\DeclareMathAlphabet{\mathpzc}{OT1}{pzc}{m}{en}
\newcommand{\dashint}{\,\ThisStyle{\ensurestackMath{%
			\stackinset{c}{.2\LMpt}{c}{.5\LMpt}{\SavedStyle-}{\SavedStyle\phantom{\int}}}%
		\setbox0=\hbox{$\SavedStyle\int\,$}\kern-\wd0}\int}
\DeclareMathOperator{\card}{Card}
\DeclareMathOperator{\supp}{Supp}
\DeclareMathOperator{\tr}{Tr}
\DeclareMathOperator{\Hol}{Hol}
\renewcommand{\Re}{\mathrm{Re}\,}
\renewcommand{\Im}{\mathrm{Im}\,}
\newcommand{\Supp}[1]{\supp\left( #1\right) }
\newcommand{\ee}{\mathrm{e}}
\newcommand{\vect}[1]{\mathbf{{#1}}}
\newcommand{\dd}{\mathrm{d}}
\DeclarePairedDelimiter{\abs}{\lvert}{\rvert}
\DeclarePairedDelimiter{\norm}{\lVert}{\rVert}
\let\originalleft\left
\let\originalright\right
\renewcommand{\left}{\mathopen{}\mathclose\bgroup\originalleft}
\renewcommand{\right}{\aftergroup\egroup\originalright}
\newcommand{\N}{\mathds{N}}
\newcommand{\Z}{\mathds{Z}}
\newcommand{\C}{\mathds{C}}
\newcommand{\R}{\mathds{R}}
\newcommand{\Ff}{\mathfrak{F}}
\newcommand{\nf}{\mathfrak{n}}
\newcommand{\zf}{\mathfrak{z}}
\newcommand{\Bc}{\mathcal{B}}
\newcommand{\Cc}{\mathcal{C}}
\newcommand{\Fc}{\mathcal{F}}
\newcommand{\Hc}{\mathcal{H}}
\newcommand{\Ic}{\mathcal{I}}
\newcommand{\Kc}{\mathcal{K}}
\newcommand{\Lc}{\mathcal{L}}
\newcommand{\cM}{\mathcal{M}}
\newcommand{\Nc}{\mathcal{N}}
\newcommand{\Oc}{\mathcal{O}}
\newcommand{\Pc}{\mathcal{P}}
\newcommand{\Rc}{\mathcal{R}}
\newcommand{\Sc}{\mathcal{S}}
\newcommand{\Lin}{\mathscr{L}}
\theoremstyle{definition}
\newtheorem{deff}{Definition}[section]
\newtheorem{oss}[deff]{Remark}
\theoremstyle{plain}
\newtheorem{teo}[deff]{Theorem}
\newtheorem{lem}[deff]{Lemma}
\newtheorem{prop}[deff]{Proposition}
\newtheorem{cor}[deff]{Corollary}
\title[Basov Spaces of Analytic Type]{Besov Spaces of Analytic Type:  Interpolation, Convolution, Fourier Multipliers, Inclusions}
\author[M.\ Calzi]{Mattia Calzi}
\address{Dipartimento di Matematica, Universit\`a degli Studi di Milano, Via C.\ Saldini 50, 20133 Milano, Italy}
\email{mattia.calzi@unimi.it}
\keywords{Besov Spaces, Homogeneous Siegel Domains, Fourier Multipliers, Interpolation}
\thanks{{\em Math Subject Classification.} {Primary:} 43A15; Secondary:  42B15, 46B70. }
\thanks{The author is a member of the Gruppo Nazionale per l'Analisi Matematica, la Probabilit\`a e le 	loro Applicazioni (GNAMPA) of the Istituto Nazionale di Alta Matematica	(INdAM) and is  partially supported by the 2020	GNAMPA grant {\em Fractional Laplacians and subLaplacians on Lie groups and trees}.}
\begin{document}
	
	\begin{abstract}
		We consider a family of Besov spaces of analytic type on the \v Silov boundary $\Nc$ of a homogeneous Siegel domain $D$, and study their properties in relation to convolution, Fourier multipliers, and complex interpolation. In addition, we study how these Besov spaces of analytic type can be compared with the `classical' Besov spaces $\Nc$.
	\end{abstract}
	
	\maketitle

\section{Introduction}

Let $E$ be a finite-dimensional complex vector space, $F$ a finite-dimensional real vector space, $\Omega$ a non-empty open convex cone in $F$ not containing affine lines, and $\Phi\colon E\times E\to F_\C$, where $F_\C$ denotes the complexification of $F$, an $\overline\Omega$-positive non-degenerate hermitian map. Then, the open convex set
\[
D\coloneqq \Set{(\zeta,z)\in E\times F_\C\colon \Im z-\Phi(\zeta)\in \Omega},
\] 
where $\Phi(\zeta)\coloneqq\Phi(\zeta,\zeta)$ for every $\zeta\in E$, is called a Siegel domain (of type II). The \v Silov boundary of $D$, that is, the smallest closed subset of $\partial D$ on which every bounded continuous function on $\overline D$ which is holomorphic on $D$ has the same supremum as on $D$, has a canonical group structure which acts affinely on $D$, and may be identified with the group $\Nc\coloneqq E\times F$, endowed with the product
\[
(\zeta,x)(\zeta',x')\coloneqq (\zeta+\zeta', x+x'+2 \Im \Phi(\zeta,\zeta')),
\]
for every $(\zeta,x),(\zeta',x')\in \Nc$, by means of the mapping $(\zeta,x)\mapsto (\zeta,x+i\Phi(\zeta))$.

When $E=\Set{0}$ and $F=\R$, $D$ is simply the upper half-plane in $\C$, and $\Nc=\R$. In this case, it is proved in~\cite{RicciTaibleson} that the boundary values of several (mixed-norm) weighted Bergman spaces on $D$ may be identified with suitable homogeneous Besov spaces on $\Nc$, namely $\Set{u\in \dot B^s_{p,q}(\R)\colon  \Supp{\Fc u}\subseteq \R_+}$, for $p,q\in ]0,\infty]$ and $s<0$.
In~\cite{BekolleBonamiGarrigosRicci}, the preceding results are extended to the case in which $E=\Set{0}$, but $\Omega$ is a general irreducible symmetric cone.\footnote{In other words, $\Omega$ is self-dual with respect to some scalar product on $F$, the group of linear automorphisms of $F$ preserving $\Omega$ acts transitively on $\Omega$, and $\Omega$ cannot be written as a product of two non-trivial convex cones.} Therein, suitable homogeneous Besov spaces on $\Nc=F$ are constructed and proved to be the boundary value spaces of several (mixed-norm) weighted Bergman spaces. 
These results have been recently extended in~\cite{CalziPeloso} to the case in which $E$ is arbitrary and $D$ is (affinely) homogeneous, that is, the group of (affine) biholomorphisms of $D$ acts transitively on $D$.

The purpose of this paper is to develop some aspects of the theory of the Besov spaces $B^{\vect s}_{p,q}(\Nc,\Omega)$ introduced in~\cite{CalziPeloso}. 

\medskip

In Section~\ref{sec:2}, we collect some basic definitions and results concerning homogeneous Siegel domains and the Besov spaces $B^{\vect s}_{p,q}(\Nc,\Omega)$ that will be necessary in the sequel. We shall mainly refer to~\cite{CalziPeloso} for a much more thorough exposition. For the sake of simplicity, we shall sometimes slightly modify some of the notation and terminology adopted in~\cite{CalziPeloso} and (apparently) allow more flexibility to the constructions developed therein. We shall also describe the group $\Nc$ in a slightly more axiomatic way, without reference to the corresponding Siegel domain $D$.

In Section~\ref{sec:3},  we develop some basic tools to deal with with the spaces $B^{\vect s}_{p,q}(\Nc,\Omega)$ for $p<1$, following the classical theory exposed in~\cite{Triebel}. Except for the sampling Theorem~\ref{prop:10} (and Lemma~\ref{lem:5}), the results of this section are trivial consequences of Young's inequality when $p\geqslant 1$. Some particular cases of these results have already been proved in~\cite{CalziPeloso}, but are not sufficient to prove some results of Sections~\ref{sec:6} and~\ref{sec:7}.

In Section~\ref{sec:4}, we deal with continuity results for convolution between the spaces $B^{\vect s}_{p,q}(\Nc,\Omega)$. These results are (somewhat technical, but) quite simple and natural when $\Nc$ is abelian. 
When $\Nc$ is not abelian, the fact that convolution `preserves regularity' (apparently expressed in a non-symmetric form) may appear peculiar. Nonetheless,  as~\cite[Theorem 4.26]{CalziPeloso} shows, the `regularity parameter' $\vect s$ actually depends only on the convolution with a distribution supported on the centre of $\Nc$: this explains why convolution may act on the Besov spaces $B^{\vect s}_{p,q}(\Nc,\Omega)$ as in the classical situation.
Since, nonetheless, the definition of the spaces $B^{\vect s}_{p,q}(\Nc,\Omega)$ is non-symmetric, some auxiliary `symmetrized' versions $\Bc^{\vect s}_{p,q}(\Nc,\Omega)$  of the $B^{\vect s}_{p,q}(\Nc,\Omega)$ appear naturally in this context. The main result is therefore expressed in terms of inclusions of the form $B*\Bc\subseteq B$, which imply analogous inclusions of the form $\Bc*\Bc\subseteq \Bc$, $B*B\subseteq B$, $B*B^*\subseteq B$, etc., since the spaces $\Bc$ can be naturally identified with quotients, as well as closed subspaces, of the spaces $B$.

In Section~\ref{sec:5}, we prove Mihlin-H\"ormander multiplier theorems for \emph{right} Fourier multiplies on the Besov spaces $B^{\vect s}_{p,q}(\Nc,\Omega)$.  Our main result is completely analogous to the classical one when $\Nc$ is abelian, and may appear peculiar when $\Nc$ is not abelian, since the relevant dimension for the regularity threshold is not that of $\Nc$, as one may expect, but rather that of its centre $F$. In order to explain this fact, one may observe that, roughly speaking, the Fourier transforms of the elements of $B^{\vect s}_{p,q}(\Nc,\Omega)$ do not have a vectorial nature `on the right' (since their kernels contain a \emph{fixed} hyperplane). It is therefore to be expected that \emph{left} Fourier multipliers behave in a quite different way.

In Section~\ref{sec:6}, we deal with complex interpolation. We first show, using classical techniques, an almost complete picture of how the Besov spaces $B^{\vect s}_{p,q}(\Nc,\Omega)$ interact with the classical complex interpolation functor (Proposition~\ref{prop:5}). We then introduce a modified complex interpolation functor, following~\cite{Triebel}, in order to deal with the case $\min(p,q)<1$, in which the $B^{\vect s}_{p,q}(\Nc,\Omega)$  are \emph{not} Banach spaces (but rather \emph{quasi}-Banach spaces), and with the case $\max(p,q)=\infty$, in which the usual complex interpolation functor behaves in a somewhat irregular way (Theorem~\ref{prop:9}). 
In contrast to the classical case, this modified interpolation functor does not operate on Banach pairs, but rather on `admissible triples' $(A_0,A_1,X)$, where  $A_0,A_1$ are quasi-Banach spaces which embed in the Hausdorff semi-complete \emph{locally convex} space $X$. Holomorphy is then defined with reference to $X$ and the usual arguments apply. 
We mention explicitly that  the complex interpolation spaces of a Banach pair $(A_0,A_1)$ need \emph{not} equal the complex interpolation spaces of the admissible triple $(A_0,A_1,\Sigma(A_0,A_1))$ (with the notation of~\cite{BerghLofstrom}), except when $A_0$ or $A_1$ is reflexive: this is unavoidable if one desires a nice treatment of the case $\max(p,q)=\infty$. In particular, in general $A_0\cap A_1$ is \emph{not} dense in the complex interpolation spaces of an admissible tripe $(A_0,A_1,X)$. Again, this is unavoidable if one wishes the spaces $B^{\vect s}_{p,q}(\Nc,\Omega)$ to interpolate nicely also when $\max(p,q)=\infty$, in full generality.

In Section~\ref{sec:7}, we introduce some more classical Besov spaces $B^{s}_{p,q}(\Nc,\Lc)$ on $\Nc$. When $\Nc$ is abelian, they reduce to the classical homogeneous Besov spaces on $\Nc$. The non-commutative analogues of these spaces are defined by means of the spectral calculus of a suitable positive Rockland operator $\Lc$ on $\Nc$. 
Some comments are in order. First of all, when $\Nc$ is stratified, we could have replaced $\Lc$ with a sub-Laplacian and referred to the general construction of homogeneous Besov spaces on a metric measure space developed in~\cite{GKKP}. Nonetheless, $\Nc$ need \emph{not} be stratified, so that it is more convenient to choose $\Lc$ as an operator of order $4$.
Secondly, it is likely that the so-defined Besov spaces do not depend on the chosen positive Rockland operator $\Lc$, since the same holds for the Sobolev spaces on graded groups developed in~\cite{FischerRuzhansky}.
Finally, we shall not embed \emph{a priori} our Besov spaces into the quotient of the space of tempered distributions on $\Nc$ by the space of polynomials, as one may expect, but rather in the strong dual of a suitable (dense) subspace $\Sc_\Lc(\Nc)$ of the space of Schwartz functions with all vanishing moments on $\Nc$, endowed with a stronger topology. \emph{A posteriori}, the resulting spaces are the same,  but the use of $\Sc_\Lc(\Nc)$ has some technical advantages that greatly simplify some of the arguments employed in Section~\ref{sec:8}. In addition to that, this choice parallels the analogous one made for the Besov spaces $B^{\vect s}_{p,q}(\Nc,\Omega)$.
For the sake of symmetry, we shall therefore also prove that also the spaces $B^{\vect s}_{p,q}(\Nc,\Omega)$ naturally embed in a suitable quotient of the space of tempered distributions (Proposition~\ref{prop:6}).

In Section~\ref{sec:8}, we compare the Besov spaces $B^{\vect s}_{p,q}(\Nc,\Omega)$ and $B^{s}_{p,q}(\Nc,\Lc)$. First we show that $B^{s}_{p,q}(\Nc,\Lc)$ is the closure (in a suitable weak topology, if $\max(p,q)=\infty$) of the union of an increasing sequence of closed subspaces $V_k$ which embed canonically as closed subspaces of $B^{\sum_j s_j}_{p,q}(\Nc,\Lc)$. Therefore,   it is reasonable to consider only `inclusions' of the form $B^{\vect s}_{p,q}(\Nc,\Omega)\to B^{\sum_j s_j}_{p,q}(\Nc,\Lc)$. 
When $\Omega=\R_+^*$,  the situation is clear since $B^{s}_{p,q}(\Nc,\Omega)$ is canonically isomorphic to a closed subspace of  $ B^{s}_{p,q}(\Nc,\Lc)$. 
When $\Omega\neq \R_+^*$, the situation is more complicated and the existence of a canonical embedding $B^{\vect s}_{p,q}(\Nc,\Omega)\to B^{\sum_j s_j}_{p,q}(\Nc,\Lc)$ turns out to be equivalent to a certain $\ell^q$-decoupling property $(D')^{\vect s}_{p,q}$ (Theorem~\ref{teo:1}). In particular, $(D')^{\vect s}_{p,q}$  may hold only if $\vect s \leqslant \vect 0$ (Proposition~\ref{prop:8}).

Property $(D')^{\vect s}_{p,q}$ is not new, as it is essentially related to a similar property which plays an important role in the determination of the boundary value spaces of several mixed-norm weighted Bergman spaces on $D$ (cf.~Proposition~\ref{prop:15},~\cite[Theorem 5.10]{CalziPeloso} and~\cite[Theorem 4.11 and Proposition 4.16]{BekolleBonamiGarrigosRicci}). In addition, when $\Omega$ is a Lorentz (or light) cone, then $(D')^{\vect s}_{p,q}$ is essentially related to the so-called `cone multiplier problem' (cf.~\cite{Wolff,LabaWolff,BekolleBonamiGarrigosRicci,GSS}). 

\subsection{Acknowledgements}

The author would like to thank professor A.\ Martini for posing the problem of investigating the relationships between the Besov spaces $B^{\vect s}_{p,q}(\Nc,\Omega)$ and the `classical' ones.  The author would also like to thank professor M.\ M.\ Peloso for helpful suggestions to improve the structure of the manuscript. 

\section{Preliminaries}\label{sec:2}

In this section we shall summarize several definitions and results on homogeneous cones, homogeneous Siegel domains, and the Besov spaces of analytic type $B^{\vect s}_{p,q}(\Nc,\Omega)$. We refer the reader to~\cite{CalziPeloso} for a more thorough exposition.

Throughout the paper, we shall fix (with the exception of Proposition~\ref{prop:16}):
\begin{itemize}
	\item a real hilbertian space $F$ of dimension $m>0$  and a complex hilbertian space $E$ of dimension $n$;
	
	\item a homogeneous cone $\Omega$ in $F$, that is, a non-empty open convex cone not containing any affine line on which the group $G(\Omega)\coloneqq \Set{t\in GL(F)\colon t \Omega=\Omega}$ acts transitively;
	
	\item a triangular\footnote{In other words, there is a basis of $F$ such that for every $t\in T_+$  the matrix associated with the mapping $x\mapsto t\cdot x$ is upper triangular. Equivalently, all the eigenvalues of the mapping $x\mapsto t\cdot x$ are real for every $t\in T_+$ (cf.~\cite{Vinberg2}).} Lie group $T_+$ which acts linearly and simply transitively on $\Omega$, and a homomorphism $\Delta\colon T_+\to (\R_+^*)^r$ which induces an isomorphism of $T_+/[T_+,T_+]$ onto $(\R_+^*)^r$, for some $r\in \N^*$;\footnote{A $T_+$ with the required properties always exists (cf.~\cite{Vinberg}). In addition, $r$ does not depend on the choice of $T_+$ and is called the rank of $\Omega$.}
	
	\item a non-degenerate $\overline\Omega$-positive hermitian map $\Phi\colon E\times E\to F_\C$, where $F_\C$ denotes the complexification of $F$, such that for every $t\in T_+$ there is $g\in GL(E)$ such that $t\cdot \Phi=\Phi\circ (g\times g)$.
\end{itemize}

Notice that $T_+$ may be equivalently characterized as a maximal connected triangular subgroup of $G(\Omega)$, and that any two such subgroups of $G(\Omega)$ are conjugate by an inner automorphism of $G(\Omega)$ (cf.~\cite{Vinberg,Vinberg2}). In particular, we may always assume that $T_+$ has the form considered in~\cite{CalziPeloso}.
In  particular, $T_+$ contains a subgroup acting by homotheties on $F$, and $T_+$ acts, by transposition, on the dual cone $\Omega'\coloneqq \Set{\lambda\in F'\colon \forall h\in \overline \Omega \setminus \Set{0}\:\: \left\langle \lambda,h\right\rangle>0}$ (cf.~also~\cite[Theorem 1]{Vinberg}). We denote the corresponding \emph{right} action by $\lambda\cdot t$, for $\lambda\in \Omega'$ and $t\in T_+$.

Then, $D\coloneqq \Set{(\zeta,z)\in E\times F_\C\colon \Im z-\Phi(\zeta)\in \Omega}$ is the homogeneous Siegel domain (of type II) associated to $E,F,\Omega$, and $\Phi$, where $\Phi(\zeta)\coloneqq \Phi(\zeta,\zeta)$ for every $\zeta\in E$. The \v Silov boundary $b D$ of $D$ can be canonically identified with $\Nc\coloneqq E\times F$ through the mapping $(\zeta,x)\mapsto (\zeta,x+i\Phi(\zeta))$. If we  endow $\Nc$ with the product
\[
(\zeta,x)(\zeta',x')\coloneqq (\zeta+\zeta',x+x'+2 \Im \Phi(\zeta,\zeta'))
\]
for every $(\zeta,x),(\zeta',x')\in \Nc$, then $\Nc$ acts holomorphically on $E\times F_\C$, $b D$, and  $D$, setting
\[
(\zeta,x)\cdot (\zeta',z')\coloneqq(\zeta+\zeta',x+i\Phi(\zeta)+z'+2 i\Phi(\zeta',\zeta))
\]
for every $(\zeta,x)\in \Nc$ and for every $(\zeta',z')\in E\times F_\C$. Then, $b D$ is the orbit of $(0,0)$, and $\Nc$ acts simply transitively on $b D$.
Note that $\Nc$ is a $2$-step nilpotent Lie group with centre $F$ and commutator subgroup equal to the vector subspace of $F$ generated by $\Phi(E)$. We endow $\Nc$ with the dilations given by $\rho \cdot (\zeta,x)\coloneqq (\rho^{1/2}\zeta,\rho x)$ for every $\rho>0$ and $(\zeta,x)\in \Nc$. With this choice, $\Nc$ need not be a stratified group, but has the usual dilations when it is abelian, that is, when $E=0$.

\subsection{An Intrinsic Perspective}

We shall now indicate a slightly more intrinsic description of the group $\Nc$ described above. For technical reasons, it is more convenient to describe its Lie algebra, instead.

\begin{prop}\label{prop:16}
	Let $\nf$ be a (finite-dimensional, real) $2$-step nilpotent Lie algebra with centre $\zf$. Then, $\nf$ is the Lie algebra of a group $\Nc$ satisfying the above conditions if and only if there are a complex structure $J$\footnote{That is, $J$ is an endomorphism of $\nf/\zf$ and $J^2=-I$.} on $\nf/\zf$ and a non-empty open convex cone $\Omega$ in $\zf$ not containing affine lines such that the following hold:
	\begin{itemize}
		\item for every $\lambda\in \Omega'$, the bilinear form $(X,Y)\mapsto \left\langle \lambda, [J X, Y]\right\rangle$, on $\nf/\zf$, is symmetric and positive on $\nf$;
		
		\item the group $G$  of the automorphisms of $\nf$ which preserve $\Omega$ and induce automorphisms of $\nf/\zf$ commuting with $J$ acts transitively on $\Omega$.\footnote{Note that every automorphism of (the Lie algebra) $\nf$ necessarily preserves $\zf$, so that it induces an automorphism of (the vector space) $\nf/\zf$.}
	\end{itemize}
\end{prop}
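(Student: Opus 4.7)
The plan is to prove the two implications separately. For the ``only if'' direction, given $(E,F,\Omega,\Phi,T_+)$ as in the setup, let $\nf$ be the Lie algebra of $\Nc$, realized on $E\oplus F$ through the exponential map. The centre $\zf$ equals $F$, $J$ is the given complex structure on $E$, and the Baker--Campbell--Hausdorff formula (trivialised in the $2$-step case) yields $[(\zeta,x),(\zeta',x')] = (0,\,4\,\Im\Phi(\zeta,\zeta'))$. Hermiticity of $\Phi$ then gives, for every $\lambda\in\Omega'$ and $X,Y\in\nf/\zf\simeq E$,
\[
\langle\lambda,[JX,Y]\rangle \;=\; 4\langle\lambda,\Im\Phi(iX,Y)\rangle \;=\; 4\langle\lambda,\Re\Phi(X,Y)\rangle,
\]
which is symmetric in $(X,Y)$ and strictly positive on the diagonal when $X\ne 0$, by $\overline\Omega$-positivity and non-degeneracy of $\Phi$ together with the definition of the dual cone. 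The transitivity of $G$ on $\Omega$ is automatic: each $t\in T_+$ comes paired with some $g\in GL(E)$ (which is $\C$-linear, hence commutes with $J$) so that $(g,t)\in G$, and $T_+$ already acts transitively.

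For the ``if'' direction, I set $F\coloneqq \zf$ (with any inner product) and $E\coloneqq \nf/\zf$ with complex structure $J$, and let $[\,\cdot\,,\cdot\,]_0\colon E\times E\to F$ be the antisymmetric map induced by the bracket of $\nf$ (well-defined as $[\nf,\nf]\subseteq \zf$). Guided by the computation above, I define
\[
\Phi\colon E\times E\to F_\C, \qquad \Phi(X,Y)\coloneqq \tfrac{1}{4}\bigl([JX,Y]_0 + i\,[X,Y]_0\bigr).
\]
Since $\Omega$ is open, $\Omega'$ separates points in $F$, so the first bullet upgrades to the identity $[JX,Y]_0 = [JY,X]_0$ in $F$; combined with $J^2=-I$ and antisymmetry, this forces $\Phi$ to be $\C$-linear in the first variable and conjugate-symmetric, i.e.\ hermitian. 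The identity $\Phi(X,X) = \tfrac14[JX,X]_0$ and the positivity hypothesis give $\overline\Omega$-positivity, while non-degeneracy of $\Phi$ is equivalent to $\zf$ being the \emph{full} centre of $\nf$. Equipping $\Nc\coloneqq E\times F$ with the Siegel product thus produces a simply connected $2$-step nilpotent Lie group whose Lie algebra is canonically isomorphic to $\nf$.

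It remains to produce $T_+$. The restriction homomorphism $\pi\colon G\to GL(F)$ takes values in $G(\Omega)$ and, by hypothesis, its image acts transitively on $\Omega$. Invoking Vinberg's structure theory~\cite{Vinberg,Vinberg2} applied to $\pi(G)^0$, one extracts a triangular Lie subgroup $T_+\subseteq \pi(G)$ acting simply transitively on $\Omega$, together with the canonical character $\Delta\colon T_+\to(\R_+^*)^r$ inducing an isomorphism on the abelianization. By the very definition of $G$, for each $t\in T_+$ there is $g\in GL(E)$ commuting with $J$ with $(g,t)\in G$, and since $g$ is $\C$-linear and $\Phi$ is hermitian, this is equivalent to $t\cdot\Phi = \Phi\circ(g\times g)$. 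The main obstacle I anticipate is precisely this last step: extracting $T_+$ inside $\pi(G)$ rather than inside the possibly larger $G(\Omega)$, which requires the strengthening of Vinberg's theorem asserting that \emph{any} connected transitive Lie subgroup of $G(\Omega)$ contains a triangular simply transitive subgroup.
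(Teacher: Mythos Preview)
Your construction of $\Phi$ and verification of its properties (hermitian, non-degenerate, $\overline\Omega$-positive) is correct and coincides with the paper's. The ``only if'' direction, which the paper dismisses as clear, is spelled out nicely in your argument.

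The gap is exactly the one you flag yourself, and it is genuine: Vinberg's structure theorem yields a maximal triangular subgroup of $G(\Omega)$ acting simply transitively on $\Omega$, but it does \emph{not} assert that an arbitrary connected transitive subgroup of $G(\Omega)$ contains such a triangular subgroup. Applying Vinberg's theory directly to $\pi(G)^0\subseteq GL(F)$ is therefore not justified as stated, and the ``strengthening'' you invoke is not available off the shelf.

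The paper resolves this by working upstairs in $GL_\R(E\times F)$ rather than downstairs in $GL(F)$. It first replaces $G$ by the subgroup $G''$ of automorphisms of the form $A\times C$ (dropping the off-diagonal block), which still acts transitively on $\Omega$. The key observation is that $G''_0$ is the identity component of an \emph{algebraic} subgroup $N'$ of $GL_\R(E\times F)$: indeed, the conditions $A\in GL_\C(E)$, $C\in N$ (the algebraic hull of $G_0(\Omega)$, via~\cite[Proposition~14]{Vinberg}), and $C[\zeta,\zeta']=[A\zeta,A\zeta']$ are all polynomial. Vinberg's Theorem~1 then gives a decomposition $G''_0=T\cdot K$ with $T$ connected triangular and $K$ maximal compact; since $K$ fixes a point of the convex region $E\times\Omega$, the projection $T_+\coloneqq\{C:A\times C\in T\}$ acts transitively, hence simply transitively, on $\Omega$. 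Because $T_+$ arises as the projection of $T\subseteq G''$, the required $\C$-linear lift $t\mapsto g_t$ with $t\cdot\Phi=\Phi\circ(g_t\times g_t)$ comes for free. This algebraicity step is the missing ingredient in your sketch.
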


\begin{proof}
	It is clear that the Lie algebra of a group $\Nc$ as above satisfies the conditions of the statement. Then, take $\nf$ as above. Set $E\coloneqq\nf/\zf$, endowed with the structure of a complex vector space induced by $J$,  $F\coloneqq\zf$, 
	\[
	\Phi\colon E\times E\ni (\zeta,\zeta') \mapsto \frac 1 4 [i \zeta,\zeta']+\frac i 4 [\zeta,\zeta']\in F_\C,
	\]	
	and $\Nc\coloneqq E\times F$, endowed with the product
	\[
	(\zeta,x)(\zeta',x')\coloneqq (\zeta+\zeta', x+x'+2 \Im \Phi(\zeta,\zeta'))
	\]
	for every $(\zeta,x),(\zeta',x')\in \Nc$. Notice that $\Phi$ is well defined, since $[\nf,\nf]\subseteq \zf$ and $[\nf,\zf]=\Set{0}$. In addition, it is clear that $\left\langle \lambda_\C, \Phi\right\rangle$ is hermitian  for every $\lambda\in \Omega'$, where $\lambda_\C$ denotes the complexification of $\lambda$, that is, $\lambda\otimes I_\C$. Therefore, $\Phi$ is hermitian.	
	Furthermore, $\Im \Phi$ is clearly non-degenerate, so that also $\Phi$ is. In addition, $\Phi(E)\subseteq \overline \Omega$ since $\overline\Omega$ is the polar of $\Omega'$ and $\left\langle \lambda_\C, \Phi\right\rangle$ is positive for every $\lambda\in \Omega'$.
	
	Next, define $G'$ as the set of automorphisms of $\Nc$ of the form
	\[
	(\zeta,x)\mapsto (A \zeta, B \zeta+Cx)
	\]
	with $A\in GL_\C(E)$, $B\in \Lc_\R(E; F)$, $C\in GL_\R(F)$, $C\Omega=\Omega$,  and $C[\zeta,\zeta']=[A \zeta,A \zeta']$ for every $\zeta,\zeta'\in E$. In other words, $G'$ corresponds to $G$ under the canonical identification of $\nf$ with $\Nc$. Then, by assumption, $G'$ acts transitively on $\Omega$. 
	In addition, with the above notation, $A\times C$ is an automorphism of $\Nc$  preserving $\Omega$, and the set $G''$ of the $A\times C$ is still a group acting transitively on $\Omega$. Notice that   $G''$ is closed in $GL(E\times F)$, and that its identity component $G''_0$ acts transitively on $\Omega$ (cf.~\cite[\S 3]{Murakami}).
	
	Now, by~\cite[Proposition 14]{Vinberg}, the identity component $G_0(\Omega)$ of the group of linear automorphisms of $\Omega$ is the identity component of an algebraic subgroup $N$ of $GL(F)$ (namely, the normalizer of $G_0(\Omega)$ in $GL(F)$). Then,
	\[
	N'\coloneqq \Set{A\times C\colon A\in GL_\C(E), C\in N, \forall \zeta,\zeta'\in E\:\: C[\zeta,\zeta']=[A \zeta,A \zeta']}
	\]
	is an algebraic subgroup of $GL_\R(E\times F)$, and its identity component $N'_0$ is contained in $G''$, hence in $G''_0$. Since clearly $G''_0\subseteq N'$, this proves that $N'_0=G''_0$. Therefore, $G''_0$ is the semidirect product of a maximal connected triangular subgroup $T$ and of a maximal connected compact subgroup $K$ (cf.~\cite[Theorem 1]{Vinberg}). Since $K$ is then the stabilizer of a point in the convex region $E\times \Omega$, $T_+\coloneqq\Set{C\colon A\times C\in T}$ is a connected triangular subgroup of $GL(F)$ which necessarily acts transitively on $\Omega$. Since every maximal connected triangular subgroup of $\Set{C\in GL(F)\colon C\Omega=\Omega}$ acts simply transitively on $\Omega$ (cf.~\cite{Vinberg2,Vinberg}), this implies that $T_+$ acts simply transitively on $\Omega$.\footnote{Note that this, im particular, implies that the mapping $A\times C\to C$ is an isomorphism of $T$ onto $T_+$, so that $T_+$ naturally acts on $E\times F$. One may define a group homomorphism $T_+\ni t\mapsto g_t\in GL(E)$ so that $t\cdot \Phi=\Phi\circ (g_t\times g_t)$ for every $t\in T_+$.}
\end{proof}

Notice that, if $\Phi(E)$ generates $F$ as a vector space (which is equivalent to saying that $\Nc$ is a stratified group), then the homogeneity condition implies that $\Omega$ is the interior of the (closed) convex cone generated by $\Phi(E)$.
If, otherwise, $\Phi(E)$ is contained in a proper subspace of $F$, then $\Omega$ is not uniquely determined by the above conditions. 
For example, if $\Nc$ is the product of a Heisenberg group $H$ and $\R^{m-1}$, $m>1$, then the only requirement on $(\Omega,T_+)$ is that $(1,0,\dots,0)$ is an eigenvector of every element of $T_+$. For example, every Lorentz cone\footnote{That is, a cone isomorphic to the quadric cone $\Set{(x_1,x_2)\in \R\times \R^{m-1}\colon x_1>\abs{x_2}}$.} in $F$ having $\R_+(1,0,\dots,0)$ as a generator would do, with an appropriate choice of $T_+$.

\subsection{The Homogeneous Cone $\Omega$}

Here we collect some definitions and results concerning $\Omega$, $T_+$, and $\Delta$. Cf.~\cite[Chapter 2]{CalziPeloso} for a more thorough exposition.

Given $\vect s\in \C^r$, we define $\Delta^{\vect s}(t)\coloneqq \prod_{j=1}^r \Delta_j(t)^{s_j}$ for every $t\in T_+$, so that $\Delta^{\vect s}$ is a character of $T_+$ and each character of $T_+$ is of the form $\Delta^{\vect s}$ for some $\vect s\in \C^r$.  
We fix two base points $e_\Omega$ and $e_{\Omega'}$ in $\Omega$ and in its dual $\Omega'$. 
Then, we may transfer $\Delta^{\vect s}$ to functions $\Delta_{\Omega}^{\vect s}$ and $\Delta_{\Omega'}^{\vect s}$ on $\Omega$ and $\Omega'$, respectively, such that 
\[
\Delta_{\Omega}^{\vect s}(t\cdot e_\Omega)\coloneqq \Delta^{\vect s}(t) \qquad \text{and} \qquad\Delta_{\Omega'}^{\vect s}(e_{\Omega'}\cdot t)\coloneqq \Delta^{\vect s}(t)
\]
for every $t\in T_+$.

Further, there is $\vect d\in \R^r$ such that the measures 
\[
\nu_\Omega\coloneqq \Delta_\Omega^{\vect d}\cdot \Hc^m \qquad \text{and} \qquad \nu_{\Omega'}\coloneqq \Delta_{\Omega'}^{\vect d}\cdot \Hc^m ,
\]
where $\Hc^m$ denotes the $m$-dimensional Hausdorff measure, are $G(\Omega)$-invariant on $\Omega$ and $\Omega'$, respectively. 
In addition, for a suitable choice of $\Delta$, there are $\vect m,\vect {m'}\in \N^r$ such that $\vect d=-(\vect 1_r-\frac 1 2 \vect m-\frac 1 2 \vect{m'})$ and such that the measures $\Delta_\Omega^{\vect s}\cdot \nu_\Omega$ and $\Delta_{\Omega'}^{\vect s}\cdot \nu_{\Omega'}$ extend to Radon measures on $\overline \Omega$ and $\overline{\Omega'}$, respectively, if and only if  $\Re\vect s\in \frac 1 2 \vect m+(\R_+^*)^r$ and $\Re\vect s\in \frac 1 2 \vect{m'}+(\R_+^*)^r$, respectively.
If this is the case, then, denoting with $\Lc$ the Laplace transform,
\[
\Lc(\Delta_\Omega^{\vect s}\cdot \nu_\Omega)=\Gamma_\Omega(\vect s)\Delta_{\Omega'}^{-\vect s}\qquad \text{and} \qquad \Lc(\Delta_{\Omega'}^{\vect s}\cdot \nu_{\Omega'})=\Gamma_{\Omega'}(\vect s)\Delta_{\Omega}^{-\vect s}
\]
respectively, where
\[
\Gamma_\Omega(\vect s)=c \prod_{j=1}^r\Gamma(s_j-m_j/2) \qquad \text{and}\qquad \Gamma_{\Omega'}(\vect s)=c \prod_{j=1}^r\Gamma(s_j-m'_j/2)
\]
for a suitable constant $c>0$. 

In addition, there are unique holomorphic families of tempered distributions $(I^{\vect s}_\Omega)_{\vect s\in \C^r}$ and $(I^{\vect s}_{\Omega'})_{\vect s\in \C^r}$ such that
\[
I^{\vect s}_\Omega= \frac{1}{\Gamma_\Omega(\vect s)}\Delta_\Omega^{\vect s}\cdot \nu_\Omega \qquad \text{and } \qquad I^{\vect s}_{\Omega'}= \frac{1}{\Gamma_{\Omega'}(\vect s)}\Delta_{\Omega'}^{\vect s}\cdot \nu_{\Omega'}
\]
when  $\Re\vect s\in \frac 1 2 \vect m+(\R_+^*)^r$ and $\Re\vect s\in \frac 1 2 \vect{m'}+(\R_+^*)^r$, respectively. We call these distributions `Riemann--Liouville potentials', since they reduce to the classical Riemann--Liouville potentials when $\Nc=\R$. 

We shall also fix $\vect b\in \R^r$ in such a way that $\Delta^{-\vect b}(t)=\det_\R g=\abs{\det_\C g}^2$ for every $t\in T_+$ and for every $g\in GL(E)$ such that $t\cdot \Phi=\Phi\circ (g\times g)$. Then, $\Delta_{\Omega'}^{-\vect b}$ is polynomial on $\Omega'$ and we shall extend it on $F'_\C$ accordingly. In addition, $\Phi_*(\Hc^{2 n})= c I^{-\vect b}_\Omega$ for some constant $c>0$.

Finally, we endow $\Omega$ and $\Omega'$ with complete $T_+$-invariant Riemannian metrics (for example, the canonical $G(\Omega)$-invariant metrics described in~\cite[I.4]{FarautKoranyi}).  We denote by $d_\Omega$ and $d_{\Omega'}$ the corresponding $T_+$-invariant distances.\footnote{Even though these distances may differ from the ones employed in~\cite{CalziPeloso}, they will still be locally bi-Lipschitz equivalent in a uniform way, by invariance, so that the difference will be immaterial for our purposes.}

Given a metric space $(X,d)$, $\delta>0$ and $R>1$, we say that a family $(x_k)_{k\in K}$ of elements of $X$ is a $(\delta,R)$-lattice if the balls $B(x_j,\delta)$ are pairwise disjoint and the balls $B(x_j,R\delta)$ cover $X$. 
Notice that $(\delta,2)$-lattices always exist, since one may take any maximal family $(x_k)_{k\in K}$ of elements of $X$ such that $d(x_k,x_h)\geqslant 2\delta$ for every $h,k\in K$, $h\neq k$. In addition, every $(\delta,R)$-lattice is also a $(\delta', R')$-lattice for every $\delta'\in ]0,\delta]$ and for every $R'\in [R'\delta/\delta',+\infty[$.
If, in addition, there is a  positive Radon measure $\mu$ on $X$ such that $\mu(B(x,R))=\mu(B(x',R))\in \R_+^*$ for every $x,x'\in X$ and for every $R>0$ (which will always be the case in what follows), then for every $(\delta,R)$-lattice $(x_k)_{k\in K}$ on $X$ there is  $N\in \N$, depending only on $\delta$ and $R$, such that every ball $B(x_k,R\delta)$ meets at most $N$ of the balls $B(x_{k'},R\delta)$.

The preceding considerations apply to lattices on $\Omega$ and $\Omega'$, but also to the lattices on $\Nc$ associated to any left-invariant distance. The measures $\nu_\Omega$, $\nu_{\Omega'}$, and $\Hc^{2n+m}$, where $\Hc^{2n+m}$ denotes the $(2n+m)$-dimensional Hausdorff measure on $\Nc$ (which is a left an right Haar measure), clearly satisfy the conditions imposed on the measure $\mu$ above, by invariance.

\subsection{Fourier Analysis on $\Nc$}

We refer the reader to~\cite[Chapter 1]{CalziPeloso} for a more thorough exposition.

Define $W\coloneqq \Set{\lambda\in F'\colon \left\langle \lambda_\C, \Phi\right\rangle \text{ is degenerate}}$, and observe that $W$ is an algebraic variety which does not meet $\Omega'$. Then, for every $\lambda\in F'\setminus W$, the quotient $\Nc/\ker \lambda$ is isomorphic to a Heisenberg group (or to $\R$), so that the Stone--Von Neumann theorem shows that there is, up to unitary equivalence, a unique irreducible unitary representation $\pi_\lambda$ of $\Nc$  in some hilbertian space $H_\lambda$ such that $\pi_\lambda(0, i x)=\ee^{- i \left\langle\lambda,x\right\rangle }$ for every $x\in F$. It turns out that these representations are sufficient to write down the Plancherel formula. More precisely, there is a constant $c>0$   such that
\[
\norm{f}_{L^2(\Nc)}^2= c \int_{F'\setminus W} \norm{\pi_\lambda(f)}_{\Lin^2(H_\lambda)}^2\abs{\Delta_{\Omega'}^{-\vect b}(\lambda)}\,\dd \lambda
\]
for every $f\in L^2(\Nc)$, where $\Lin^2(H_\lambda)$ denotes the space of Hilbert--Schmidt operators on $H_\lambda$.\footnote{Recall that $\Delta_{\Omega'}^{-\vect b}$ is polynomial on $\Omega'$, so that it may be extended to the whole of $F'$ by analyticity.}

If $\lambda\in \Omega'$, we may describe $\pi_\lambda$ as follows. Define $H_\lambda\coloneqq \Hol(E)\cap L^2(\ee^{-2 \left\langle \lambda,\Phi(\,\cdot\,)\right\rangle}\cdot\Hc^{2 n})$, where $\Hol(E)$ denotes the space of holomorphic functions on $E$, and define
\[
[\pi_\lambda(\zeta,x) \varphi](\omega)\coloneqq \ee^{\left\langle \lambda_\C, -i x + 2 \Phi(\omega,\zeta)-\Phi(\zeta)\right\rangle} \varphi(\omega-\zeta)
\]
for every $(\zeta,x)\in \Nc$, for every $\varphi\in H_\lambda$, and for every $\omega\in E$. Then, one may prove that $\pi_\lambda$ is a continuous irreducible unitary representation of $\Nc$ in $H_\lambda$.
We denote by $P_{\lambda,0}$ the self-adjoint projector of $H_\lambda$ onto the space of constant functions. Then, 
\[
\tr(\pi_\lambda(\zeta,x)P_{\lambda})= \frac{\left\langle \pi_\lambda(\zeta,x) 1\vert 1\right\rangle}{\norm{1}_{H_\lambda}^2}= \ee^{-\left\langle \lambda_\C, \Phi(\zeta)+i x\right\rangle}
\]
for every $(\zeta,x)\in \Nc$ and for every $\lambda\in \Omega'$.

\subsection{Besov Spaces of Analytic Type}

We report below some definitions and basic results concerning the spaces $B^{\vect s}_{p,q}(\Nc,\Omega)$. We refer the reader to~\cite[Chapter 4]{CalziPeloso} for a more thorough exposition.

\begin{deff}
	For every compact subset $K$ of $\Omega'$, we define 
	\[
	\Sc_\Omega(\Nc,K)\coloneqq \Set{\varphi\in \Sc(\Nc)\colon \forall \lambda\in F'\setminus W\quad \pi_\lambda(\varphi)=\chi_K(\lambda) P_{\lambda,0}\pi_\lambda(\varphi)P_{\lambda,0} }
	\]
	and
	\[
	\Sc_{\Omega,L}(\Nc,K)\coloneqq \Sc(\Nc)*\Sc_\Omega(\Nc,K),
	\]
	endowed with the topology induced by $\Sc(\Nc)$. We define $\Sc_\Omega(\Nc)$ and $\Sc_{\Omega,L}(\Nc)$ as the inductive limits of the $\Sc_\Omega(\Nc,K)$ and the $\Sc_{\Omega,L}(\Nc,K)$, as $K$ runs through the set of compact subsets of $\Omega'$, ordered by inclusion.\footnote{The definition of $\Sc_\Omega(\Nc)$ and $\Sc_{\Omega,L}(\Nc)$ is slightly different, but still equivalent, to that employed in~\cite{CalziPeloso}.}
	
	We define $\Fc_\Nc\colon \Sc_\Omega(\Nc)\ni \varphi\mapsto (\lambda \mapsto \tr \pi_\lambda(\varphi))\in \C^{\Omega'}$.
\end{deff}

It can be proved that $\Fc_\Nc$ induces an isomorphism of $\Sc_\Omega(\Nc)$ onto $C^\infty_c(\Omega')$.
In particular, convolution is commutative and associative on $\Sc_\Omega(\Nc)$, and $\Fc_\Nc(\varphi*\psi)=\Fc_\Nc(\varphi)\Fc_\Nc(\psi)$ for every $\varphi,\psi\in \Sc_\Omega(\Nc)$. Further, if $\varphi\in \Sc_\Omega(\Nc)$, then $\varphi^*\coloneqq \overline{\varphi(\,\cdot\,^{-1})}$  belongs to $\Sc_\Omega(\Nc)$ and $\Fc_\Nc(\varphi^*)=\overline{\Fc_\Nc(\varphi)}$.

Notice that both $\Sc_\Omega(\Nc)$ and $\Sc_{\Omega,L}(\Nc)$ are complete Hausdorff bornological Montel spaces.

Further, it can be proved that convolution by $I^{\vect s}_\Omega$ induces automorphisms of $\Sc_\Omega(\Nc)$ and $\Sc_{\Omega,L}(\Nc)$ for every $\vect s\in \C^r$. In addition, 
\[
\pi_\lambda(\varphi* I^{\vect s}_\Omega)= \ee^{-\frac{\pi}{2} \sum_j s_j i}\Delta^{-\vect s}_{\Omega}(\lambda)\pi_\lambda(\varphi)
\]
for every $\varphi\in \Sc_\Omega(\Nc)$, for every $\lambda\in \Omega'$, and for every $\vect s\in \C^r$.

\begin{deff}
	We denote by $\Sc_\Omega'(\Nc)$ and $\Sc'_{\Omega,L}(\Nc)$ the strong duals of the conjugates of $\Sc_\Omega(\Nc)$ and $\Sc_{\Omega,L}(\Nc)$, respectively, and define
	\[
	\left\langle u\vert \varphi\right\rangle\coloneqq \overline {\left\langle u,\overline \varphi\right\rangle}
	\]
	for every $(u,\varphi)\in \Sc_\Omega'(\Nc)\times \Sc_\Omega(\Nc)$ and for every $(u,\varphi)\in \Sc'_{\Omega,L}(\Nc)\times \Sc_{\Omega,L}(\Nc)$, respectively.
	
	In addition, for every $u\in \Sc_\Omega'(\Nc)$, for every $u'\in \Sc'_{\Omega,L}(\Nc)$,  for every $\varphi,\varphi'\in \Sc_\Omega(\Nc)$, for every $\psi\in \Sc_{\Omega,L}(\Nc)$, and for every $\tau\in \Sc(\Nc)$, we define
	\[
	\left\langle \varphi*u*\varphi'\vert \tau\right\rangle\coloneqq \left\langle u\vert \varphi^**\tau*\varphi'^*\right\rangle \qquad \text{and} \qquad \left\langle u'*\psi^*\vert \tau\right\rangle\coloneqq \left\langle u'\vert \tau*\psi\right\rangle.
	\]
\end{deff}

Observe that, if $u,u'$, $\varphi,\varphi'$, and $\psi$ are as above, then $\varphi*u*\varphi', u'*\psi\in \Sc'(\Nc)\cap C^\infty(\Nc)$. In addition, if $u$ and $u'$ are induced by elements of $\Sc'(\Nc)$, then $\varphi*u*\varphi', u'*\psi$ agree with their usual definition.

Furthermore, both $\Sc'_\Omega(\Nc)$ and $\Sc'_{\Omega,L}(\Nc)$ are complete Hausdorff Montel spaces and can be identified with the projective limits of the quotients of $\Sc'(\Nc)$ by the polars of the conjugates of $\Sc_\Omega(\Nc,K)$ and $\Sc_{\Omega,L}(\Nc,K)$, respectively, as $K$ runs through the set of compact subsets of $\Omega'$, ordered by inclusion.

\begin{oss}\label{oss:1}
	Take $\psi_1,\psi_2,\psi_3,\psi_4\in \Sc_\Omega(\Nc)$ such that $\psi_1*\psi_2=\psi_3*\psi_4$. Then, $\psi_1*u*\psi_2=\psi_3*u*\psi_4$ for every $u\in \Sc'_\Omega(\Nc)$. In particular, $\psi_1*u*\psi_2=\psi_2*u*\psi_1$.
	
	It suffices to prove that $\psi_1^**\tau*\psi_2^*=\psi_3^**\tau*\psi_4^*$ for every $\tau\in \Sc(\Nc)$. Then, observe that
	\[
	\pi_\lambda(\psi_1^**\tau*\psi_2^*)=\overline{\Fc_\Nc(\psi_1*\psi_2)} P_{\lambda,0}\pi_\lambda(\tau)P_{\lambda,0}=\overline{\Fc_\Nc(\psi_3*\psi_4)} P_{\lambda,0}\pi_\lambda(\tau)P_{\lambda,0}=\pi_\lambda(\psi_3^**\tau*\psi_4^*)
	\]
	for every $\lambda\in F'\setminus W$, so that the assertion follows.
\end{oss}

\begin{deff}\label{def:2}
	Take $\vect s\in \R^r$ and $p,q\in]0,\infty]$. Take a $(\delta,R)$-lattice $(\lambda_k)_{k\in K}$ on $\Omega'$, with $\delta>0$ and $R>1$, and a bounded family $(\varphi_k)$ of positive elements of $C^\infty_c(\Omega')$ such that
	\[
	\sum_{k\in K} \varphi_k(\,\cdot\, t_k^{-1})\geqslant 1
	\]
	on $\Omega'$, where $t_k\in T_+$ is defined so that $\lambda_k=e_{\Omega'}\cdot t_k$. Define $\psi_k\coloneqq \Fc_\Nc^{-1}(\varphi_k(\,\cdot\, t_k^{-1}))$ for every $k\in K$. Then, we define $B^{\vect s}_{p,q}(\Nc,\Omega)$ (resp.\ $\mathring B^{\vect s}_{p,q}(\Nc,\Omega)$) as the space of $u\in \Sc_{\Omega,L}'(\Nc)$ such that
	\[
	(\Delta_{\Omega'}^{\vect s}(\lambda_k) (u*\psi_k))\in \ell^q(K;L^p(\Nc)) \qquad \text{(resp.\ $(\Delta_{\Omega'}^{\vect s}(\lambda_k) (u*\psi_k))\in \ell^q_0(K;L^p_0(\Nc))$),}
	\]
	endowed with the corresponding topology.\footnote{Here, $L^p_0(\Nc)$ equals $L^p(\Nc)$ if $p<\infty$, and $C_0(\Nc)$ otherwise. The space $\ell^q_0(K;L^p_0(\Nc))$ is defined analogously.}
\end{deff}

Notice that the definitions of $ B^{\vect s}_{p,q}(\Nc,\Omega)$ and $\mathring B^{\vect s}_{p,q}(\Nc,\Omega)$ do not depend on the choice of $(\lambda_k)$ and $(\varphi_k)$. In addition, $\mathring B^{\vect s}_{p,q}(\Nc,\Omega)$ is the closure of (the canonical image of) $\Sc_{\Omega,L}(\Nc)$ in $B^{\vect s}_{p,q}(\Nc,\Omega)$.

Notice that    $ B^{\vect s}_{p,q}(\Nc,\Omega)$ and $\mathring B^{\vect s}_{p,q}(\Nc,\Omega)$ are quasi-Banach spaces and embed continuously into $\Sc'_{\Omega,L}(\Nc)$. 
In addition, there are continuous inclusions $B^{\vect{s_1}}_{p_1,q_1}(\Nc,\Omega)\subseteq B^{\vect{s_2}}_{p_2,q_2}(\Nc,\Omega)$ whenever $p_1\leqslant p_2$, $q_1\leqslant q_2$, and $\vect{s_2}=\vect{s_1}+\big(\frac{1}{p_1}-\frac{1}{p_2}\big)(\vect b+\vect d)$.

\begin{deff}
	Take $\vect s\in \R^r$ and $p,q\in ]0,\infty]$, and take $(\lambda_k)$, $(\varphi_k)$, and $(\psi_k)$ as in Definition~\ref{def:2}. Assume that
	\[
	\sum_{k\in K} \varphi_k(\,\cdot\, t_k^{-1})^2=1
	\]
	on $\Omega'$. Then, we define a continuous sesquilinear form on $B^{\vect s}_{p,q}(\Nc,\Omega)\times B^{-\vect s-(1/p-1)_+(\vect b+\vect d)}_{p',q'}(\Nc,\Omega)$, where $p'\coloneqq \max(1,p)'$ and $q'\coloneqq \max(1,q)'$, by
	\[
	(u,u')\mapsto \sum_{k\in K}\left\langle u*\psi_k\vert u'*\psi_k\right\rangle.
	\]
	
	We define $\sigma^{\vect s}_{p,q}$ as the weak topology $\sigma(B^{\vect s}_{p,q}(\Nc,\Omega), \mathring B^{-\vect s-(1/p-1)_+(\vect b+\vect d)}_{p',q'}(\Nc,\Omega))$.
\end{deff}

Note that the definition of the above sesquilinear form  does not depend on the choice of $(\lambda_k)$, $(\varphi_k)$, and $(\psi_k)$ as above. In addition, it induces an antilinear isomorphism of $B^{-\vect s-(1/p-1)_+(\vect b+\vect d)}_{p',q'}(\Nc,\Omega)$ onto $\mathring B^{\vect s}_{p,q}(\Nc,\Omega)'$. Hence, $\sigma^{\vect s}_{p,q}$ is the usual weak dual topology when $p,q\geqslant 1$, and has analogous properties also when $\min(p,q)<1$. For example, the bounded subsets of $ B^{\vect s}_{p,q}(\Nc,\Omega)$ are relatively compact in the topology $\sigma^{\vect s}_{p,q}$, while $\Sc_{\Omega,L}(\Nc,\Omega)$ is dense in $B^{\vect s}_{p,q}(\Nc,\Omega)$ in the topology $\sigma^{\vect s}_{p,q}$.

If $(\psi_k)$ as in Definition~\ref{def:2} is chosen in such a way that $\sum_k \Fc_\Nc(\psi_k)=1$ on $\Omega'$, then 
\[
u=\sum_{k\in K} u* \psi_k
\]
for every $u\in \Sc_{\Omega,L}(\Nc)$ (finite sum),  for every $u\in \Sc'_{\Omega,L}(\Nc)$ (with convergence in $\Sc'_{\Omega,L}(\Nc)$),  for every $u\in \mathring B^{\vect s}_{p,q}(\Nc,\Omega)$ (with convergence in $ B^{\vect s}_{p,q}(\Nc,\Omega)$), and for every $u\in  B^{\vect s}_{p,q}(\Nc,\Omega)$ (with convergence in $ \sigma^{\vect s}_{p,q}$).

As we shall see later (cf.~Proposition~\ref{prop:6}), we may have defined $B^{\vect s}_{p,q}(\Nc,\Omega)$ requiring $u$ to belong to the dual of the conjugate of the closure of $\Sc_{\Omega,L}(\Nc)$ in $\Sc(\Nc)$ (which is a quotient of $\Sc'(\Nc)$). Nonetheless, our choice has some technical advantages, since, for example, the sum $\sum_{k} u_k*\psi_k$, with $(\psi_k)$ as above, converges in $\Sc'_{\Omega,L}(\Nc)$ for every $(u_k)\in \Sc'_{\Omega,L}(\Nc)^K$, which would not be the case in the other case. We shall make use of these remarks in the proof of Proposition~\ref{prop:14}.

\section{Sampling}\label{sec:3}

The results of this section are based on~\cite[Chapter 1]{Triebel} and can be extended with minor modifications replacing $\Nc$ with a general homogeneous group (with the exception of Corollary~\ref{cor:13}). The extension to more general Lie groups (or even metric measure spaces) requires more careful considerations (cf., for instance,~\cite{KP}).

Recall that we endow $\Nc$ with the dilations $t\cdot (\zeta,x)\mapsto (t^{1/2}\zeta,t x)$, for $t>0$ and $(\zeta,x)\in \Nc$. We denote by $Q$ the corresponding homogeneous dimension, that is, $n+m$.
We define a $(1/2)$-homogeneous left-invariant control distance $d$ as follows. Denote by $\nabla_E$ and $\nabla_F$ the left-invariant differential operators on $\Nc$ which induce the gradients on $E$ and $F$, respectively, at $(0,0)$.
Given $(\zeta,x),(\zeta',x')\in \Nc$ we define $d((\zeta,x),(\zeta',x'))$ as the greatest lower bound of the set of $\varepsilon>0$ such that there is an absolutely continuous curve $\gamma\colon [0,1]\to \Nc$ joining $(\zeta,x)$ and $(\zeta',x')$ such that, if $a$ and $b$ are the component functions of $\gamma'$ with respect to $\nabla_E$ and $\nabla_F$, respectively, then
\[
\abs{a}, \abs{b}^{1/2}\leqslant \varepsilon
\]
almost everywhere.

Then, $d((\zeta,x)(\zeta',x'),(\zeta,x)(\zeta'',x''))=d((\zeta',x'),(\zeta'',x''))$ and $d(t\cdot (\zeta,x),t \cdot (\zeta',x'))=t^{1/2} d((\zeta,x),(\zeta',x'))$ for every $(\zeta,x),(\zeta',x'),(\zeta'',x'')\in \Nc$ and for every $t>0$. 
In addition, if $f\in C^1(\Nc)$, then
\[
\abs{f(\zeta,x)-f(\zeta',x')}\leqslant \max(\delta,\delta^2) \max_{\overline B((\zeta,x),\delta)}(\abs{\nabla_E f}+\abs{\nabla_F f}) ,
\]
where $\delta=d((\zeta,x),(\zeta',x'))$.

We denote by $\norm{\,\cdot\,}$ the homogeneous norm\footnote{By a homogeneous norm we mean a positive proper function on $\Nc$ which is symmetric and  homogeneous of degree $1$.} $d((0,0),\,\cdot\,)^2$.

For every $p\in ]0,\infty[$, for every measurable function $f$ on $\Nc$, and for every $(\zeta,x)\in \Nc$, we define 
\[
(\cM_p f)(\zeta,x)\coloneqq \sup\limits_B \left( \dashint_B \abs{f(\zeta',x')}^p\,\dd (\zeta',x')\right)^{1/p},
\]
where $B$ runs through the set of balls containing $(\zeta,x)$. We define $\cM_\infty f$ as the function constantly equal to $\norm{f}_{L^\infty(\Nc)}$.

Then, the usual theory of maximal functions show that for every $p,q\in ]0,\infty]$ such that $p<q$ there is a constant $C_{p,q}>0$ such that
\[
\norm{\cM_p f}_{L^q(\Nc)}\leqslant C_{p,q}\norm{f}_{L^q(\Nc)}
\]
for every $f\in L^q(\Nc)$.

\begin{deff}
	Take $\varphi\in \Sc(\Nc)$. We denote by $\Oc(\varphi)$ the space of $u\in \Sc'(\Nc)$ such that $u=u*\varphi$. We shall identify the elements of $\Oc(\varphi)$ with elements of $C^\infty(\Nc)$.
	
	For every $p\in ]0,\infty]$, we define $\Oc^p(\varphi)\coloneqq\Oc(\varphi)\cap L^p(\Nc)$ and $\Oc^p_0(\varphi)\coloneqq\Oc(\varphi)\cap L^p_0(\Nc)$, endowed with the topology induced by $L^p(\Nc)$.
\end{deff}

Notice that, if $\Nc=F$, then $\Oc(\varphi)=\Set{u\in \Sc'(\Nc)\colon \Supp{\Fc_F u}\subseteq (\Fc_F\varphi)^{-1}(1)}$.

\begin{lem}\label{lem:5}
	Take $\varphi\in \Sc(\Nc)$, $p\in ]0,\infty]$, and a left-invariant differential operator $X$ on $\Nc$. Then, there is a constant $C>0$ such that
	\[
	\abs{(X u)(\zeta,x)}\leqslant C (1+ d((\zeta,x),(\zeta',x')))^{2 Q/p} (\cM_p u)(\zeta',x')
	\]
	for every $u\in \Oc(\varphi)$, and for every $(\zeta,x),(\zeta',x')\in \Nc$.
\end{lem}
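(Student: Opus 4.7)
The plan is to reduce the assertion to the Peetre-type pointwise bound
\[
\abs{u(y)}\leqslant C(1+d(y,w_0))^{2Q/p}(\cM_p u)(w_0)\qquad\forall y,w_0\in \Nc,\ u\in \Oc(\varphi),\qquad (\ast)
\]
and then deduce $(\ast)$ from a local Bernstein-type bound. For the deduction of the lemma from $(\ast)$, note that $u=u*\varphi$ together with the left-invariance of $X$ gives $Xu=u*\psi$ with $\psi\coloneqq X\varphi\in\Sc(\Nc)$; writing $w\coloneqq(\zeta,x)$, $w_0\coloneqq(\zeta',x')$, and using $(\ast)$ together with the submultiplicative inequality $1+d(y,w_0)\leqslant (1+d(y,w))(1+d(w,w_0))$ (a consequence of the triangle inequality), we obtain
\[
\abs{(Xu)(w)}\leqslant\int_\Nc\abs{u(y)}\abs{\psi(y^{-1}w)}\,\dd y\leqslant C(\cM_p u)(w_0)(1+d(w,w_0))^{2Q/p}\int_\Nc(1+d(0,z))^{2Q/p}\abs{\psi(z)}\,\dd z;
\]
the change of variables $z\coloneqq y^{-1}w$ preserves Haar measure on the unimodular group $\Nc$ and satisfies $d(y,w)=d(0,z)$ (by left-invariance and symmetry of $d$), and the last integral is finite since $\psi$ is Schwartz.

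To prove $(\ast)$, I would establish the following local Bernstein-type estimate: there exist $r_0>0$ and $C_0>0$, depending only on $\varphi$ and $p$, such that
\[
\abs{u(y)}^p\leqslant C_0 \dashint_{B(y,r_0)}\abs{u(z)}^p\,\dd z\qquad \forall u\in \Oc(\varphi),\ y\in\Nc.
\]
Granting this, since $B(y,r_0)\subseteq B(w_0,r_0+d(y,w_0))$ and $w_0\in B(w_0,r_0+d(y,w_0))$, the definition of $\cM_p u$ yields
\[
\abs{u(y)}^p\leqslant C_0\,\frac{\abs{B(w_0,r_0+d(y,w_0))}}{\abs{B(y,r_0)}}(\cM_p u)(w_0)^p\leqslant C'(1+d(y,w_0))^{2Q}(\cM_p u)(w_0)^p,
\]
where we used that $d$-balls of radius $R$ have Haar measure $\asymp R^{2Q}$ (since $d$ is $(1/2)$-homogeneous and the dilations on $\Nc$ have Jacobian $t^Q$). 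Taking $p$-th roots gives $(\ast)$.

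The main obstacle is the local Bernstein bound, especially when $p<1$. Since $u=u*\varphi$, left-invariant differentiation gives $\nabla u=u*\nabla\varphi$, and the mean-value inequality recalled earlier in the section controls the oscillation of $\abs{u}$ on a ball of radius $r_0\leqslant 1$ in terms of $r_0$ times $\sup\abs{\nabla u}$ on a slightly larger ball. For $p\geqslant 1$ this suffices: a direct H\"older estimate applied to the convolution $u*\nabla\varphi$ shows that $\abs{u}$ varies by at most a bounded multiplicative factor on $B(y,r_0)$ for $r_0$ sufficiently small, hence $\dashint_{B(y,r_0)}\abs{u}^p\geqslant c\abs{u(y)}^p$. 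For $p<1$, where H\"older is unavailable, one must exploit the band-limited character $u\in\Oc(\varphi)$ more deeply: iterating the reproducing identity as $u=u*\varphi^{*N}$ for $N$ large (so that $\nabla\varphi^{*N}$ has strong Schwartz decay), one runs a self-improvement argument bounding $\norm{\nabla u}_{L^\infty(B(y,r_0))}$ in terms of local $L^p$-averages of $\abs{u}$, thereby closing the loop. The resulting constant depends only on $p$, on finitely many Schwartz seminorms of $\varphi$, and on the order of the differential operator $X$.
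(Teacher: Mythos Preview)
Your reduction to $(\ast)$ is correct and matches the paper's Step~I exactly. The derivation of $(\ast)$ from the local Bernstein bound is also correct. The gap lies in the proof of the local Bernstein bound itself.

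For $p\geqslant 1$, your claim that a H\"older estimate on $u*\nabla\varphi$ yields \emph{multiplicative} control of the oscillation of $u$ on $B(y,r_0)$ is unjustified: H\"older gives $\abs{\nabla u(z)}\leqslant \norm{u}_{L^p(\Nc)}\norm{\nabla\varphi}_{L^{p'}(\Nc)}$ or $\abs{\nabla u(z)}\leqslant \norm{u}_{L^\infty(\Nc)}\norm{\nabla\varphi}_{L^{1}(\Nc)}$, both global bounds, not $\abs{\nabla u(z)}\leqslant C\abs{u(y)}$. For $p<1$, replacing $\varphi$ by $\varphi^{*N}$ does not alter the qualitative Schwartz decay, and the ``self-improvement argument'' you allude to is not specified; in fact a purely local argument cannot succeed, because the reproducing identity $u=u*\varphi$ is non-local and the tail of the convolution involves values of $u$ arbitrarily far from $y$.

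The paper handles this by working directly with the \emph{global} Peetre maximal function
\[
u^\sharp(w_0)\coloneqq\sup_{z\in\Nc}\frac{\abs{u(z)}}{(1+d(z,w_0))^{2Q/p}},
\]
and running the absorption argument: writing $\abs{u(z)}\leqslant \inf_{B(z,\varepsilon)}\abs{u}+\varepsilon\sup_{B(z,\varepsilon)}(\abs{\nabla_E u}+\abs{\nabla_F u})$, bounding the first term by $\varepsilon^{-2Q/p}(1+d(z,w_0))^{2Q/p}\cM_p u(w_0)$ and the second (via Step~I applied to $\nabla_E,\nabla_F$) by $C_2\,u^\sharp(w_0)$, one obtains $u^\sharp(w_0)\leqslant \varepsilon^{-2Q/p}\cM_p u(w_0)+\varepsilon C_2\,u^\sharp(w_0)$; choosing $\varepsilon$ small absorbs the last term. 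This bootstrapping on a global quantity is precisely the mechanism your proposal is missing, and it is what makes the argument work uniformly for all $p\in]0,\infty]$.
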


The proof is based on that of~\cite[Theorem 1.3.1]{Triebel}.

\begin{proof}
	\textsc{Step I.} Let us first show that we may reduce to proving the assertion for $X=I$ (the identity). Indeed, for every $N>0$ there is $C_{1,N}>0$ such that
	\[
	\abs{(X\varphi)(\zeta,x)}\leqslant \frac{C_{1,N}}{(1+\norm{(\zeta,x)}^{1/2})^N}
	\]
	for every $(\zeta,x)\in \Nc$. Therefore, choosing $N>\frac{2 Q}{p}+2 Q$,
	\[
	\begin{split}
		\abs{(X u)(\zeta,x)}&\leqslant C_{1,N} \int_\Nc \frac{ \abs{u(\zeta',x')}}{(1+d((\zeta,x),(\zeta',x'))^N}\,\dd (\zeta',x')\\
			&\leqslant C_{1,N} C'_{1,N} (1+d((\zeta,x),(\zeta'',x'')))^{2 Q/p}\sup\limits_\Nc \frac{\abs{u}}{(1+d((\zeta'',x''),\,\cdot\,))^{2 Q/p} } 
	\end{split}
	\]
	for every $u\in \Oc(\varphi)$ and for every $(\zeta,x),(\zeta'',x'')\in \Nc$, since
	\[
	1+d((\zeta',x'),(\zeta'',x''))\leqslant (1+d((\zeta,x),(\zeta',x')))(1+d((\zeta,x),(\zeta'',x'')))
	\]
	for every $(\zeta',x')\in \Nc$, and where
	\[
	C'_{1,N} \coloneqq\int_\Nc \frac{1}{(1+\norm{(\zeta',x')}^{1/2})^{N-2Q/p}}\,\dd (\zeta',x')<\infty .
	\]

	\textsc{Step II.} We now prove the assertion for $X=I$. Observe that the assertion is trivial for $p=\infty$, so that we may assume that $p<\infty$.
	Observe that, for every $\varepsilon\in ]0,1[$,
	\[
	\begin{split}
	\abs{u(\zeta,x)}&\leqslant \inf\limits_{ B((\zeta,x),\varepsilon)} \abs{u}+\sup\limits_{B((\zeta,x),\varepsilon)} \abs{u- u(\zeta,x)}\\
		&\leqslant \left( \dashint_{B((\zeta,x),\varepsilon)} \abs{u}^p\,\dd \Hc^{2 n+m}\right)^{1/p}+\varepsilon \sup\limits_{B((\zeta,x),  \varepsilon)} (\abs{\nabla_E u}+\abs{\nabla_F u})
	\end{split}
	\]
	for every $u\in \Oc(\varphi)$ and for every $(\zeta,x)\in \Nc$.
	Now, observe that 
	\[
	\begin{split}
	\left( \dashint_{B((\zeta,x),\varepsilon)} \abs{u}^p\,\dd \Hc^{2 n+m}\right)^{1/p}&\leqslant \left( \frac{1+d((\zeta,x),(\zeta',x'))}{\varepsilon}  \right)^{2Q/p}   \left( \dashint_{B((\zeta',x'),1+d((\zeta,x),(\zeta',x')))} \abs{u}^p\,\dd \Hc^{2n+m}\right)^{1/p}\\
	&\leqslant  \left( \frac{1+d((\zeta,x),(\zeta',x'))}{\varepsilon}  \right)^{2Q/p}   (\cM_p u)(\zeta,x)
	\end{split} 
	\]
	for every $u\in \Oc(\varphi)$ and for every $(\zeta,x),(\zeta',x')\in \Nc$. In addition, \textsc{step I} shows that there is a constant $C_2>0$ such that
	\[
	\sup\limits_{(\zeta,x)\in\Nc} \frac{ \sup\limits_{B((\zeta,x),\varepsilon)}(\abs{\nabla_E u}+\abs{\nabla_F u})}{(1+d((\zeta,x),(\zeta',x')))^{2 Q/p}}\leqslant\sup\limits_{\Nc} \frac{\abs{\nabla_E u}+\abs{\nabla_F u}}{(1/2+d(\,\cdot\,,(\zeta',x')))^{2 Q/p}} \leqslant C_2 \sup\limits_{\Nc} \frac{ \abs{u}}{(1+d(\,\cdot\,,(\zeta',x')))^{2 Q/p}}
	\]
	for every $u\in \Oc(\varphi)$ and for every $(\zeta',x')\in \Nc$, provided that $\varepsilon\in ]0,1/2]$. Hence,
	\[
	\sup\limits_{\Nc} \frac{ \abs{u}}{(1+d(\,\cdot\,,(\zeta,x)))^{2 Q/p}}\leqslant \varepsilon^{-2 Q/p} (\cM_p u)(\zeta,x)+\varepsilon C_2 \sup\limits_{\Nc} \frac{ \abs{u}}{(1+d(\,\cdot\,,(\zeta,x)))^{2 Q/p}}
	\]
	for every $u\in \Oc(\varphi)$ and for every $(\zeta,x)\in \Nc$, provided that $\varepsilon\in ]0,1/2]$. 
	If we choose $\varepsilon\leqslant\min(1/2, 1/(2C_2))$, we then find
	\[
	\sup\limits_{\Nc} \frac{ \abs{u}}{(1+d(\,\cdot\,,(\zeta,x)))^{2 Q/p}}\leqslant 2\varepsilon^{-2 Q/p} (\cM_p u)(\zeta,x)
	\]
	for every $u\in \Oc(\varphi)$ and for every $(\zeta,x)\in \Nc$.
\end{proof}

\begin{teo}\label{prop:10}
	Take $\varphi\in \Sc(\Nc)$, $p\in ]0,\infty]$, $\delta_+>0$, and $R_0>1$. Then, there are $C,\delta_->0$ such that for every $(\delta,R)$-lattice $(\zeta_j,x_j)_{j\in J}$, with $\delta>0$ and $R\in ]1,R_0]$, and for every $u\in \Oc(\varphi)$,
	\[
	\frac{1}{C}\norm{u}_{L^p(\Nc)}\leqslant \delta^{2 Q/p}\norm*{ \max_{\overline B((\zeta_j,x_j),R\delta)} \abs{u}}_{\ell^p(J)}\leqslant C\norm{u}_{L^p(\Nc)}
	\]
	if $\delta\leqslant \delta_+$, and
	\[
	\frac{1}{C}\norm{u}_{L^p(\Nc)}\leqslant \delta^{2 Q/p}\norm*{ \min_{\overline B((\zeta_j,x_j),R\delta)} \abs{u}}_{\ell^p(J)}\leqslant C\norm{u}_{L^p(\Nc)}
	\]
	if $\delta\leqslant \delta_-$ and $u\in \Oc^p(\varphi)$.
\end{teo}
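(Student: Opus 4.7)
My plan is to prove the max-based estimate first, via a covering/packing argument combined with Lemma~\ref{lem:5}, and then bootstrap to the min-based estimate by controlling the oscillation of $u$ on each cell through Lemma~\ref{lem:5} applied to $\nabla_E$ and $\nabla_F$.

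For the left inequality of the max case, I would simply cover $\Nc$ by the enlarged balls $\overline{B_j}\coloneqq \overline{B((\zeta_j,x_j),R\delta)}$, whose overlap is bounded uniformly in $R\in\,]1,R_0]$ by translation- and dilation-invariance, and bound $\int_{\overline{B_j}}\abs{u}^p$ by $\abs{\overline{B_j}}\,(\max_{\overline{B_j}}\abs{u})^p\lesssim \delta^{2Q}(\max_{\overline{B_j}}\abs{u})^p$. For the right inequality, fix $q\in\,]0,p[$ (if $p=\infty$, take any finite $q$ and replace the maximal inequality below with the pointwise bound $\cM_q u\leqslant \norm{u}_{L^\infty(\Nc)}$). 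Lemma~\ref{lem:5} with $X=I$ gives, for $(\zeta,x)\in\overline{B_j}$ and $(\zeta',x')\in B((\zeta_j,x_j),\delta)$ (whose mutual $d$-distance is $\leqslant (R+1)\delta\leqslant (R_0+1)\delta_+$),
\[
\max_{\overline{B_j}}\abs{u}\leqslant C_1 \inf_{B((\zeta_j,x_j),\delta)} \cM_q u.
\]
Raising to the $p$-th power, integrating over the pairwise disjoint balls $B((\zeta_j,x_j),\delta)$, summing over $j$, and applying the maximal inequality $\norm{\cM_q u}_{L^p(\Nc)}\leqslant C\,\norm{u}_{L^p(\Nc)}$ (valid because $q<p$) yields $\delta^{2Q}\sum_j(\max_{\overline{B_j}}\abs{u})^p\leqslant C_2\,\norm{u}_{L^p(\Nc)}^p$.

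The right inequality in the min case is immediate from $\min\leqslant\max$ and the max case. For the left inequality, shrink the prospective $\delta_-$ so that $2R_0\delta_-\leqslant 1$. The mean-value inequality recalled in Section~\ref{sec:3} then gives, for $(\zeta,x),(\zeta',x')\in\overline{B_j}$,
\[
\abs{u(\zeta,x)-u(\zeta',x')}\leqslant 2R\delta \max_{\overline{B((\zeta,x),2R\delta)}}(\abs{\nabla_E u}+\abs{\nabla_F u}),
\]
and Lemma~\ref{lem:5} applied to the left-invariant vector fields $\nabla_E$ and $\nabla_F$, with reference point ranging in $B((\zeta_j,x_j),\delta)$ (the distance to any point of $\overline{B((\zeta,x),2R\delta)}$ being $\leqslant (3R_0+1)\delta_+$), yields
\[
\max_{\overline{B_j}}\abs{u}-\min_{\overline{B_j}}\abs{u}\leqslant C_3\,\delta\, \inf_{B((\zeta_j,x_j),\delta)} \cM_q u.
\]
Raising to the $p$-th power (with quasi-triangle constant $2^{(p-1)_+}$), integrating, summing, and using the maximal inequality as before gives
\[
\delta^{2Q}\sum_j(\max_{\overline{B_j}}\abs{u})^p \leqslant C_4\,\delta^{2Q}\sum_j(\min_{\overline{B_j}}\abs{u})^p + C_5\,\delta^p\,\norm{u}_{L^p(\Nc)}^p.
\]
Combining with the estimate $\norm{u}_{L^p(\Nc)}^p\leqslant C_6\,\delta^{2Q}\sum_j(\max_{\overline{B_j}}\abs{u})^p$ from the max case, and choosing $\delta_->0$ so that $C_5 C_6 \delta_-^p\leqslant 1/2$, the $\norm{u}_{L^p(\Nc)}^p$ term absorbs into the left-hand side, concluding the proof.

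The main obstacle I anticipate is the absorption step: one must verify that $\delta_-$ depends only on $\varphi$, $p$, $\delta_+$, and $R_0$, which is guaranteed because all intervening constants are uniform in the lattice. A minor subtlety is the case $p=\infty$, where the maximal inequality is replaced by the pointwise bound $\cM_q u\leqslant \norm{u}_{L^\infty(\Nc)}$; the finiteness of the right-hand side is precisely why the extra hypothesis $u\in\Oc^p(\varphi)$ is imposed in the min estimate (without it, $\norm{u}_{L^p(\Nc)}$ on the right could be infinite and the absorption would be vacuous).
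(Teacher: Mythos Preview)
Your argument is correct and complete; the constants you track are uniform in the lattice, and the absorption step is justified precisely by the hypothesis $u\in\Oc^p(\varphi)$, as you note.

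The route, however, differs from the paper's in one place worth highlighting. For the right inequality in the max case, the paper does \emph{not} invoke Lemma~\ref{lem:5} with $X=I$; instead it picks a point $(\zeta'_j,x'_j)\in\overline{B_j}$ realizing the maximum, writes $|u(\zeta'_j,x'_j)|\leqslant |u(\zeta,x)|+\max(R\delta,(R\delta)^2)\max_{\overline{B_j}}(|\nabla_E u|+|\nabla_F u|)$ via the mean-value inequality, bounds the gradient term by $C_1\,\cM_{p/2}u$ on $\overline{B_j}$ using Lemma~\ref{lem:5} applied to the first-order operators, and then integrates and uses the $L^p$-boundedness of $\cM_{p/2}$. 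Your use of Lemma~\ref{lem:5} with $X=I$ and exponent $q<p$ is more direct: it yields $\max_{\overline{B_j}}|u|\lesssim\inf_{B((\zeta_j,x_j),\delta)}\cM_q u$ in one stroke, bypassing the oscillation estimate entirely. For the right inequality in the min case, the paper uses the trivial bound $\sum_j|B((\zeta_j,x_j),\delta)|(\min_{\overline{B_j}}|u|)^p\leqslant\|u\|_{L^p}^p$ coming from the disjointness of the small balls, rather than passing through the max estimate as you do; this is slightly more elementary and does not require $\delta\leqslant\delta_+$, but since you may take $\delta_-\leqslant\delta_+$ your argument is equally valid. For the left min inequality the two proofs are essentially the same oscillation-plus-absorption argument, except that the paper integrates $|u|^p$ over $\overline{B_j}$ directly rather than first comparing to the max sampling sum.
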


This result is based on~\cite[Proposition 1.3.3]{Triebel}.

In some situations, it is possible to remove the assumption that $u\in \Oc^p(\varphi)$ in the third inequality. For example, if $\Nc=f$, then we may find a sequence of Schwartz functions $(\psi_j)$ of the form $\psi_j=\psi_0((j+1)\,\cdot\,)$, $j\in\N$, such that $\psi_0(0)=1$ and $u \psi_j\in \Oc(\tau)$ for some $\tau\in \Sc(\Nc)$ (for example, so that $\Fc\tau=1$ on the convex envelope of $(\Fc_F\varphi)^{-1}(1)\cup \Supp{\Fc \psi_0}\cup \Set{0}$). Then, arguing by approximation, the assertion  follows in this case.

This type of arguments can be extended also to the non-abelian case when $\varphi\in \widetilde \Sc_\Omega(\Nc)$ (cf.~the arguments of~\cite[Section 3.1]{CalziPeloso}). We do not know if the assumption that $u\in \Oc^p(\varphi)$ in the third inequality can be removed in full generality.

\begin{proof}
	\textsc{Step I.} Observe that, clearly,
	\[
	\norm{u}_{L^p(\Nc)}\leqslant \norm*{ \Hc^{2 n+m}(B((\zeta_j,x_j),R\delta))^{1/p}\max_{\overline B((\zeta_j,x_j),R\delta)} \abs{u}}_{\ell^p(J)},
	\]
	so that the first inequality  follows from the homogeneity of $\Hc^{2 n+m}$ and $d$.
	
	In order to prove the second inequality, we may limit ourselves to considering $u\in \Oc^p(\varphi)$.
	Then, choose, for every $j\in J$, $(\zeta'_j,x'_j)\in \overline B((\zeta_j,x_j),R\delta)$, so that $\abs{u(\zeta'_j,x'_j)}=\max_{\overline B((\zeta_j,x_j),R\delta)} \abs{u}$.
	Notice that 
	\[
	\abs{u(\zeta'_j,x'_j)}\leqslant \abs{u(\zeta,x)}+\max(R\delta,(R\delta)^2) \max_{B((\zeta_j,x_j),R\delta)}(\abs{\nabla_E u}+\abs{\nabla_F u})
	\]
	for every $(\zeta,x)\in B((\zeta_j,x_j),R\delta)$ and for every $j\in J$. Now, Lemma~\ref{lem:5} implies that there is a constant $C_1>0$ such that
	\[
	\max_{B((\zeta_j,x_j),R\delta)}(\abs{\nabla_E u}+\abs{\nabla_F u})\leqslant C_1 \cM_{p/2} u
	\]
	on $B((\zeta_j,x_j),R\delta)$, for every $j\in J$ and for every $u\in \Oc^p(\varphi)$. Hence, assuming that $R_0\delta_+\geqslant 1$ to simplify the notation,
	\[
	\Hc^{2 n+m}(B((0,0),1))^{1/p} \delta^{2 Q/p}\norm*{ \max_{\overline B((\zeta_j,x_j),R\delta)} \abs{u}}_{\ell^p(J)}^{\min(1,p)}\leqslant \norm{u}_{L^p(\Nc)}^{\min(1,p)}+C_1^{\min(1,p)}(R_0\delta_+)^{2\min(1,p)}  \norm*{\cM_{p/2} u}_{L^p(\Nc)}^{\min(1,p)}
	\] 
	for every $u\in \Oc^p(\varphi)$. Since there is a constant $C_2>0$ such that $\norm{\cM_{p/2} u}_{L^p(\Nc)}\leqslant C_2 \norm{u}_{L^p(\Nc)}$ for every $u\in L^p(\Nc)$, the second inequality follows.
	
	\textsc{Step II.} Observe that, clearly,
	\[
	\norm*{ \Hc^{2 n+m}(B((\zeta_j,x_j),\delta))^{1/p}\min_{\overline B((\zeta_j,x_j),R\delta)} \abs{u}}_{\ell^p(J)}\leqslant \norm{u}_{L^p(\Nc)}
	\]
	so that the fourth inequality  follows from the homogeneity of $\Hc^{2 n+m}$ and $d$.
	
	Then, choose, for every $j\in J$, $(\zeta'_j,x'_j)\in \overline B((\zeta_j,x_j),R\delta)$, so that $\abs{u(\zeta'_j,x'_j)}=\min_{\overline B((\zeta_j,x_j),R\delta)} \abs{u}$.
	Notice that 
	\[
	\abs{u(\zeta,x)}\leqslant \abs{u(\zeta'_j,x'_j)} +\max(R\delta,(R\delta)^2) \max_{B((\zeta_j,x_j), R\delta)}(\abs{\nabla_E u}+\abs{\nabla_F u})
	\]
	for every $(\zeta,x)\in B((\zeta_j,x_j),R\delta)$ and for every $j\in J$.  Hence,  choosing $\delta_-\leqslant 1/R_0$, and defining $C_1$ as in \textsc{step I},
	\[
	\norm{u}_{L^p(\Nc)}^{\min(1,p)}\leqslant  \Hc^{2 n+m}(B((0,0),R_0))^{1/p} \delta^{2 Q/p}\norm*{ \min_{\overline B((\zeta_j,x_j),R\delta)} \abs{u}}_{\ell^p(J)}^{\min(1,p)} + (C_1 R_0\delta_-)^{\min(1,p)} \norm*{\cM_{p/2} u}_{L^p(\Nc)}^{\min(1,p)}
	\] 
	for every $u\in \Oc^p(\Nc)$.
	Hence, there is a constant $C_3>0$ such that
	\[
	\norm{u}_{L^p(\Nc)}\leqslant C_3\delta^{2 Q/p}\norm*{ \min_{\overline B((\zeta_j,x_j),R\delta)} \abs{u}}_{\ell^p(J)}+ C_3 \delta_-\norm*{u}_{L^p(\Nc)}
	\]
	for every $u\in \Oc^p(\varphi)$. If we choose $\delta_-\leqslant \min(R_0^{-1},(2C_3)^{-1})$, then the third inequality follows.
\end{proof}

\begin{cor}\label{cor:10}
	Take $\varphi\in \Sc(\Nc)$ and $p,q\in ]0,\infty]$ such that $p\leqslant q$. Then, $\Oc^p(\varphi)$ embeds continuously into $\Oc^q(\varphi)$, and $\Oc^p_0(\varphi)$ embeds continuously into $\Oc^q_0(\varphi)$.
\end{cor}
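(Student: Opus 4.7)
The plan is to apply the sampling Theorem~\ref{prop:10} twice, once in each direction, to a single fixed lattice, and then exploit the elementary inclusion $\ell^p(J)\hookrightarrow \ell^q(J)$ which holds whenever $p\leqslant q$. The case $p=\infty$ forces $q=\infty$ and there is nothing to prove, so I assume $p<\infty$ throughout. Pick any $(\delta,2)$-lattice $(\zeta_j,x_j)_{j\in J}$ on $\Nc$ with $\delta$ so small that both applications of Theorem~\ref{prop:10} below are admissible, i.e.\ $\delta$ is at most the minimum of the two constants $\delta_+$ that the theorem produces for the parameters $(\varphi,p,R_0=2)$ and $(\varphi,q,R_0=2)$ respectively.

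With this single $\delta$ fixed, Theorem~\ref{prop:10} provides constants $C_p,C_q>0$ such that, for every $u\in \Oc(\varphi)$,
\[
\delta^{2Q/p}\norm*{ \max_{\overline B((\zeta_j,x_j),2\delta)} \abs{u}}_{\ell^p(J)}\leqslant C_p\norm{u}_{L^p(\Nc)}
\]
(the upper half of the first pair of inequalities at exponent $p$) and
\[
\norm{u}_{L^q(\Nc)}\leqslant C_q\,\delta^{2Q/q}\norm*{ \max_{\overline B((\zeta_j,x_j),2\delta)} \abs{u}}_{\ell^q(J)}
\]
(the lower half of the same pair at exponent $q$). Combining these two estimates through the monotonicity $\norm*{\,\cdot\,}_{\ell^q(J)}\leqslant \norm*{\,\cdot\,}_{\ell^p(J)}$ one obtains
\[
\norm{u}_{L^q(\Nc)}\leqslant C_pC_q\,\delta^{2Q/q-2Q/p}\norm{u}_{L^p(\Nc)}
\]
for every $u\in \Oc^p(\varphi)$, which proves the first embedding; the constant $\delta^{2Q/q-2Q/p}$ is harmless because $\delta$ has been fixed.

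For the assertion about $\Oc^p_0(\varphi)\hookrightarrow \Oc^q_0(\varphi)$, the only case not already covered by the above is $p<\infty=q$, where one must verify that $u\in \Oc^p(\varphi)$ forces $u\in C_0(\Nc)$. The first of the two displayed inequalities above shows that the sequence $j\mapsto \max_{\overline B((\zeta_j,x_j),2\delta)}\abs{u}$ belongs to $\ell^p(J)$ with $p<\infty$, so for every $\varepsilon>0$ only finitely many indices $j$ satisfy $\max_{\overline B((\zeta_j,x_j),2\delta)}\abs{u}>\varepsilon$. Since the balls $\overline B((\zeta_j,x_j),2\delta)$ cover $\Nc$ and the lattice points have no accumulation points in $\Nc$, the union of those finitely many balls is a bounded subset of $\Nc$, and outside it one has $\abs{u}\leqslant\varepsilon$; combined with $u=u*\varphi\in C^\infty(\Nc)$, this yields $u\in C_0(\Nc)$. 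The only step requiring attention is the bookkeeping that allows one single $\delta$ to serve both applications of Theorem~\ref{prop:10}; otherwise the argument is essentially a direct chaining of the sampling inequalities with the $\ell^p$--$\ell^q$ monotonicity.
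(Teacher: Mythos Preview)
Your argument for the first embedding is essentially the same as the paper's: fix one lattice, sample, use $\ell^p\hookrightarrow\ell^q$, and unsample. One small wording issue: in Theorem~\ref{prop:10} the number $\delta_+$ is an \emph{input}, not an output, so you should rather say ``fix any $\delta_+>0$ and apply the theorem once with exponent $p$ and once with exponent $q$'' --- but this does not affect the correctness.

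For the second assertion the two proofs diverge. You deduce $u\in C_0(\Nc)$ from the fact that the sampled maxima lie in $\ell^p(J)$ with $p<\infty$, so that $\{\abs{u}>\varepsilon\}$ is contained in finitely many balls; this is correct and pleasantly self-contained, reusing only what you have already established. The paper instead observes that by the first part one may assume $p\geqslant 1$, and then invokes Young's inequality directly: $u=u*\varphi$ with $u\in L^p(\Nc)$ and $\varphi\in L^{p'}(\Nc)$ forces $u\in C_0(\Nc)$. The paper's route is shorter and highlights the structural reason ($u=u*\varphi$), while yours avoids any appeal beyond the sampling theorem itself.
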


\begin{proof}
	Fix a $(\delta,2)$-lattice $(\zeta_j,x_j)_{j\in J}$ on $\Nc$, and observe that Theorem~\ref{prop:10} implies that there is a constant $C>0$ such that
	\[
	\frac{1}{C}\norm{u}_{L^p(\Nc)}\leqslant\norm*{ \max_{\overline B((\zeta_j,x_j),2\delta)} \abs{u}}_{\ell^p(J)}\leqslant C\norm{u}_{L^p(\Nc)}
	\]
	and
	\[
	\frac{1}{C}\norm{u}_{L^q(\Nc)}\leqslant \norm*{ \max_{\overline B((\zeta_j,x_j),2\delta)} \abs{u}}_{\ell^q(J)}\leqslant C\norm{u}_{L^q(\Nc)}
	\]
	for every $u\in \Oc(\varphi)$. Therefore,
	\[
	\norm{u}_{L^q(\Nc)}\leqslant C\norm*{ \max_{\overline B((\zeta_j,x_j),2\delta)} \abs{u}}_{\ell^q(J)}\leqslant C\norm*{ \max_{\overline B((\zeta_j,x_j),2\delta)} \abs{u}}_{\ell^p(J)}\leqslant C^2\norm{u}_{L^p(\Nc)}
	\]
	for every $u\in \Oc(\varphi)$, whence the first assertion. For what concerns the second assertion, it will suffice to prove that $\Oc^{\max(1,p)}(\varphi)\subseteq \Oc^\infty_0(\varphi)$ if $p<\infty$. However, this follows from Young's inequality, since $u=u*\varphi\in C_0(\Nc)$ for every $u\in \Oc^p(\varphi)$, as $\varphi\in L^{p'}_0(\Nc)$.
\end{proof}

\begin{cor}\label{cor:11}
	Take $\varphi_1,\varphi_2\in \Sc(\Nc)$ and $p_1,p_2,p_3\in ]0,\infty]$ such that $p_1,p_2\leqslant p_3$ and $\frac{1}{p_1'}+\frac{1}{p_2'}\leqslant \frac{1}{p_3'}$. Then, there is a constant $C>0$ such that
	\[
	\norm{u_1*u_2^*}_{L^{p_3}(\Nc)}\leqslant C\norm{u_1}_{L^{p_1}(\Nc)}\norm{u_2}_{L^{p_2}(\Nc)}
	\]	
	for every $u_1\in \Oc^{p_1}(\varphi_1)$ and for every $u_2\in \Oc^{p_2}(\varphi_2)$.
\end{cor}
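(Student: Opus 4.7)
The plan is to treat the cases $p_3\geqslant 1$ and $p_3<1$ separately. In both cases, the key preliminary observation is that the hypothesis, combined with $\frac{1}{p_3'}\leqslant 1$, forces $\frac{1}{p_1'}+\frac{1}{p_2'}\leqslant 1$, so (using the convention $p'=\max(1,p)'$) one has $\frac{1}{\max(p_1,1)}+\frac{1}{\max(p_2,1)}=2-\bigl(\frac{1}{p_1'}+\frac{1}{p_2'}\bigr)\geqslant 2-\frac{1}{p_3'}=1+\frac{1}{\max(p_3,1)}$.

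For $p_3\geqslant 1$ I will pick auxiliary exponents $q_i\in[\max(p_i,1),\infty]$ with $\frac{1}{q_1}+\frac{1}{q_2}=1+\frac{1}{p_3}$; the observation above ensures such $q_i$ exist. Corollary~\ref{cor:10} provides $\|u_i\|_{L^{q_i}(\Nc)}\leqslant C\|u_i\|_{L^{p_i}(\Nc)}$ (since $q_i\geqslant p_i$), and the classical Young inequality $L^{q_1}*L^{q_2}\to L^{p_3}$ then gives the desired bound on $\|u_1*u_2^*\|_{L^{p_3}(\Nc)}$.

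For $p_3<1$ the hypothesis forces $p_1,p_2\leqslant p_3<1$, and Corollary~\ref{cor:10} gives $\|u_i\|_{L^{p_3}(\Nc)}\leqslant C\|u_i\|_{L^{p_i}(\Nc)}$. I will fix a $(\delta,2)$-lattice $((\zeta_j,x_j))_{j\in J}$ on $\Nc$, with $\delta$ chosen within the range allowed by Theorem~\ref{prop:10}, and a smooth partition of unity $(\chi_j)$ subordinate to $(B((\zeta_j,x_j),2\delta))_{j\in J}$. Writing $u_1*u_2^*=\sum_{j,k\in J}(u_1\chi_j)*(u_2\chi_k)^*$, each summand is supported in a ball of radius $O(\delta)$ and has amplitude at most $CM_1(j)M_2(k)\delta^{2Q}$, where $M_i(j):=\max_{\overline B((\zeta_j,x_j),2\delta)}|u_i|$. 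The pointwise quasi-subadditivity $|\sum a|^{p_3}\leqslant\sum|a|^{p_3}$ (valid since $p_3<1$), combined with integration over $\Nc$, yields
\[
\int_\Nc |u_1*u_2^*|^{p_3}\leqslant C\delta^{2Q(p_3+1)}\|M_1\|_{\ell^{p_3}(J)}^{p_3}\|M_2\|_{\ell^{p_3}(J)}^{p_3}.
\]
Theorem~\ref{prop:10} then converts each $\|M_i\|_{\ell^{p_3}(J)}^{p_3}$ into $\delta^{-2Q}\|u_i\|_{L^{p_3}(\Nc)}^{p_3}$ up to constants, and since $\delta$ is fixed the resulting power of $\delta$ is harmless; Corollary~\ref{cor:10} finally controls $\|u_i\|_{L^{p_3}(\Nc)}$ by $\|u_i\|_{L^{p_i}(\Nc)}$.

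The main obstacle is the case $p_3<1$: there the failure of Minkowski's inequality rules out a direct Young-type estimate, and it is the bandlimited structure $u_i\in\Oc(\varphi_i)$, via the sampling equivalence of Theorem~\ref{prop:10}, that makes the lattice decomposition above yield a bound independent of the number of lattice points contributing (and hence of the size of the supports of the $u_i$).
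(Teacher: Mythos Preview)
Your proof is correct and follows essentially the same strategy as the paper. The paper deduces the result from the slightly more general Corollary~\ref{cor:12} (which treats $\abs{u_1}^{\alpha_1}*\abs{u_2^*}^{\alpha_2}$), whose proof proceeds exactly as you do: Young plus Corollary~\ref{cor:10} for $p_3\geqslant 1$, and a lattice argument combined with the subadditivity of $\abs{\,\cdot\,}^{p_3}$ for $p_3<1$.

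Two cosmetic remarks on the $p_3<1$ case. First, the support of $(u_1\chi_j)*(u_2\chi_k)^*$ is a \emph{right} translate of a $d$-ball, which need not itself be a $d$-ball since $d$ is only left-invariant; what you actually use (and what is true, since $\Nc$ is unimodular) is that this set has measure $O(\delta^{2Q})$. Second, the paper avoids introducing a partition of unity: it bounds the inner integral $\int\abs{u_1}\abs{u_2((\zeta,x)^{-1}\,\cdot\,)}$ directly by $\sum_j\max_{B_j}\abs{u_1}\max_{(\zeta,x)^{-1}B_j}\abs{u_2}$ (having normalized $\delta$ so that $\Hc^{2n+m}(B((0,0),2\delta))\leqslant 1$), applies $(\sum)^p\leqslant\sum(\,\cdot\,)^p$, and then integrates in $(\zeta,x)$ to factor the double sum. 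Your double decomposition via $(\chi_j)$ achieves the same thing; neither approach has a real advantage over the other.
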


This result is an immediate consequence of the following more general one, which will be very useful when dealing with complex interpolation.

\begin{cor}\label{cor:12}
	Take $\varphi_1,\varphi_2\in \Sc(\Nc)$, $\alpha_1,\alpha_2>0$, and $p_1,p_2,p_3\in ]0,\infty]$ such that $p_1,p_2\leqslant p_3$ and $\frac{1}{p_1'}+\frac{1}{p_2'}\leqslant \frac{1}{p_3'}$. Then, there is a constant $C>0$ such that
	\[
	\norm*{\abs{u_1}^{\alpha_1}*\abs{ u_2^*}^{\alpha_2}}_{L^{p_3}(\Nc)}\leqslant C\norm{\abs{u_1}^{\alpha_1}}_{L^{p_1}(\Nc)}\norm{\abs{u_2}^{\alpha_2}}_{L^{p_2}(\Nc)}
	\]	
	for every $u_1\in \Oc(\varphi_1)$ and for every $u_2\in \Oc(\varphi_2)$.
\end{cor}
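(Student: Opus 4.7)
The plan is to reduce to the case $u_i\in \Oc^{p_i\alpha_i}(\varphi_i)$ (otherwise the right-hand side is infinite) and then split according to whether $p_3\geqslant 1$ or $p_3<1$. The guiding idea is that, although $g_i:=\abs{u_i}^{\alpha_i}$ does not itself belong to any $\Oc(\varphi)$, the function $u_i$ does, so all the needed regularity information can be transferred from $u_i$ to $g_i$ via Corollary~\ref{cor:10} and Theorem~\ref{prop:10} applied with the exponent $p_i\alpha_i$, using the identity $\norm{u_i}_{L^{p_i\alpha_i}}^{\alpha_i}=\norm{g_i}_{L^{p_i}}$.

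In the case $p_3\geqslant 1$, the hypothesis $\tfrac{1}{p_1'}+\tfrac{1}{p_2'}\leqslant \tfrac{1}{p_3'}$ is exactly what allows us to find $q_1,q_2\in[1,\infty]$ with $\tfrac{1}{q_1}+\tfrac{1}{q_2}=1+\tfrac{1}{p_3}$ and $q_i\geqslant p_i$ (verified via the identity $1-1/p_i'=1/\max(1,p_i)$, which turns the existence condition into the hypothesis). By Corollary~\ref{cor:10} applied to $u_i$ with exponent $q_i\alpha_i\geqslant p_i\alpha_i$ we get $\norm{g_i}_{L^{q_i}}\leqslant C\norm{g_i}_{L^{p_i}}$, and the classical Young inequality, together with unimodularity of $\Nc$ (which gives $\norm{g_2^*}_{L^{q_2}}=\norm{g_2}_{L^{q_2}}$), closes the argument.

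For $p_3<1$ the hypothesis is automatic and genuine use of sampling is needed. I would fix $\delta>0$ small enough to apply Theorem~\ref{prop:10} to $u_i$ with exponent $p_i\alpha_i$, choose a $(\delta,2)$-lattice $(\zeta_j,x_j)_{j\in J}$ on $\Nc$ and a measurable partition $(B_j')_{j\in J}$ of $\Nc$ with $B_j'\subseteq \overline{B((\zeta_j,x_j),2\delta)}$, and set $h_{i,j}:=\max_{\overline{B_j}}\abs{u_i}$ and $H_{i,j}:=h_{i,j}^{\alpha_i}$. Then $g_i\leqslant \sum_j H_{i,j}\chi_{B_j'}$ pointwise, so
\[
g_1*g_2^*\leqslant \sum_{j,k\in J}H_{1,j}H_{2,k}K_{j,k},\qquad K_{j,k}:=\chi_{B_j'}*\chi_{B_k'^{*}}.
\]
Using $p_3$-subadditivity of $\norm{\,\cdot\,}_{L^{p_3}}^{p_3}$, the bound $\norm{K_{j,k}}_{L^{p_3}}^{p_3}\leqslant \norm{K_{j,k}}_\infty^{p_3}\,\Hc^{2n+m}(\supp K_{j,k})$, the elementary inclusion $\norm{H_i}_{\ell^{p_3}(J)}\leqslant \norm{H_i}_{\ell^{p_i}(J)}$ (valid for sequences since $p_i\leqslant p_3$), and Theorem~\ref{prop:10} applied to $u_i$, everything collapses to a bound of the form $C\,\delta^{2Q(1+1/p_3-1/p_1-1/p_2)}\norm{g_1}_{L^{p_1}}\norm{g_2}_{L^{p_2}}$, which, for our fixed $\delta$, is the desired inequality.

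The main obstacle is the estimate on $K_{j,k}$ in the non-abelian setting. Because of the conjugation identity $(\zeta_k,x_k)b(\zeta_k,x_k)^{-1}=(b_\zeta,b_x+4\Im\Phi(b_\zeta,\zeta_k))$, the set $B_jB_k^{-1}$ is not contained in any $d$-ball of radius comparable to $\delta$ around $(\zeta_j,x_j)(\zeta_k,x_k)^{-1}$, and a naive geometric description fails. Fortunately the only quantity that enters the argument is the Haar volume, and bi-invariance of the Haar measure on the nilpotent group $\Nc$ yields the uniform estimate
\[
\Hc^{2n+m}(B_jB_k^{-1})\leqslant \Hc^{2n+m}\!\left(B(0,2\delta)\cdot B(0,2\delta)^{-1}\right)\leqslant \Hc^{2n+m}(B(0,4\delta))\leqslant C\delta^{2Q},
\]
which, combined with the pointwise bound $\norm{K_{j,k}}_\infty\leqslant \Hc^{2n+m}(B_k')\leqslant C\delta^{2Q}$, is precisely what allows the discrete estimate to give a $\delta$-independent (once $\delta$ is fixed) conclusion.
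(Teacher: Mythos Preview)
Your proposal is correct. For $p_3\geqslant 1$ it coincides with the paper's argument (Young plus Corollary~\ref{cor:10}, after rewriting the hypothesis as $1+\tfrac{1}{p_3}\leqslant \tfrac{1}{\max(1,p_1)}+\tfrac{1}{\max(1,p_2)}$). For $p_3<1$ the two proofs differ in implementation. The paper first reduces to $p_1=p_2=p_3\eqqcolon p$ via Corollary~\ref{cor:10}, fixes $\delta$ with $\Hc^{2n+m}(B(0,2\delta))\leqslant 1$, and then estimates $\int\abs{g_1*g_2^*}^p$ directly: the inner convolution integral is bounded by the lattice sum $\sum_j \max_{\overline B_j}\abs{u_1}^{\alpha_1 p}\max_{(\zeta,x)^{-1}\overline B_j}\abs{u_2}^{\alpha_2 p}$ (the ball-volume normalization absorbs the passage from $(\sum)^p$ to $\sum(\cdot)^p$), and after integrating in $(\zeta,x)$ the left-invariance of $\Hc^{2n+m}$ immediately decouples the two factors, so that one lands on a product of two lattice sums and applies Theorem~\ref{prop:10}. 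Your route keeps the three exponents distinct, discretizes $g_1$ and $g_2$ into step functions, and is forced to control $\norm{K_{j,k}}_{L^{p_3}}$ for each pair; this brings in the set $B_jB_k^{-1}$, whose non-abelian geometry you correctly handle via bi-invariance of Haar measure and symmetry of the homogeneous norm. The paper's order of integration sidesteps this issue entirely and is somewhat cleaner, but your argument has the minor advantage of making the $\delta$-homogeneity $\delta^{2Q(1+1/p_3-1/p_1-1/p_2)}$ of the constant explicit without first collapsing the exponents.
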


\begin{proof}
	The assertion follows from Young's inequality and Corollary~\ref{cor:10} if $p_3\geqslant 1$, so that we may reduce to the case $p_3<1$. By Corollary~\ref{cor:10}, we may then reduce to the case $p_1=p_2=p_3\eqqcolon p$.
	Then, fix a $(\delta,2)$-lattice $(\zeta_j,x_j)_{j\in J}$ for some $\delta>0$ such that $\Hc^{2 n+m}(B((0,0),2\delta))\leqslant 1$, and observe that
	\[
	\begin{split}
	\int_\Nc \abs{(\abs{u_1}^{\alpha_1}*\abs{ u_2^*}^{\alpha_2})(\zeta,x)}^p\,\dd (\zeta,x) &\leqslant \int_\Nc  \left( \int_\Nc \abs{u_1(\zeta',x')}^{\alpha_1} \abs{ u_2((\zeta,x)^{-1}(\zeta',x'))}^{\alpha_2}\,\dd (\zeta',x') \right)^p \,\dd (\zeta,x)\\
		&\leqslant \int_\Nc  \sum_{j\in J} \max_{\overline B((\zeta_j,x_j),2 \delta)} \abs{u_1}^{\alpha_1 p} \max_{\overline B((\zeta,x)^{-1}(\zeta_j,x_j),2 \delta)} \abs{u_2}^{\alpha_2 p}\,\dd (\zeta,x)\\
		&= \sum_{j\in J} \max_{\overline B((\zeta_j,x_j),2 \delta)} \abs{u_1}^{\alpha_1 p}\int_\Nc  \max_{\overline B((\zeta,x),2 \delta)} \abs{u_2}^{\alpha_2 p}\,\dd (\zeta,x)\\
		&\leqslant \sum_{j\in J} \max_{\overline B((\zeta_j,x_j),2 \delta)} \abs{u_1}^{\alpha_1 p} \sum_{j'\in J} \max_{\overline B((\zeta_{j'},x_{j'}),4 \delta)} \abs{u_2}^{\alpha_2 p}.
	\end{split}
	\]
	Since $\norm{\abs{u}^\alpha}_{L^p(\Nc)}=\norm{u}_{L^{\alpha p}(\Nc)}^{\alpha}$ for every $\alpha>0$ and for every measurable function $u$, Theorem~\ref{prop:10} leads to the conclusion.
\end{proof}

\begin{cor}\label{cor:13}
	Take $\varphi_1,\varphi_2\in \Sc_\Omega(\Nc)$ and $p_1,p_2,p_3\in ]0,\infty]$ such that $p_1,p_2\leqslant p_3$ and $\frac{1}{p_1'}+\frac{1}{p_2'}\leqslant \frac{1}{p_3'}$. Then, there is a constant $C>0$ such that
	\[
	\norm{u_1*u_2}_{L^{p_3}(\Nc)}\leqslant C\norm{u_1}_{L^{p_1}(\Nc)}\norm{u_2}_{L^{p_2}(\Nc)}
	\]	
	for every $u_1\in \Oc^{p_1}(\varphi_1)$ and for every $u_2\in \Oc^{p_2}(\varphi_2)$.
\end{cor}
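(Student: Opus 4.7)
The approach is to model the proof on that of Corollary~\ref{cor:12}, with the absence of the adjoint (in $u_1 * u_2$ as opposed to $u_1 * u_2^*$) compensated by the stronger hypothesis $\varphi_1,\varphi_2\in \Sc_\Omega(\Nc)$ rather than merely $\Sc(\Nc)$. First, as in Corollary~\ref{cor:12}, Young's inequality (for $p_3 \ge 1$) together with Corollary~\ref{cor:10} handles the easy range, so I reduce to the case $p_1=p_2=p_3=:p<1$. Fixing a $(\delta,2)$-lattice $(\zeta_j,x_j)_{j\in J}$ on $\Nc$ for $\delta>0$ small enough and setting $B_j:=\overline B((\zeta_j,x_j),2\delta)$, the analogous pointwise discretization of $|(u_1*u_2)(\zeta,x)|$ combined with $(\sum a_j)^p \le \sum a_j^p$ for $p\le 1$ yields
\[
\|u_1*u_2\|_{L^p}^p\le C\,\delta^{2Qp}\sum_{j\in J}\Bigl[\max_{B_j}|u_1|^p\Bigr]\int_\Nc \max_{(\zeta',x')\in B_j}|u_2((\zeta',x')^{-1}(\zeta,x))|^p\,d(\zeta,x).
\]

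The nontrivial point is the inner integral. Writing $(\zeta',x')=(\zeta_j,x_j)h$ with $h\in B(e,2\delta)$, one has $(\zeta',x')^{-1}(\zeta,x)=h^{-1}\cdot a$ where $a:=(\zeta_j,x_j)^{-1}(\zeta,x)$; after the measure-preserving change of variables $a \leftrightarrow (\zeta,x)$, the integral becomes $\int_\Nc \max_{k\in B(e,2\delta)}|u_2(k\cdot a)|^p\,da$. As $k$ runs through $B(e,2\delta)$, the product $k\cdot a$ traces out a \emph{right}-ball of radius $2\delta$ around $a$; because conjugation is not an isometry of the left-invariant distance $d$, this right-ball cannot be uniformly contained in a left-ball of size $O(\delta)$ around $a$, and Theorem~\ref{prop:10} cannot be applied directly to $u_2$. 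What is needed is the right-ball sampling estimate
\[
\int_\Nc \max_{k\in B(e,2\delta)}|u_2(k\cdot a)|^p\,da \le C\,\|u_2\|_{L^p}^p,
\]
which is precisely where the hypothesis $\varphi_2\in \Sc_\Omega(\Nc)$ (rather than just $\Sc(\Nc)$) is essential.

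To obtain this estimate I would prove right-ball analogues of Lemma~\ref{lem:5} and Theorem~\ref{prop:10}, using right-invariant differential operators $X^R$ in place of left-invariant ones. The identity $X^R(u*\varphi)=(X^Ru)*\varphi$ valid for any Schwartz $\varphi$ already implies $X^R u_2 \in \Oc(\varphi_2)$; the delicate step is to bound $|X^Ru_2|$ pointwise by the maximal function $\cM_p u_2$ in the spirit of Lemma~\ref{lem:5}. For a generic $\varphi_2\in\Sc(\Nc)$, this would require decay of $X^R\varphi_2$ with respect to a right-invariant distance, which does not follow from the left-invariant Schwartz decay because conjugation is not isometric. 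The special structure of $\Sc_\Omega(\Nc)$ saves the day: the commutativity of convolution on $\Sc_\Omega(\Nc)$ and its stability under $\varphi\mapsto\varphi^*$ (cf.~Remark~\ref{oss:1}) allow one to rewrite the identity $u_2 = u_2 * \varphi_2$ in a symmetrized form from which uniform right-invariant gradient bounds follow. Once the right-ball sampling estimate is in hand, summing over $j$ and applying Theorem~\ref{prop:10} to $u_1$ completes the proof as in Corollary~\ref{cor:12}. The main obstacle I anticipate is precisely verifying this right-invariant Lemma~\ref{lem:5}, where exploiting the commutativity and $^*$-stability of $\Sc_\Omega(\Nc)$ to absorb the conjugation defect is the key technical point.
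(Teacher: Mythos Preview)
Your reduction to $p_1=p_2=p_3=p<1$ is correct and matches the paper. The gap is exactly where you flag it: the right-ball sampling estimate for $u_2$. Your proposed justification via the commutativity and $^*$-stability of $\Sc_\Omega(\Nc)$ does not go through, because those properties concern convolutions \emph{within} $\Sc_\Omega(\Nc)$, whereas $u_2$ is merely in $\Oc^p(\varphi_2)$. Concretely, $u_2=u_2*\varphi_2$ forces $\pi_\lambda(u_2)=\pi_\lambda(u_2)P_{\lambda,0}$ on the \emph{right}, but says nothing about $P_{\lambda,0}\pi_\lambda(u_2)$; hence there is no $\psi\in\Sc(\Nc)$ with $u_2=\psi*u_2$, and the identity $X^R(u_2*\varphi_2)=(X^R u_2)*\varphi_2$ only tells you $X^R u_2\in\Oc(\varphi_2)$, not that $X^R u_2=u_2*\psi$ for a fixed Schwartz kernel. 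Without that, you cannot bound $\abs{X^R u_2}$ pointwise by $\cM_p u_2$, and the right-invariant analogue of Lemma~\ref{lem:5} for $\Oc(\varphi_2)$ fails.

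The paper avoids this problem by a two-step manoeuvre that is quite different from yours. First, it treats the special case $u_1\in\Sc_\Omega(\Nc)\cap\Oc(\varphi_1)$: since then $\pi_\lambda(u_1)$ is a scalar multiple of $P_{\lambda,0}$, both $u_2(0,\cdot)$ and $(u_1*u_2)(\zeta,\cdot)$ can be written as inverse Fourier transforms on the \emph{abelian} centre $F$, and Corollary~\ref{cor:12} applied on $F$ gives $\norm{u_1*u_2}_{L^p(\Nc)}\leqslant C\norm{u_1}_{L^p(\Nc)}\norm{u_2(0,\cdot)}_{L^p(F)}$; a lattice argument then controls $\norm{u_2(0,\cdot)}_{L^p(F)}$ by $\norm{u_2}_{L^p(\Nc)}$. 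Second, for general $u_1\in\Oc^p(\varphi_1)$, one writes
\[
u_1*u_2=(u_1*\varphi_1)*u_2=u_1*(\varphi_1*u_2)=u_1*(u_2^**\varphi_1^*)^*,
\]
which is now of the form $u_1*v^*$ with $v=u_2^**\varphi_1^*$, so Corollary~\ref{cor:11} applies directly; and $\norm{v}_{L^p}=\norm{\varphi_1*u_2}_{L^p}$ is exactly the special case just handled. The hypothesis $\varphi_1\in\Sc_\Omega(\Nc)$ is thus used not to symmetrize $u_2$, but to make $\varphi_1*u_2$ computable via scalar Fourier analysis on $F$.
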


\begin{proof}
	\textsc{Step I.} We first prove the assertion in the case $p_1=p_2=p_3\eqqcolon p<1$ and $u_1\in \Sc_\Omega(\Nc)\cap \Oc(\varphi_1)$.
 	Observe that Corollary~\ref{cor:10} implies that $u_2\in L^1(\Nc)\cap L^2(\Nc)$, so that the results of Section~\ref{sec:2} imply that there are constants $c,c'>0$ such that
	\[
	u_2(0,x)= c\int_{\Omega'} \tr(\pi_\lambda(u_2) \pi_\lambda(0,x)^*)\Delta_{\Omega'}^{-\vect b}(\lambda)\,\dd \lambda=c'\Fc_F^{-1}(\lambda \mapsto\tr(\pi_\lambda(u)) \Delta_{\Omega'}^{-\vect b}(\lambda) )(x)
	\]
	and
	\[
	\begin{split}
	(u_1*u_2)(\zeta,x)&=c\int_{\Omega'} \tr(\pi_\lambda(u_2)) \ee^{-\left\langle \lambda_\C , \Phi(\zeta)-i x\right\rangle} (\Fc_\Nc u_1)(\lambda) \Delta_{\Omega'}^{-\vect b}(\lambda)\,\dd \lambda\\
		&=c'\Fc_F^{-1}(\lambda \mapsto\tr(\pi_\lambda(u_2)) \Delta_{\Omega'}^{-\vect b}(\lambda)  \ee^{-\left\langle \lambda , \Phi(\zeta)\right\rangle} (\Fc_\Nc u_1)(\lambda))(x)
	\end{split}
	\]
	for every $(\zeta,x)\in \Nc$. Therefore, Corollary~\ref{cor:12} implies that there is a constant $C_1>0$ such that
	\[
	\norm{(u_1*u_2)(\zeta,\,\cdot\,)}_{L^p(F)}\leqslant C_1 \norm{\Fc_F^{-1}(  \ee^{-\left\langle \,\cdot\, , \Phi(\zeta)\right\rangle} \Fc_\Nc u_1)}_{L^p(F)}\norm{u_2(0,\,\cdot\,)}_{L^p(F)} 
	\]
	and such that
	\[
	\norm{\Fc_F^{-1}(  \ee^{-\left\langle \,\cdot\, , \Phi(\zeta)\right\rangle} \Fc_\Nc u_1)}_{L^p(F)}\leqslant C_1\norm{ \Fc_F^{-1}(  \ee^{-\left\langle \,\cdot\, , \Phi(\zeta)\right\rangle} (\Fc_\Nc u_1) \Delta_{\Omega'}^{-\vect b})}_{L^p(F) }=\frac{C_1}{c'} \norm{u_1(\zeta,\,\cdot\,)}_{L^p(\Nc)}
	\]
	for every $\zeta\in E$,	so that
	\[
	\norm{u_1*u_2}_{L^p(F)}\leqslant \frac{C_1^2}{c'}  \norm{u_1}_{L^p(\Nc)}\norm{u_2(0,\,\cdot\,)}_{L^p(F)}
	\]
	for every $u_1\in \Sc_\Omega(\Nc)\cap \Oc(\varphi_1)$ and $u_2\in \Oc^p(\varphi_2)$.
	
	Now, observe that we may construct a family $(\zeta_k,x_k)_{k\in K}$ of elements of $\Nc$ which is maximal for the property that $d((\zeta_k,x_k),(\zeta_{k'},x_{k'}))\geqslant 2$ for every $k,k'\in K$, $k\neq k'$ and such that, defining $K'\coloneqq \Set{k\in K\colon \zeta_k=0}$, the family $(x_k)_{k\in K'}$ is a  family of elements of $F$ which is maximal for the property that $d((0,x_k),(0, x_{k'}))\geqslant 2$ for every $k,k'\in K'$, $k\neq k'$. Then, $(\zeta_k,x_k)_{k\in K}$ is a $(1,2)$-lattice on $\Nc$ and $(x_k)_{k\in K'}$ is a $(1,2)$-lattice on $F$, for the (translation-invariant) distance induced by the identification of $F$ with the subspace $\Set{0}\times F$ of $\Nc$. Then, Theorem~\ref{prop:10} implies that there is a constant $C_2>0$ such that
	\[
	\norm*{\max_{\overline B((\zeta_k,x_k),2)}\abs{u_2} }_{\ell^p(K)}\leqslant C_2 \norm{u_2}_{L^p(\Nc)}
	\]
	for every $u_2\in \Oc(\varphi_2)$, so that
	\[
	\norm{u_2(0,\,\cdot\,)}_{L^p(F)}\leqslant \Hc^m(B_F(0,2))^{1/p}\norm*{\max_{\overline B_F(x_k,2)}\abs{u_2(0,\,\cdot\,)} }_{\ell^p(K')}\leqslant C_2 \Hc^m(B_F(0,2))^{1/p }\norm{u_2}_{L^p(\Nc)}
	\]
	for every $u_2\in \Oc(\varphi_2)$.
	Therefore,
	\[
	\norm{u_1*u_2}_{L^p(F)}\leqslant \frac{C_1^2C_2  \Hc^m(B_F(0,2))^{1/p }}{c'}  \norm{u_1}_{L^p(\Nc)}\norm{u_2}_{L^p(\Nc)}
	\]
	for every $u_1\in \Sc_\Omega(\Nc)\cap \Oc(\varphi_1)$ and for every $u_2\in \Oc^p(\varphi_2)$.
	
	\textsc{Step II.} We now consider the general case. The assertion follows from Young's inequality and Corollary~\ref{cor:10} if $p_3\geqslant 1$, so that we may reduce to the case $p_3<1$. By Corollary~\ref{cor:10}, we may then reduce to the case $p_1=p_2=p_3\eqqcolon p$.
	Then, combining \textsc{step I} with Corollary~\ref{cor:11}, we see that there is a constant $C_3>0$ such that
	\[
	\norm{u_1*u_2}_{L^p(\Nc)}=\norm{(u_1*\varphi_1)*u_2}_{L^p(\Nc)}=\norm{u_1*(u_2^* * \varphi_1^*)^*}_{L^p(\Nc)}\leqslant C_3 \norm{u_1}_{L^p(\Nc)}\norm{u_2^* * \varphi_1^*}_{L^p(\Nc)}
	\]
	and such that
	\[
	\norm{u_2^* * \varphi_1^*}_{L^p(\Nc)}=\norm{ \varphi_1* u_2}_{L^p(\Nc)}\leqslant C_3\norm{\varphi_1}_{L^p(\Nc)}\norm{u_2}_{L^p(\Nc)}
	\]
	for every $u_1\in \Oc^p(\varphi_1)$ and for every $u_2\in \Oc^p(\varphi_2)$. The assertion follows.
\end{proof}

\section{Convolution}\label{sec:4}

In this section we deal with convolution between the spaces $B^{\vect s}_{p,q}(\Nc,\Omega)$. Instead of proving that convolution induces continuous bilinear mappings on the product  $B^{\vect {s_1}}_{p_1,q_1}(\Nc,\Omega)\times B^{\vect{s_2}}_{p_2,q_2}(\Nc,\Omega)$ for suitable values of $p_1,q_1,p_2,q_2$ and $\vect{s_1},\vect{s_2}$, we shall introduce some `symmetrized' spaces $\Bc^{\vect s}_{p,q}(\Nc,\Omega)$ and consider convolution on the product$B^{\vect {s_1}}_{p_1,q_1}(\Nc,\Omega)\times \Bc^{\vect{s_2}}_{p_2,q_2}(\Nc,\Omega)$. On the one hand, the spaces $\Bc^{\vect s}_{p,q}(\Nc,\Omega)$ appear naturally in this kind of considerations. On the other hand, dealing with $B^{\vect {s_1}}_{p_1,q_1}(\Nc,\Omega)\times \Bc^{\vect{s_2}}_{p_2,q_2}(\Nc,\Omega)$ will allow us to get the best results we could have obtained arguing directly for $B^{\vect {s_1}}_{p_1,q_1}(\Nc,\Omega)\times B^{\vect{s_2}}_{p_2,q_2}(\Nc,\Omega)$.

\begin{deff}
	For every $\vect s\in \R^r$ and for every $p,q\in ]0,\infty]$, we define $\Bc^{\vect s}_{p,q}(\Nc,\Omega)$ and $\mathring \Bc^{\vect s}_{p,q}(\Nc,\Omega)$ as the spaces of $u\in \Sc'_\Omega(\Nc)$ such that
	\[
	(\Delta_{\Omega'}^{\vect s}(\lambda_k) \psi_k*u*\psi_k)\in \ell^q(K; L^p(\Nc)) \qquad \text{and}\qquad(\Delta_{\Omega'}^{\vect s}(\lambda_k) \psi_k*u*\psi_k)\in \ell^q_0(K; L^p_0(\Nc)),
	\]
	respectively, where $(\lambda_k)$ and $(\psi_k)$ are as in Definition~\ref{def:2}.
\end{deff}

Arguing as in the proof of~\cite[Lemma 4.14]{CalziPeloso}, one may prove that the definitions of $\Bc^{\vect s}_{p,q}(\Nc,\Omega)$ and $\mathring \Bc^{\vect s}_{p,q}(\Nc,\Omega)$ do not depend on the choice of $(\lambda_k)$ and $(\psi_k)$.
Because of Remark~\ref{oss:1} (and the preceding observations), one may replace $\psi_k*u*\psi_k$ with $\psi'_k*u*\psi_k$ (or $\psi_k*u*\psi'_k)$ in the above definition, where $\psi'_k\in \Sc_\Omega(\Nc)$ and $\psi'_k*\psi_k=\psi_k$, without altering the spaces $\Bc^{\vect s}_{p,q}(\Nc,\Omega)$ and $\mathring \Bc^{\vect s}_{p,q}(\Nc,\Omega)$.

The following result shows some of the relationships between $\Bc^{\vect s}_{p,q}(\Nc,\Omega)$ and $B^{\vect s}_{p,q}(\Nc,\Omega)$.

\begin{prop}\label{prop:14}
	Take $\vect s\in \R^r$ and $p,q\in]0,\infty]$. Let $V_0$ (resp.\ $V$) be the closure of $\Sc_\Omega(\Nc)$ in $B^{\vect s}_{p,q}(\Nc,\Omega)$ (resp.\ in the topology $\sigma^{\vect s}_{p,q}$). Then, the canonical mapping $\Sc'_{\Omega,L}(\Nc)\to \Sc_\Omega'(\Nc)$ induces an isomorphism of $V_0$ (resp.\ $V$) onto $\mathring \Bc^{\vect s}_{p,q}(\Nc,\Omega)$ (resp.\ $\Bc^{\vect s}_{p,q}(\Nc,\Omega)$).
	
	In addition, the canonical mapping $\Sc'_{\Omega,L}(\Nc)\to \Sc_\Omega'(\Nc)$ induces a strict morphism of $\mathring B^{\vect s}_{p,q}(\Nc,\Omega)$ (resp.\ $B^{\vect s}_{p,q}(\Nc,\Omega)$) onto $\mathring \Bc^{\vect s}_{p,q}(\Nc,\Omega)$ (resp.\ $\Bc^{\vect s}_{p,q}(\Nc,\Omega)$).
\end{prop}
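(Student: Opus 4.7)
The plan is to build everything on the following identity: for every $u\in\Sc'_{\Omega,L}(\Nc)$ with canonical image $\tilde u\in\Sc'_\Omega(\Nc)$, one has $\psi_k*\tilde u*\psi_k=\psi_k*(u*\psi_k)$ in $\Sc'(\Nc)\cap C^\infty(\Nc)$. Pairing both sides against an arbitrary $\tau\in\Sc(\Nc)$ and unwinding the definitions of Section~\ref{sec:2} via associativity of convolution, both reduce to the same pairing of $u$ with $\check\psi_k*\tau*\check\psi_k\in\Sc_\Omega(\Nc)\subseteq\Sc_{\Omega,L}(\Nc)$; the membership in $\Sc_\Omega(\Nc)$ is guaranteed by the representation-theoretic description of $\Sc_\Omega(\Nc)$, since $\pi_\lambda(\check\psi_k*\tau*\check\psi_k)$ is a $P_{\lambda,0}$-rank-one operator with compact support in $\Omega'$. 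Combining this identity with the uniform estimate $\norm{\psi_k*f}_{L^p(\Nc)}\leq C\norm{f}_{L^p(\Nc)}$ for $f\in\Oc^p(\psi_k')$---via Young's inequality and the uniform $L^1$-boundedness of the $\psi_k$ when $p\geq 1$, and via Corollary~\ref{cor:13} applied with a suitable $\psi_k'\in\Sc_\Omega(\Nc)$ satisfying $\psi_k*\psi_k'=\psi_k$ when $p<1$, absorbing the $T_+$-scale of the quasi-norm by rescaling---one deduces that $u\mapsto\tilde u$ is a continuous linear map $B^{\vect s}_{p,q}(\Nc,\Omega)\to\Bc^{\vect s}_{p,q}(\Nc,\Omega)$, restricting to a continuous map $\mathring B^{\vect s}_{p,q}\to\mathring\Bc^{\vect s}_{p,q}$.

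For surjectivity with a norm bound, I choose $(\psi_k)$ with $\sum_k\abs{\Fc_\Nc(\psi_k)}^2=1$ on $\Omega'$ and, given $v$ in the target, set
\[
u\coloneqq\sum_{k\in K}\psi_k*v*\psi_k.
\]
I verify: $(i)$ the series converges in $\Sc'_{\Omega,L}(\Nc)$, since only finitely many terms contribute against any fixed element of $\Sc_{\Omega,L}(\Nc)$ by a Fourier-support argument; $(ii)$ by the lattice structure of $(\lambda_k)$, the identity $u*\psi_{k_0}=\sum_{k\sim k_0}(\psi_k*v*\psi_k)*\psi_{k_0}$ exhibits $u*\psi_{k_0}$ as a finite sum over indices $k$ whose Fourier supports meet that of $\psi_{k_0}$, yielding the bound $\norm{u}_{B^{\vect s}_{p,q}}\leq C\norm{v}_{\Bc^{\vect s}_{p,q}}$ (and its counterpart in the $\mathring{\phantom{a}}$ case); $(iii)$ $\tilde u=v$, because for every $\sigma\in\Sc_\Omega(\Nc)$ the Plancherel-type identity $\pi_\lambda(\psi_k^**\sigma*\psi_k^*)=\abs{\Fc_\Nc(\psi_k)(\lambda)}^2\pi_\lambda(\sigma)$ gives $\sum_k\psi_k^**\sigma*\psi_k^*=\sigma$, whence $\langle u\vert\sigma\rangle=\langle v\vert\sigma\rangle$. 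This bound already establishes the strict-morphism assertion: the canonical map $\mathring B^{\vect s}_{p,q}\to\mathring\Bc^{\vect s}_{p,q}$ (resp.\ $B^{\vect s}_{p,q}\to\Bc^{\vect s}_{p,q}$) is surjective, and the unit ball of the target is contained in the image of a ball of controlled radius in the source, so the map is open.

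To upgrade this to the isomorphism statement for $V_0$ and $V$, I verify that the constructed $u$ belongs to $V_0$ (resp.\ $V$) by approximating $v$ by a sequence (resp.\ net) $(v_n)$ in (the canonical image of) $\Sc_\Omega(\Nc)$ converging to $v$ in $\mathring\Bc^{\vect s}_{p,q}$ (resp.\ in the analogue of $\sigma^{\vect s}_{p,q}$ on $\Bc^{\vect s}_{p,q}$, to be established by transport along the surjection already proved) and applying the reconstruction to each $v_n$, obtaining $u_n\in\Sc_\Omega(\Nc)$ (finite sums, by the compact Fourier support of $v_n$) that converge to $u$ in the appropriate topology. For injectivity, if $u\in V_0$ (or $V$) has $\tilde u=0$, then $\psi_k*(u*\psi_k)=\psi_k*\tilde u*\psi_k=0$; at any $\lambda$ in the Fourier support of $\psi_k$, $\pi_\lambda(\psi_k)=\Fc_\Nc(\psi_k)(\lambda)P_{\lambda,0}$ is a nonzero rank-one operator and $\pi_\lambda(u*\psi_k)$ has range contained in that of $P_{\lambda,0}$, so $\pi_\lambda(\psi_k)\pi_\lambda(u*\psi_k)=0$ forces $\pi_\lambda(u*\psi_k)=0$ there, hence $u*\psi_k=0$; varying $k$ yields $\norm{u}_{B^{\vect s}_{p,q}}=0$ and $u=0$. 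I expect the main obstacle to be the uniform $L^p$-bound on $\psi_k*f$ for $p<1$, which requires carefully tracking the $T_+$-scaling of the quasi-norm $\norm{\psi_k}_{L^p}$ in the application of Corollary~\ref{cor:13} so that the resulting constant is independent of~$k$.
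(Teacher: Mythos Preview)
Your overall architecture matches the paper's proof closely: the reconstruction $u=\sum_k\psi_k*v*\psi_k$ with $\sum_k(\Fc_\Nc\psi_k)^2=1$ is exactly the paper's device for surjectivity, and the uniform estimate $\norm{\psi_k*(u*\psi_k)}_{L^p}\leqslant C\norm{u*\psi_k}_{L^p}$ via Corollary~\ref{cor:13} and a homogeneity argument is precisely the content of the paper's \textsc{Step~II}. Your universal identity $\psi_k*\tilde u*\psi_k=\psi_k*(u*\psi_k)$ for every $u\in\Sc'_{\Omega,L}(\Nc)$ is correct (both sides pair against $\tau$ to give $\langle u\vert\psi_k^**\tau*\psi_k^*\rangle$, with $\psi_k^**\tau*\psi_k^*\in\Sc_\Omega(\Nc)$), and it is a slightly cleaner entry point than the paper's, which instead establishes the \emph{commutation} $(u*\psi_k)*\psi_k=\psi_k*(u*\psi_k)$ only for $u\in V$ by passing to the limit along an approximating filter in $\Sc_\Omega(\Nc)$.

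There is, however, a genuine gap in your injectivity argument. From $\psi_k*(u*\psi_k)=0$ one obtains $P_{\lambda,0}\,\pi_\lambda(u*\psi_k)=0$ on $\Supp{\Fc_\Nc\psi_k}$; combined with $\pi_\lambda(u*\psi_k)=\pi_\lambda(u*\psi_k)P_{\lambda,0}$ (which follows from $u*\psi_k=(u*\psi_k)*\psi_k'$), this says $\pi_\lambda(u*\psi_k)=B_\lambda P_{\lambda,0}$ with $P_{\lambda,0}B_\lambda P_{\lambda,0}=0$, which does \emph{not} force $B_\lambda P_{\lambda,0}=0$. Your assertion that ``$\pi_\lambda(u*\psi_k)$ has range contained in that of $P_{\lambda,0}$'' is exactly what is missing: it is false for generic $u\in B^{\vect s}_{p,q}(\Nc,\Omega)$, and for $u\in V$ it is equivalent to $P_{\lambda,0}\pi_\lambda(u*\psi_k)=\pi_\lambda(u*\psi_k)P_{\lambda,0}$, i.e.\ to the commutation identity the paper proves by approximation. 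You can repair this in one of two ways. Either argue, as the paper does, that the closed relation $(u*\psi_k)*\psi_k=\psi_k*(u*\psi_k)$ passes from $\Sc_\Omega(\Nc)$ to its $\sigma^{\vect s}_{p,q}$-closure $V$; once this holds, choosing $\psi_k'$ with $\psi_k*\psi_k'=\psi_k$ gives $\psi_k'*\tilde u*\psi_k=(u*\psi_k)*\psi_k'=u*\psi_k$, so the map $V\to\Bc^{\vect s}_{p,q}$ is isometric (for the norms computed with $(\psi_k',\psi_k)$ and $(\psi_k)$ respectively) and in particular injective. Or, more in line with what you have already built, use the section: your reconstruction $R(v)=\sum_k\psi_k*v*\psi_k$ is continuous $\Bc^{\vect s}_{p,q}\to B^{\vect s}_{p,q}$, lands in $V$ (resp.\ $V_0$) by your approximation step, and satisfies $R(\tilde u)=u$ for $u\in\Sc_\Omega(\Nc)$; by continuity and density this extends to all $u\in V$ (resp.\ $V_0$), whence injectivity without any Fourier computation.
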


\begin{proof}
	\textsc{Step I.} Take $u\in V_0$ (resp.\ $u\in V$) and $(\lambda_k)$, $(\psi_k)$ as in Definition~\ref{def:2}. Let us prove that $(u*\psi_k)*\psi_k=\psi_k*(u*\psi_k)$ for every $k\in K$. Let $\Ff$ be a filter on $\Sc_\Omega(\Nc)$ which converges to $u$ in  $B^{\vect s}_{p,q}(\Nc,\Omega)$ (resp.\ in the topology $\sigma^{\vect s}_{p,q}$). Then, $\Ff*\psi_k$ converges to $u*\psi_k$ in $\Sc'(\Nc)$ (resp.\ in the weak topology of $\Sc'(\Nc)$).
	Consequently, $\psi_k*(\Ff*\psi_k)$  and $(\Ff*\psi_k)*\psi_k$ converge to $\psi_k*(u*\psi_k)$ and $(u*\psi_k)*\psi_k$ in $\Sc'(\Nc)$ (resp.\ in the weak topology of $\Sc'(\Nc)$).
	Therefore, $(u*\psi_k)*\psi_k=\psi_k*(u*\psi_k)$ for every $k\in K$, since the equality is clear if $u\in \Sc_\Omega(\Nc)$. In particular, $\psi_k*(u*\psi_k)\in L^p_0(\Nc)$ (resp.\ $\psi_k*(u*\psi_k)\in L^p(\Nc)$) for every $k\in K$. It is then clear that the canonical image of $u$ in $\Sc'_\Omega(\Nc)$ belongs to $\mathring \Bc^{\vect s}_{p,q}(\Nc,\Omega)$ (resp.\ $\Bc^{\vect s}_{p,q}(\Nc,\Omega)$), and that the so-induced mapping is an isomorphism onto its image.
	
	Conversely, take $u\in \mathring \Bc^{\vect s}_{p,q}(\Nc,\Omega)$ (resp.\ $u\in\Bc^{\vect s}_{p,q}(\Nc,\Omega)$). Take $(\lambda_k)$, $(\psi_k)$ as above, and assume further that $\sum_{k\in K} \Fc_\Nc(\psi_k)^2=1$ on $\Omega'$. Then, define
	\[
	u'\coloneqq \sum_{k\in K} \psi_k*u*\psi_k,
	\]
	and observe that the sum clearly converges in $\Sc'_{\Omega,L}(\Nc)$. More precisely, the sum converges in $B^{\vect s}_{p,q}(\Nc,\Omega)$ (resp.\ in the topology $\sigma^{\vect s}_{p,q}$), so that $u'\in V_0$ (resp.\ $u'\in V$). Then, $u$ is the canonical image of $u'$ in $\Sc'_\Omega(\Nc)$, so that the first assertion follows.
	
	\textsc{Step II.} By \textsc{step I}, it will suffice to observe that Corollary~\ref{cor:13} and a homogeneity argument imply that there is a constant $C>0$ such that
	\[
	\norm{\psi_k*(u*\psi_k)}_{L^p(\Nc)}\leqslant C\norm{u*\psi_k}_{L^p(\Nc)}
	\]
	for every $k\in K$ and for every $u\in B^{\vect s}_{p,q}(\Nc,\Omega)$, where $(\psi_k)$ is as above. 
\end{proof}

\begin{teo}
	Take $\vect{s_1},\vect{s_2}\in \R^r$ and $p_1,p_2,p_3,q_1,q_2,q_2\in ]0,\infty]$ such that 
	\[
	p_1,p_2\leqslant p_3, \qquad \frac{1}{p_1'}+\frac{1}{p_2'}\leqslant\frac{1}{p_3'}, \qquad \frac{1}{q_1}+\frac{1}{q_2}\geqslant \frac{1}{q_3}, \qquad \vect{s_3}=\vect{s_1}+\vect{s_2}+\left(\frac{1}{p_1}+\frac{1}{p_2}-1-\frac{1}{p_3}\right)(\vect b+\vect d).
	\]
	Then, the mapping
	\[
	\Sc_{\Omega,L}(\Nc)\times \Sc_\Omega(\Nc)\ni (\varphi_1,\varphi_2)\mapsto \varphi_1*\varphi_2\in \Sc_{\Omega,L}(\Nc)
	\]
	induces a uniquely determined  continuous bilinear  mapping
	\begin{align*}
		B^{\vect{s_1}}_{p_1,q_1}(\Nc,\Omega)\times  \Bc^{\vect{s_2}}_{p_2,q_2}(\Nc,\Omega)&\to  B^{\vect{s_3}}_{p_3,q_3}(\Nc,\Omega)
	\end{align*}
	such that
	\[
	(u_1*u_2)*\psi=(u_1*\psi')*(\psi'*u_2*\psi)
	\]
	for every $\psi,\psi'\in \Sc_\Omega(\Nc)$ such that $\psi=\psi*\psi'$, for every $u_1\in B^{\vect{s_1}}_{p_1,q_1}(\Nc,\Omega)$, and for every $u_2\in \Bc^{\vect{s_2}}_{p_2,q_2}(\Nc,\Omega)$.
\end{teo}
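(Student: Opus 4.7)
The strategy is to construct $u_1*u_2$ by prescribing each building block $(u_1*u_2)*\psi_k$ via the stated formula, estimate each such block in $L^{p_3}(\Nc)$ via Corollary~\ref{cor:13}, and sum in $\ell^{q_3}(K)$ through a H\"older inequality for sequences.

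First I would fix a $(\delta,R)$-lattice $(\lambda_k)_{k\in K}$ on $\Omega'$ and a partition $(\varphi_k)$ as in Definition~\ref{def:2}, assuming in addition that $\sum_k\Fc_\Nc(\psi_k)^2=1$ on $\Omega'$. I would then select a slightly enlarged family $(\varphi'_k)$, uniformly bounded in $C^\infty_c$ and obtained from a fixed function by the $T_+$-action, with $\varphi'_k\equiv 1$ on $\supp\varphi_k$. Setting $\psi'_k=\Fc_\Nc^{-1}(\varphi'_k(\,\cdot\,t_k^{-1}))$ yields $\psi_k*\psi'_k=\psi_k$, and a bounded overlap property: $\psi'_k*\psi_j$ is nonzero for at most $N$ indices $j$, uniformly in $k$. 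For $u_1\in B^{\vect{s_1}}_{p_1,q_1}(\Nc,\Omega)$ and $u_2\in\Bc^{\vect{s_2}}_{p_2,q_2}(\Nc,\Omega)$ I would set
\[
(u_1*u_2)*\psi_k \coloneqq (u_1*\psi'_k)*(\psi'_k*u_2*\psi_k)
\]
and verify, using Remark~\ref{oss:1} and Proposition~\ref{prop:14}, that this prescription is consistent (independent of the auxiliary $\psi,\psi'$ of the statement) and defines an element of $\Sc'_{\Omega,L}(\Nc)$.

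For the core estimate, since $u_1*\psi'_k\in\Oc^{p_1}(\psi'_k*\psi'_k)$ and $\psi'_k*u_2*\psi_k\in\Oc^{p_2}(\psi'_k*\psi_k*\psi'_k)$, Corollary~\ref{cor:13} produces
\[
\norm{(u_1*u_2)*\psi_k}_{L^{p_3}(\Nc)}\leqslant C_k\norm{u_1*\psi'_k}_{L^{p_1}(\Nc)}\norm{\psi'_k*u_2*\psi_k}_{L^{p_2}(\Nc)},
\]
where \emph{a priori} $C_k$ depends on the frequency scale $t_k$. To obtain uniform control I would invoke $T_+$-equivariance: pulling each profile back by the dilation $(\zeta,x)\mapsto t_k\cdot(\zeta,x)$ transforms the pair $(\psi_k,\psi'_k)$ into a fixed pair $(\psi_0,\psi'_0)$ independent of $k$, so that the constant in Corollary~\ref{cor:13} becomes universal. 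Returning to the original scale, each of the three $L^p$ norms acquires a Jacobian factor $\Delta_{\Omega'}^{(\vect b+\vect d)/p}(\lambda_k)$; combining them gives $C_k=C\,\Delta_{\Omega'}^{(1/p_1+1/p_2-1-1/p_3)(\vect b+\vect d)}(\lambda_k)$, which matches exactly the prescribed discrepancy $\vect{s_3}-\vect{s_1}-\vect{s_2}$.

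Multiplying by $\Delta_{\Omega'}^{\vect{s_3}}(\lambda_k)$ and applying H\"older for sequences with $1/\bar q=1/q_1+1/q_2\geqslant 1/q_3$ then yields
\[
\norm*{\bigl(\Delta_{\Omega'}^{\vect{s_3}}(\lambda_k)(u_1*u_2)*\psi_k\bigr)_k}_{\ell^{q_3}(K;L^{p_3})}\leqslant C'\norm*{\bigl(\Delta_{\Omega'}^{\vect{s_1}}(\lambda_k)u_1*\psi'_k\bigr)_k}_{\ell^{q_1}(K;L^{p_1})}\norm*{\bigl(\Delta_{\Omega'}^{\vect{s_2}}(\lambda_k)\psi'_k*u_2*\psi_k\bigr)_k}_{\ell^{q_2}(K;L^{p_2})}.
\]
By bounded overlap one can replace $\norm{u_1*\psi'_k}_{L^{p_1}}$ by a finite sum of $\norm{u_1*\psi_j}_{L^{p_1}}$ over neighbors $j\sim k$ (and likewise for the second factor), recovering precisely the $B^{\vect{s_1}}_{p_1,q_1}$ and $\Bc^{\vect{s_2}}_{p_2,q_2}$ norms of $u_1$ and $u_2$. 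The main obstacle is the careful bookkeeping of the $T_+$-dilation: one must track simultaneously how $\psi_k$, the lattice counting, and the $\Delta$-weights transform under the action of $t_k$, and verify that the exponents of $\vect b+\vect d$ combine precisely to the prescribed value; once this scaling identity is in place, the convolution and H\"older steps are routine.
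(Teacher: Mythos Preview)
Your plan follows essentially the same route as the paper's proof: the paper also fixes a lattice and two families $(\psi_k),(\psi'_k)$, defines $u_1*u_2$ so that $(u_1*u_2)*\psi_k=(u_1*\psi''_k)*(\psi''_k*u_2*\psi_k)$ (via a limiting double sum in $\Sc'_{\Omega,L}(\Nc)$, then invoking Remark~\ref{oss:1} to derive the formula), applies Corollary~\ref{cor:13} together with a $T_+$-homogeneity argument for the block estimate, and concludes by H\"older on $\ell^q$. The only structural difference is the order: the paper first builds the element $u_1*u_2\in\Sc'_{\Omega,L}(\Nc)$ as a limit and then verifies the formula, whereas you prescribe the pieces and then assemble; both work, though the paper's route makes the well-definedness in $\Sc'_{\Omega,L}(\Nc)$ more transparent (your appeal to Proposition~\ref{prop:14} at that step is not quite what is needed).

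There is, however, a sign slip in your scaling bookkeeping. The correct outcome of the homogeneity argument is
\[
\norm{(u_1*u_2)*\psi_k}_{L^{p_3}}\leqslant C\,\Delta_{\Omega'}^{(1+1/p_3-1/p_1-1/p_2)(\vect b+\vect d)}(\lambda_k)\,\norm{u_1*\psi'_k}_{L^{p_1}}\norm{\psi'_k*u_2*\psi_k}_{L^{p_2}},
\]
i.e.\ the exponent is $\vect{s_1}+\vect{s_2}-\vect{s_3}$, not $\vect{s_3}-\vect{s_1}-\vect{s_2}$ as you wrote; this is exactly what is needed so that multiplying by $\Delta_{\Omega'}^{\vect{s_3}}(\lambda_k)$ produces $\Delta_{\Omega'}^{\vect{s_1}}(\lambda_k)\Delta_{\Omega'}^{\vect{s_2}}(\lambda_k)$ on the right and the H\"older step closes. (A quick sanity check in the rank-one case $\Nc=\R$, $\Omega=\R_+^*$, where $\vect b+\vect d=-1$ and the dilation is $x\mapsto\lambda_k x$, confirms this sign.) With that correction your argument goes through.
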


Before we pass to the proof, let us draw some consequences of this result.
First of all, by means of Proposition~\ref{prop:14}, we see that convolution induces a  continuous bilinear mapping 
\[
B^{\vect{s_1}}_{p_1,q_1}(\Nc,\Omega)\times B^{\vect{s_2}}_{p_2,q_2}(\Nc,\Omega)\to B^{\vect{s_3}}_{p_3,q_3}(\Nc,\Omega).
\]

Analogously, again by means of Proposition~\ref{prop:14}, we see that convolution induces a  continuous \emph{commutative} bilinear mapping 
\[
\Bc^{\vect{s_1}}_{p_1,q_1}(\Nc,\Omega)\times \Bc^{\vect{s_2}}_{p_2,q_2}(\Nc,\Omega)\to \Bc^{\vect{s_3}}_{p_3,q_3}(\Nc,\Omega).
\]

Furthermore, since the spaces $\Bc$ are invariant under the mapping $u\mapsto u^*$,  we also find a continuous sesquilinear mapping
\[
B^{\vect{s_1}}_{p_1,q_1}(\Nc,\Omega)\times B^{\vect{s_2}}_{p_2,q_2}(\Nc,\Omega)\ni (u_1,u_2)\mapsto u_1* u_2^*\in B^{\vect{s_3}}_{p_3,q_3}(\Nc,\Omega).
\]

\begin{proof}
	Take a $(\delta,R)$-lattice $(\lambda_k)_{k\in K}$ on $\Omega'$ for some $\delta>0$ and some $R>1$, and define $t_k\in T_+$ so that $\lambda_k=e_{\Omega'}\cdot t_k$ for every $k\in K$. Fix two bounded families $(\varphi_k)$ and $(\varphi'_k)$ of positive elements of $C^\infty_c(\Omega')$ such that
	\[
	\sum_k \varphi_k(\,\cdot\, t_k^{-1})=\sum_{k }\varphi_k'^2(\,\cdot\, t_k^{-1})=1
	\]
	on $\Omega'$, and define $\psi_k\coloneqq \Fc_\Nc^{-1}(\varphi_k(\,\cdot\, t_k^{-1}))$ and $\psi'_k\coloneqq \Fc_\Nc^{-1}(\varphi'_k(\,\cdot\, t_k^{-1}))$ for every $k\in K$. 
	Take $u_1\in B^{\vect{s_1}}_{p_1,q_1}(\Nc,\Omega)$ and $u_2\in \Bc^{\vect{s_2}}_{p_2,q_2}(\Nc,\Omega)$, and define, for every two finite subsets $H,H'$ of $K$,
	\[
	v_{H,H'}\coloneqq \sum_{k\in H}\sum_{k'\in H'} (u_1*\psi_k)*(\psi'_{k'}*u_2*\psi'_{k'}).
	\]
	Notice that $u_1*\psi_k\in L^{\max(1,p_1)}(\Nc)$ and $\psi'_{k'}*u_2*\psi'_{k'}\in L^{\max(1,p_2)}(\Nc)$ (cf.~Corollary~\ref{cor:10}), so that $v_{H,H'}$ is well defined by Young's inequality.
	Observe that, if $\eta\in \Sc_{\Omega,L}(\Nc)$, then
	\[
	\left\langle v_{H,H'}\vert \eta\right\rangle=\sum_{k\in H}\sum_{k'\in H'} \left\langle u_1*\psi_k\vert \eta*(\psi'_{k'}*u_2^** \psi'_{k'})\right\rangle=\sum_{k\in H}\sum_{k'\in H'} \left\langle u_1\vert \eta*(\psi'_{k'}*u_2^** \psi'_{k'})*\psi_k\right\rangle
	\]
	converges to the (finite) sum
	\[
	\sum_{k,k'\in K} \left\langle u_1\vert \eta*(\psi'_{k'}*u_2^** \psi'_{k'})*\psi_k\right\rangle
	\]
	as $H,H'$ run through the set of finite subsets of $K$, endowed with the section filter associated with $\subseteq $.
	Furthermore, if $\eta$ stays in a bounded subset of $\Sc_{\Omega,L}(\Nc)$, the set of $(k,k')\in K^2$ such that  $\eta*(\psi'_{k'}*u_2^** \psi'_{k'})*\psi_k\neq 0$ stays in a fixed finite subset of $K^2$. Thus, $(v_{H,H'})$ converges in the strong topology of $\Sc'_{\Omega,L}(\Nc)$. Denote by $u_1*u_2$ its limit.
	
	Let us now prove that $(u_1*u_2)*\psi=(u_1*\psi')*(\psi'*u_2*\psi)$ for every $\psi,\psi'\in \Sc_\Omega(\Nc)$ such that $\psi=\psi*\psi'$.  Observe first that, by Remark~\ref{oss:1}, 
	\[
	(u_1*\psi')*(\psi'*u_2*\psi)=(u_1*\psi'')*(\psi'''*u_2*\psi)
	\]
	for every $\psi'',\psi'''\in \Sc_\Omega(\Nc)$ such that $\psi=\psi*\psi''=\psi*\psi'''$. Then, observe that, by definition,
	\[
	(u_1*u_2)*\psi=\sum_{k,k'} (u_1*\psi_k)*(\psi'_{k'}*u_2*\psi'_{k'})*\psi,
	\]
	since the sum on the right-hand side is actually finite. If we choose $\psi'''\in \Sc_\Omega(\Nc)$ so that $\psi'''*\psi'_{k'}=\psi'_{k'}$ for every $k'\in K$ such that $\psi*\psi'_{k'}\neq 0$, then Remark~\ref{oss:1} again implies that
	\[
	\sum_{k,k'} (u_1*\psi_k)*(\psi'_{k'}*u_2*\psi'_{k'})*\psi=\sum_{k,k'} (u_1*\psi_k)*(\psi'''*u_2*\psi_{k'}*\psi_{k'}*\psi)=\sum_{k} (u_1*\psi_k)*(\psi'''*u_2*\psi).
	\]
	To conclude, it suffices to choose $\psi''\in \Sc_\Omega(\Nc)$ as the sum of the $\psi_k$ such that $\psi_k*\psi'''\neq 0$. Note that this implies that $u_1*u_2$ does not depend on the choice of the $\psi_k$ and the $\psi'_{k'}$.
	
	Finally, choose $\varphi''\in C^\infty_c(\Omega')$ such that $\varphi''=1$ on the supports of the $\varphi_k$, and define $\psi''_k\coloneqq \Fc_\Nc^{-1}(\varphi''(\,\cdot\, t_k^{-1}))$ for every $k\in K$.
	Then,	observe that Corollary~\ref{cor:13}  and a homogeneity argument imply that there is a constant $C>0$ such that
	\[
	\norm{(u_1*u_2)*\psi_k}_{L^{p_3}(\Nc)}\leqslant C\Delta_{\Omega'}^{(1+1/p_3-1/p_1-1/p_2)(\vect b+\vect d)}(\lambda_k) \norm{u_1*\psi''_k}_{L^{p_1}(\Nc)} \norm{\psi''_k*u_2*\psi_k}_{L^{p_2}(\Nc)}
	\] 
	for every $u_1\in B^{\vect{s_1}}_{p_1,q_1}(\Nc,\Omega)$, and for every $u_2\in \Bc^{\vect{s_2}}_{p_2,q_2}(\Nc,\Omega)$. The conclusion follows.
\end{proof}

\section{Fourier Multipliers}\label{sec:5}

In this section we consider how \emph{right} Fourier multipliers interact with the spaces $B^{\vect s}_{p,q}(\Nc,\Omega)$. This leads to some simplifications which will allow us to find fairly complete multipliers theorems. We shall \emph{not} consider \emph{left} Fourier multipliers.

\medskip

Take a bounded measurable field of operators $(M_\lambda)_{\lambda\in F'\setminus W}$. Then, Fourier analysis shows that there is a uniquely determined continuous endomorphism $\Psi$ of $L^2(\Nc)$ such that
\[
\pi_\lambda(\Psi (f))= \pi_\lambda(f) M_\lambda
\]
for almost every $\lambda\in F'\setminus W$. Therefore, $\Psi$ induces a continuous linear mapping $\Psi'\colon \Sc_{\Omega,L}(\Nc)\to \Sc_{\Omega,L}'(\Nc)$. 
Observe, though, that for every family $(\psi_k)_{k\in K}$ as in Definition~\ref{def:2}, subject to the further requirement that $\sum_k \Fc_\Nc\psi_k=1$ on $\Omega'$,
\[
\Psi'(\varphi)=\sum_{k,k'} \Psi'(\varphi*\psi_k)*\psi_{k'},
\]
for every $\varphi\in \Sc_{\Omega,L}(\Nc)$, so that $\Psi'$ is uniquely determined by $\Psi'(\,\cdot\,*\psi_k)*\psi_{k'}$. In addition,
\[
\pi_\lambda(\Psi'(\varphi*\psi_k)*\psi_{k'}) =\pi_\lambda(\varphi)\Fc_\Nc (\psi_k*\psi_{k'})(\lambda)  P_{\lambda,0}M_\lambda P_{\lambda,0}
\]
for almost every $\lambda\in F'\setminus W$. Consequently, if we wish to study the properties of $\Psi'$, then there is no need to consider bounded measurable families of operators, but simply bounded measurable \emph{functions} on $\Omega'$.

\begin{deff}
	Take $M\in L^\infty(\Omega')$. Then, we define $\Psi(M)$ as the endomorphism of $L^2(\Nc)$ such that
	\[
	\pi_\lambda(\Psi(M) f)=M(\lambda)\pi_\lambda(f) 
	\]
	for almost every $\lambda\in \Omega'$ and for every $f\in L^2(\Nc)$. We denote $\Psi'(M)$ the  continuous linear mapping $\Sc_{\Omega,L}(\Nc)\to \Sc'_{\Omega,L}(\Nc)$ induced by $\Psi(M)$.
\end{deff}

\begin{teo}
	Take $p_0\in ]0,2]$, and take $q_0\in [2,\infty]$ so that $\frac{1}{q_0}=\frac{1}{\max(1,p_0)}-\frac{1}{2}$. Fix a non-zero positive $\varphi\in C^\infty_c(\Omega')$,  and define $\cM_{p_0}$ as the space of bounded measurable functions $M\colon \Omega \to \C$ such that
	\[
	\sup\limits_{t\in T_+} \norm{ \varphi M(\,\cdot\, t)}_{B^{m(1/p_0-1/2)}_{q_0,\min(p_0,1)}(F')}<\infty,
	\]
	endowed with the corresponding topology.
	Then,  $\Psi'$ induces uniquely determined continuous linear mappings $\cM_{p_0}\to \Lc(B^{\vect s}_{p,q}(\Nc,\Omega))$ and $\cM_{p_0}\to \Lc(\mathring B^{\vect s}_{p,q}(\Nc,\Omega))$ for every $p\in[p_0,p_0']$, for every $q\in ]0,\infty]$, and for every $\vect s\in \R^r$, such that
	\[
	\left\langle \Psi'(M) u_1\vert u_2\right\rangle= \left\langle u_1\vert \Psi'(\overline M) u_2\right\rangle
	\]
	for every $M\in \cM_{p_0}$, for every $u_1\in B^{\vect s}_{p,q}(\Nc,\Omega)$, and for every $u_2\in B^{-\vect s-(1/p-1)_+(\vect b+\vect d)}_{p',q'}(\Nc,\Omega)$.
\end{teo}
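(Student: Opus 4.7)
The strategy adapts the Mihlin--H\"ormander multiplier scheme to the Besov spaces $B^{\vect s}_{p,q}(\Nc,\Omega)$, exploiting the fact that $M$ depends only on $\lambda \in \Omega' \subseteq F'$, so that the relevant Fourier analysis reduces to the centre $F$ of $\Nc$. Fix a lattice $(\lambda_k)_{k\in K}$ on $\Omega'$ and families $(\psi_k)$, $(\psi_k')$, $(\psi_k'')$ as in Definition~\ref{def:2} with $\sum_k \Fc_\Nc \psi_k = 1$ on $\Omega'$ and with the usual thickening relations $\psi_k' * \psi_k = \psi_k$ and $\psi_k'' * \psi_k' = \psi_k'$. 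Since $M$ is scalar-valued, the identity $\pi_\lambda(f * g) = \pi_\lambda(f)\pi_\lambda(g)$ shows that $\Psi'(M)$ commutes with both left and right convolution, whence
\[
\Psi'(M)(u * \psi_k) = (u * \psi_k) * K_k, \qquad K_k \coloneqq \Psi'(M)\psi_k',
\]
with $K_k \in \Oc(\psi_k'')$ (in particular, bandlimited in $\lambda$ to a neighbourhood of $\supp \varphi(\,\cdot\, t_k^{-1})$).

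The central technical step is a uniform-in-$k$ bound $\norm{K_k}_{L^{r}(\Nc)} \le C \norm{M}_{\cM_{p_0}} \Delta^{\vect\alpha}(t_k)$ for an appropriate $r \in \{p_0, 1\}$ and a character $\Delta^{\vect\alpha}$ of $T_+$. Using the explicit form of $\pi_\lambda$ on $\Omega'$ and the Plancherel formula (as in the proof of Corollary~\ref{cor:13}),
\[
K_k(\zeta,x) = c\, \Fc_F^{-1}\!\bigl(\lambda \mapsto M(\lambda)\varphi'(\lambda t_k^{-1}) \Delta_{\Omega'}^{-\vect b}(\lambda)e^{-\langle \lambda, \Phi(\zeta)\rangle}\bigr)(x).
\]
After the substitution $\lambda = \mu t_k$, the Besov norm $\norm{\varphi M(\,\cdot\, t_k)}_{B^{m(1/p_0 - 1/2)}_{q_0,\min(p_0, 1)}(F')}$ appears naturally; the classical Besov-into-Wiener embedding $B^{m/q_0}_{q_0, 1}(F') \hookrightarrow \Fc_F L^1(F)$ (when $p_0 \ge 1$), or its quasi-Banach analogue $B^{m(1/p_0 - 1/2)}_{2, p_0}(F') \hookrightarrow \Fc_F L^{p_0}(F)$ (when $p_0 < 1$), combined with the rapid decay of $e^{-\langle \mu t_k, \Phi(\zeta)\rangle}$ in $\zeta$ for $\mu$ in a fixed compact subset of $\Omega'$, then yields the kernel bound after integration in $\zeta$ and bookkeeping of the $T_+$-rescalings.

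Given the kernel estimate, one applies Corollary~\ref{cor:13} to $(u * \psi_k) * K_k$. For $p_0 \ge 1$ and $p \in [p_0, p_0']$, take $(p_1, p_2) = (p, 1)$: this is Young's inequality, and uses $\norm{K_k}_{L^1(\Nc)}$ as just estimated. For $p_0 < 1$ and $p \in [p_0, 1]$, take $p_1 = p_2 = p$, using that $K_k \in \Oc(\psi_k'')$ combined with Corollary~\ref{cor:10} gives $\norm{K_k}_{L^p(\Nc)}$ controlled by $\norm{K_k}_{L^{p_0}(\Nc)}$ (up to rescaling). For $p_0 < 1$ and $p \in [1, \infty]$, Young again applies, since $K_k \in L^1(\Nc)$ follows from Corollary~\ref{cor:10} and the $L^{p_0}$ bound. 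Summing the per-$k$ estimates in $\ell^q$ against $\Delta_{\Omega'}^{\vect s}(\lambda_k)$ --- with the rescaling factors absorbed into the shift of $\vect s$ guaranteed by the Besov inclusions --- and invoking Proposition~\ref{prop:14} for the closure properties, yields continuity of $\Psi'(M)$ on both $B^{\vect s}_{p,q}(\Nc,\Omega)$ and $\mathring B^{\vect s}_{p,q}(\Nc,\Omega)$, with norms controlled by $\norm{M}_{\cM_{p_0}}$. The adjoint identity $\langle \Psi'(M) u_1 \vert u_2\rangle = \langle u_1 \vert \Psi'(\overline M) u_2\rangle$ is immediate from $\Psi(M)^* = \Psi(\overline M)$ on $L^2(\Nc)$ together with the definition of the sesquilinear bracket.

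The main obstacle is the kernel estimate in the second paragraph: even though $K_k$ lives on $\Nc$ of dimension $2n + m$, the Besov condition on $M$ is only on $F'$ of dimension $m$, and the passage between the two requires carefully tracking how the exponential modulation $e^{-\langle \lambda, \Phi(\zeta)\rangle}$ behaves under $T_+$-rescaling inside the support of $\varphi(\,\cdot\, t_k^{-1})$. That this contributes only a tempered weight in $\zeta$ --- rather than producing a loss of dimension --- is precisely what makes the regularity threshold scale with $m$ rather than $2n + m$, as emphasised in the introduction.
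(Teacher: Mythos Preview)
Your outline is essentially correct for $p_0\leqslant 1$, and there it matches the paper's Step~I (the $L^{p_0}(\Nc)$ kernel bound via the explicit formula for $\Kc(M)(\zeta,x)$ and the embedding $B^{m(1/p_0-1/2)}_{2,p_0}(F')\hookrightarrow \Fc_F L^{p_0}(F)$, together with the exponential weight in~$\zeta$) followed by Step~III (Corollary~\ref{cor:13} plus homogeneity).

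The gap is in the range $p_0\in\,]1,2]$. The embedding you invoke,
\[
B^{m/q_0}_{q_0,1}(F')\hookrightarrow \Fc_F L^1(F),
\]
is \emph{false} for $q_0>2$. Indeed, after rescaling a single Littlewood--Paley piece $\Delta_j m$ to unit scale, one is asking whether a compactly supported $L^{q_0}$ function on $F'$ has inverse Fourier transform in $L^1(F)$; already the characteristic function of a ball shows this fails for $m\geqslant 2$. (The embedding does hold at the endpoint $q_0=2$, i.e.\ $p_0=1$, which is precisely the paper's Step~I.) Consequently your $L^1$ bound on $K_k$ is unavailable when $p_0>1$, and the Young-inequality step for $p\in[p_0,p_0']$ collapses. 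This is not a cosmetic issue: an $L^1$ kernel estimate would give $L^p$-boundedness for \emph{all} $p\in[1,\infty]$, whereas the H\"ormander condition with regularity $m(1/p_0-1/2)<m/2$ is known to yield only $p\in[p_0,p_0']$.

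The paper circumvents this by \emph{interpolating convolver spaces} rather than kernel spaces (Step~II): one first shows $(\Cc(L^{\ell_0}(\Nc)),\Cc(L^{\ell_1}(\Nc)))_{[\theta]}\hookrightarrow \Cc(L^{\ell_\theta}(\Nc))$ via the three-lines lemma applied to $\langle F(z)*U(z),G(z)\rangle$, then interpolates between the endpoint $\Kc\colon\cM'_1\to L^1(\Nc)\subseteq\Cc(L^1),\Cc(L^\infty)$ from Step~I and the trivial $\Kc\colon\cM'_2\to\Cc(L^2)$ from Plancherel, using $(B^{m/2}_{2,1},B^0_{\infty,1})_{[\theta]}=B^{m(1/p_0-1/2)}_{q_0,1}$. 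Your argument needs an analogous interpolation step in place of the nonexistent Wiener embedding.
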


The fact that the relevant dimension in this case is that of $F$, even when $\Nc$ is not abelian, may seem peculiar. Nonetheless, this is related to the fact that the considered multiplier completely dismiss the operator-valued structure of the Fourier transform on $\Nc$, in a certain sense.

\begin{proof}
	Notice that there is a unique $\Kc(M)\in \Sc'(\Nc)$ such that $\Psi(M) f=f*\Kc(M)$ for every $f\in \Sc(\Nc)$.
	
	\textsc{Step I.} Assume that $p_0\in ]0,1]$, define $\cM'_{p_0}\coloneqq \varphi' B^{m(1/p_0-1/2)}_{2,p_0}(F')$ for some $\varphi'\in C^\infty_c(\Omega')$ such that $\varphi=\varphi \varphi'$, endowed with the topology induced by $B^{m(1/p_0-1/2)}_{2,p_0}(F')$, and let us prove that $\Kc$ induces a continuous linear mapping $\cM'_{p_0}\to L^{p_0}(\Nc)$.
	Observe first that, arguing as in the proof of~\cite[Proposition 4.2]{CalziPeloso},  there is a constant $c>0$ such that
	\[
	\Kc(M)(\zeta,x)= c \int_{\Omega'} M(\lambda) \Delta_{\Omega'}^{-\vect b}(\lambda) \ee^{-\left\langle \lambda, \Phi(\zeta)\right\rangle+i \left\langle \lambda,  x\right\rangle}\,\dd \lambda
	\]
	for every $M\in \cM'_{p_0}$ and for every $(\zeta,x)\in \Nc$.
	In particular,~\cite[1.5.4]{Triebel} implies that there is a continuous linear mapping
	\[
	\Kc'\colon \cM'_{p_0}\ni M\mapsto \Kc(M)(0,\,\cdot\,)\in L^{p_0}(F).
	\]
	Now, observe that there is a constant $C_1>0$ such that
	\[
	\left\langle e_{\Omega'}, \Phi(\zeta)\right\rangle\leqslant \frac{1}{2C_1} \left\langle \lambda,\Phi(\zeta)\right\rangle
	\]
	for every $\lambda$ is a neighbourhood of $ \Supp{\varphi'}$ and for every $\zeta\in E$.\footnote{It suffices to that $C_1$ such that a neighbourhood of $\Supp{\varphi'}$ is contained in $ 2C_1 e_{\Omega'}+\Omega'$.} 
	Therefore,  the mappings
	\[
	\cM'_{p_0}\ni M\mapsto M \ee^{C_1\left\langle e_{\Omega'}, \Phi(\zeta)\right\rangle}\ee^{-\left\langle\,\cdot\, ,\Phi(\zeta)\right\rangle}\in \cM'_{p_0}
	\]
	are equicontinuous (cf.~\cite[Theorem 2.82]{Triebel}). Since $\Kc(M)(\zeta,x)=\Kc'(\ee^{-\left\langle \,\cdot\,, \Phi(\zeta)\right\rangle}M)(x)$ for every $(\zeta,x)\in\Nc$ by the preceding arguments, this proves that $\Kc$ induces a continuous linear mapping $\cM'_{p_0}\to L^{p_0}(\Nc)$.
	
	\textsc{Step II.} Assume that $p_0\in ]1,2]$, and define $\cM''_{p_0}\coloneqq \varphi B^{m(1/p_0-1/2)}_{q_0,1}(F')$, endowed with the topology induced by $B^{m(1/p_0-1/2)}_{q_0,1}(F')$. Let us prove that $\Kc$ induces a continuous linear mapping $\cM''_{p_0}\to \Cc (L^{p}(\Nc))$ for every $p\in [p_0,p_0']$, where $ \Cc (L^{p}(\Nc))$ denotes the space of $u\in \Sc'(\Nc)$ such that the mapping $\Sc(\Nc)\ni f\mapsto f*u\in \Sc'(\Nc)$ induces an endomorphism of $L^{p}(\Nc)$.
	
	We  proceed by interpolation. 
	Let us first prove that $(\Cc(L^{\ell_0}(\Nc)),\Cc(L^{\ell_1}(\Nc)))_{[\theta]}$ embeds continuously into $\Cc(L^{\ell_\theta}(\Nc))$ for every $\theta\in ]0,1[$, where $\frac{1}{\ell_\theta}=\frac{1-\theta}{\ell_0}+\frac{\theta}{\ell_1}$.
	Take two integrable step functions $f,g\colon \Nc\to \C$ such that $\norm{f}_{L^{\ell_\theta}(\Nc)}=\norm{g}_{L^{\ell'_\theta}(\Nc)}=1$, and define two holomorphic functions which are bounded on every vertical strip
	\[
	F\colon \C\ni z \mapsto \abs{f}^{z\ell_\theta/\ell_1+(1-z)\ell_\theta/\ell_0-1} f\in L^1(\Nc)\cap L^\infty(\Nc)
	\]
	and
	\[
	G\colon \C\ni z \mapsto \abs{g}^{z\ell'_\theta/\ell'_1+(1-z)\ell'_\theta/\ell'_0-1} g\in L^1(\Nc)\cap L^\infty(\Nc).
	\]
	Observe that $F(\theta)=f$ and $G(\theta)=g$, and that
	\[
	\norm{F(j+i t)}_{L^{\ell_j}(\Nc)}= 	\norm{G(j+i t)}_{L^{\ell'_j}(\Nc)}=1
	\]
	for every $t\in \R$ and for $j=0,1$. Now, fix $u\in (\Cc(L^{\ell_0}(\Nc)),\Cc(L^{\ell_1}(\Nc)))_{[\theta]}$, and take a  bounded continuous function $U\colon \overline S\to \Cc(L^{\ell_0}(\Nc))+\Cc(L^{\ell_1}(\Nc))$ which is  holomorphic on $S$, where $S=\Set{z\in \C\colon 0<\Re z<1}$, such that $U(\theta)=u$ and such that the mappings
	\[
	\R\ni t \mapsto U(j+i t)\in \Cc(L^{\ell_j}(\Nc)) 
	\]
	are continuous and bounded for $j=0,1$. Then, the mapping
	\[
	\overline S\ni z \mapsto \left\langle F(z)*U(z), G(z)\right\rangle\in \C
	\]
	is bounded and continuous, and holomorphic in $S$. In addition,
	\[
	\abs{\left\langle F(j+it)*U(j+it), G(j+it)\right\rangle}\leqslant \norm{U}
	\]
	for every $t\in \R$ and for $j=0,1$, where $\norm{U}\coloneqq \max_{j=0,1}\sup\limits_{t\in \R} \norm{U(j+it)}_{\Cc(L^{\ell_j}(\Nc))}$. Hence, 
	\[
	\abs{\left\langle f*u, g\right\rangle} \leqslant \norm{U}.
	\]
	By the arbitrariness of $f$, $g$, and $U$, this implies that $u\in \Cc(L^{\ell_\theta}(\Nc))$, and that
	\[
	\norm{u}_{\Cc(L^{\ell_\theta}(\Nc))}\leqslant \norm{u}_{(\Cc(L^{\ell_0}(\Nc)),\Cc(L^{\ell_1}(\Nc)))_{[\theta]}},
	\]
	whence our assertion. 
	
	Now, take $\theta\in [0,1]$ so that $\frac{1}{p_0}=1-\frac{\theta}{2}$. Observe that $(B^{m/2}_{2,1}(F'), B^{0}_{\infty,1}(F'))_{[\theta]}= B^{m(1/p_0-1/2)}_{q_0,1}(F')$ (cf.~\cite[Theorem 6.4.5]{BerghLofstrom}). In addition, $B^{0}_{\infty,1}(F')$ embeds continuously into $L^\infty(F')$ (cf.~\cite[Remark  2.7.1/2]{Triebel}).
	Then, \textsc{step I} and standard Fourier analysis imply that $\Kc$ induces continuous linear mappings $\cM_{1}'\to L^1(\Nc)\subseteq \Cc(L^1(\Nc)),\Cc(L^\infty(\Nc))$ and $\cM_{2}'\to \Cc(L^2(\Nc))$.
	In addition, $\cM_{p_0}''$ embeds continuously into $(\cM_{1}', \cM_{2}')_{[\theta]}$ (cf.~\cite[Theorem 2.8.2]{Triebel}), so that our assertion follows by interpolation.
	
	\textsc{Step III.} Take $\vect s\in \R^r$,  $p\in [p_0,p_0']$, and   $q\in ]0,\infty]$. Take $(\lambda_k)_{k\in K}$, $(t_k)$, $(\varphi_k)$, and $(\psi_k)$ as in Definition~\ref{def:2}, and subject to the further condition that $\sum_k \Fc_\Nc \psi_k=1$ on $\Omega'$.
	Notice that, if we replace $\varphi$ with another non-zero positive element of $C^\infty_c(\Omega')$, the space $\cM_{p_0}$  is not altered, both set-theoretically and topologically. Hence, we may further assume that $\varphi_k=\varphi \varphi_k$ for every $k\in K$.
	
	Define $\psi'_k\coloneqq \Fc_\Nc^{-1}(\varphi(\,\cdot\, t_k^{-1}))$ for every $k\in K$.
	Then, for every $k\in K$, \textsc{steps I} and \textsc{II}, together with Corollary~\ref{cor:13} and a homogeneity argument, imply that $(u*\psi_k)*(\Kc(M)*\psi'_k)$ is well defined and belongs to $L^p(\Nc)$ (to $L^p_0(\Nc)$ if $u\in \mathring B^{\vect s}_{p,q}(\Nc,\Omega)$), and that there is a constant $C_{2,p}>0$ such that
	\[
	\norm{(u*\psi_k)*(\Kc(M)*\psi'_k)}_{L^{p}(\Nc)}\leqslant C_{2,p}\norm{M }_{\cM_{p_0}}\norm{u*\psi_k}_{L^{p}(\Nc)} 
	\]
	for every $k\in K$ and for every $u\in B^{\vect s}_{p,q}(\Nc,\Omega)$.
	Now, observe that, if $u\in \Sc_{\Omega,L}(\Nc)$, then
	\[
	\Psi'(M) u= \sum_k (u*\psi_k)*\Kc(M)=\sum_k (u*\psi_k)*(\Kc(M)*\psi'_k)
	\]
	and
	\[
	[\Psi'(M) u]*\psi_k= (u*\psi_k)*\Kc(M)=(u*\psi_k)*(\Kc(M)*\psi'_k)
	\]
	since $\psi_k=\psi_k*\psi'_k$ and $\psi*\Kc(M)=\Kc(M)*\psi$ for every $k\in K$ and for every $\psi\in \Sc_\Omega(\Nc)$ (use the Fourier transform). Then, the preceding estimates imply that $\Psi'(M)$ induces an endomorphism of $\mathring B^{\vect s}_{p,q}(\Nc)$, and that the linear mapping $\Psi'\colon \cM_{p_0}\mapsto \Lc(\mathring B^{\vect s}_{p,q}(\Nc))$ is continuous.
	
	Now, observe that
	\[
	\left\langle (u*\psi_k)*(\Kc(M)*\psi'_k)\vert u'\right\rangle= \left\langle u*\psi_k\vert u'*(\Kc(\overline M)*\psi'_k)\right\rangle=\left\langle u\vert (u'*\psi_k)*(\Kc(\overline M)*\psi'_k)\right\rangle
	\]
	for every $u\in B^{\vect s}_{p,q}(\Nc)$ and for every $u'\in \mathring B^{-\vect s-(1/p-1)_+}_{p',q'}(\Nc)$, since $\Kc(M)^*=\Kc(\overline M)$. It then follows that the sum $\sum_{k} (u*\psi_k)*(\Kc(M)*\psi'_k)$ converges in the topology $\sigma^{\vect s}_{p,q}$, and that
	\[
	\left\langle \sum_k (u*\psi_k)*(\Kc(M)*\psi'_k)\Bigg\vert u'\right\rangle= \left\langle u\vert \Psi'(\overline M) u'\right\rangle.
	\]
	We then define $\Psi'(M)u\coloneqq \sum_k (u*\psi_k)*(\Kc(M)*\psi'_k)$, so that the preceding estimates imply that $\Psi'(M)$ induces an endomorphism of $B^{\vect s}_{p,q}(\Nc)$ and that the linear mapping $\Psi'\colon \cM_{p_0}\mapsto \Lc( B^{\vect s}_{p,q}(\Nc))$ is continuous. The last assertion follows from the fact that $(\Psi'(M) u)*\psi=(u*\psi)*(\Kc(M)*\psi')$ for every $\psi,\psi'\in \Sc_\Omega(\Nc)$ such that $\psi=\psi*\psi'$, and from the definition of $\left\langle\,\cdot\,\vert \,\cdot\,\right\rangle$.
\end{proof}

\section{Complex Interpolation}\label{sec:6}

In this section, we study how the spaces $B^{\vect s}_{p,q}(\Nc,\Omega)$ interact with complex interpolation. In complete analogy to the classical case, complex interpolation behaves fairly well for the spaces $B^{\vect s}_{p,q}(\Nc,\Omega)$, except possibly when $p=\infty$. In order to overcome this problem, and to extend the complex interpolation techniques to the case $\min(p,q)<1$, we develop a method described in~\cite{Triebel} and prove that the spaces $B^{\vect s}_{p,q}(\Nc,\Omega)$ interact nicely with this modified interpolation functor.

\medskip

As a particular case of~\cite[Theorem 1.81.1 and Remarks 1.18.1/1,2,3]{Triebel2}, we get the following result.\footnote{Actually, the last assertion is not contained in~\cite[Remark 1.18.1/3]{Triebel2}, but can be proved with the same methods.}

\begin{prop}\label{prop:4}
	Take a Banach pair $(A_0,A_1)$, a countable discrete space $K$, $p_0,p_1\in [1,\infty]$,  and two functions $w_0,w_1\colon K\to \R_+^*$. Then,
	\[
	(w_0\ell^{p_0}_0(K;A_0), w_1\ell^{p_1}_0(K;A_1))_{[\theta]}= w_\theta \ell^{p_\theta}_0(K;(A_0,A_1)_{[\theta]})
	\]
	with equality of norms for every $\theta\in ]0,1[$, where 
	\[
	\frac{1}{p_\theta}= \frac{1-\theta}{p_0}+\frac{\theta}{p_1} \qquad \text{and} \qquad w_\theta= w_0^{1-\theta} w_1 ^\theta.
	\]
	If, in addition, $\min(p_0,p_1)<\infty$, then
	\[
	(w_0\ell^{p_0}(K;A_0), w_1\ell^{p_1}(K;A_1))_{[\theta]}= w_\theta \ell^{p_\theta}(K;(A_0,A_1)_{[\theta]}).
	\]
	Finally, if $A_0=A_1\eqqcolon A$ and $\frac{w_\theta}{\max(w_0,w_1)}\in \ell^q(K)$ for some $q\in]0,\infty[$, then 
	\[
	(w_0\ell^{\infty}(K;A), w_1\ell^{\infty}(K;A))_{[\theta]}= w_\theta \ell^{\infty}_0(K;A).
	\]
\end{prop}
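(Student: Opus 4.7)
The approach is to reduce all three assertions to the unweighted vector-valued $\ell^p$-interpolation results of~\cite{Triebel} by means of Stein's interpolation theorem applied to an analytic family of pointwise multiplication operators.

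Concretely, I would introduce the family $(T_z f)(k)\coloneqq w_0(k)^{z-1} w_1(k)^{-z} f(k)$ acting on $(A_0+A_1)$-valued sequences. Since $w_0(k),w_1(k)$ are positive reals, one computes $|(T_{j+it} f)(k)|=|f(k)|/w_j(k)$ for $j=0,1$, so $T_{it}$ and $T_{1+it}$ act as isometric isomorphisms of the weighted spaces onto their unweighted counterparts, while $T_\theta$ is pointwise multiplication by $w_\theta^{-1}$. The family has appropriate analyticity and admissible growth on the ambient sum space, and the inverse family $T_z^{-1}$ satisfies dual boundary conditions. Stein's theorem then produces isometric contractions in both directions, so $T_\theta$ identifies $(w_0 \ell^{p_0}_{(0)}(A_0), w_1 \ell^{p_1}_{(0)}(A_1))_{[\theta]}$ isometrically with the closure of $T_\theta(A_0\cap A_1)$ inside $(\ell^{p_0}_{(0)}(A_0), \ell^{p_1}_{(0)}(A_1))_{[\theta]}$.

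For the first two assertions, the unweighted target equals $\ell^{p_\theta}_{(0)}(K;(A_0,A_1)_{[\theta]})$ by Triebel's theorem, and the intersection $A_0\cap A_1$ is dense in the interpolation space (first complex method), so $T_\theta$ is surjective; pulling back by $T_\theta^{-1}$ yields $w_\theta\ell^{p_\theta}_{(0)}(K;(A_0,A_1)_{[\theta]})$. I would also include a sketch of the unweighted proof for completeness: for finitely supported $(a_k)$ with $c_k=\|a_k\|_{A_\theta}$, the analytic function $F(z)=(c_k^{p_\theta/p_z-1} g_k(z))_k$, with $1/p_z=(1-z)/p_0+z/p_1$ and $g_k$ nearly optimal representatives, yields the upper bound; Hadamard's three-line theorem applied componentwise, combined with H\"older's inequality across $k$ with exponents $p_j/p_\theta$, gives the reverse bound.

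For the third assertion the unweighted target $(\ell^\infty(A),\ell^\infty(A))_{[\theta]}$ is just $\ell^\infty(K;A)$ isometrically, but the image of $T_\theta$ on the intersection is $(\min(w_0,w_1)/w_\theta)\ell^\infty(K;A)$, a proper subspace whose $\ell^\infty$-closure must be computed. This closure computation is the main obstacle, and this is where the hypothesis $w_\theta/\max(w_0,w_1)\in\ell^q(K)$ enters. A short case-check on the sign of $w_0-w_1$ at each $k$ shows $\min(w_0,w_1)/w_\theta=(w_\theta/\max(w_0,w_1))^{\alpha(k)}$ with $\alpha(k)\in\{\theta/(1-\theta),(1-\theta)/\theta\}$, both positive, so the hypothesis forces $\min(w_0,w_1)/w_\theta\to 0$ at infinity; hence $(\min/w_\theta)\ell^\infty(K;A)\subseteq\ell^\infty_0(K;A)$. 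Conversely, every $g\in\ell^\infty_0(K;A)$ is the sup-norm limit of its finite truncations, each of which lies trivially in $(\min/w_\theta)\ell^\infty$, so the closure equals $\ell^\infty_0(K;A)$. Pulling back by $T_\theta^{-1}$ gives $w_\theta\ell^\infty_0(K;A)$, as required.
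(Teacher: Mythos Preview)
The paper does not give a proof: it simply cites Triebel~\cite[Theorem 1.18.1 and Remarks 1.18.1/1,2,3]{Triebel2}, noting in a footnote that the third assertion is not literally there but ``can be proved with the same methods.'' So your proposal is more detailed than what the paper offers, and for the first two assertions your reduction via the analytic multiplier family is the standard route and is essentially correct (modulo some imprecision about which ``intersection'' is meant and about the continuity of $z\mapsto T_z F(z)$ in the ambient sum space, which is best handled by working first with finitely supported sequences).

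There is, however, a genuine gap in your treatment of the third assertion. You write that ``Stein's theorem then produces isometric contractions in both directions,'' and then identify the image of $T_\theta$ with the $\ell^\infty$-closure of $T_\theta(\text{intersection})$. But if Stein really applied to the inverse family $S_z=T_z^{-1}$ in this case, you would obtain a contraction $S_\theta\colon \ell^\infty(K;A)\to (w_0\ell^\infty,w_1\ell^\infty)_{[\theta]}$, and composing with the forward contraction would force $T_\theta$ to be a \emph{surjective} isometry onto $\ell^\infty(K;A)$, i.e.\ $(w_0\ell^\infty,w_1\ell^\infty)_{[\theta]}=w_\theta\ell^\infty$, contradicting the very statement you are trying to prove. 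The point is precisely that for general $g\in\ell^\infty(K;A)$ the candidate $z\mapsto w_0^{1-z}w_1^{\,z}g$ need \emph{not} lie in $\Fc(w_0\ell^\infty,w_1\ell^\infty)$: continuity (and analyticity) in the norm of $\max(w_0,w_1)\ell^\infty$ fails when $w_1/w_0$ is unbounded above or below.

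The fix is straightforward and uses the ingredients you already have. The forward bound $\norm{a/w_\theta}_{\ell^\infty}\leqslant\norm{a}_{[\theta]}$ follows from the three-lines theorem applied \emph{coordinatewise} to any $F\in\Fc$ with $F(\theta)=a$ (no global continuity of $T_zF(z)$ is needed). The reverse bound $\norm{a}_{[\theta]}\leqslant\norm{a/w_\theta}_{\ell^\infty}$ is established first for \emph{finitely supported} $a$, where $F(z)=w_0^{1-z}w_1^{\,z}(a/w_\theta)$ is a finite sum and hence genuinely lies in $\Fc$. Your closure computation then shows (under the hypothesis) that $\min(w_0,w_1)\ell^\infty\subseteq w_\theta\ell^\infty_0$; since the intersection is dense in the interpolation space and the two norms agree on finitely supported sequences, one concludes $X=w_\theta\ell^\infty_0$. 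So your closure argument is correct and is the heart of the matter --- you just need to replace the appeal to ``Stein in both directions'' by the coordinatewise three-lines bound plus the explicit construction on finite support.
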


\begin{prop}\label{prop:5}
	Take $p_0,p_1,q_0,q_1\in [1,\infty]$, $\vect{s_0},\vect{s_1}\in \R^r$ and $\theta\in ]0,1[$. Then,
	\[
	(\mathring B^{\vect{s_0}}_{p_0,q_0}(\Nc,\Omega),\mathring B^{\vect{s_1}}_{p_1,q_1}(\Nc,\Omega))_{[\theta]}=\mathring B^{\vect{s}_\theta}_{p_\theta,q_\theta}(\Nc,\Omega)
	\]
	where 
	\[
	\frac{1}{p_\theta}= \frac{1-\theta}{p_0}+\frac{\theta}{p_1} ,\qquad \frac{1}{q_\theta}= \frac{1-\theta}{q_0}+\frac{\theta}{q_1},\qquad \text{and} \qquad \vect{s}_\theta= (1-\theta)\vect{s_0} +\theta \vect{s_1}.
	\]
	In addition,
	\[
	( B^{\vect{s_0}}_{p_0,q_0}(\Nc,\Omega), B^{\vect{s_1}}_{p_1,q_1}(\Nc,\Omega))_{[\theta]}= B^{\vect{s}_\theta}_{p_\theta,q_\theta}(\Nc,\Omega)
	\]
	if $\min(q_0,q_1)<\infty$. 
	
	Finally, if $r=1$ and  $\vect{s_0}\neq \vect{s_1}$, then
	\[
	( B^{\vect{s_0}}_{p,\infty}(\Nc,\Omega), B^{\vect{s_1}}_{p,\infty}(\Nc,\Omega))_{[\theta]}=\mathring B^{\vect{s}_\theta}_{p,\infty}(\Nc,\Omega)
	\]
	for every $p\in [1,\infty]$.
\end{prop}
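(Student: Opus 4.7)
The plan is to reduce everything to Proposition~\ref{prop:4} via a standard retraction--coretraction argument. Fix a $(\delta,R)$-lattice $(\lambda_k)_{k\in K}$ on $\Omega'$ together with $(t_k)$, $(\varphi_k)$, $(\psi_k)$ as in Definition~\ref{def:2}, subject to $\sum_k \Fc_\Nc(\psi_k)=1$ on $\Omega'$; choose also an auxiliary family $(\psi'_k)$ with $\Fc_\Nc(\psi'_k)$ supported slightly larger than $\Fc_\Nc(\psi_k)$ and satisfying $\psi_k=\psi'_k*\psi_k$ for every $k\in K$. For $\vect s\in \R^r$ and $p,q\in [1,\infty]$ define
\[
R^{\vect s}_{p,q}\colon B^{\vect s}_{p,q}(\Nc,\Omega)\to w_{\vect s}\,\ell^q(K;L^p(\Nc)),\qquad R^{\vect s}_{p,q}(u)\coloneqq (u*\psi_k)_{k\in K},
\]
where $w_{\vect s}(k)\coloneqq \Delta_{\Omega'}^{\vect s}(\lambda_k)$, and
\[
S^{\vect s}_{p,q}\colon w_{\vect s}\,\ell^q(K;L^p(\Nc))\to B^{\vect s}_{p,q}(\Nc,\Omega),\qquad S^{\vect s}_{p,q}\bigl((f_k)_{k}\bigr)\coloneqq \sum_{k\in K} f_k*\psi'_k.
\]
By Corollary~\ref{cor:13} and a homogeneity argument together with the almost-disjoint support of the $\Fc_\Nc(\psi'_k)$ (and the associated finite overlap), $S^{\vect s}_{p,q}$ is continuous, and clearly $S^{\vect s}_{p,q}\circ R^{\vect s}_{p,q}=\mathrm{Id}$. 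The same pair restricts to the spaces with rings and to the corresponding $\ell^q_0(K;L^p_0(\Nc))$ subspaces.

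Once this is established, the three assertions follow from the retraction--coretraction principle for complex interpolation, which says that if $A_j$ is a retract of $X_j$ via the same pair $(R,S)$ for $j=0,1$, then $(A_0,A_1)_{[\theta]}$ is a retract of $(X_0,X_1)_{[\theta]}$ via the same pair, hence coincides with $S\bigl((X_0,X_1)_{[\theta]}\bigr)$.

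\textbf{First assertion.} Apply this principle with $X_j=w_{\vect{s_j}}\ell^{q_j}_0(K;L^{p_j}_0(\Nc))$. The first part of Proposition~\ref{prop:4}, applied first to the inner spaces $L^{p_j}_0(\Nc)$ (which is the classical Calder\'on result) and then to the outer $\ell^{q_j}_0$, yields $(X_0,X_1)_{[\theta]}=w_{\vect{s}_\theta}\ell^{q_\theta}_0(K;L^{p_\theta}_0(\Nc))$, whose image under $S^{\vect{s}_\theta}_{p_\theta,q_\theta}$ is precisely $\mathring B^{\vect{s}_\theta}_{p_\theta,q_\theta}(\Nc,\Omega)$.

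\textbf{Second assertion.} Apply the same principle with $X_j=w_{\vect{s_j}}\ell^{q_j}(K;L^{p_j}(\Nc))$. When $\min(q_0,q_1)<\infty$ we may use the second part of Proposition~\ref{prop:4} for the outer $\ell^{q_j}$; for the inner $L^{p_j}$, since $p_0,p_1\in [1,\infty]$ Calder\'on's theorem gives $(L^{p_0}(\Nc),L^{p_1}(\Nc))_{[\theta]}=L^{p_\theta}(\Nc)$ whenever $\min(p_0,p_1)<\infty$ (and trivially when $p_0=p_1=\infty$, both sides being $L^\infty(\Nc)$). Combining, $(X_0,X_1)_{[\theta]}=w_{\vect{s}_\theta}\ell^{q_\theta}(K;L^{p_\theta}(\Nc))$, and the conclusion follows by applying $S$.

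\textbf{Third assertion.} Here $p_0=p_1=p$, $q_0=q_1=\infty$, and $A\coloneqq L^p(\Nc)$ in Proposition~\ref{prop:4}. To invoke the third part of Proposition~\ref{prop:4} it suffices to verify that $w_{\vect{s}_\theta}/\max(w_{\vect{s_0}},w_{\vect{s_1}})\in \ell^q(K)$ for some finite $q$. Since $r=1$, $w_{\vect{s_j}}(k)=\Delta_{\Omega'}^{s_j}(\lambda_k)$ is a fixed real power $\lambda_k^{s_j}$ of $\lambda_k\in \Omega'\simeq \R_+^*$, and $s_\theta=(1-\theta)s_0+\theta s_1$ lies strictly between $s_0$ and $s_1$; consequently
\[
\frac{w_{\vect{s}_\theta}(k)}{\max(w_{\vect{s_0}}(k),w_{\vect{s_1}}(k))}=\min\bigl(\lambda_k^{s_\theta-s_0},\lambda_k^{s_\theta-s_1}\bigr),
\]
which decays like a strictly positive power both as $\lambda_k\to 0$ and as $\lambda_k\to \infty$. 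Since the lattice $(\lambda_k)$ on $(\Omega',d_{\Omega'})$ is such that the cardinality of points with $\lambda_k$ in any dyadic annulus is uniformly bounded (by invariance of $d_{\Omega'}$), summability over $K$ in $\ell^q$ for $q$ large enough follows. Proposition~\ref{prop:4}(iii) then yields $(X_0,X_1)_{[\theta]}=w_{\vect{s}_\theta}\ell^\infty_0(K;L^p(\Nc))$, whose image under $S$ is $\mathring B^{\vect{s}_\theta}_{p,\infty}(\Nc,\Omega)$.

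The main delicate step is the third assertion: one has to be careful that the retraction--coretraction really transports the $\ell^\infty_0$ subspace to the $\mathring{\phantom{B}}$-subspace (and not to something strictly smaller or larger), which is why the hypothesis $\vect{s_0}\neq \vect{s_1}$ (providing a ``decay gap'' between the two weights) is essential; this, together with $r=1$, is exactly what makes the summability ratio above work out.
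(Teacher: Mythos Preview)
Your argument is essentially the paper's own: both proceed by the retraction--coretraction method, using the analysis map $u\mapsto (u*\psi_k)$ and the synthesis map $(f_k)\mapsto \sum_k f_k*\psi'_k$ to reduce to Proposition~\ref{prop:4} (together with the classical $(L^{p_0},L^{p_1})_{[\theta]}=L^{p_\theta}$), and then invoking~\cite[Theorem 6.4.2]{BerghLofstrom}. Your treatment of the third assertion, verifying explicitly that $w_{\vect s_\theta}/\max(w_{\vect{s_0}},w_{\vect{s_1}})\in\ell^q(K)$ when $r=1$ and $\vect{s_0}\neq\vect{s_1}$, is more detailed than the paper's bare reference.

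One small correction: your citation of Corollary~\ref{cor:13} for the continuity of $S^{\vect s}_{p,q}$ is misplaced. That corollary requires \emph{both} factors to lie in spaces $\Oc^{p}(\varphi)$ with $\varphi\in\Sc_\Omega(\Nc)$, whereas the $f_k$ in your synthesis map are arbitrary elements of $L^p(\Nc)$. Since here $p\geqslant 1$, the right tool is simply Young's inequality together with the uniform bound $\sup_k\norm{\psi'_k*\psi_{k'}}_{L^1(\Nc)}<\infty$ (cf.~\cite[Proposition 4.2]{CalziPeloso}), exactly as in the paper's proof. This is only a citation issue; the estimate itself is valid.
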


\begin{proof}
	Take a $(\delta,R)$-lattice $(\lambda_k)_{k\in K}$ on $\Omega'$ for some $\delta>0$ and some $R>1$. Choose $t_k\in T_+$ so that $\lambda_k=e_{\Omega'}\cdot t_k$ for every $k\in K$. In addition, fix a bounded family $(\varphi_k)$ of  elements of $C^\infty_c(\Omega')$ and $\varphi'\in C^\infty_c(\Omega')$ such that
	\[
	\chi_{B_{\Omega'}(e_{\Omega'}, \delta)}\leqslant \varphi_k\leqslant\chi_{B_{\Omega'}(e_{\Omega'},  R\delta)} \leqslant \varphi'\leqslant \chi_{B_{\Omega'}(e_{\Omega'},  2R\delta)},
	\]
	and such that
	\[
	\sum_k \varphi_k(\,\cdot\,t_k^{-1})=1
	\]
	on $\Omega'$. Define $\psi_k\coloneqq \Fc_\Nc^{-1}(\varphi(\,\cdot\, t_k^{-1}))$ and $\psi'_k\coloneqq \Fc_\Nc^{-1}(\varphi'(\,\cdot\, t_k^{-1}))$ for every $k\in K$. Then, $\psi_k*\psi'_k=\psi_k$ for every $k\in K$, and there is $N\in \N$ such that for every $k\in K$ the set $K_k\coloneqq \Set{k'\in K\colon \psi_k*\psi'_{k'}\neq 0}$ has at most $N$ elements, while every element of $K$ is contained in at most $N$ of the sets $K_k$ (cf.~Section~\ref{sec:2}).
	
	Observe that, by definition, the mapping
	\[
	\Ic\colon \Sc'_{\Omega,L}(\Nc)\ni u\mapsto (u*\psi_k)\in \Sc'(\Nc)^K
	\]
	induces isomorphisms of $\mathring B^{\vect s}_{p,q}(\Nc,\Omega)$ and $B^{\vect s}_{p,q}(\Nc,\Omega)$ onto closed subspaces of $(\Delta_{\Omega'}^{-\vect s}(\lambda_k))  \ell^q_0(K;L^p_0(\Nc))$ and $(\Delta_{\Omega'}^{-\vect s}(\lambda_k))  \ell^q(K;L^p(\Nc))$, respectively, for every $p,q\in ]0,\infty]$ and for every $\vect s\in \R^r$.
	
	Now, let us prove that the continuous linear mapping\footnote{Notice that, if $H$ is a bounded subset of $\Sc_{\Omega,L}(\Nc)$, then there is a finite subset $K'$ of $K$ such that
	\[
	\left\langle u_k*\psi'_k\vert \tau\right\rangle =0
	\]
	for every $u_k\in \Sc'(\Nc)$, for every $\tau\in H$, and for every $k\in K\setminus K'$.  By the arbitrariness of $H$, the sum $ \sum_k u_k*\psi'_k$ converges in $ \Sc'_{\Omega,L}(\Nc)$ and $\Pc$ is continuous.}
	\[
	\Pc\colon \Sc'(\Nc)^K\ni (u_k)\mapsto \sum_k u_k*\psi'_k\in \Sc'_{\Omega,L}(\Nc),
	\]
	induces  continuous linear mappings of $(\Delta_{\Omega'}^{-\vect s}(\lambda_k)) \ell^q_0(K;L^p_0(\Nc))$ (resp.\ $(\Delta_{\Omega'}^{-\vect s}(\lambda_k))  \ell^q(K;L^p(\Nc))$) into  $\mathring B^{\vect s}_{p,q}(\Nc,\Omega)$ (resp.\ $B^{\vect s}_{p,q}(\Nc,\Omega)$) for every $p,q\in [1,\infty]$  and for every $\vect s\in \R^r$. 
	Take $(u_k)\in (\Delta_{\Omega'}^{-\vect s}(\lambda_k)) \ell^q_0(K;L^p_0(\Nc))$ (resp.\ $(u_k)\in (\Delta_{\Omega'}^{-\vect s}(\lambda_k))  \ell^q(K;L^p(\Nc))$) and define $u\coloneqq \Pc[(u_k)]$. Observe that
	\[
	u*\psi_k=\sum_{k'\in K_k} u_{k'}*\psi'_{k'}*\psi_k
	\]
	for every $k\in K$, so that  $u*\psi_k\in L^p_0(\Nc)$ (resp.\ $u*\psi_k\in L^p(\Nc)$) and
	\[
	\norm{u*\psi_k}_{L^p(\Nc)}\leqslant C_1\sum_{k'\in K_k} \norm{u_{k'}}_{L^p(\Nc)}
	\]
	for every $k\in K$, where $C_1\coloneqq \sup\limits_{k\in K} \norm{\psi_{k}}_{L^1(\Nc)}\sup\limits_{k\in K}\norm{\psi'_k}_{L^1(\Nc)}$ is finite by~\cite[Proposition 4.2]{CalziPeloso}.
	Hence, by~\cite[Corollary 2.49]{CalziPeloso}, there is a constant $C_2>0$ such that
	\[
	\Delta_{\Omega'}^{\vect s}(\lambda_k)\norm{u*\psi_k}_{L^p(\Nc)}\leqslant C_2\sum_{k'\in K_k} \Delta_{\Omega'}^{\vect s}(\lambda_{k'})\norm{u_{k'}}_{L^p(\Nc)}
	\]
	
	Therefore, $u\in \mathring B^{\vect s}_{p,q}(\Nc,\Omega)$ (resp.\ $u\in  B^{\vect s}_{p,q}(\Nc,\Omega)$) and 
	\[
	\norm*{\Delta_{\Omega'}(\lambda_k)\norm{u*\psi_k}_{L^p(\Nc)}}_{\ell^q(K)}\leqslant C_2 N \norm*{\Delta_{\Omega'}(\lambda_k)\norm{u_k}_{L^p(\Nc)}}_{\ell^q(K)},
	\]
	whence our assertion.
	Furthermore, observe that $\Pc \Ic(u)=u$ for every $u\in \Sc_{\Omega,L}'(\Nc)$.
	
	Therefore, the assertion follows from~\cite[Theorems 5.1.1 and 6.4.2]{BerghLofstrom} and Proposition~\ref{prop:4}.	
\end{proof}

We now proceed to defining a notion of complex interpolation for quasi-normed spaces embedded in a semi-complete Hausdorff locally convex space, inspired by~\cite[2.4.4]{Triebel}. 

\begin{deff}
	Let $A_0$ and $A_1$ be two quasi-normed spaces continuously embedded in a semi-complete Hausdorff locally convex space  $X$. We then say that $(A_0,A_1,X)$ is an admissible triple.
	
	Define $S\coloneqq \Set{z\in \C\colon 0<\Re z <1}$, and define $\Fc_X(A_0,A_1)$ as the space of bounded continuous functions $f\colon \overline S\to X$ which are holomorphic in $S$ 
	and map $i\R$ and $1+i \R$ boundedly into $A_0$ and $A_1$, respectively. We endow $\Fc_X(A_0,A_1)$ with the quasi-norm
	\[
	f \mapsto \max_{j=0,1} \sup\limits_{t\in\R} \norm{f(j+i t)}_{A_j}.
	\]
	
	For every $\theta\in ]0,1[$, we define $(A_0,A_1)_{X,\theta}$ as the image of the functional $\Fc_X(A_0,A_1)\ni f \mapsto f(\theta)\in X$, endowed with the corresponding topology.
\end{deff}

Note that we need not impose that $A_0$ and $A_1$ be complete, since holomorphy is defined with reference to the semi-complete Hausdorff locally convex space $X$.
In addition, observe that if $(A_0,A_1)$ is a Banach pair, then $(A_0,A_1,\Sigma(A_0,A_1))$ is an admissible triple and there are continuous  inclusions $(A_0,A_1)_{[\theta]}\subseteq (A_0,A_1)_{\Sigma(A_0,A_1),\theta}\subseteq (A_0,A_1)^{[\theta]}$ for every $\theta \in ]0,1[$, with the notation of~\cite{BerghLofstrom}. In particular, if $A_0$ or $A_1$ is reflexive, then the preceding inclusions are equalities by~\cite[Theorem 4.3.1]{BerghLofstrom}.  We do not know if any of the preceding inclusions is an equality in full generality.

\begin{prop}
	Let $(A_0,A_1,X)$ be an admissible triple. Then, $(A_0,A_1)_{X,\theta}$ is a quasi-normed space and embed continuously into $X$ for every $\theta\in ]0,1[$. If, in addition,  $A_0$ and $A_1$ are complete, then  $(A_0,A_1)_{X,\theta}$ is complete for every $\theta\in ]0,1[$.
\end{prop}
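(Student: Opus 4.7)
The plan is to reduce the statement to properties of the quasi-normed space $\Fc_X(A_0, A_1)$ itself, viewing $(A_0, A_1)_{X, \theta}$ as the image of $\Fc_X(A_0, A_1)$ under the evaluation map $\mathrm{ev}_\theta\colon f\mapsto f(\theta)$, endowed with the induced quotient quasi-seminorm. I must verify that this quotient is in fact a quasi-\emph{norm} continuously embedded in $X$, and, under the additional completeness assumption, that it is complete.

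First I would establish the basic pointwise estimate. For each continuous seminorm $p$ on $X$, the continuous inclusions $A_j \hookrightarrow X$ provide constants $C_{p,j}>0$ with $p(a) \leq C_{p,j} \|a\|_{A_j}$ for $a \in A_j$. For $f \in \Fc_X(A_0, A_1)$, the function $p \circ f$ is continuous and bounded on $\overline S$; writing $p = \sup_{\ell \in U_p} |\ell|$ on $X$, with $U_p$ the polar of the $p$-unit ball in $X'$, and noting that each $\ell \circ f$ is a bounded scalar holomorphic function on $S$, the function $p \circ f$ is the upper envelope of the subharmonic functions $|\ell \circ f|$, hence itself subharmonic. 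The Phragmén--Lindelöf principle for subharmonic functions on the strip then yields
\[
p(f(\theta)) \leq (1-\theta)\sup_{t \in \R} p(f(it)) + \theta \sup_{t \in \R} p(f(1+it)) \leq \max(C_{p,0}, C_{p,1})\, \|f\|_{\Fc_X(A_0, A_1)}.
\]
This shows that $\mathrm{ev}_\theta \colon \Fc_X(A_0, A_1) \to X$ is continuous; passing to the infimum over $f \in \mathrm{ev}_\theta^{-1}(a)$ yields the continuous embedding of $(A_0, A_1)_{X,\theta}$ into $X$. Separation follows: if the quotient quasi-norm of some $a \in (A_0, A_1)_{X,\theta}$ vanishes, choosing $f_n$ with $f_n(\theta) = a$ and $\|f_n\|_{\Fc_X} \to 0$ forces $p(a) = 0$ for every continuous seminorm $p$ on $X$, hence $a = 0$ by the Hausdorff property of $X$. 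Thus $(A_0, A_1)_{X, \theta}$ is a quasi-normed space continuously embedded into $X$.

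For the completeness assertion, I would first show that $\Fc_X(A_0, A_1)$ itself is quasi-Banach whenever $A_0, A_1$ are. Given a Cauchy sequence $(f_n)$, the restrictions $f_n(j+i\,\cdot\,)$ are Cauchy in the quasi-Banach space of bounded $A_j$-valued functions on $\R$ (for $j = 0, 1$) and so converge uniformly to bounded functions $g_j\colon \R \to A_j$; simultaneously, applying the estimate above to $f_n - f_m$ shows that $(f_n)$ is Cauchy uniformly on $\overline S$ with respect to every continuous seminorm of $X$, so the semi-completeness of $X$ produces a continuous uniform limit $f\colon \overline S \to X$ satisfying $f(j+it) = g_j(t) \in A_j$. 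Then $f \in \Fc_X(A_0, A_1)$ and $f_n \to f$ in $\Fc_X$. Since $\ker(\mathrm{ev}_\theta)$ is closed in $\Fc_X(A_0, A_1)$ by continuity into the Hausdorff space $X$, the interpolation space $(A_0, A_1)_{X,\theta}$ is a quotient of a quasi-Banach space by a closed subspace, hence quasi-Banach.

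The main technical point I expect will be to ensure that the uniform limit $f$ obtained above is genuinely holomorphic on $S$ with values in $X$, as required by the definition of $\Fc_X$. My plan is to use that $\ell \circ f_n$ converges uniformly on compact subsets of $S$ to $\ell \circ f$ for every $\ell \in X'$, so the scalar Weierstrass theorem yields weak holomorphy of $f$; combined with the Morera theorem for continuous $X$-valued functions (valid in the semi-complete Hausdorff locally convex setting because $X$-valued Riemann integrals of continuous functions over compact contours exist by sequential completeness), this upgrades to holomorphy of $f$ into $X$.
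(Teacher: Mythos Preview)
Your proposal is correct and follows essentially the same route as the paper's proof: establish a uniform pointwise estimate on $\overline S$ via a maximum-principle argument, deduce continuity of the evaluation map and the embedding into $X$, then prove completeness of $\Fc_X(A_0,A_1)$ and pass to the quotient. The only cosmetic difference is that you phrase the key estimate in terms of continuous seminorms $p$ on $X$ and the additive Phragm\'en--Lindel\"of bound for the subharmonic function $p\circ f$, whereas the paper works directly with linear functionals $v'$ ranging over equicontinuous subsets of $X'$ and invokes the three-lines lemma; the two formulations are equivalent via the bipolar theorem. Your treatment of the holomorphy of the uniform limit (weak holomorphy plus Morera, using sequential completeness of $X$ for the contour integrals) is more explicit than the paper's, which simply asserts that the uniform limit of holomorphic $X$-valued functions is holomorphic.
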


\begin{proof}
	By assumption, for every equicontinuous subset $H$ of $F'$ there is a constant $C_H>0$ such that $\abs{\left\langle v', x_j\right\rangle}\leqslant C_H \norm{x_j}_{A_j}$ for every $v'\in H$, for every $x_j\in A_j$, and for $j=0,1$.	
	Therefore, by the three-lines lemma,
	\[
	\abs{\left\langle v', f(z)\right\rangle}\leqslant C_H\norm{f}_{\Fc_X(A_0,A_1)}
	\]
	for every equicontinuous subset $H$ of $F'$, for every $v'\in H$, for every $z\in \overline S$, and for every $f\in \Fc_X(A_0,A_1)$. This implies that the mapping $\Fc_X(A_0,A_1) \ni f \mapsto f(\theta)\in X$ is continuous (cf.~\cite[Corollary 1 to Proposition 7 of Chapter III, \S 3, No.\ 5]{BourbakiTVS}), so that its kernel is closed, and that $f=0$ if $\norm{f}_{\Fc_X(A_0,A_1) }=0$.  It then follows easily that $\Fc_X(A_0,A_1)$ and $(A_0,A_1)_{X,\theta}$ are quasi-normed spaces continuously embedded in $X$ for every $\theta\in ]0,1[$.
	
	Then, assume that  $A_0$ and $A_1$ are complete, and let us prove that $\Fc_X(A_0,A_1)$ is complete. This will imply that $(A_0,A_1)_{X,\theta}$ is complete by~\cite[Chapter I, \S 3, No.\ 2]{BourbakiTVS}. 
	Then, let $(f_j)$ be a Cauchy sequence in $\Fc_X(A_0,A_1)$, and observe that the preceding estimates show that $(\left\langle v',f_j\right\rangle)$ is a Cauchy sequence in the topology of uniform convergence, uniformly as $v'$ runs through an equicontinuous subset of $F'$. By~\cite[Corollary 1 to Proposition 7 of Chapter III, \S 3, No.\ 5]{BourbakiTVS} again, this implies that $(f_j)$ converges uniformly on $\overline S$ to some bounded continuous function $f\colon \overline S\to X$, which is then holomorphic on $S$. Since  $(f_j(j+i t))$ is a Cauchy sequence in $A_j$ for every $t\in \R$ and for $j=0,1$, this implies that $f$ maps $j+i \R$ boundedly into $A_j$ for $j=0,1$, so that $f\in  \Fc_X(A_0,A_1)$. It is then clear that $(f_j)$ converges to $f$ in $\Fc_X(A_0,A_1)$. The proof is complete.
\end{proof}

The following result shows that $(\,\cdot\,,\,\cdot\,)_{\,\cdot\,,\theta}$ is an exact interpolation functor for the category of admissible triples, endowed with suitable morphisms.

\begin{prop}
	Let $(A_0,A_1,X)$ and $(B_0,B_1,Y)$ be two admissible triples, and let $T\colon X\to Y$ be a continuous linear mapping which maps $A_j$ continuously into $B_j$ for $j=0,1$. Then, $T$ maps $(A_0,A_1)_{X,\theta}$ continuously into $(B_0,B_1)_{Y,\theta}$ for every $\theta\in ]0,1[$. In addition, 
	\[
	\norm{T}_{\Lc((A_0,A_1)_{X,\theta},(B_0,B_1)_{Y,\theta})}\leqslant \norm{T}_{\Lc(A_0,B_0)}^{1-\theta}\norm{T}_{\Lc(A_1,B_1)}^{\theta}
	\]
	for every $\theta\in ]0,1[$.
\end{prop}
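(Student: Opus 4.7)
The plan is to imitate the classical proof of the complex interpolation theorem. Set $M_j \coloneqq \norm{T}_{\Lc(A_j, B_j)}$ for $j = 0,1$; by replacing $M_j$ with $M_j + \varepsilon$ throughout and eventually sending $\varepsilon \to 0^+$, I may assume $M_0, M_1 > 0$ (this is needed only so that the prefactor below is well defined). Given $x \in (A_0, A_1)_{X,\theta}$, pick a witness $f \in \Fc_X(A_0, A_1)$ with $f(\theta) = x$, and define
\[
g(z) \coloneqq \left(\frac{M_0}{M_1}\right)^{z-\theta} T(f(z)), \qquad z \in \overline S.
\]
Since $g(\theta) = T(x)$, it will suffice to show that $g \in \Fc_Y(B_0, B_1)$ and to estimate its norm.

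Continuity and boundedness of $g\colon \overline S \to Y$ are immediate from the continuity of $T\colon X \to Y$, the boundedness and continuity of $f$, and the boundedness of $(M_0/M_1)^{z-\theta}$ on $\overline S$. The boundary conditions $g(it) \in B_0$ and $g(1+it) \in B_1$, with the right boundedness, follow from $f(it) \in A_0$, $f(1+it) \in A_1$ and the assumption that $T$ maps $A_j$ continuously into $B_j$. For holomorphy of $g$ on $S$, note that for every equicontinuous $w' \in Y'$ the pullback $w' \circ T$ is an equicontinuous element of $X'$, so $z \mapsto \left\langle w', Tf(z)\right\rangle$ is holomorphic on $S$; by semi-completeness of $Y$ this yields $Y$-valued holomorphy of $T \circ f$, and hence of $g$. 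Thus $g \in \Fc_Y(B_0, B_1)$, and a direct computation on the two vertical lines gives
\[
\norm{g(it)}_{B_0} \leqslant (M_0/M_1)^{-\theta}\, M_0 \norm{f(it)}_{A_0} = M_0^{1-\theta} M_1^\theta \norm{f(it)}_{A_0}
\]
and
\[
\norm{g(1+it)}_{B_1} \leqslant (M_0/M_1)^{1-\theta}\, M_1 \norm{f(1+it)}_{A_1} = M_0^{1-\theta} M_1^\theta \norm{f(1+it)}_{A_1},
\]
so $\norm{g}_{\Fc_Y(B_0,B_1)} \leqslant M_0^{1-\theta} M_1^\theta \norm{f}_{\Fc_X(A_0,A_1)}$. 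Taking the infimum over $f$ with $f(\theta)=x$ and then letting $\varepsilon \to 0^+$ gives the claimed norm estimate.

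No serious obstacle arises, but the one subtlety distinguishing this argument from the Banach-pair case is the transfer of holomorphy from $X$ to $Y$ under $T$, since $X$ and $Y$ are only semi-complete Hausdorff locally convex spaces and $T$ is merely continuous linear. The semi-completeness built into the definition of an admissible triple is exactly what is needed to pass from weak $Y$-holomorphy of $T\circ f$ to $Y$-holomorphy in the sense required by the definition of $\Fc_Y(B_0,B_1)$; no additional assumption on $T$ is required.
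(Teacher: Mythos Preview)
Your proof is correct and follows essentially the same route as the paper: define $g(z)=(M_0/M_1)^{z-\theta}\,T f(z)$, check $g\in\Fc_Y(B_0,B_1)$ with $g(\theta)=Tx$, and read off the norm estimate on the boundary lines. The paper's proof is a two-line sketch of exactly this argument; your additional handling of the degenerate case $M_j=0$ and your explicit verification of $Y$-holomorphy (which in fact follows immediately from continuity of $T$, without appealing to semi-completeness) are harmless elaborations.
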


\begin{proof}
	Define $C_j\coloneqq \norm{T}_{\Lc(A_j,B_j)}$ for $j=0,1$.
	It suffices to observe that, if $f\in \Fc_X(A_0,A_1)$, then $g\coloneqq (C_0/C_1)^{\,\cdot\,-\theta} T f\in \Fc_Y(B_0,B_1)$, $g(\theta)=T[f(\theta)]$, and $\norm{g}_{\Fc_Y(B_0,B_1)}\leqslant C_0^{1-\theta} C_1^{\theta}\norm{f}_{\Fc_X(A_0,A_1)}$.
\end{proof}

\begin{teo}\label{prop:9}
	Take $p_0,p_1,q_0,q_1\in ]0,\infty]$, $\vect{s_0},\vect{s_1}\in \R^r$ and $\theta\in ]0,1[$. Then,
	\[
	(\mathring B^{\vect{s_0}}_{p_0,q_0}(\Nc,\Omega),\mathring B^{\vect{s_1}}_{p_1,q_1}(\Nc,\Omega))_{\Sc'_{\Omega,L}(\Nc),\theta}=\mathring B^{\vect{s}_\theta}_{p_\theta,q_\theta}(\Nc,\Omega)
	\]
	and
	\[
	( B^{\vect{s_0}}_{p_0,q_0}(\Nc,\Omega), B^{\vect{s_1}}_{p_1,q_1}(\Nc,\Omega))_{\Sc'_{\Omega,L}(\Nc),\theta}= B^{\vect{s}_\theta}_{p_\theta,q_\theta}(\Nc,\Omega),
	\] 
	where 
	\[
	\frac{1}{p_\theta}= \frac{1-\theta}{p_0}+\frac{\theta}{p_1} ,\qquad \frac{1}{q_\theta}= \frac{1-\theta}{q_0}+\frac{\theta}{q_1},\qquad \text{and} \qquad \vect{s}_\theta= (1-\theta)\vect{s_0} +\theta \vect{s_1}.
	\]
\end{teo}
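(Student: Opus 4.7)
The plan is to reduce, via a retract argument in the category of admissible triples, the desired interpolation identity to a corresponding identity for weighted sequence spaces, and then treat the latter separately in the Banach and quasi-Banach regimes. Fix $(\lambda_k)_{k\in K}$, $(\psi_k)$, $(\psi'_k)$ as in the proof of Proposition~\ref{prop:5}, and let $Y\coloneqq\Sc'(\Nc)^K$ endowed with the product topology, which is a semi-complete Hausdorff locally convex space. For $p,q\in{]0,\infty]}$ and $\vect s\in\R^r$ define the weighted sequence spaces
\[
E^{\vect s}_{p,q}\coloneqq (\Delta_{\Omega'}^{-\vect s}(\lambda_k))\,\ell^q(K;L^p(\Nc)),\qquad \mathring E^{\vect s}_{p,q}\coloneqq (\Delta_{\Omega'}^{-\vect s}(\lambda_k))\,\ell^q_0(K;L^p_0(\Nc)),
\]
each embedded continuously in $Y$ (when $p<1$, use Corollary~\ref{cor:10} to pass from $L^p\cap\Oc(\psi'_k*\psi_k)$ to $L^\infty\cap C^\infty\subseteq \Sc'(\Nc)$). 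I would then extend the proof of Proposition~\ref{prop:5} to all $p,q\in{]0,\infty]}$, using Corollary~\ref{cor:13} and a homogeneity argument in place of Young's inequality in the estimate for the retraction, producing continuous maps $\Ic\colon u\mapsto (u*\psi_k)$ and $\Pc\colon (u_k)\mapsto \sum_k u_k*\psi'_k$, with $\Pc\circ\Ic=\mathrm{id}$, both compatible with the ambient spaces $\Sc'_{\Omega,L}(\Nc)$ and $Y$. Thus $\Ic$ realises $B^{\vect s}_{p,q}(\Nc,\Omega)$ and $\mathring B^{\vect s}_{p,q}(\Nc,\Omega)$ as topological direct summands of $E^{\vect s}_{p,q}$ and $\mathring E^{\vect s}_{p,q}$ respectively.

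A standard retract principle for the modified interpolation functor (apply $\Ic$ and $\Pc$ componentwise to elements of $\Fc_{\Sc'_{\Omega,L}(\Nc)}$ and $\Fc_Y$ respectively) then reduces the theorem to
\[
(E^{\vect{s_0}}_{p_0,q_0},E^{\vect{s_1}}_{p_1,q_1})_{Y,\theta}=E^{\vect{s}_\theta}_{p_\theta,q_\theta}
\]
and its $\mathring{(\,\cdot\,)}$ analogue. For $p_j,q_j\in [1,\infty]$, this follows from Proposition~\ref{prop:4} once one verifies that on these concrete sequence Banach pairs the modified functor agrees with the classical one: if $\min(p_j,q_j)<\infty$ one of the endpoints is reflexive and~\cite[Theorem 4.3.1]{BerghLofstrom} applies, while in the all-$\ell^\infty$ case the modified functor yields precisely the third identity of Proposition~\ref{prop:4} (this is exactly what motivates the modified functor, when compared with the restriction $\min(q_0,q_1)<\infty$ in Proposition~\ref{prop:5}). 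For $\min(p_j,q_j)<1$ I would follow~\cite[2.4.4]{Triebel}: given a norm-one $u=(u_k)\in E^{\vect{s}_\theta}_{p_\theta,q_\theta}$, one constructs an explicit $f\in\Fc_Y(E^{\vect{s_0}}_{p_0,q_0},E^{\vect{s_1}}_{p_1,q_1})$ with $f(\theta)=u$ whose components are of the form
\[
f(z)(k)=\Delta_{\Omega'}^{(1-z)\vect{s_0}+z\vect{s_1}-\vect{s}_\theta}(\lambda_k)\,\|u_k\|_{L^{p_\theta}(\Nc)}^{\alpha(z)}\,\bigl|u_k/\|u_k\|_{L^{p_\theta}(\Nc)}\bigr|^{p_\theta/p(z)-1}u_k,
\]
with $1/p(z)=(1-z)/p_0+z/p_1$, $1/q(z)=(1-z)/q_0+z/q_1$, and $\alpha(z)$ chosen linearly so that the vertical-line norms in $E^{\vect{s_j}}_{p_j,q_j}$ both equal $1$ (and set to zero where $u_k=0$). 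The converse inequality in either regime comes from a three-lines argument applied to the sesquilinear pairing with a norm-realising functional on $E^{-\vect{s}_\theta-(1/p_\theta-1)_+(\vect b+\vect d)}_{p_\theta',q_\theta'}$ introduced in Section~\ref{sec:2}.

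The main obstacle I expect lies in the quasi-Banach regime $\min(p_j,q_j)<1$: one must make precise that the function $f$ above is $Y$-valued bounded-holomorphic on $\overline S$ and continuous up to the boundary with values uniformly in the correct endpoint spaces. The fractional powers of $|u_k|$ and of $\|u_k\|_{L^{p_\theta}(\Nc)}$ introduce branch and convergence issues which I would handle by first truncating $u$ to finitely many non-zero components, building $f$ explicitly in that finite case, and then passing to the limit using the completeness of $\Fc_Y$ together with the continuity of evaluation at $\theta$. The combination $\max(p,q)=\infty$ with $\min(p,q)<1$ is the genuinely subtle case, and it is precisely this phenomenon that the modified functor was introduced to accommodate.
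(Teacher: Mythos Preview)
Your retract strategy is natural but has two genuine gaps in the quasi-Banach regime that make it break down as written.

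First, for $p<1$ the ambient space $Y=\Sc'(\Nc)^K$ does \emph{not} contain $E^{\vect s}_{p,q}$: an arbitrary $L^p(\Nc)$ function with $p<1$ need not define a tempered distribution. Your parenthetical fix via Corollary~\ref{cor:10} only applies to sequences with $u_k\in \Oc(\psi'_k*\psi_k)$, i.e.\ to the image of $\Ic$, not to the full weighted $\ell^q(L^p)$ space. If you instead restrict to band-limited sequences, then your explicit interpolating function $f(z)(k)=\cdots |u_k|^{p_\theta/p(z)-1}u_k$ is no longer band-limited (fractional powers destroy the spectral support), so $f$ does not take values in the admissible triple you set up. The paper resolves this by working directly in $B^{\vect s}_{p,q}(\Nc,\Omega)$ and writing $U(z)=\sum_k(\cdots)(|u*\psi_k|^{az+b}u*\psi_k)*\psi'_k$: the final convolution with $\psi'_k$ restores membership in $\Sc'_{\Omega,L}(\Nc)$, the boundedness $u*\psi_k\in L^\infty$ from Corollary~\ref{cor:10} makes holomorphy immediate, and Corollary~\ref{cor:12} (not Young) controls $\|(|u*\psi_k|^{aj+b+1})*|\psi'_k*\psi_{k'}|\|_{L^{p_j}}$ for the endpoint estimates.

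Second, your converse via a norming functional cannot work when $p_\theta<1$: the topological dual of $L^{p_\theta}(\Nc)$ is trivial, so no sesquilinear pairing recovers the $L^{p_\theta}$ norm. The paper instead applies a pointwise three-lines inequality with probability measures $\mu_0,\mu_1$ (from~\cite[2.4.6/2]{Triebel2}) to the scalar holomorphic functions $z\mapsto (U_\varepsilon(z)*\psi_k)(\zeta,x)$, and then uses H\"older and Minkowski's integral inequalities to pass to the $L^{p_\theta}$ and $\ell^{q_\theta}$ norms; this works uniformly for all $p_\theta\in{]0,\infty]}$. (Incidentally, your Banach-case claim that ``$\min(p_j,q_j)<\infty$ implies one endpoint is reflexive'' is also false: take $p_0=q_0=1$.)
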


\begin{proof}
	Fix $(\lambda_k)$, $(\varphi_k)$, $(\psi_k)$, and $(\psi'_k)$ as in the proof of Proposition~\ref{prop:5}. Define $K_k\coloneqq \Set{k'\in K\colon \psi'_{k'}*\psi'_k\neq 0}$, and observe that there is $N\in\N$ such that $\card(K_k)\leqslant N$ for every $k\in K$ (cf.~Section~\ref{sec:2}).
	Take $u\in B^{\vect{s}_\theta}_{p_\theta,q_\theta}(\Nc,\Omega)$	such that
	\[
	\norm*{\Delta_\Omega^{\vect{s}_\theta}(\lambda_{k}) \norm{u*\psi_{k}}_{L^{p_\theta}(\Nc)}}_{\ell^{q_\theta}(K)}=1,
	\]
	and define
	\[
	U\colon \C\ni z\mapsto \sum_{k\in K} \Delta_{\Omega'}^{z \vect{s_0'}+\vect{s_1'}}(\lambda_k)\norm{u*\psi_{k}}_{L^{p_\theta}(\Nc)} ^{c z+d}  \big(\abs{u*\psi_k}^{a z+b}u*\psi_k\big)*\psi'_k\in \Sc'_{\Omega,L}(\Nc)
	\]
	for some $a,b,c,d\in \R$ and $\vect{s_0'},\vect{s_1'}\in \R^r$ to be chosen. Let us first prove that $U$ is well defined, holomorphic, and bounded on $\overline S$. Arguing as in the proof of Proposition~\ref{prop:5}, we see that $U$ is well defined. Then, take $\tau\in \Sc_{\Omega,L}(\Nc)$, and observe that there is a finite subset $K'$ of $K$ such that
	\[
	\left\langle U(z) \vert \tau\right\rangle= \sum_{k\in K'} \Delta_{\Omega'}^{z \vect{s_0'}+\vect{s_1'}}(\lambda_k)\norm{u*\psi_{k}}_{L^{p_\theta}(\Nc)} ^{c z+d} \left\langle  \big(\abs{u*\psi_k}^{a z+b}u*\psi_k\big)*\psi'_k\Big\vert \tau \right\rangle
	\]
	for every $z\in \C$. In order to prove that $U$ is holomorphic and bounded on $\overline S$, it then suffices to prove that the mapping $z \mapsto \left\langle  \abs{u*\psi_k}^{a z+b}u*\psi_k\Big\vert \tau *\psi'^*_k\right\rangle$ is holomorphic on $\C$ and bounded on $\overline S$, for every $k\in K'$. Since $u*\psi_k\in L^\infty(\Nc)$ by Corollary~\ref{cor:10}, this is easily proved.
	
	Observe that $U(\theta)=u$ if $a\theta+b=c\theta+d=0$ and $\theta\vect{s_0'}+\vect{s_1'}=\vect 0$.  In addition, by Corollary~\ref{cor:12} and a homogeneity argument, there is a constant $C_1>0$ such that 
	\[
	\norm*{\big(\abs{u*\psi_k}^{a j+b}u*\psi_k\big)*\psi'_k*\psi_{k'}}_{L^{p_j}(\Nc)}\leqslant\norm*{\abs{u*\psi_k}^{a j+b+1}*\abs{\psi'_k*\psi_{k'}}}_{L^{p_j}(\Nc)}\leqslant C_1\norm{u*\psi_k}_{L^{p_\theta}(\Nc)}^{a j+b+1}
	\]
	for every $j=0,1$ and for every $k,k'\in K$, provided that  $\frac{a j+b+1}{p_\theta}=\frac{1}{p_j}$ if $p_\theta<\infty $ and $aj+b=0$ if $p_\theta=\infty$. We then choose $a\coloneqq \frac{p_\theta}{p_0}-\frac{p_\theta}{p_1}$ and $b\coloneqq \frac{p_\theta}{p_0}-1$ if $p_\theta<\infty$, and   simply $a=b=0$ if $p_\theta=\infty$, so that the preceding conditions are satisfied.
	Therefore, by means of~\cite[Corollary 2.49]{CalziPeloso}, we see that there is a constant $C_2>0$ such that
	\[
	\begin{split}
		\Delta_\Omega^{\vect{s_j}}(\lambda_k)\norm{U(j+i t)*\psi_k }_{L^{p_j}(\Nc)}&\leqslant \Delta_\Omega^{\vect{s_j}}(\lambda_k)C_1 N^{(1/p_j-1)_+} \sum_{k'\in K_k}\Delta_\Omega^{j \vect{s_0'}+\vect{s_1'}}(\lambda_{k'}) \norm{u*\psi_{k'}}_{L^{p_\theta}(\Nc)}^{(a+c)j+b+d+1}\\
			&\leqslant C_2 \sum_{k'\in K_k}\Delta_\Omega^{\vect{s}_\theta}(\lambda_{k'}) \norm{u*\psi_{k'}}_{L^{p_\theta}(\Nc)}^{(a+c)j+b+d+1}\\
	\end{split}
	\]
	for every $t\in \R$, for every $k\in K$, and for $j=0,1$, provided that $\vect{s_j}+j \vect{s_0'}+\vect{s_1'}=\vect{s}_\theta$. Then, set $c\coloneqq \frac{q_\theta}{q_0}-\frac{q_\theta}{q_1}-a$ and $d\coloneqq \frac{q_\theta}{q_0}-1-b$ if $q_\theta<\infty$ and $c\coloneqq-a$ and $d\coloneqq-b$ if $q_\theta=\infty$, so that $c\theta+d=0$, and $\frac{(a+c) j+b+d+1}{q_j}=\frac{1}{q_\theta}$ if $q_\theta<\infty$ and $(a+c)j+b+d=0$ if $q_\theta=\infty$. In addition, set $\vect{s_0'}\coloneqq \vect{s_0}-\vect{s_1}$ and $\vect{s_1'}\coloneqq \vect{s}_\theta-\vect{s_0}$, so that the preceding conditions are satisfied. Then,
	\[
	\norm*{\Delta_\Omega^{\vect{s}_j}(\lambda_k)\norm{U(j+i t)*\psi_k }_{L^{p_j}(\Nc)}}_{\ell^{q_j}(K)}\leqslant C_2 N^{\max(1,1/q_j)}
	\] 
	for $j=0,1$. This proves that  $B^{\vect{s}_\theta}_{p_\theta,q_\theta}(\Nc,\Omega)\subseteq( B^{\vect{s_0}}_{p_0,q_0}(\Nc,\Omega), B^{\vect{s_1}}_{p_1,q_1}(\Nc,\Omega))_{\Sc'_{\Omega,L}(\Nc),\theta}$ continuously. 
	
	If, in addition, $u\in \mathring B^{\vect{s}_\theta}_{p_\theta,q_\theta}(\Nc,\Omega)$, then it is readily verified that $U(j+i t)\in \mathring B^{\vect s_j}_{p_j,q_j}(\Nc,\Omega)$ for $j=0,1$ and for every $t\in \R$, so that $\mathring B^{\vect{s}_\theta}_{p_\theta,q_\theta}(\Nc,\Omega)\subseteq( \mathring B^{\vect{s_0}}_{p_0,q_0}(\Nc,\Omega), \mathring B^{\vect{s_1}}_{p_1,q_1}(\Nc,\Omega))_{\Sc'_{\Omega,L}(\Nc),\theta}$ continuously. 
		
	Conversely,  take $u\in ( B^{\vect{s_0}}_{p_0,q_0}(\Nc,\Omega), B^{\vect{s_1}}_{p_1,q_1}(\Nc,\Omega))_{\Sc'_{\Omega,L}(\Nc),\theta}$, and take 
	\[
	U\in \Fc_{\Sc'_{\Omega,L}(\Nc)}(B^{\vect{s_0}}_{p_0,q_0}(\Nc,\Omega), B^{\vect{s_1}}_{p_1,q_1}(\Nc,\Omega))
	\]
	such that $U(\theta)=u$. 
	Set $\ell\coloneqq\min(p_0,p_1,q_0,q_1)$. Then,~\cite[2.4.6/2]{Triebel2} shows that there are two probability measures $\mu_0,\mu_1$ on $\R$ such that
	\[
	\abs{f(z)}^\ell\leqslant \left( \int_\R \abs{f(it)}^\ell\,\dd \mu_0(t) \right)^{1-\theta} \left( \int_\R \abs{f(1+it)}^\ell\,\dd \mu_1(t) \right)^\theta
	\]
	for every $z\in \theta+i \R$ and for every bounded uniformly continuous function $f\colon\overline S\to \C$ which is holomorphic on $S$.
	For every $\varepsilon>0$, define $U_\varepsilon\colon \overline S\ni z \mapsto \ee^{\varepsilon(z^2-z)-\varepsilon(\theta^2-\theta)} U(z)\in \Sc'_{\Omega,L}(\Nc)$, so that $U_\varepsilon\in  \Fc_{\Sc'_{\Omega,L}(\Nc)}(B^{\vect{s_0}}_{p_0,q_0}(\Nc,\Omega), B^{\vect{s_1}}_{p_1,q_1}(\Nc,\Omega))$ and $U_\varepsilon(\theta)=u$. Observe that, for every $(\zeta,x)\in \Nc $, the function 
	\[
	z \mapsto (U_\varepsilon(z)*\psi_k)(\zeta,x)= \ee^{\varepsilon(z^2-z)-\varepsilon(\theta^2-\theta)}\left\langle U(z) \vert L_{(\zeta,x)}\psi_k^*\right\rangle
	\]
	is bounded and uniformly continuous (actually, vanishes at the point at infinity of $\overline S$). 
	Then, by H\"older's and Minkowski's integral inequalities,
	\[
	\begin{split}
	\norm{u*\psi_k}_{L^{p_\theta}(\Nc)}&\leqslant \norm*{\left( \int_\R \abs*{U_\varepsilon(i t)*\psi_k}^\ell\,\dd \mu_0(t) \right)^{1-\theta} \left( \int_\R \abs*{U_\varepsilon(1+i t)*\psi_k}^\ell\,\dd \mu_1(t) \right)^\theta   }_{L^{p_\theta/\ell}(\Nc)}^{1/\ell}\\
		&\leqslant \norm*{ \int_\R \abs*{U_\varepsilon(i t)*\psi_k}^\ell\,\dd \mu_0(t) }_{L^{p_0/\ell}(\Nc)}^{(1-\theta)/\ell} \norm*{\int_\R \abs*{U_\varepsilon(1+i t)*\psi_k}^\ell\,\dd \mu_1(t)    }_{L^{p_1/\ell}(\Nc)}^{\theta/\ell}\\
		&\leqslant \left(\int_\R \norm{U_\varepsilon(i t)*\psi_k}_{L^{p_0}(\Nc)}^\ell\,\dd \mu_0(t) \right)^{(1-\theta)/\ell} \left(\int_\R \norm{U_\varepsilon(1+i t)*\psi_k}_{L^{p_1}(\Nc)}^\ell\,\dd \mu_1(t)    \right)^{\theta/\ell }
	\end{split}
	\]
	for every $k\in K$. Therefore, by H\"older's and Minkowski's integral inequalities again,
	\[
	\begin{split}
		\norm*{\Delta_{\Omega'}^{\vect s_\theta}(\lambda_k) \norm{u*\psi_k}_{L^{p_\theta}(\Nc)}}_{\ell^{q_\theta}(K)}&\leqslant \left(\int_\R \norm*{\Delta_{\Omega'}^{\vect s_0}(\lambda_k)\norm{U_\varepsilon(i t)*\psi_k}_{L^{p_0}(\Nc)}}_{L^{q_0}(K)}^\ell\,\dd \mu_0(t) \right)^{(1-\theta)/\ell} \\
			&\qquad \times\left(\int_\R \norm*{\Delta_{\Omega'}^{\vect s_1}(\lambda_k)\norm{U_\varepsilon(1+i t)*\psi_k}_{L^{p_1}(\Nc)}}_{L^{q_1}(K)}^\ell\,\dd \mu_1(t)    \right)^{\theta/\ell }\\
			& \leqslant \norm{U_\varepsilon}_{\Fc_{\Sc'_{\Omega,L}(\Nc)}(B^{\vect{s_0}}_{p_0,q_0}(\Nc,\Omega), B^{\vect{s_1}}_{p_1,q_1}(\Nc,\Omega))}.
	\end{split}
	\]
	By the arbitrariness of $\varepsilon>0$ and $U$, this implies that $( B^{\vect{s_0}}_{p_0,q_0}(\Nc,\Omega), B^{\vect{s_1}}_{p_1,q_1}(\Nc,\Omega))_{\Sc'_{\Omega,L}(\Nc),\theta}$ embeds continuously into $ B^{\vect{s}_\theta}_{p_\theta,q_\theta}(\Nc,\Omega)$. 
	
	Finally, assume that  $u\in (\mathring  B^{\vect{s_0}}_{p_0,q_0}(\Nc,\Omega), \mathring B^{\vect{s_1}}_{p_1,q_1}(\Nc,\Omega))_{\Sc'_{\Omega,L}(\Nc),\theta}$, and that
	\[
	U\in \Fc_{\Sc'_{\Omega,L}(\Nc)}(\mathring B^{\vect{s_0}}_{p_0,q_0}(\Nc,\Omega), \mathring B^{\vect{s_1}}_{p_1,q_1}(\Nc,\Omega)).
	\]
	Then, the preceding remarks imply that
	\[
	\abs{(u*\psi_k)(\zeta,x)}\leqslant \left( \int_\R \abs{(U_\varepsilon(it)*\psi_k)(\zeta,x)}^\ell\,\dd \mu_0(t) \right)^{(1-\theta)/\ell} \left( \int_\R \abs{(U_\varepsilon(1+it)*\psi_k)(\zeta,x)}^\ell\,\dd \mu_1(t) \right)^{\theta/\ell},
	\]
	for every $(\zeta,x)\in \Nc$ and for every $k\in K$,
	so that $u*\psi_k\in C_0(\Nc)$ by Corollary~\ref{cor:10} and the dominated convergence theorem. Hence, $u*\psi_k\in L^{p_\theta}_0(\Nc)$ for every $k\in K$. Analogously, from the inequality (proved above)
	\[
	\begin{split}
	\Delta_{\Omega'}^{\vect s_\theta}(\lambda_k)\norm{u*\psi_k}_{L^{p_\theta}(\Nc)}&\leqslant \left(\int_\R \Delta_{\Omega'}^{\vect s_0}(\lambda_k)\norm{U_\varepsilon(i t)*\psi_k}_{L^{p_0}(\Nc)}^\ell\,\dd \mu_0(t) \right)^{(1-\theta)/\ell} \\
		&\qquad \times\left(\int_\R \Delta_{\Omega'}^{\vect s_1}(\lambda_k)\norm{U_\varepsilon(1+i t)*\psi_k}_{L^{p_1}(\Nc)}^\ell\,\dd \mu_1(t)    \right)^{\theta/\ell },
	\end{split}
	\]
	for every $k\in K$, one deduces that $(\Delta_{\Omega'}^{\vect s_\theta}(\lambda_k)\norm{u*\psi_k}_{L^{p_\theta}(\Nc)})\in \ell^\infty_0(K)$, so that $(\Delta_{\Omega'}^{\vect s_\theta}(\lambda_k)\norm{u*\psi_k}_{L^{p_\theta}(\Nc)})\in \ell^{q_\theta}_0(K)$. Hence, $u\in \mathring B^{\vect s_\theta}_{p_\theta,q_\theta}(\Nc,\Omega)$.
\end{proof}

\section{`Classical' Besov spaces on $\Nc$}\label{sec:7}

In this section we introduce some Besov spaces on $\Nc$ associated to a suitable positive Rockland operator.\footnote{A Rockland operator is a homogeneous, hypoelliptic, left-invariant differential operator. We say that a Rockland operator $\Rc$ is positive if $\int_\Nc (\Rc f) \overline f\,\dd \Hc^{2n +m}=\int_\Nc f \overline {\Rc f}\,\dd \Hc^{2n +m}\geqslant 0$ for every $f\in C^\infty_c(\Nc)$. } In the classical case, that is, when $\Nc=F$, these Besov spaces are exactly the classical \emph{homogeneous} Besov spaces on $F$. In the general case, the resulting spaces are non-commutative analogues of the classical homogeneous Besov spaces. 
Since the Sobolev spaces associated to positive Rockland operators on graded groups investigated in~\cite{FischerRuzhansky} do not depend on the choice of the Rockland operator, it is likely that the same holds for the Besov spaces defined in this setting, even though we shall not prove that.
We mention here that Besov spaces on graded Lie groups were studied in~\cite{CardonaRuzhansky}, which nonetheless contains no proofs. For this reason, but also to discuss the space $\Sc_\Lc(\Nc)$ and its relationships with the Besov spaces $B^s_{p,q}(\Nc,\Lc)$, we shall provide complete proofs. 
Notice, in addition, that, in virtue of the (real) interpolation results claimed in~\cite[Theorem 3.2]{CardonaRuzhansky}, the Besov spaces $B^s_{p,q}(\Nc,\Lc)$ do not actually depend on the positive Rockland operator $\Lc$, \emph{at least when $p\in]1,\infty[$ and $q\in [1,\infty[$}.

\begin{deff}
	Define
	\[
	\Lc\coloneqq \frac 1 4 \left(\sum_{j} (Z_j\overline{Z_j}+\overline{Z_j} Z_j)\right)^2-\sum_{k} U_k^2,
	\]
	where the $Z_j$  are the left-invariant vector fields (with complex coefficients) on $\Nc$ such that $(Z_j)_{(0,0)}=\partial_{E,v_j}$  for some orthonormal basis $(v_j)$ of $E$ over $\C$, while $(U_k)$ is an orthonormal basis of invariant vector fields on the centre $F$ on $\Nc$. 
	
	Given a bounded measurable function $\varphi\colon \R\to \C$, we denote by $\Kc(\varphi)$ the (right) convolution kernel of the operator $\varphi(\Lc)$, defined by spectral calculus.
\end{deff}

Observe that $\Lc$ does not depend on the choice of $(Z_j)$, $(\overline{Z_j})$, and $(U_k)$. In addition, $\Lc$ is a positive Rockland operator of degree $2$ on $\Nc$, so that it induces an essentially self-adjoint operator on $L^2(\Nc)$ with initial domain $C^\infty_c(\Nc)$. We shall therefore make use of the corresponding spectral calculus. Cf.\ e.g.~\cite{Martini,Calzi,Calzi2} for more information on the spectral calculus associated to positive Rockland (or more general) operators on graded (or general) Lie groups. We mention, in particular, that $\Kc(\Sc(\R))\subseteq \Sc(\Nc)$ and that $\Kc(m_1 m_2)=\Kc(m_1)*\Kc(m_2)$ for every two bounded measurable functions $m_1,m_2$ on $\R$.
Observe that $\Lc$ has real coefficients, so that $\Kc(\varphi)$ is real whenever $\varphi$ is real.

\medskip

For every $\lambda\in F'$, define $J_\lambda\in \Lc_\C(E)$ so that
\[
\left\langle \lambda_\C, \Phi(\zeta,\zeta')\right\rangle=\left\langle \zeta \vert -i J_\lambda \zeta'\right\rangle_E
\]
for every $\zeta,\zeta'\in E$.

\begin{prop}\label{prop:1}
	Take $\lambda\in \Omega'$. Then, $\dd \pi_\lambda(\Lc)$ has purely discrete spectrum and $P_{\lambda,0}$ is an eigenprojector of $\dd \pi_\lambda(\Lc)$. In addition, 
		\[
		\dd \pi_\lambda(\Lc) P_{\lambda,0}=\left((\tr\abs{J_\lambda})^2+\abs{\lambda}^2\right) P_{\lambda,0}=\left(\Big(\sum_{j}\left\langle \lambda, \Phi(v_j)\right\rangle\Big)^2+\abs{\lambda}^2\right) P_{\lambda,0}
		\]
		for every orthonormal basis $(v_j)$ of $E$ over $\C$.
\end{prop}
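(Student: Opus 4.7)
The plan is to diagonalize $\dd\pi_\lambda(\Lc)$ explicitly in a Fock-type basis of $H_\lambda$, from which the spectrum and the claimed eigenvalue can be read off directly.

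First, I would compute the Lie-algebra representation on smooth vectors. Writing $Z_j = \tfrac{1}{2}(X_j - iY_j)$ and $\overline{Z_j} = \tfrac{1}{2}(X_j + iY_j)$, where $X_j$ and $Y_j$ are the real left-invariant vector fields extending $v_j, iv_j \in E$, a direct differentiation of $\pi_\lambda(tv_j, 0)\varphi$ and $\pi_\lambda(itv_j, 0)\varphi$ at $t = 0$---using that the elements of $H_\lambda$ are holomorphic, so that the real directional derivative along $v_j$ coincides with the complex derivative $\partial_{E, v_j}$---should give, on smooth vectors,
\[
\dd\pi_\lambda(Z_j) = -\partial_{E, v_j}, \qquad \dd\pi_\lambda(\overline{Z_j}) = 2\langle \lambda_\C, \Phi(\,\cdot\,, v_j)\rangle, \qquad \dd\pi_\lambda(U_k) = -i\langle \lambda, e_k\rangle,
\]
where $(e_k)$ is the orthonormal basis of $F$ corresponding to $(U_k)$. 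In particular $\dd\pi_\lambda(\overline{Z_j})$ is multiplication by a holomorphic function and $\dd\pi_\lambda(-\sum_k U_k^2) = \abs{\lambda}^2$.

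Next, I would analyze $J_\lambda$. The hermitianness of $\Phi$ and the reality of $\lambda$ force $A_\lambda \coloneqq -iJ_\lambda$ to be self-adjoint on $E$, and $\lambda \in \Omega'$ makes it strictly positive (since $\langle \zeta \vert A_\lambda \zeta\rangle_E = \langle \lambda, \Phi(\zeta)\rangle > 0$ for $\zeta \neq 0$), whence $\abs{J_\lambda} = A_\lambda$. Choosing $(v_j)$ to diagonalize $A_\lambda$, with $A_\lambda v_j = a_j v_j$ and $a_j > 0$, in the holomorphic coordinates $\omega = \sum \omega^j v_j$ one has $\langle \lambda_\C, \Phi(\omega, v_j)\rangle = \langle \omega \vert A_\lambda v_j\rangle_E = a_j\omega^j$. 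Substituting into the formulas above, on holomorphic smooth vectors,
\[
\dd\pi_\lambda\Big(\sum_j(Z_j\overline{Z_j} + \overline{Z_j} Z_j)\Big) = -2\tr(A_\lambda) - 4\sum_j a_j\,\omega^j \partial_{E, v_j}.
\]

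Finally, I would read off the spectrum. The weight $\ee^{-2\langle \lambda, \Phi(\omega)\rangle} = \ee^{-2\sum_j a_j \abs{\omega^j}^2}$ factorizes, so the holomorphic monomials $\omega^\alpha = \prod_j (\omega^j)^{\alpha_j}$ for $\alpha \in \N^n$ form an orthogonal Fock-type basis of $H_\lambda$; each is an eigenvector of $\sum_j a_j\omega^j\partial_{E,v_j}$ with eigenvalue $\langle a, \alpha\rangle$, hence of $\dd\pi_\lambda(\sum_j(Z_j\overline{Z_j} + \overline{Z_j} Z_j))$ with eigenvalue $-2(\tr A_\lambda + 2\langle a, \alpha\rangle)$, and therefore of $\dd\pi_\lambda(\Lc)$ with eigenvalue $(\tr A_\lambda + 2\langle a, \alpha\rangle)^2 + \abs{\lambda}^2$. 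These eigenvalues tend to $\infty$ as $\abs{\alpha} \to \infty$ with finite multiplicity, which shows that the closure of $\dd\pi_\lambda(\Lc)$---essentially self-adjoint by the Rockland property of $\Lc$---has purely discrete spectrum. The minimum, attained only at $\alpha = 0$ because every $a_j > 0$, is $(\tr A_\lambda)^2 + \abs{\lambda}^2 = (\tr\abs{J_\lambda})^2 + \abs{\lambda}^2$, with eigenspace equal to the constants, that is, the range of $P_{\lambda,0}$; this proves that $P_{\lambda,0}$ is an eigenprojector. The identity $\tr\abs{J_\lambda} = \sum_j \langle \lambda, \Phi(v_j)\rangle$ in an arbitrary orthonormal basis is just the basis-independence of the trace. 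The main technical obstacle I expect is Step~1, namely keeping the sesquilinearity conventions of $\Phi$ and of $\langle\,\cdot\,\vert\,\cdot\,\rangle_E$ consistent so that $\dd\pi_\lambda(\overline{Z_j})$ really is multiplication by a holomorphic function and hence stabilizes $H_\lambda$; once this bookkeeping is done, everything else is an explicit Fock-space diagonalization.
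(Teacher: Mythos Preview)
Your proposal is correct and follows essentially the same approach as the paper: both diagonalize $\dd\pi_\lambda(\Lc)$ on the Fock-type monomial basis $w_\alpha=\prod_j\langle\lambda,\Phi(\,\cdot\,,v_j)\rangle^{\alpha_j}$ (equivalently $\omega^\alpha$) after choosing $(v_j)$ orthonormal for $\langle\lambda,\Phi\rangle$, obtaining the eigenvalues $(\tr\abs{J_\lambda}+2\langle a,\alpha\rangle)^2+\abs{\lambda}^2$. The only difference is cosmetic: the paper quotes the closed formula $\dd\pi_\lambda(\Lc)=\big(\sum_j(2\langle\lambda,\Phi(\,\cdot\,,v_j)\rangle\partial_{v_j}+\langle\lambda,\Phi(v_j)\rangle)\big)^2+\abs{\lambda}^2$ from~\cite[Proposition 1.15]{CalziPeloso}, whereas you rederive it by computing $\dd\pi_\lambda(Z_j)$ and $\dd\pi_\lambda(\overline{Z_j})$ separately.
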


\begin{proof}
	Observe that
	\[
	\dd \pi_\lambda(\Lc)=\left(  \sum_{j} (2 \left\langle \lambda, \Phi(\,\cdot\,,v_j)\right\rangle\partial_{v_j}+ \left\langle \lambda, \Phi(v_j)\right\rangle I)\right) ^2+\abs{\lambda}^2 I
	\]
	for every orthonormal basis $(v_j)$ of $E$ over $\C$, thanks to~\cite[Proposition 1.15]{CalziPeloso}.
	Now, choose the orthonormal basis $(v_j)$ of $E$ over $\C$ so that it is orthogonal for the scalar product $\left\langle\lambda,\Phi(\,\cdot\,,v_j)\right\rangle$.	
	In order to see that $\dd \pi_\lambda(\Lc)$ has purely discrete spectrum, it will suffice to observe that the monomials $w_\alpha\coloneqq \prod_{j}\left\langle \lambda,\Phi(\,\cdot\,,v_j)\right\rangle^{\alpha_j}$, $\alpha\in\N^n$, form a total orthogonal family in $H_\lambda$ (argue as in~\cite[Theorem 1.63]{Folland}), and that
	\[
	\dd \pi_\lambda(\Lc) w_\alpha= \left( \Big(\sum_j (2\alpha_j+1) \left\langle \lambda, \Phi(v_j)\right\rangle \Big)^2+\abs{\lambda}^2\right)w_\alpha
	\]
	thanks to the above formula for $\dd \pi_\lambda(\Lc)$, for every $\alpha\in\N^n$.
	In particular, $P_{\lambda,0}$ is an eigenprojector of $\dd \pi_\lambda(\Lc)$, with corresponding eigenvalue
	\[
	  \Big( \sum_{j}\left\langle \lambda, \Phi(v_j)\right\rangle \Big) ^2+\abs{\lambda}^2.
	\]
	To conclude, observe that  $\left\langle \lambda, \Phi(v,w)\right\rangle= \left\langle\abs{J_\lambda}v\vert w\right\rangle$ for every $v,w\in E$ by the definition of $J_\lambda$, so that
	\[
	\Big( \sum_{j}\left\langle \lambda, \Phi(v_j)\right\rangle \Big) ^2+\abs{\lambda}^2=(\tr\abs{J_\lambda})^2+\abs{\lambda}^2.\qedhere
	\]
\end{proof}

\begin{deff}
	Define $N\colon F'\ni \lambda \mapsto \left( (\tr\abs{J_\lambda})^2+\abs{\lambda}^2 \right)\in \R_+$.
	
	For every compact subset $K$ of $\R^*_+$, define 
	\[
	\Sc_{\Lc,K}(\Nc)\coloneqq \Set{\varphi\in \Sc(\Nc)\colon \varphi=\varphi*\Kc(\chi_K)},
	\]
	endowed with the topology induced by $\Sc(\Nc)$. We define $\Sc_\Lc(\Nc)$ as the inductive limit of the $\Sc_{\Lc,K}(\Nc)$, as $K$ runs through the set of compact subsets of $\R_+^*$. We denote by $\Sc'_{\Lc}(\Nc)$ the strong dual of $\Sc_{\Lc}(\Nc)$.
	
	We denote by $\Pc$ the space of polynomials on $\Nc$.
\end{deff}

Notice that, since $\Kc(\chi_K)$ is real-valued for every compact subset $K$ of $\R_+^*$, the space $\Sc_\Lc(\Nc)$ is invariant under conjugation. Hence, we may define  $\left\langle T\vert \varphi\right\rangle \coloneqq \left\langle T, \overline \varphi\right\rangle$  for $T\in \Sc'_\Lc(\Nc)$ and $\varphi\in \Sc_\Lc(\Nc)$.

\begin{prop}\label{prop:2}
	The following hold:
	\begin{enumerate}
		\item[\textnormal{(1)}] $\Sc_{\Lc,K}(\Nc)$ is a Fr\'echet Montel space for every compact subset $K$ of $\R_+^*$;
		
		\item[\textnormal{(2)}] $\Sc_\Lc(\Nc)$ is a complete bornological Montel space;
		
		\item[\textnormal{(3)}] the closure of $\Sc_\Lc(\Nc)$ in $\Sc(\Nc)$ is 
		\[
		\Set{\varphi\in \Sc(\Nc)\colon \forall P\in \Pc\:\: \int_\Nc \varphi(\zeta,x) P(\zeta,x)\,\dd (\zeta,x)=0 }.
		\]
	\end{enumerate}
\end{prop}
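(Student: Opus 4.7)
My plan treats the three parts in order, with most of the work in part (3). For part (1), $\Sc_{\Lc,K}(\Nc)$ is a closed subspace of $\Sc(\Nc)$: the continuous inclusion $\Sc(\Nc)\hookrightarrow L^2(\Nc)$ transports $\Sc$-convergence to $L^2$-convergence, and the relation $\varphi=\chi_K(\Lc)\varphi$ is preserved by the bounded operator $\chi_K(\Lc)$ on $L^2(\Nc)$. Since $\Sc(\Nc)$ is nuclear Fr\'echet, hence Fr\'echet--Montel, closed subspaces inherit both properties. For part (2), I fix the cofinal countable exhaustion $K_n=[1/n,n]$ of $\R_+^*$. Each inclusion $\Sc_{\Lc,K_n}(\Nc)\hookrightarrow \Sc_{\Lc,K_{n+1}}(\Nc)$ is a topological embedding with closed image (both carry the subspace topology of $\Sc(\Nc)$), so $\Sc_\Lc(\Nc)$ is a strict (LF)-space, hence complete, bornological, and regular. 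Regularity together with the Montel property of each step gives the Montel property of the inductive limit.

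For part (3), let $V\subseteq \Sc(\Nc)$ denote the right-hand side; it is closed in $\Sc(\Nc)$ as an intersection of kernels of the continuous functionals $\varphi\mapsto \int \varphi P$, so it suffices to prove $\Sc_\Lc(\Nc)\subseteq V\subseteq \overline{\Sc_\Lc(\Nc)}$. For the first inclusion, take $\varphi\in \Sc_{\Lc,K}(\Nc)$ and $\eta\in C_c^\infty(\R_+^*)$ with $\eta\equiv 1$ on $K$, so that $\varphi=\eta(\Lc)\varphi$. For any $N\in \N$ the function $\eta/\lambda^N$ lies in $C_c^\infty(\R_+^*)\subseteq \Sc(\R)$, hence $\Kc(\eta/\lambda^N)\in \Sc(\Nc)$. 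The identity $\Lc^N\Kc(\eta/\lambda^N)=\Kc(\eta)$ together with left-invariance of $\Lc$ gives $\varphi=\Lc^N \psi_N$ with $\psi_N\coloneqq \varphi*\Kc(\eta/\lambda^N)\in \Sc(\Nc)$. The operator $\Lc$ is formally self-adjoint on $L^2(\Nc)$ (each of $Z_j\overline{Z_j}$, $\overline{Z_j}Z_j$, $U_k^2$ being so) and is a left-invariant differential operator of strictly positive homogeneous degree; therefore $\Lc^N P=0$ for any polynomial $P$ and $N$ large enough. Integration by parts (Schwartz against polynomial-growth) then gives $\int \varphi P=\int \psi_N\,\Lc^N P=0$, so $\overline{\Sc_\Lc(\Nc)}\subseteq V$.

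For the reverse inclusion $V\subseteq \overline{\Sc_\Lc(\Nc)}$, pick $\theta\in C_c^\infty(\R)$ with $\theta(0)=1$ and set $\eta_n(\lambda)\coloneqq \theta(\lambda/n^2)-\theta(n^2\lambda)\in C_c^\infty(\R_+^*)$; then $\eta_n(\Lc)\varphi\in \Sc_\Lc(\Nc)$, and it suffices to show $\eta_n(\Lc)\varphi\to \varphi$ in $\Sc(\Nc)$ for every $\varphi\in V$. Writing $\varphi-\eta_n(\Lc)\varphi=[I-\theta(\Lc/n^2)]\varphi+\theta(n^2\Lc)\varphi$, the high-frequency term tends to $0$ in $\Sc(\Nc)$ for every $\varphi\in \Sc(\Nc)$ by the standard approximate-identity argument: the convolution kernel of $\theta(\Lc/n^2)$ is a contracting dilation of $\Kc(\theta)$ of total mass $\theta(0)=1$. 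For the low-frequency term the kernel $k_n\coloneqq \Kc(\theta(n^2\,\cdot\,))$ is a spreading dilation of $\Kc(\theta)$; subtracting from $g\mapsto k_n(g^{-1}h)$ its Taylor polynomial at $g=0$ of order $D$ gives $\theta(n^2\Lc)\varphi(h)=\int \varphi(g)\,R_D(g,h)\,\dd g$, since the polynomial part is annihilated by the moment hypothesis on $\varphi$. A scaling estimate of the form $\abs{R_D(g,h)}\leqslant C_D\,n^{-2(Q+D)/a}(1+\norm{g})^D$, where $Q$ is the homogeneous dimension and $a$ is the order of $\Lc$, combined with the Schwartz decay of $\varphi$, yields arbitrary polynomial decay in $n$ and hence convergence to $0$ in every Schwartz seminorm. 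The main obstacle is precisely this low-frequency step: the vanishing-moment hypothesis is essential, and the Taylor expansion on the nilpotent group requires careful bookkeeping of homogeneous degrees, the interplay between right- and left-invariant derivatives, and uniformity in the base point $h$ in order to upgrade pointwise decay to Schwartz convergence.
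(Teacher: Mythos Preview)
Your arguments for (1), (2), and the inclusion $\Sc_\Lc(\Nc)\subseteq V$ in (3) coincide with the paper's (the paper simply cites Bourbaki for (2), and your strict $(LF)$ argument is a concrete instance of that).

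For the reverse inclusion $V\subseteq\overline{\Sc_\Lc(\Nc)}$ the two approaches diverge. The paper also approximates $\varphi\in V$ by $\varphi*(\psi_h-\psi_{-k})$ with $\psi_j=\Kc(\eta(2^{-2j}\,\cdot\,))$, but instead of estimating Schwartz seminorms directly it observes two things: (i) spectral theory gives $\varphi*(\psi_h-\psi_{-k})\to\varphi$ in $L^2(\Nc)$; (ii) an external result (\cite[Proposition~5.8]{Calzi}) shows that, \emph{under the vanishing-moment hypothesis}, the family $\{\varphi*(\psi_h-\psi_{-k})\}_{h,k}$ is bounded in $\Sc(\Nc)$. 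Since $\Sc(\Nc)$ is Montel, boundedness implies relative compactness, and a relatively compact net with a unique $L^2$-cluster point must converge in $\Sc(\Nc)$. This compactness trick completely avoids the seminorm-by-seminorm analysis.

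Your Taylor-remainder route is legitimate in principle, but as written it has a gap at precisely the point you flag. The estimate $\abs{R_D(g,h)}\leqslant C_D\,n^{-(Q+D)}(1+\norm{g})^{D}$ carries no decay in $h$, so it yields only $\norm{\theta(n^2\Lc)\varphi}_{L^\infty}\to 0$, not control of $(1+\norm{h})^M\abs{X^\alpha[\theta(n^2\Lc)\varphi](h)}$. To close this one must either (a) propagate the weight via $(1+\norm{h})^M\lesssim (1+\norm{g})^M(1+\norm{g^{-1}h})^M$ and redo the Taylor expansion for $(1+\norm{\,\cdot\,})^M X^\alpha k_n$, tracking how the Schwartz decay of $\Kc(\theta)$ survives the dilation, or (b) first show that $\{\theta(n^2\Lc)\varphi\}_n$ is \emph{bounded} in $\Sc(\Nc)$ and then argue as the paper does. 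Option (b) is essentially the paper's proof; option (a) is doable but is exactly the ``careful bookkeeping'' you defer, and it is the whole content of the result. Either way, the sentence ``arbitrary polynomial decay in $n$ and hence convergence to $0$ in every Schwartz seminorm'' is not yet justified.
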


\begin{proof}
	(1) This follows from the fact that $\Sc_{\Lc,K}(\Nc)$ is a closed subspace of the Fr\'echet Montel space $\Sc(\Nc)$.
	
	(2) This follows  from~\cite[Proposition 9 of Chapter II, \S 4, No.\ 6, Example 3 of Chapter III, \S 2, and Example 3 of Chapter IV, \S 2, No.\ 5]{BourbakiTVS}.
	
	(3) Let us first prove that $ \int_\Nc \varphi(\zeta,x) P(\zeta,x)\,\dd (\zeta,x)=0$ for every $\varphi\in \Sc_\Lc(\Nc)$ and for every $P\in \Pc$.
	Notice that, since $\Lc$ is formally self-adjoint and for every  $P\in \Pc$ there is $k\in\N$ such that $\Lc^k P=0$ by homogeneity, it will suffice to prove that, for every $\varphi\in \Sc_\Lc(\Nc)$ and for every $k\in\N$, there is $\varphi_k\in \Sc(\Nc)$ such that $\varphi= \Lc^k \varphi_k$.
	Then, fix $\varphi\in \Sc_\Lc(\Nc)$ and $k\in \N$. Observe that there is s positive $\tau\in C^\infty_c(\R_+^*)$ such that $\varphi=\varphi*\Kc(\tau)$. Then,  define $I'_k\coloneqq \Kc[(\,\cdot\,)^{-k}\tau]$, so that $I'_k\in \Sc(\Nc)$ and
	\[
	\Lc^k I'_{k}=\Kc(\tau).
	\]
	If we set
	\[
	\varphi_k\coloneqq\varphi*I'_{k}\in \Sc(\Nc),
	\]
	then
	\[
	\Lc^k \varphi_k= \varphi*\Kc(\tau)=\varphi,
	\]
	whence our claim.

	Then, fix $\eta\in C^\infty_c(\R)$ so that $\chi_{[0,1]}\leqslant \eta\leqslant \chi_{[-1,2]}$. If we define $\psi_j\coloneqq \Kc(\eta(2^{-2j}\,\cdot\,))=(2^{ -j}\,\cdot\,)_* \Kc(\eta)$ for every $j\in \Z$, then clearly $f*(\psi_h-\psi_{-k})=(\eta(2^{-2h}\Lc)-\eta(2^{2k}\Lc))f$ converges to $f$ in $L^2(\Nc)$ for $h,k\to +\infty$, for every $f\in L^2(\Nc)$, by spectral theory.  In addition, if $\varphi\in \Sc(\Nc)$ and $ \int_\Nc \varphi(\zeta,x) P(\zeta,x)\,\dd (\zeta,x)=0$   for every $P\in \Pc$, then~\cite[Proposition 5.8]{Calzi}  implies that the set of $\varphi*(\psi_h-\psi_{-k})$, as $h,k\in \N$, is bounded (hence relatively compact) in $\Sc(\Nc)$. Therefore, the preceding arguments imply that $\varphi*(\psi_h-\psi_{-k})$ converges to $\varphi$ in $\Sc(\Nc)$ for $h,k\to +\infty$. Since clearly $\varphi*(\psi_h-\psi_{-k})\in \Sc_\Lc(\Nc)$ for every $h,k\in \N$, the assertion follows.
\end{proof}

\begin{deff}
	Given $u\in \Sc'_\Lc(\Nc)$ and $\varphi\in \Sc_\Lc(\Nc)$, we define $u*\varphi\in \Sc'(\Nc)$ so that
	\[
	\left\langle u*\varphi\vert\tau\right\rangle=\left\langle u\vert\tau *\varphi^*\right\rangle
	\]
	for every $\tau\in \Sc(\Nc)$. We shall identify $u*\varphi$ with an element of $C^\infty(\Nc)$.
\end{deff}

\begin{lem}\label{lem:1}
	Take $s\in \R$ and $p,q\in ]0,\infty]$. Take two bounded families $(\varphi_j)_{j\in \Z}$ and $(\varphi'_j)_{j\in \Z}$ of positive elements of $C^\infty_c(\R^*)$ such that
	\[
	\sum_{j} \varphi_j(2^{-2j}\,\cdot\,), \sum_{j}\varphi'_j(2^{-2j}\,\cdot\,)\geqslant 1.
	\]
	Define $\psi_j\coloneqq \Kc(\varphi_j(2^{-2j}\,\cdot\,))$ and $\psi'_j\coloneqq \Kc(\varphi'_j(2^{-2j}\,\cdot\,))$.
	Then, there is a constant $C>0$ such that
	\[
	\frac 1 C \norm*{2^{s j} \norm{T*\psi'_j}_{L^p(\Nc)} }_{\ell^q(\Z)}\leqslant \norm*{2^{s j} \norm{T*\psi_j}_{L^p(\Nc)} }_{\ell^q(\Z)}\leqslant C\norm*{2^{s j} \norm{T*\psi'_j}_{L^p(\Nc)} }_{\ell^q(\Z)}
	\]
	for every $T\in \Sc'(\Nc)$. In addition, 
	\[
	(2^{s j}T*\psi_j) \in \ell^q_0(\Z; L^p_0(\Nc)) \quad \text{if and only if} \quad (2^{s j}T*\psi'_j) \in \ell^q_0(\Z; L^p_0(\Nc)).
	\]
\end{lem}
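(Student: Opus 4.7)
The strategy is the standard almost‑orthogonality argument for Littlewood–Paley decompositions, pushed through the spectral calculus of $\Lc$. Because the families $(\varphi_j)$ and $(\varphi'_j)$ are bounded in $C^\infty_c(\R_+^*)$, all their supports lie in some common compact $[a,b]\subset \R_+^*$; consequently $\Supp{\varphi_j(2^{-2j}\,\cdot\,)}\subseteq 2^{2j}[a,b]$, and there exists $N\in\N$ (depending only on $[a,b]$) such that $2^{2j}[a,b]\cap 2^{2k}[a,b]=\emptyset$ whenever $\abs{j-k}>N$. Set $\Phi'(t)\coloneqq \sum_k \varphi'_k(2^{-2k}t)$, which belongs to $C^\infty(\R_+^*)$ (only boundedly many terms contribute at each point) and satisfies $\Phi'\geqslant 1$. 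Defining $h_j(t)\coloneqq \varphi_j(2^{-2j}t)/\Phi'(t)\in C^\infty_c(\R_+^*)$, the identity
\[
\varphi_j(2^{-2j}t)=\sum_{\abs{k-j}\leqslant N}\varphi'_k(2^{-2k}t)\,h_j(t)
\]
combined with the functional calculus $\Kc(fg)=\Kc(f)*\Kc(g)$ gives the key decomposition
\[
\psi_j=\sum_{\abs{k-j}\leqslant N}\psi'_k*\Kc(h_j).
\]

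The second ingredient is a uniform estimate of the form
\[
\norm{(T*\psi'_k)*\Kc(h_j)}_{L^p(\Nc)}\leqslant C_1\,\norm{T*\psi'_k}_{L^p(\Nc)}
\]
with $C_1$ independent of $j,k$ and of $T$. Fix $\tilde\varphi\in C^\infty_c(\R_+^*)$ equal to $1$ on a sufficiently large annulus (so that $\tilde\varphi(2^{-2k}\,\cdot\,)=1$ on $\Supp{\varphi'_k(2^{-2k}\,\cdot\,)}$), and put $\tilde\psi_k\coloneqq \Kc(\tilde\varphi(2^{-2k}\,\cdot\,))$; then $\psi'_k=\psi'_k*\tilde\psi_k$, so $T*\psi'_k\in \Oc(\tilde\psi_k)$. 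For $p\geqslant 1$ the estimate is Young's inequality, and we only need $\norm{\Kc(h_j)}_{L^1(\Nc)}$ bounded uniformly in $j$; this follows by writing $h_j(t)=H_j(2^{-2j}t)$ with $H_j(s)=\varphi_j(s)/\Phi'(2^{2j}s)$ lying in a bounded subset of $C^\infty_c(\R_+^*)$ (since $\Phi'(2^{2j}s)$ is uniformly bounded away from $0$ and $\infty$ for $s$ in the support of $\varphi_j$), together with the scale invariance of the $L^1$‑norm of convolution kernels of $\varphi(2^{-2j}\Lc)$. For $p\in]0,1[$ Young fails, but Corollary~\ref{cor:11} applied to $u_1=T*\psi'_k\in\Oc(\tilde\psi_k)$ and $u_2=\Kc(h_j)$, after rescaling to the unit scale via the dilations of $\Nc$ (which are admissible because both kernels live at the same scale $2^{-j}$ in $\Nc$), yields the required quasi‑Young inequality with a scale‑invariant constant.

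Combining these two ingredients through the $\min(1,p)$‑power triangle inequality gives
\[
\norm{T*\psi_j}_{L^p(\Nc)}^{\min(1,p)}\leqslant C_1^{\min(1,p)}\sum_{\abs{k-j}\leqslant N}\norm{T*\psi'_k}_{L^p(\Nc)}^{\min(1,p)}.
\]
Multiplying by $2^{sj\min(1,p)}$ and taking the $\ell^{q/\min(1,p)}(\Z)$‑norm in $j$, the shift‑invariance of $\ell^q$‑norms (with a harmless loss $2^{\abs{s}N}(2N+1)^{\max(1,1/q)}$) yields the right‑hand inequality of the lemma. The reverse inequality is obtained by swapping the roles of $(\varphi_j)$ and $(\varphi'_j)$. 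The $\ell^q_0(\Z;L^p_0(\Nc))$ claim follows from the same decomposition: each summand $(T*\psi'_k)*\Kc(h_j)$ belongs to $L^p_0(\Nc)$ whenever $T*\psi'_k$ does (for $p\geqslant 1$ by Young, for $p<1$ by Corollary~\ref{cor:10}, which embeds band‑limited $L^p$ functions into $C_0(\Nc)$), and the weighted $\ell^q_0$ condition transfers through the finite‑range convolution in $j$.

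The main obstacle is the $p<1$ case: the absence of Young's inequality forces us to use band‑limitedness essentially, and to ensure that the constants in the quasi‑Young estimate from Corollary~\ref{cor:11} do not depend on the dyadic scale $j$. The cleanest way is to reduce the scale‑$j$ estimate to the scale‑$0$ estimate by pulling back through the dilation $D_{2^{j}}$ on $\Nc$, exploiting the fact that both $\psi'_k$ and $\Kc(h_j)$ transform as convolution kernels of functional‑calculus operators at commensurable scales, so that all relevant quantities scale homogeneously and the constant produced by Corollary~\ref{cor:11} at a fixed reference scale transfers without loss.
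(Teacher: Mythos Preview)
Your argument is correct and follows essentially the same route as the paper's proof: both exploit the finite overlap of the dyadic supports, divide by the total sum $\Phi'$ (the paper's $\widetilde\varphi$) to produce an auxiliary family bounded in $C^\infty_c(\R_+^*)$, and then invoke Corollary~\ref{cor:11} together with a homogeneity/dilation argument to obtain the uniform $L^p$ bound for $p<1$. The only cosmetic difference is that the paper decomposes $\psi'_{j'}$ in terms of the $\psi_j$ (bounding the primed norm by the unprimed one first), whereas you decompose $\psi_j$ in terms of the $\psi'_k$; by symmetry this is immaterial. One small remark: in the $\ell^q_0(\Z;L^p_0(\Nc))$ part, for $p<\infty$ one has $L^p_0(\Nc)=L^p(\Nc)$, so the membership $(T*\psi'_k)*\Kc(h_j)\in L^p_0(\Nc)$ is already contained in your norm estimate and no appeal to Corollary~\ref{cor:10} is needed; only the case $p=\infty$ requires the extra observation $C_0(\Nc)*L^1(\Nc)\subseteq C_0(\Nc)$.
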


\begin{proof}
	Choose $\ell\in \R$ and $M\in\N$   so that $\Supp{\varphi_j}, \Supp{\varphi'_j}\subseteq [2^{\ell},2^{\ell+2 M}]$ for every $j\in \Z$.
	Define
	\[
	\widetilde \varphi\coloneqq \sum_{j\in \Z} \varphi_k(2^{-2 j}\,\cdot\,),
	\]
	and observe that the sum defining $\widetilde \varphi$ is locally finite on $\R_+^*$, so that $\widetilde \varphi$ is of class $C^\infty$ on $\R_+^*$. 
	In addition, if $\varphi'_{j'}(2^{-2 j'}\,\cdot\,)\varphi_j(2^{-2 j}\,\cdot\,)\neq 0 $ then $\abs{j-j'}\leqslant M$. 
	Then, for every $j'\in \Z$,
	\[
	\varphi'_{j'}(2^{-2 j'}\,\cdot\,)=\sum_{j=j'-M}^{j'+M} \frac{\varphi_{j'}'(2^{-2 j'}\,\cdot\,) \varphi_j (2^{-2 j}\,\cdot\,)}{\widetilde \varphi}= \sum_{j=j'-M}^{j'+M} \widetilde \varphi'_{j'}(2^{-2 j'}\,\cdot\,) \varphi_j (2^{-2 j}\,\cdot\,),
	\]
	where
	\[
	\widetilde \varphi'_{j'}\coloneqq \frac{\varphi_{j'}'}{\widetilde \varphi(2^{2 j'}\,\cdot\,)}= \frac{\varphi'_{j'}}{\sum_{j=j'-M}^{j'+M} \varphi_j(2^{2(j'-j)}\,\cdot\, ) }= \frac{\varphi'_{j'}}{\sum_{j=-M}^{M} \varphi_j(2^{-2j}\,\cdot\, ) }
	\]
	for every $j'\in \Z$.
	Notice that the family $(\widetilde \varphi'_{j'})_{j'\in \Z}$ is bounded in $C^\infty_c(\R_+^*)$.
	Then, define $\widetilde \psi'_{j}\coloneqq \Kc(\widetilde \varphi'_{j}(2^{-2 j}\,\cdot\,))$ for every $j\in \Z$, and observe that Corollary~\ref{cor:11}  and a dilation argument imply that there is a constant $C_1>0$ such that, for every $T\in \Sc_{\Lc}'(\Nc)$ and for every $j,j'\in \Z$, 
	\[
	\norm{(T*\psi_{j})*\widetilde \psi'_{j'}}_{L^p(\Nc)}\leqslant C_1\norm{T*\psi_j}_{L^p(\Nc)}.
	\]
	Now, for every $j'\in \Z$,
	\[
	T*\psi'_{j'}=\sum_{j=j'-M}^{j'+M}(T*\psi_j)*\widetilde \psi'_{j'}
	\]
	by the associativity of convolution on $\Sc_{\Lc}'(\Nc)\times \Sc_\Lc(\Nc)\times \Sc_\Lc(\Nc)$, so that
	\[
	\norm{T*\psi'_{j'}}_{L^p(\Nc)}\leqslant C_1 (2M+1)^{(1/p-1)_+}\sum_{j=j'-M}^{j'+M}\norm{T*\psi_j}_{L^p(\Nc)} 
	\]
	for every $j'\in \Z$. Hence, there is a constant $C_2>0$ such that
	\[
	2^{j' s}\norm{T*\psi'_{j'}}_{L^p(\Nc)}\leqslant C_2\sum_{j=j'-M}^{j'+M} 2^{j s}\norm{T*\psi_j}_{L^p(\Nc)}
	\]
	for every $j'\in \Z$.
	
	Therefore, 
	\[
	\norm*{2^{s j} \norm{T*\psi'_j}_{L^p(\Nc)} }_{\ell^q(\Z)}\leqslant C_2 (2 M+1)^{\max(1,1/q)}\norm*{2^{s j} \norm{T*\psi_j}_{L^p(\Nc)} }_{\ell^q(\Z)}.
	\]
	By symmetry, the first assertion is proved. For what concerns the second assertion, assume that $(2^{s j}T*\psi_j) \in \ell^q_0(\Z; L^p_0(\Nc))$. Then, it is clear that
	\[
	T*\psi'_{j'}=\sum_{j=j'-M}^{j'+M}(T*\psi_j)*\widetilde \psi'_{j'}\in L^p_0(\Nc)
	\]
	for every $j'\in \Z$. Analogously, it is readily seen that
	\[
	\big(2^{j' s}\norm{T*\psi'_{j'}}_{L^p(\Nc)}\big)_{j'}\leqslant \bigg(C_2\sum_{j=j'-M}^{j'+M} 2^{j s}\norm{T*\psi_j}_{L^p(\Nc)} \bigg)_{j'}\in \ell^q_0(\Z),
	\]
	whence $(2^{s j}T*\psi'_j) \in \ell^q_0(\Z; L^p_0(\Nc))$. The second assertion follows therefore by symmetry.
\end{proof}

\begin{deff}
	Take $s\in \R$ and $p,q\in ]0,\infty]$. Take $(\psi_j)$ as in Lemma~\ref{lem:1}. Then, we define $B_{p,q}^s(\Nc,\Lc)$ (resp.\ $\mathring B_{p,q}^s(\Nc,\Lc)$) as the space of $T\in \Sc'_\Lc(\Nc)$ such that
	\[
	(2^{s j}T*\psi_j) \in \ell^q(\Z; L^p(\Nc)) \qquad \text{(resp.\ $(2^{s j}T*\psi_j) \in \ell^q_0(\Z; L^p_0(\Nc))$)},
	\]
	endowed with the corresponding topology. 
\end{deff}

Notice that we chose to define $B_{p,q}^s(\Nc,\Lc)$ as a subspace of $\Sc'_\Lc(\Nc)$ since this choice simplifies some results. Nonetheless, one may prove the $B_{p,q}^s(\Nc,\Lc)$ embeds canonically into $\Sc'(\Nc)/\Pc$.

\begin{prop}\label{prop:11}
	Take $s\in \R$ and $p,q\in ]0,\infty]$. Then, $B_{p,q}^s(\Nc,\Lc)$ and $\mathring B_{p,q}^s(\Nc,\Lc)$ are complete metrizable topological vector spaces and embed canonically into $\Sc'(\Nc)/\Pc$. Furthermore, $\mathring B_{p,q}^{s}(\Nc,\Lc)$ is the closure of $\Sc_\Lc(\Nc)$   in $ B_{p,q}^{s}(\Nc,\Lc)$.
\end{prop}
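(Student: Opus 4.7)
I fix $(\psi_j)_{j\in\Z}$ as in Lemma~\ref{lem:1} with $\sum_j \varphi_j(2^{-2j}\cdot)^2 \equiv 1$ on $\R_+^*$; because $\Lc$ is formally self-adjoint and each $\varphi_j$ is real-valued, every $\psi_j$ is real and symmetric, so $\psi_j^*=\psi_j$. Equip $B^s_{p,q}(\Nc,\Lc)$ with the quasi-norm
\[
\|T\|:=\|(2^{sj}\|T*\psi_j\|_{L^p(\Nc)})_{j}\|_{\ell^q(\Z)},
\]
well defined up to equivalence by Lemma~\ref{lem:1}. The quasi-triangle inequality and positive homogeneity are inherited from those of $\ell^q$ and $L^p$, and $(T_1,T_2)\mapsto\|T_1-T_2\|^{\min(1,p,q)}$ is a translation-invariant metric, giving metrizability. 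For the Hausdorff property, given $\varphi\in\Sc_{\Lc,K}(\Nc)$ the spectral calculus furnishes the finite reproducing identity $\varphi=\sum_{j\in J_K}\varphi*\psi_j*\psi_j$, where $J_K$ is the finite set of $j$ for which the support of $\varphi_j(2^{-2j}\cdot)$ meets $K$. Consequently $\langle T|\varphi\rangle=\sum_{j\in J_K}\langle T*\psi_j|\varphi*\psi_j\rangle$, which vanishes whenever $T*\psi_j=0$ for all $j$.

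For completeness, let $(T_n)$ be Cauchy in $B^s_{p,q}(\Nc,\Lc)$. For each $j$, $(T_n*\psi_j)_n$ is Cauchy in $L^p(\Nc)$ with limit $f_j$, and $(2^{sj}f_j)_j\in\ell^q(\Z;L^p(\Nc))$ by Fatou's lemma. I define $T\in\Sc'_\Lc(\Nc)$ by
\[
\langle T|\varphi\rangle := \sum_{j\in J_K}\langle f_j|\varphi*\psi_j\rangle\qquad\text{for }\varphi\in\Sc_{\Lc,K}(\Nc);
\]
this finite sum yields a continuous linear functional on each $\Sc_{\Lc,K}(\Nc)$, since each $f_j\in L^p(\Nc)\subset\Sc'(\Nc)$ (invoking Corollary~\ref{cor:10} to embed $\Oc^p(\psi_j)$ into $L^\infty(\Nc)$ when $p<1$). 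A direct computation with the reproducing formula $\psi_j=\sum_{k\in J'_j}\psi_j*\psi_k*\psi_k$ (finite sum) and passage to the limit from the identities $\langle T_n|\tau*\psi_j\rangle=\sum_k\langle T_n*\psi_k|(\tau*\psi_j)*\psi_k\rangle$ shows $T*\psi_j=f_j$, whence $T_n\to T$ in $B^s_{p,q}(\Nc,\Lc)$. The same argument with $L^p_0,\ell^q_0$ in place of $L^p,\ell^q$ gives completeness of $\mathring B^s_{p,q}(\Nc,\Lc)$.

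For the embedding into $\Sc'(\Nc)/\Pc$: Proposition~\ref{prop:2}(3) identifies the $\Sc(\Nc)$-closure of $\Sc_\Lc(\Nc)$ with $Z:=\{\varphi\in\Sc(\Nc):\int_\Nc\varphi\cdot P=0\;\forall P\in\Pc\}$, whose annihilator in $\Sc'(\Nc)$ is exactly $\Pc$, giving a continuous injection $\Sc'(\Nc)/\Pc\hookrightarrow\Sc'_\Lc(\Nc)$. Given $T\in B^s_{p,q}(\Nc,\Lc)$ I construct a tempered extension $\widetilde T$ (unique modulo $\Pc$) via the Littlewood--Paley synthesis $\widetilde T=\sum_{j\in\Z}T*\psi_j$: each $T*\psi_j$ is smooth of at most polynomial growth (Lemma~\ref{lem:5}), hence tempered. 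The high-frequency tail $\sum_{j\geqslant 0}T*\psi_j$ converges absolutely against any $\varphi\in\Sc(\Nc)$, because a Taylor expansion of $\varphi$ at the identity of $\Nc$ combined with the vanishing moments of $\psi_j\in\Sc_\Lc\subset Z$ yields $\|\varphi*\psi_j^*\|_{L^{p'}}\leqslant C_N 2^{-j(N+1)}$ for arbitrary $N$, which dominates the at-most-geometric growth of $\|T*\psi_j\|_{L^p}$ (the latter controlled by the $\ell^\infty$-bound of the defining sequence). The low-frequency tail $\sum_{j<0}T*\psi_j$ converges only after subtracting from each test function a Taylor polynomial at the identity of appropriate degree; the resulting ambiguity lies in $\Pc$, yielding the embedding. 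This low-frequency analysis is the main technical obstacle.

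For density, $\Sc_\Lc(\Nc)\subset\mathring B^s_{p,q}(\Nc,\Lc)$ is immediate since $\varphi\in\Sc_{\Lc,K}$ gives $\varphi*\psi_j=0$ off the finite set $J_K$. Conversely, for $T\in\mathring B^s_{p,q}(\Nc,\Lc)$, the $\ell^q_0\cap L^p_0$ condition yields $T_N:=\sum_{|j|\leqslant N}T*\psi_j\to T$ in $B^s_{p,q}(\Nc,\Lc)$. Each $T*\psi_j\in L^p_0(\Nc)\cap\Oc(\psi_j)$ is approximable in $L^p$-norm by Schwartz functions (via spatial truncation and mollification); fixing $\widetilde\psi_j\in\Sc_\Lc(\Nc)$ with $\psi_j*\widetilde\psi_j=\psi_j$, convolution of these Schwartz approximations with $\widetilde\psi_j$ lands in $\Sc_\Lc(\Nc)$ and approximates $T*\psi_j=(T*\psi_j)*\widetilde\psi_j$ in $L^p$. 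Finite summation yields the desired $\Sc_\Lc(\Nc)$-approximation of $T$ in $B^s_{p,q}(\Nc,\Lc)$.
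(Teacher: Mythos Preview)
Your completeness argument and density argument are essentially correct and parallel the paper's, with your completeness proof being more explicit (the paper argues abstractly via lower semi-continuity of the quasi-norm on $\Sc'_\Lc(\Nc)$). The density argument in the paper uses the same two-step approximation (truncate the Littlewood--Paley sum, then approximate each piece), with the spatial cutoff realized by multiplication by dilates of a fixed $\varphi\in\Sc_\Omega(\Nc)$ rather than generic mollification.

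The genuine gap is in the embedding into $\Sc'(\Nc)/\Pc$. You correctly identify the low-frequency tail as ``the main technical obstacle'' and sketch a Taylor-subtraction strategy, but you do not carry it out; as written, the proof is incomplete precisely at its hardest point. There is also a minor slip: with your normalization $\sum_j\varphi_j(2^{-2j}\cdot)^2=1$ the synthesis should be $\sum_j T*\psi_j*\psi_j$, not $\sum_j T*\psi_j$, and your high-frequency bound implicitly requires inserting a reproducing $\widetilde\psi_j$ so that $\langle T*\psi_j\,|\,\varphi\rangle=\langle T*\psi_j\,|\,\varphi*\widetilde\psi_j^*\rangle$ before invoking the $L^p$--$L^{p'}$ pairing.

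The paper avoids the low-frequency difficulty entirely by arguing dually: it shows that the closure $Z=\Sc_\infty(\Nc)$ of $\Sc_\Lc(\Nc)$ in $\Sc(\Nc)$ embeds continuously into $\mathring B^{-s+(1/p-1)_+Q}_{p',q'}(\Nc,\Lc)$, and then invokes the sesquilinear pairing of Proposition~\ref{prop:13} to transpose. The key technical input is that for every $k\in\Z$ one has $\Sc_\infty(\Nc)*I_k\subset\Sc(\Nc)$, where $I_k$ is a (log-)homogeneous fundamental solution of $\Lc^k$ (drawing on \cite{Geller} and \cite{Calzi}); writing $2^{-2kj}\varphi*\psi_j=(\varphi*I_k)*\psi_{k,j}$ then gives uniform $\Sc(\Nc)$-bounds on $2^{-2kj}\varphi*\psi_j$ for \emph{all} $j\in\Z$ simultaneously, handling both tails at once. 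This is the idea you are missing: rather than subtracting Taylor polynomials from test functions, one transfers arbitrary powers of $\Lc$ between $\varphi$ and $\psi_j$ via the Riesz-type potentials $I_k$, which converts the question into a uniform Schwartz estimate for a rescaled family.
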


Before we prove this result, we need some preparations.

\begin{prop}\label{prop:12}
	Take $s_1,s_2\in \R$ and $p_1,p_2,q_1,q_2\in ]0,\infty]$ such that
	\[
	p_1\leqslant p_2, \qquad q_1\leqslant q_2, \qquad \text{and} \qquad s_2=s_1+\left(\frac{1}{p_2}-\frac{1}{p_1}\right) Q. 
	\]
	Then, there are continuous inclusions
	\[
	B_{p_1,q_1}^{s_1}(\Nc,\Lc)\subseteq B_{p_2,q_2}^{s_2}(\Nc,\Lc) \qquad \text{and} \qquad \mathring B_{p_1,q_1}^{s_1}(\Nc,\Lc)\subseteq \mathring B_{p_2,q_2}^{s_2}(\Nc,\Lc) .
	\]
\end{prop}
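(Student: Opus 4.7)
The plan is to reduce the embedding to two independent one-variable inclusions: a Nikolskii-type inequality in the spatial variable that turns $L^{p_1}(\Nc)$ into $L^{p_2}(\Nc)$ with the correct dyadic scale factor, together with the trivial inclusion $\ell^{q_1}(\Z)\subseteq \ell^{q_2}(\Z)$ in the scale variable. By Lemma~\ref{lem:1} the Besov quasi-norm does not depend on the choice of the family $(\psi_j)$ in its definition, so I may fix once and for all a positive $\varphi_0\in C^\infty_c(\R_+^*)$ with $\sum_{j\in\Z}\varphi_0(2^{-2j}\,\cdot\,)\geqslant 1$ on $\R_+^*$, and set $\psi_j\coloneqq \Kc(\varphi_0(2^{-2j}\,\cdot\,))$. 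I additionally pick $\widetilde\varphi\in C^\infty_c(\R_+^*)$ with $\widetilde\varphi=1$ on $\Supp{\varphi_0}$ and set $\widetilde\psi_j\coloneqq\Kc(\widetilde\varphi(2^{-2j}\,\cdot\,))$, so that $\psi_j*\widetilde\psi_j=\psi_j$ by functional calculus and hence $T*\psi_j\in \Oc(\widetilde\psi_j)$ for every $T\in \Sc'_\Lc(\Nc)$.

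Next, I shall prove the Nikolskii-type estimate
\[
\norm{T*\psi_j}_{L^{p_2}(\Nc)}\leqslant C\, 2^{jQ(1/p_1-1/p_2)}\norm{T*\psi_j}_{L^{p_1}(\Nc)}
\]
for a constant $C>0$ independent of $j\in\Z$ and $T\in \Sc'_\Lc(\Nc)$. At scale $j=0$ this is precisely the content of the continuous embedding $\Oc^{p_1}(\widetilde\psi_0)\hookrightarrow \Oc^{p_2}(\widetilde\psi_0)$ furnished by Corollary~\ref{cor:10}. For arbitrary $j$ I invoke the $2$-homogeneity of $\Lc$ with respect to the automorphic dilations $\delta_t(\zeta,x)=(t^{1/2}\zeta,tx)$, which via spectral calculus implies that $f\in \Oc(\widetilde\psi_j)$ entails $f\circ\delta_{2^{-j}}\in \Oc(\widetilde\psi_0)$; the change-of-variables identity $\norm{f\circ\delta_{2^{-j}}}_{L^p(\Nc)}=2^{jQ/p}\norm{f}_{L^p(\Nc)}$, combined with the $j=0$ case, then produces the stated inequality.

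Multiplying by $2^{j s_2}$ and using the hypothesis $s_2=s_1+Q(1/p_2-1/p_1)$, the Nikolskii estimate becomes
\[
2^{j s_2}\norm{T*\psi_j}_{L^{p_2}(\Nc)}\leqslant C\,2^{j s_1}\norm{T*\psi_j}_{L^{p_1}(\Nc)},
\]
and taking $\ell^{q_2}(\Z)$-norms, together with the elementary embedding $\ell^{q_1}(\Z)\subseteq \ell^{q_2}(\Z)$ (valid since $q_1\leqslant q_2$), yields the desired continuous inclusion $B^{s_1}_{p_1,q_1}(\Nc,\Lc)\subseteq B^{s_2}_{p_2,q_2}(\Nc,\Lc)$. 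The same computation handles the $\mathring B$ spaces: that $T*\psi_j\in L^{p_2}_0(\Nc)$ whenever $T*\psi_j\in L^{p_1}_0(\Nc)$ follows from the analogous second statement of Corollary~\ref{cor:10}, and the vanishing at infinity of the weighted sequence in $\ell^{q_2}_0(\Z)$ is automatic from its pointwise-in-$j$ domination by a vanishing sequence in $\ell^{q_1}_0(\Z)$.

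The main obstacle is the dilation identity underlying the Nikolskii inequality; once this spectral-calculus manipulation is stated cleanly, the rest of the argument is routine bookkeeping of indices.
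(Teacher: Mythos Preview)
Your proof is correct and follows essentially the same approach as the paper's own argument, which simply reads ``This follows from Corollary~\ref{cor:10}, a dilation argument, and the canonical inclusions $\ell^{q_1}(\Z)\subseteq \ell^{q_2}(\Z)$ and $\ell^{q_1}_0(\Z)\subseteq \ell^{q_2}_0(\Z)$.'' You have merely spelled out these three ingredients in detail: the Nikolskii-type estimate via Corollary~\ref{cor:10} at scale $j=0$, the dilation argument exploiting the $2$-homogeneity of $\Lc$ to pass to general $j$, and the $\ell^q$-inclusion.
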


\begin{proof}
	This follows from Corollary~\ref{cor:10}, a dilation argument, and the canonical inclusions $\ell^{q_1}(\Z)\subseteq \ell^{q_2}(\Z)$ and $\ell^{q_1}_0(\Z)\subseteq \ell^{q_2}_0(\Z)$.
\end{proof}

\begin{prop}\label{prop:13}
	Take $s\in \R$, $p,q\in ]0,\infty]$ and two bounded families $(\varphi_j)_{j\in \Z}$, $(\varphi'_j)_{j\in \Z}$ of positive elements of $C^\infty_c(\R_+^*)$ such that 
	\[
	\sum_{j\in \Z} \varphi_j(2^{-2 j}\,\cdot\,)\varphi'_j(2^{-2 j}\,\cdot\,)=1
	\]
	on $\R_+^*$. Then, the sesquilinear form
	\[
	B^{s}_{p,q}(\Nc,\Lc)\times B^{-s+(1/p-1)_+ Q}_{p',q'}(\Nc,\Lc)\ni (u,u')\mapsto \sum_j \left\langle u*\psi_j\big\vert  u'*\psi'_j\right\rangle\in \C,
	\]
	where $\psi_j\coloneqq \Kc(\varphi_j(2^{-2 j}\,\cdot\,))$ and $\psi'_j\coloneqq \Kc(\varphi'_j(2^{-2 j}\,\cdot\,))$ for every $j\in\Z$,  is well defined and continuous, and does not depend on the choice of $(\varphi_j), (\varphi'_j)$.
\end{prop}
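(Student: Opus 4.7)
The approach is to establish well-definedness of each individual pairing, then absolute convergence and continuity of the sum, and finally independence from the choice of $(\varphi_j),(\varphi'_j)$.

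First, I would show each term $\langle u*\psi_j\vert u'*\psi'_j\rangle$ is well defined. Since $\psi_j=\Kc(\varphi_j(2^{-2j}\,\cdot\,))$ is the kernel of an $\Lc$-spectral multiplier with compactly supported symbol, $u*\psi_j$ lies in $\Oc(\tilde\psi_j)\cap L^p(\Nc)$ for a suitable auxiliary $\tilde\psi_j$ built by spectral calculus. Combining Corollary~\ref{cor:10} with a dilation argument, exploiting the fact that $\Lc$ is homogeneous of degree two, would yield the scale-dependent embedding
\[
\|u*\psi_j\|_{L^r(\Nc)}\leqslant C\,2^{jQ(1/p-1/r)}\|u*\psi_j\|_{L^p(\Nc)}
\]
for $p\leqslant r$ uniformly in $j$. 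Taking $r=\max(1,p)$ places $u*\psi_j$ in a space suitable for pairing with $\overline{u'*\psi'_j}\in L^{p'}(\Nc)$.

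Second, for continuity, I would split into cases. If $p\geqslant 1$, H\"older in $L^p$ gives $|\langle u*\psi_j\vert u'*\psi'_j\rangle|\leqslant \|u*\psi_j\|_{L^p}\|u'*\psi'_j\|_{L^{p'}}$ directly. If $p<1$, I would first embed $u*\psi_j$ into $L^1(\Nc)$ via the estimate above at the cost of the factor $2^{jQ(1/p-1)}$, then pair against $u'*\psi'_j\in L^\infty(\Nc)$. Multiplying and dividing by $2^{js}$ and summing over $j$ by H\"older in $\ell^q$ when $q\geqslant 1$ (or the trivial inclusion $\ell^q\hookrightarrow\ell^1$ with $q'=\infty$ when $q<1$) would yield
\[
\sum_j |\langle u*\psi_j\vert u'*\psi'_j\rangle|\leqslant C\,\|u\|_{B^s_{p,q}(\Nc,\Lc)}\|u'\|_{B^{-s+(1/p-1)_+Q}_{p',q'}(\Nc,\Lc)},
\]
the exponent shift $(1/p-1)_+Q$ emerging exactly from the two cases; this gives absolute convergence and continuity simultaneously.

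Third, for independence, I would exploit two key properties: the kernels $\psi_j,\psi'_j$ all arise from the spectral calculus of the self-adjoint operator $\Lc$ with real coefficients and real symbols, so they commute pairwise and satisfy $\psi_j^*=\psi_j$, $(\psi'_j)^*=\psi'_j$. Given another choice $(\tilde\psi_k,\tilde\psi'_k)$, I would insert the identity $\sum_k\tilde\varphi_k(2^{-2k}\lambda)\tilde\varphi'_k(2^{-2k}\lambda)=1$ as $\psi_j=\sum_k\psi_j*\tilde\psi_k*\tilde\psi'_k$, a \emph{finite} sum for each $j$ by compactness of the Fourier supports (only $|j-k|$ in a bounded range contributes). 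Moving $\tilde\psi'_k$ across the pairing using its self-symmetry, then commuting $\psi'_j$ and $\psi_j*\psi'_j$ across the spectral multiplier $\tilde\psi_k$, would produce the double sum $\sum_{j,k}\langle u*\psi_j*\tilde\psi_k\vert u'*\psi'_j*\tilde\psi'_k\rangle$, supported on pairs $(j,k)$ with $|j-k|$ bounded. Reversing the order and applying the collapse $\sum_j\psi_j*\psi'_j*\tilde\psi_k=\Kc(\tilde\varphi_k(2^{-2k}\,\cdot\,))=\tilde\psi_k$ (again a finite sum) for each fixed $k$ would reduce the double sum to $\sum_k\langle u*\tilde\psi_k\vert u'*\tilde\psi'_k\rangle$, proving independence.

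The main obstacle I anticipate is the independence step: the algebraic manipulations require careful justification using commutativity, self-adjointness, and the compactness of Fourier supports that makes all intermediate sums finite. This approach bypasses any density argument (such as approximation by $\Sc_\Lc(\Nc)$), which would be delicate when $\max(p,q)=\infty$, since $\Sc_\Lc(\Nc)$ need not be norm-dense in $B^s_{p,q}(\Nc,\Lc)$ in that regime.
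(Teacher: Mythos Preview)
Your proposal is correct and follows essentially the same approach as the paper. For well-definedness and continuity, the paper simply invokes the embedding $B^{s}_{p,q}(\Nc,\Lc)\subseteq B^{s-(1/p-1)_+ Q}_{\max(1,p),\max(1,q)}(\Nc,\Lc)$ from Proposition~\ref{prop:12}, which is exactly what your Corollary~\ref{cor:10}-plus-dilation argument unpacks; for independence, the paper inserts the alternative resolution on the $u$ side rather than on the $\psi_j$ side, but the finite-overlap and swap-of-summation mechanism is identical to yours.
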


Arguing as in the proof of~\cite[Theorem 4.23]{CalziPeloso}, one may actually prove that the above sesquilinear form induces an antilinear isomorphism of $B^{-s+(1/p-1)_+ Q}_{p',q'}(\Nc,\Lc)$ onto $\mathring B^{s}_{p,q}(\Nc,\Lc)'$. 

An analogous assertion holds for bilinear forms. 

\begin{proof}
	The fact that $\left\langle\,\cdot\,\vert\,\cdot\,\right\rangle$ is well defined and continuous follows from the inclusion 
	\[
	B^{s}_{p,q}(\Nc,\Lc)\subseteq B^{s-(1/p-1)_+ Q}_{\max(1,p),\max(1,q)}(\Nc,\Lc)
	\]
	(cf.~Proposition~\ref{prop:12}). Then, take two bounded families $(\eta_j),(\eta'_j)$ of positive elements of $C^\infty_c(\R_+^*)$ such that
	\[
	\sum_{j\in \Z} \eta_j(2^{-2 j}\,\cdot\,)\eta'_j(2^{-2 j}\,\cdot\,)=1
	\]
	on $\R_+^*$, and define $\tau_j\coloneqq \Kc(\eta_j(2^{-2 j}\,\cdot\,))$ and $\tau'_j\coloneqq \Kc(\eta'_j(2^{-2 j}\,\cdot\,))$ for every $j\in \Z$. Then, for every $u\in B^{s}_{p,q}(\Nc,\Lc)$ and for every $u'\in B^{-s+(1/p-1)_+ Q}_{p',q'}(\Nc,\Lc)$,
	\[
	\begin{split}
	\sum_j \left\langle u*\psi_j\big\vert  u'*\psi'_j\right\rangle&=\sum_j \left\langle \sum_{j'} u*\tau_{j'}*\tau'_{j'}*\psi_j\Bigg\vert u'*\psi'_j\right\rangle\\
		&=\sum_j \sum_{j'} \left\langle u*\tau_{j'}\big\vert u'*\tau'_{j'}*\psi_j*\psi'_j\right\rangle
	\end{split}
	\]
	since only a finite number of terms of the inner sum are non-zero. For a similar reason, the sum in $j$ and $j'$ converges absolutely, so that one may reverse the above computations and infer that $	\sum_j \left\langle u*\psi_j\big\vert  u'*\psi'_j\right\rangle=	\sum_{j'}\left\langle u*\tau_{j'}\big\vert  u'*\tau'_{j'}\right\rangle$, whence the result.
\end{proof}

\begin{lem}\label{lem:2}
	Take $s\in \R$, $p,q\in ]0,\infty]$, and $\varphi\in C^\infty_c(\R_+^*)$. Then, $\Oc^p(\Kc(\varphi))$ embeds as a closed subspace of both $B_{p,q}^s(\Nc,\Lc)$ and $L^p(\Nc)$.
\end{lem}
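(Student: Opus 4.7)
The plan is to prove the two-sided norm equivalence
\[
\|u\|_{L^p(\Nc)} \asymp \|u\|_{B^s_{p,q}(\Nc,\Lc)} \qquad \text{on } \Oc^p(\Kc(\varphi)),
\]
and then deduce both closed-subspace statements from the completeness of $\Oc^p(\Kc(\varphi))$. The preliminary point, to be settled first, is that the natural map $\Oc^p(\Kc(\varphi))\to\Sc'_{\Lc}(\Nc)$ is well-defined and injective: for $p\geqslant 1$ this is immediate, while for $p<1$ one uses Corollary~\ref{cor:10} to embed $\Oc^p(\Kc(\varphi))$ continuously into $\Oc^q(\Kc(\varphi))$ for $q\geqslant 1$, and injectivity then follows from the reproducing identity $u=u*\Kc(\varphi)$ together with $\Kc(\varphi)\in\Sc_\Lc(\Nc)$, since $u(x)$ is recovered from pairings of $u$ against the $\Sc_\Lc$-functions $y\mapsto\Kc(\varphi)(y^{-1}x)$.

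Closedness of $\Oc^p(\Kc(\varphi))$ in $L^p(\Nc)$ is direct: the operator $u\mapsto u*\Kc(\varphi)$ is continuous on $L^p(\Nc)$ (by Young's inequality for $p\geqslant 1$; for $p<1$ one first pushes through Corollary~\ref{cor:10} into $L^q$ with $q\geqslant 1$ and passes to the limit there), so $\Oc^p(\Kc(\varphi))$ is the kernel of $\mathrm{id}-(\,\cdot\,)*\Kc(\varphi)$ and hence complete as a quasi-Banach space.

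Next, fix a resolution $\psi_j\coloneqq \Kc(\varphi_j(2^{-2j}\,\cdot\,))$ built from a bounded family $(\varphi_j)_{j\in\Z}$ of positive elements of $C^\infty_c(\R_+^*)$ satisfying $\sum_{j\in\Z}\varphi_j(2^{-2j}\,\cdot\,)=1$ on $\R_+^*$; by Lemma~\ref{lem:1} such a choice affects $\|\,\cdot\,\|_{B^s_{p,q}(\Nc,\Lc)}$ only up to equivalence. Since $\Supp{\varphi}$ is a compact subset of $\R_+^*$, there is a finite set $F\subset\Z$ such that $\varphi\cdot\varphi_j(2^{-2j}\,\cdot\,)\equiv 0$ for $j\notin F$. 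For $u\in\Oc^p(\Kc(\varphi))$, the fact that spectral multipliers of $\Lc$ commute yields
\[
u*\psi_j \;=\; u*\Kc(\varphi)*\psi_j \;=\; u*\Kc\bigl(\varphi\cdot\varphi_j(2^{-2j}\,\cdot\,)\bigr),
\]
which vanishes for $j\notin F$. For $j\in F$, Corollary~\ref{cor:11} produces $\|u*\psi_j\|_{L^p(\Nc)}\leqslant C\|u\|_{L^p(\Nc)}$ (one uses here that $\psi_j^*=\psi_j$, since $\Lc$ has real coefficients and $\varphi_j$ is real, so that $\Kc(\varphi_j(2^{-2j}\,\cdot\,))$ is symmetric). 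Summing the finitely many non-zero terms gives $\|u\|_{B^s_{p,q}(\Nc,\Lc)}\lesssim \|u\|_{L^p(\Nc)}$.

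For the reverse bound, the identity $\varphi=\varphi\sum_{j\in F}\varphi_j(2^{-2j}\,\cdot\,)$ produces
\[
u \;=\; u*\Kc(\varphi) \;=\; \sum_{j\in F}(u*\psi_j)*\Kc(\varphi),
\]
and the ($p$-)quasi-triangle inequality combined with a further application of Corollary~\ref{cor:11} yields $\|u\|_{L^p(\Nc)}\lesssim \|u\|_{B^s_{p,q}(\Nc,\Lc)}$. The two quasi-norms are therefore equivalent on $\Oc^p(\Kc(\varphi))$, so the inclusion $\Oc^p(\Kc(\varphi))\hookrightarrow B^s_{p,q}(\Nc,\Lc)$ is a topological embedding of a complete space, hence has closed range. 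The main potential obstacle is the $p<1$ regime, where both the injectivity into $\Sc'_{\Lc}(\Nc)$ and the closedness in $L^p$ rely on bootstrapping to $L^q$ with $q\geqslant 1$ via Corollary~\ref{cor:10}, and where the convolution estimates must be taken from Corollary~\ref{cor:11} rather than classical Young.
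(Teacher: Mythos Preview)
Your proof is correct and rests on the same idea as the paper's: since $\varphi$ has compact support in $\R_+^*$, only finitely many dyadic pieces $\psi_j$ interact with $\Kc(\varphi)$, whence a two-sided equivalence between the $L^p$ and Besov quasi-norms on $\Oc^p(\Kc(\varphi))$.

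The paper's argument is a slicker version of yours. Rather than taking an arbitrary resolution and invoking Corollary~\ref{cor:11} twice to control the surviving finitely many terms, the paper chooses the family $(\varphi_j)$ so that $\varphi_0\varphi=\varphi$ while $\varphi_j(2^{-2j}\,\cdot\,)\varphi=0$ for every $j\neq 0$. With this choice one gets $u*\psi_0=u*\Kc(\varphi)=u$ and $u*\psi_j=0$ for $j\neq 0$, so the Besov quasi-norm is \emph{literally} $\|u\|_{L^p(\Nc)}$, and no convolution estimate is needed. Your route has the minor advantage of not requiring the existence of such a tailored resolution, at the cost of the extra bookkeeping and the appeal to Corollary~\ref{cor:11}; the paper's route is shorter but relies on the freedom in Definition of $B^s_{p,q}(\Nc,\Lc)$ to pick $(\varphi_j)$ adapted to $\varphi$.
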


\begin{proof}
	It suffices to observe that there is a bounded family $(\varphi_j)_{j\in \Z}$ of positive elements of $C^\infty_c(\R_+^*)$ such that
	\[
	\sum_{j\in \Z} \varphi_j(2^{-2j}\,\cdot\,)\geqslant 1,
	\]
	$\varphi_0\varphi=\varphi$, and $\varphi_j(2^{-2j}\,\cdot\,)\varphi=0$ for every $j\in \Z$, $j\neq 0$. 
\end{proof}

\begin{proof}[Proof of Proposition~\ref{prop:11}.]
	The fact that $B^s_{p,q}(\Nc,\Lc)$ is complete follows from the fact that the equivalent quasi-norms which define its topology extend to  lower semi-continuous convex functions on $\Sc'_\Lc(\Nc)$ which are finite only on $B^s_{p,q}(\Nc,\Lc)$, and from the completeness of $\Sc'_\Lc(\Nc)$. For what concerns the second assertion, it will suffice to prove that  $\Sc_\infty(\Nc)\coloneqq \Set{\varphi\in \Sc(\Nc)\colon \forall P\in \Pc\:\:\left\langle \varphi \vert P\right\rangle=0 }$  embeds continuously into $B^{-s+(1/p-1)_+}_{p',q'}(\Nc,\Lc)$, and to apply  Proposition~\ref{prop:13}.
	Then, take a positive $\eta\in C^\infty_c(\R_+^*)$ such that 
	\[
	\sum_{j\in \Z} \eta(2^{-2 j}\,\cdot\,)=1
	\]
	on $\R_+^*$, and define $\psi_j\coloneqq \Kc(\eta(2^{-2 j}\,\cdot\,))$ for every $j\in \Z$. In addition, define $\psi_{k,j}\coloneqq \Kc(((\,\cdot\,)^k \eta)(2^{-2 j}\,\cdot\,)) $ for every $j,k\in \Z$, and denote by $I_k$ a log-homogeneous fundamental solution of $\Lc^k$ of degree $2k-Q$,\footnote{In other words, there are a homogeneous polynomial $P$ of degree $2 k-Q$ and a homogeneous norm  $\rho$ of class $C^\infty$ on $\Nc\setminus \Set{(0,0)}$ such that $I_k- P\log\rho$ is homogeneous of degree $2 k-Q$, and $\Lc^k I_k=\delta_{(0,0)}$.} if $k>0$ (cf.~\cite{Geller}), and $\Lc^{-k}\delta_{(0,0)}$ otherwise.
	Then, a simple modification of~\cite[Proposition 5.8]{Calzi} implies that $\Sc_\infty(\Nc)*I_k\subseteq\Sc(\Nc)$ for every $k\in \Z$. In addition,~\cite[Proposition 5.8]{Calzi} implies that
	\[
	2^{-2 k j} \varphi*\psi_{j}=(\varphi*I_k)*\psi_{k,j}
	\]
	is uniformly bounded in $\Sc(\Nc)$ as $j$ runs through $\Z$, for every $\varphi\in \Sc_\infty(\Nc)$ and for every $k\in \Z$. Thus, $(2^{s' j}\varphi*\psi_j)\in\ell^q_0(\Z;L^p_0(\Nc))$ for every $\varphi\in\Sc_\infty(\Nc)$, for every $s'\in \R$, and for every $p,q\in ]0,\infty]$. The assertion follows by means of the closed graph theorem.
	
	For what concerns the last assertion, observe first that (the canonical image of) $\Sc_{\Lc,L}(\Nc)$ is contained in $\mathring B^{s}_{p,q}(\Nc,\Lc)$. Conversely, take $u\in \mathring B^{s}_{p,q}(\Nc,\Lc)$ and observe that $u=\sum_{j\in \Z} u*\psi_j$, with convergence in $ B^{s}_{p,q}(\Nc,\Lc)$ (argue as in the proof of~\cite[Lemma 4.22]{CalziPeloso}). Furthermore, fix $\varphi\in \Sc(\Nc)$ so that $\varphi(0,0)=\norm{\varphi}_{L^\infty(\Nc)}=1$, and define $\varphi_k\coloneqq \varphi(2^{-k}\,\cdot\,)$ for every $k\in \N$. Take $\eta'\in C^\infty_c(\R^*_+)$ such that $\eta=\eta' \eta$, and define $\psi'_j\coloneqq \Kc(\eta'(2^{-2 j}\,\cdot\,))$ for every $j\in \Z$.
	Let us prove that $[\varphi_k(u*\psi_j)]*\psi'_j$ converges to $u*\psi_j$ in $L^p(\Nc)$ for every $j\in \Z$. Observe first that $\abs{u*\psi_k}*\abs{\psi'_j}\in L^p_0(\Nc)$ for every $j\in \Z$, by Corollary~\ref{cor:12}. Since $\abs{[\varphi_k(u*\psi_j)]*\psi'_j}\leqslant \abs{u*\psi_k}*\abs{\psi'_j}$ for every $k\in \N$ and for every $j\in \Z$, it will suffice to prove that $[\varphi_k(u*\psi_j)]*\psi'_j$ converges locally uniformly to $u*\psi_j$ for every $j\in \Z$,  and this is clear. Since clearly $[\varphi_k(u*\psi_j)]*\psi'_j\in \Sc_\Lc(\Nc)$  for every $j\in \Z$, the assertion follows by means of Lemma~\ref{lem:2}.
\end{proof}

In a completely analogous fashion, one may prove that $B^{\vect s}_{p,q}(\Nc,\Omega)$ embeds continuously into the dual $\widetilde\Sc'_\Omega(\Nc)$ of the conjugate of the closure $\widetilde\Sc_\Omega(\Nc)$ of $\Sc_{\Omega,L}(\Nc)$ in $\Sc(\Nc)$.\footnote{We shall prove in~\cite{CalziPeloso2} that this definition of $ \widetilde \Sc_\Omega(\Nc)$ agrees with the one given in~\cite{CalziPeloso}.} This solves a problem left open in~\cite[4.3.7]{CalziPeloso}.

\begin{prop}\label{prop:6}
	Take $\vect s\in \R^r$ and $p,q\in ]0,\infty]$. Then, the canonical linear mapping $\widetilde \Sc_{\Omega}(\Nc)\to \Sc'_{\Omega,L}(\Nc)$ is continuous and injective, and induces a continuous injective linear mapping 
	\[
	\widetilde \Sc_{\Omega}(\Nc)\to \mathring B^{-\vect s-(1/p-1)_+(\vect b+\vect d)}_{p',q'}(\Nc,\Omega).
	\]
	This latter mapping, in turn, induces a continuous injective linear mapping 
	\[
	B^{\vect s}_{p,q}(\Nc,\Omega)\to \widetilde \Sc_{\Omega}'(\Nc).
	\]
\end{prop}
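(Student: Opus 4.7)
The first assertion---continuity and injectivity of the canonical map $\iota\colon \widetilde\Sc_\Omega(\Nc)\to \Sc'_{\Omega,L}(\Nc)$ determined by $\left\langle \iota(\varphi)\vert \tau\right\rangle = \int_\Nc \varphi\overline\tau$ for $\varphi\in \widetilde\Sc_\Omega(\Nc)$ and $\tau\in \Sc_{\Omega,L}(\Nc)$---is straightforward. Continuity from the Schwartz topology to the strong dual follows from the estimate $\abs{\int \varphi\overline\tau}\leqslant \norm{\varphi}_{L^2(\Nc)}\norm{\tau}_{L^2(\Nc)}$, since the $L^2(\Nc)$ norm is dominated by Schwartz seminorms. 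For injectivity, if $\iota(\varphi)=0$, I pick a sequence $\tau_n\in \Sc_{\Omega,L}(\Nc)$ converging to $\varphi$ in $\Sc(\Nc)$ (hence in $L^2(\Nc)$), which is available by the very definition of $\widetilde\Sc_\Omega(\Nc)$; passing to the limit in $\int_\Nc \varphi\overline{\tau_n}=0$ yields $\norm{\varphi}_{L^2(\Nc)}^2=0$, whence $\varphi=0$.

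For the embedding into $\mathring B^{\vect s''}_{p',q'}(\Nc,\Omega)$, with $\vect s''\coloneqq -\vect s-(1/p-1)_+(\vect b+\vect d)$, I plan to invoke the closed graph theorem. Indeed, $\widetilde\Sc_\Omega(\Nc)$, a closed subspace of the Fr\'echet space $\Sc(\Nc)$, and $\mathring B^{\vect s''}_{p',q'}(\Nc,\Omega)$, a quasi-Banach space, are both complete metric topological vector spaces continuously embedded in the Hausdorff TVS $\Sc'_{\Omega,L}(\Nc)$; hence it suffices to show $\iota(\widetilde\Sc_\Omega(\Nc))\subseteq \mathring B^{\vect s''}_{p',q'}(\Nc,\Omega)$, whereupon the resulting graph is automatically closed and continuity follows. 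To establish this inclusion, I fix $(\lambda_k),(\psi_k)$ as in Definition~\ref{def:2} and bound $\Delta_{\Omega'}^{\vect s''}(\lambda_k)\norm{\varphi*\psi_k}_{L^{p'}(\Nc)}$ using three ingredients: (i) the rapid Schwartz decay of $\varphi$; (ii) the identities $\pi_\lambda(\varphi) = \pi_\lambda(\varphi)P_{\lambda,0}$ for $\lambda\in\Omega'$ and $\pi_\lambda(\varphi)=0$ for $\lambda\in F'\setminus\overline{\Omega'}$, both obtained as $\Sc(\Nc)$-limits of the corresponding properties of elements of $\Sc_{\Omega,L}(\Nc)$; and (iii) a $T_+$-dilation reduction of the family $(\psi_k)$ to a bounded family of reference test functions, with $L^{p'}$-norms rescaled by appropriate powers of $\Delta(t_k)$. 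Combining these via a Plancherel-type estimate analogous to Step~I of the proof of Corollary~\ref{cor:13}, together with the sampling arguments of Section~\ref{sec:3}, will yield the required $\ell^{q'}_0(K)$-membership. Injectivity of this induced mapping is inherited from that of $\iota$.

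Finally, for the map $B^{\vect s}_{p,q}(\Nc,\Omega)\to \widetilde\Sc_\Omega'(\Nc)$, I take the continuous sesquilinear pairing on $B^{\vect s}_{p,q}(\Nc,\Omega)\times B^{\vect s''}_{p',q'}(\Nc,\Omega)$ constructed in Section~\ref{sec:2} and restrict its second argument to $\widetilde\Sc_\Omega(\Nc)$, embedded via the previous step; continuity of the resulting linear map is then immediate. For injectivity, suppose $u\in B^{\vect s}_{p,q}(\Nc,\Omega)$ pairs to zero with every element of $\widetilde\Sc_\Omega(\Nc)$; in particular it pairs to zero with every $\tau\in \Sc_{\Omega,L}(\Nc)\subseteq \widetilde\Sc_\Omega(\Nc)$, and by the density of $\Sc_{\Omega,L}(\Nc)$ in $\mathring B^{\vect s''}_{p',q'}(\Nc,\Omega)$ and the continuity of the pairing, $u$ induces the zero antilinear functional on $\mathring B^{\vect s''}_{p',q'}(\Nc,\Omega)$; hence $u=0$ by the duality $\mathring B^{\vect s''}_{p',q'}(\Nc,\Omega)'\cong B^{\vect s}_{p,q}(\Nc,\Omega)$ recalled in Section~\ref{sec:2}. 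The main obstacle is the key estimate in the second paragraph, specifically ensuring that $\norm{\varphi*\psi_k}_{L^{p'}(\Nc)}$ decays fast enough as $\lambda_k$ approaches the boundary of $\Omega'$: it is precisely here that the fine structure of $\widetilde\Sc_\Omega(\Nc)$ encoded by the two identities on $\pi_\lambda(\varphi)$ must be brought to bear.
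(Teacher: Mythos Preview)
Your arguments for injectivity (part one) and for the transposed embedding (part three) are correct; your injectivity proof via an approximating sequence from $\Sc_{\Omega,L}(\Nc)$ is in fact cleaner than the paper's Fourier-analytic verification.

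The gap is in part two. You correctly isolate the difficulty---obtaining enough decay of $\Delta_{\Omega'}^{\vect s''}(\lambda_k)\norm{\varphi*\psi_k}_{L^{p'}(\Nc)}$ as $\lambda_k$ escapes to the boundary of $\Omega'$---but the ingredients you list do not supply it. Schwartz decay of $\varphi$ and the projector identity $\pi_\lambda(\varphi)=\pi_\lambda(\varphi)P_{\lambda,0}$, combined with dilation normalization of $(\psi_k)$, yield at best a \emph{bound} on $\norm{\varphi*\psi_k}_{L^{p'}}$ (of the form $\lesssim\norm{\varphi}_{L^{p'}}$ after Young), not decay in the anisotropic weights $\Delta_{\Omega'}^{\vect s'}(\lambda_k)$ for arbitrary $\vect s'\in\R^r$. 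The ``Plancherel-type estimate analogous to Step~I of Corollary~\ref{cor:13}'' controls $L^p$ norms of convolutions once both factors live in $\Oc^p$-type spaces; it does not by itself produce polynomial-in-$\Delta$ decay when $\lambda_k$ approaches $\partial\Omega'\setminus\{0\}$ along non-radial directions.

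The paper's mechanism is different and worth noting: it uses that convolution with the Riemann--Liouville potentials $I_\Omega^{\vect s'}$ is an \emph{automorphism} of $\widetilde\Sc_\Omega(\Nc)$ for every $\vect s'\in\R^r$ (\cite[Proposition 4.11]{CalziPeloso}). Writing $\varphi*\psi_k=(\varphi*I_\Omega^{\vect s'})*(\psi_k*I_\Omega^{-\vect s'})$ converts the weight $\Delta_{\Omega'}^{\vect s'}(\lambda_k)$ into a bounded quantity at the cost of replacing $\varphi$ by $\varphi*I_\Omega^{\vect s'}\in\widetilde\Sc_\Omega(\Nc)$; a further factorization through $\Lc^\ell$ handles the radial direction $N(\lambda_k)\to\infty$. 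The arbitrariness of $\vect s'$ and $\ell$ then gives membership in $\ell^{q'}_0$ for any weight. Your sketch omits this re-weighting device (or an equivalent substitute), and without it the key estimate does not close.
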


\begin{proof}
	Observe first that~\cite[Proposition 4.11]{CalziPeloso} implies that the mapping $\varphi\mapsto \varphi*I_\Omega^{\vect{s'}}$ induces an automorphism of $\widetilde \Sc_{\Omega}(\Nc)$ for every $\vect{s'}\in \R^r$.
	Let us first prove that the canonical (continuous) linear mapping $\widetilde \Sc_{\Omega}(\Nc)\to \Sc'_{\Omega,L}(\Nc)$ is  injective. Indeed, if $\varphi\in\widetilde \Sc_{\Omega}(\Nc)$ and $\left\langle \varphi\vert \psi\right\rangle=0$ for every $\psi\in \Sc_{\Omega,L}(\Nc)$, then
	\[
	(\varphi*\psi)(\zeta,x)=\left\langle \varphi\big\vert \delta_{(\zeta,x)}*\psi^* \right\rangle=0
	\]
	for every $\psi\in \Sc_{\Omega}(\Nc)$ and for every $(\zeta,x)\in \Nc$, thanks to~\cite[Propositions 4.2 and 4.5]{CalziPeloso}. Therefore,
	\[
	0=\pi_\lambda(\varphi*\psi)=\Fc_\Nc(\psi)(\lambda)\pi_\lambda(\varphi)
	\]
	for every $\psi\in \Sc_\Omega(\Nc)$ and for every $\lambda \in \Omega'$, whence $\varphi=0$ since $\Fc_\Nc(\Sc_\Omega(\Nc))=C^\infty_c(\Omega')$ and $\pi_\lambda(\varphi)=0$ for every $\lambda\in F'\setminus (W\cup \Omega')$.
		
	Now, take $\varphi\in \widetilde \Sc_\Omega(\Nc)$, and let us prove that its canonical image in $\Sc'_{\Omega,L}(\Nc)$ is contained in the space $B^{-\vect s-(1/p-1)_+(\vect b+\vect d)}_{p',q'}(\Nc,\Omega)$.	
	Take $(\lambda_k)$, $(\varphi_k)$, and $(\psi_k)$ as in Definition~\ref{def:2}. Take $\vect{s'}\in \R^r$, $\ell\in\N$, and $\varphi\in \widetilde \Sc_{\Omega,L}(\Nc)$. Then,~\cite[Proposition 4.11]{CalziPeloso} and the preceding remarks show that $\varphi*I_\Omega^{\vect{s'}} \in \widetilde\Sc_{\Omega}(\Nc)$, $\psi_k*I_\Omega^{-\vect{s'}}\in \Sc_\Omega(\Nc)$, and that
	\[
	\pi_\lambda(\varphi*\psi_k)=\pi_\lambda((\varphi*I_\Omega^{\vect{s'}})*(\psi_k*I_\Omega^{-\vect{s'}}))
	\]
	for every $\lambda\in \Omega'$ and for every $k\in K$. Hence,
	\[
	\varphi*\psi_k=(\varphi*I_\Omega^{\vect{s'}})*(\psi_k*I_\Omega^{-\vect{s'}})
	\]
	for every $k\in K$.
	Next, define
	\[
	\psi_{k,\vect{s'},\ell}\coloneqq \Fc_\Nc^{-1}( N^{-\ell}  \Fc_\Nc(\psi_k*I^{-\vect{s'}}_\Omega) )
	\]
	and observe that
	\[
	\pi_\lambda(\varphi*\psi_k)=\pi_\lambda(\Lc^{\ell}(\varphi*I_\Omega^{\vect{s'}})*\psi_{k,\vect{s'},\ell})
	\]
	for every $\lambda\in \Omega'$ and for every $k\in K$, whence
	\[
	\varphi*\psi_k=\Lc^{\ell}(\varphi*I_\Omega^{\vect{s'}})*\psi_{k,\vect{s'},\ell}
	\]
	for every $k\in K$.
	
	Therefore,~\cite[Corollaries 2.49 and 2.51, and Proposition 4.11]{CalziPeloso} imply that
	\[
	\norm{\varphi*\psi_k}_{L^{p'}(\Nc)}\leqslant \Delta_{\Omega'}^{\vect{s'}}(\lambda_k)\max(1,N(\lambda_k))^{-\ell} C_{\vect{s'},\ell}\max\left(\norm{\Lc^\ell( \varphi*I^{\vect{s'}}_\Omega)}_{L^{p'}(\Nc)} ,\norm{ \varphi*I^{\vect{s'}}_\Omega}_{L^{p'}(\Nc)}\right)
	\]
	for every $k\in K$,
	where 
	\[
	C_{\vect{s'},\ell}\coloneqq \max\left(\sup\limits_{N(\lambda_k)\leqslant 1} \Delta_{\Omega'}^{-\vect{s'}}(\lambda_k)\norm{ \psi_k*I_\Omega^{-\vect{s'}}}_{L^1(\Nc)}, \sup\limits_{N(\lambda_k)\geqslant 1} \Delta_{\Omega'}^{-\vect{s'}}(\lambda_k)N(\lambda_k)^\ell\norm{\psi_{k,\vect{s'},\ell}}_{L^1(\Nc)}	\right)
	\]
	is finite.\footnote{To see this, observe that $\Fc_\Nc(\psi_{k,\vect{s'},\ell})=i^{\vect{s'}}N^{-\ell} \Delta_{\Omega'}^{\vect{s'}} \varphi_k(\,\cdot\, t_k^{-1})$, and that the family $(N(\lambda_k)^\ell N^{-\ell}(\,\cdot\,t_k) \Delta_{\Omega'}^{\vect{s'}} \varphi_k)_{N(\lambda_k)\geqslant 1}$ is bounded in $C^\infty_c(\Omega')$.} Since clearly $\varphi*\psi_k\in L^{p'}_0(\Nc)$ for every $k\in K$, this implies that $(\max(1,N(\lambda_k))^{\ell}\Delta_{\Omega'}^{-\vect{s'}}(\lambda_k)\varphi*\psi_k)\in \ell^\infty(K;L^{p'}_0(\Nc)) $ for every $\vect{s'}\in \R^r$ and for every $\ell\in \N$. By the arbitrariness of $\vect{s'}$ and $\ell$, this implies that $(\Delta_{\Omega'}^{\vect{s}}(\lambda_k)\varphi*\psi_k)\in \ell^{q'}_0(K;L^{p'}_0(\Nc)) $, that is, $\varphi\in \mathring B^{-\vect s-(1/p-1)_+(\vect b+\vect d)}_{p',q'}(\Nc,\Omega)$. 
	Thus, there is a canonical linear mapping $\widetilde \Sc_{\Omega}(\Nc)\to \mathring B^{-\vect s-(1/p-1)_+(\vect b+\vect d)}_{p',q'}(\Nc,\Omega)$, which is necessarily continuous by the closed graph theorem. Alternatively, continuity may be proved directly using the above estimates.
	Since $B^{\vect s}_{p,q}(\Nc,\Omega)$ embeds continuously and canonically into $B^{\vect s+(1/p-1)_+(\vect b+\vect d)}_{\max(1,p),\max(1,q)}(\Nc)$, which is canonically identified with the dual of $\mathring B^{-\vect s-(1/p-1)_+(\vect b+\vect d)}_{p',q'}(\Nc,\Omega)$ thanks to~\cite[Proposition 4.19 and Theorem 4.23]{CalziPeloso}, the last assertion follows by transposition.
\end{proof}

\section{Comparison between Classical and Analytic Besov Spaces}\label{sec:8}

We now proceed to compare the Besov spaces of analytic type $B^{\vect s}_{p,q}(\Nc,\Omega)$ with the `classical' Besov spaces $B^{s}_{p,q}(\Nc,\Lc)$. We shall first show that $B^{\vect s}_{p,q}(\Nc,\Omega)$ is the closure in the topology $\sigma^{\vect s}_{p,q}$ of the union of an increasing sequence $(V_j)$ of closed vector subspaces on which the topologies induced by $B^{\vect s}_{p,q}(\Nc,\Omega)$ and $B^{\sum_j s_j}_{p,q}(\Nc,\Lc)$ coincide (cf.~Proposition~\ref{prop:7}). Thus, it is natural to restrict attention to embeddings of the form $B^{\vect s}_{p,q}(\Nc,\Omega)\to B^{\sum_j s_j}_{p,q}(\Nc,\Lc)$. 

When $r=1$, Proposition~\ref{prop:7} shows that $B^{\vect s}_{p,q}(\Nc,\Omega)$ is actually canonically isomorphic to a closed subspace of $B^{\sum_j s_j}_{p,q}(\Nc,\Lc)$. When $r>1$, though, the situation is far more delicate and it turns out that the existence of a canonical embedding $B^{\vect s}_{p,q}(\Nc,\Omega)\to B^{\sum_j s_j}_{p,q}(\Nc,\Lc)$ is equivalent to a suitable decoupling inequality $(D')^{\vect s}_{p,q}$, which cannot hold unless $\vect s\in \R_-^r$ (cf.~Theorem~\ref{teo:1} and Proposition~\ref{prop:8}). If, in addition, $\vect s\neq \vect 0$, then the decoupling condition $(D')^{\vect s}_{p,q}$ is equivalent to the condition $(D)^{\vect s}_{p,q}$ defined in~\cite[Definition 5.5]{CalziPeloso} which, in turn, is closely related to the determination of the boundary value spaces of suitable weighted Bergman spaces on $D$ (cf.~\cite[Theorem 5.10]{CalziPeloso}).
When $\Omega$ is a Lorentz cone, then the   $\ell^2$-decoupling inequality proved by Bourgain and Demeter~\cite{BourgainDemeter} allows to determine much more precise sufficient conditions for property $(D)^{\vect s}_{p,q}$ to hold, as noticed in~\cite{BekolleGonessaNana}. To the best of our knowledge, a complete characterization of the validity of property $(D)^{\vect s}_{p,q}$ (or $(D')^{\vect s}_{p,q}$) is not available, in general.

\begin{prop}\label{prop:7}
	Take $\vect s\in \R^r$ and $p,q\in ]0,\infty]$. In addition, fix a compact subset $H$ of $\Omega'$, and define 
	\[
	V\coloneqq \Set{T\in B_{p,q}^{\vect s}(\Nc,\Omega)\colon \forall \tau\in C^\infty_c(\Omega'\setminus (\R_+^* H))  \quad T*\Fc_\Nc^{-1}(\tau)=0 }
	\]
	and
	\[
	V_0\coloneqq V\cap \mathring B_{p,q}^{\vect s}(\Nc,\Omega),
	\]
	endowed with the topology induced by $B_{p,q}^{\vect s}(\Nc,\Omega)$.
	Then, $V$ and $V_0$ embed as closed subspaces of $B_{p,q}^{\sum_j s_j}(\Nc,\Lc)$ and $\mathring B_{p,q}^{\sum_j s_j}(\Nc,\Lc)$, respectively.
\end{prop}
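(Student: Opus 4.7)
The plan is to establish a norm equivalence on $V$ between the $B^{\vect s}_{p,q}(\Nc,\Omega)$ quasi-norm and the $B^{\sum_j s_j}_{p,q}(\Nc,\Lc)$ quasi-norm; closedness of the image will then follow from completeness of the ambient Besov space (Proposition~\ref{prop:11}). Fix a $(\delta,R)$-lattice $(\lambda_k)_{k\in K}$ on $\Omega'$ with associated $(\varphi_k)$ and $(\psi_k)$ as in Definition~\ref{def:2}, chosen so that $\sum_k\varphi_k(\,\cdot\, t_k^{-1})=1$ on $\Omega'$. Let $K_H\coloneqq\{k\in K\colon \Supp{\varphi_k(\,\cdot\, t_k^{-1})}\cap\R_+^*H\neq\emptyset\}$; the defining condition of $V$ forces $T*\psi_k=0$ for every $k\in K\setminus K_H$ and every $T\in V$. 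Pick also $\eta\in C^\infty_c(\R_+^*)$ with $\sum_{j\in\Z}\eta(2^{-2j}\,\cdot\,)=1$ on $\R_+^*$, and set $\psi^\Lc_j\coloneqq\Kc(\eta(2^{-2j}\,\cdot\,))$.

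The bridge between the two decompositions is Proposition~\ref{prop:1}: for $\varphi\in\Sc_\Omega(\Nc)$ and $\lambda\in\Omega'$, one has $\pi_\lambda(\varphi*\psi^\Lc_j)=\eta(2^{-2j}N(\lambda))\,\pi_\lambda(\varphi)$. Since $N$ is $2$-homogeneous with $N(t\lambda)=t^2 N(\lambda)$, oscillates by a bounded ratio on every $d_{\Omega'}$-ball of fixed radius, and $H$ is compact in $\Omega'$, by taking $\delta$ small and $\eta$ with sufficiently wide support I can construct a map $j\colon K_H\to\Z$ enjoying (i) $\eta(2^{-2j(k)}N(\lambda))=1$ on $\Supp{\varphi_k(\,\cdot\, t_k^{-1})}$, so that $\psi_k*\psi^\Lc_{j(k)}=\psi_k$, and (ii) $\psi_k*\psi^\Lc_{j'}=0$ whenever $|j'-j(k)|>A$, for an absolute constant $A$. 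Letting $K_H^j\coloneqq j^{-1}(j)$, the $T_+$-invariance of $d_{\Omega'}$ (applied via the scalar dilation that maps the scale-$1$ slab $\{d_{\Omega'}(\,\cdot\,,\R_+^*H)\leqslant R\delta,\,N\in[4^{-1},4]\}$ to its scale-$4^j$ counterpart) yields $|K_H^j|\leqslant M$ with $M$ independent of $j$, and the $T_+$-equivariance $\Delta_{\Omega'}^{\vect s}(t\lambda)=t^{\sum_\ell s_\ell}\Delta_{\Omega'}^{\vect s}(\lambda)$ under scalar dilations yields $\Delta_{\Omega'}^{\vect s}(\lambda_k)\asymp 2^{j\sum_\ell s_\ell}$ uniformly for $k\in K_H^j$.

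Set $T_j\coloneqq\sum_{k\in K_H^j}T*\psi_k$ for $T\in V$. The cardinality bound $|K_H^j|\leqslant M$ combined with the quasi-triangle inequality and a bounded-overlap argument produces
\[
\Big(\sum_{k\in K_H^j}\|T*\psi_k\|_{L^p}^q\Big)^{1/q}\asymp \|T_j\|_{L^p},
\]
hence $\|T\|_{B^{\vect s}_{p,q}(\Nc,\Omega)}\asymp\|(2^{j\sum_\ell s_\ell}\|T_j\|_{L^p})_j\|_{\ell^q(\Z)}$. On the other hand, (i) and (ii) give $T*\psi^\Lc_j=\sum_{j'\colon|j'-j|\leqslant A}T_{j'}*\psi^\Lc_j$, with each summand controlled uniformly by $\|T_{j'}\|_{L^p}$ via Corollaries~\ref{cor:10} and~\ref{cor:11} (the sampling machinery of Section~\ref{sec:3}, rather than mere Young's inequality, being essential when $p<1$). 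A further $\ell^q$ quasi-triangle argument converts this into $\|(2^{j\sum_\ell s_\ell}\|T*\psi^\Lc_j\|_{L^p})_j\|_{\ell^q}\asymp\|(2^{j\sum_\ell s_\ell}\|T_j\|_{L^p})_j\|_{\ell^q}$, closing the loop.

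This equivalence produces the embedding: I define $\widetilde T\in\Sc'_\Lc(\Nc)$ by $\langle\widetilde T\,|\,\varphi\rangle\coloneqq\sum_{k\in K_H}\langle T*\psi_k\,|\,\varphi\rangle$, which is a finite sum for every $\varphi\in\Sc_\Lc(\Nc)$, since the $\Lc$-spectral support of $\varphi$ is compactly contained in $\R_+^*$ and thus meets only finitely many $K_H^j$, each of cardinality at most $M$. The map $T\mapsto\widetilde T$ is injective, and by the norm equivalence a topological isomorphism of $V$ onto its image in $B^{\sum_j s_j}_{p,q}(\Nc,\Lc)$; since $V$ is closed in the complete space $B^{\vect s}_{p,q}(\Nc,\Omega)$, its image is complete and hence closed in the Hausdorff space $B^{\sum_j s_j}_{p,q}(\Nc,\Lc)$. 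The statement for $V_0$ follows identically, because the two-sided estimates $\|T*\psi^\Lc_j\|_{L^p}\asymp\|T_j\|_{L^p}\asymp\max_{k\in K_H^j}\|T*\psi_k\|_{L^p}$ transfer the $\ell^q_0(L^p_0)$-vanishing condition cleanly. The main technical obstacle is arranging the simultaneous compatibility of the two dyadic decompositions (i)-(ii) through the spectral identity of Proposition~\ref{prop:1} and the geometric scaling of $N$ and $d_{\Omega'}$, together with the uniform control of the bounded multiplicities in the quasi-normed regime $\min(p,q)<1$.
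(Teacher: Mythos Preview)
Your approach is valid in spirit but differs from the paper's, and it contains one genuine gap. The paper avoids norm-equivalence estimates entirely by constructing a \emph{specific} $(\delta,2)$-lattice $(\lambda_k)_{k\in K}$ on $\Omega'$ containing the dyadic orbit $\{2^j e_{\Omega'}\colon j\in\Z\}$, together with a partition $(\varphi_k)_{k\in K}$ such that, for $j\in\Z$, $\varphi_j(\,\cdot\, t_j^{-1})=\tau(2^{-j}\,\cdot\,)\,\eta(2^{-2j}N(\,\cdot\,))$ with a cutoff $\tau\equiv 1$ on the relevant slab of a neighbourhood $\R_+^* H'$ of $\R_+^*H$, while for $k\notin\Z$ the support of $\varphi_k(\,\cdot\, t_k^{-1})$ misses $\R_+^* H'$. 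For $T\in V$ one then has $T*\psi_k=0$ when $k\notin\Z$ and $T*\psi_j=T*\Kc(\eta(2^{-2j}\,\cdot\,))$ when $j\in\Z$; since $\Delta_{\Omega'}^{\vect s}(2^j e_{\Omega'})=2^{j\sum_\ell s_\ell}$, the two Besov quasi-norms agree \emph{term by term}, with no recourse to Corollary~\ref{cor:11}. Your route via an arbitrary lattice makes the geometric content explicit and is more robust, at the cost of the estimates below.

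The gap is in the displayed equivalence
\[
\Big(\sum_{k\in K_H^j}\|T*\psi_k\|_{L^p}^q\Big)^{1/q}\asymp \|T_j\|_{L^p}.
\]
The inequality $\gtrsim$ follows from the quasi-triangle inequality and $|K_H^j|\leqslant M$, but the ``bounded-overlap argument'' for $\lesssim$ is not specified and does not follow as stated: the supports of the $\Fc_\Nc\psi_k$ for $k\in K_H^j$ may overlap, so one cannot recover $T*\psi_k$ from $T_j$ by convolving with a single fattened bump, and for $p\neq 2$ there is no orthogonality to exploit. The fix is to bypass $T_j$: since $\psi_k\in\Sc_\Lc(\Nc)$ (its $\Lc$-spectrum is compact by Proposition~\ref{prop:1}) and $\sum_{k'}\psi_{k'}*\psi_k=\psi_k$, your $\widetilde T$ satisfies $\widetilde T*\psi_k=T*\psi_k$; combined with (i) this gives $T*\psi_k=(\widetilde T*\psi^\Lc_{j(k)})*\psi_k$, whence $\|T*\psi_k\|_{L^p}\lesssim\|\widetilde T*\psi^\Lc_{j(k)}\|_{L^p}$ by Corollary~\ref{cor:11} (or~\ref{cor:13}) and homogeneity. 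Summing with $\Delta_{\Omega'}^{\vect s}(\lambda_k)\asymp 2^{j(k)\sum_\ell s_\ell}$ and the fibre bound $|K_H^j|\leqslant M$ yields $\|T\|_{B^{\vect s}_{p,q}(\Nc,\Omega)}\lesssim\|\widetilde T\|_{B^{\sum_\ell s_\ell}_{p,q}(\Nc,\Lc)}$ directly. The reverse inequality you already have from (ii) and the same corollaries, so the two-sided equivalence---and with it the closedness of the image---goes through.
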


Notice that, if $(H_k)$ is an increasing sequence of compact subsets of $\Omega'$ whose union is $\Omega'$, then the corresponding subspaces $V_{0,k}$ of $\mathring B_{p,q}^{\vect s}(\Nc,\Omega)$ form an increasing sequence of closed subspaces of $\mathring B_{p,q}^{\vect s}(\Nc,\Omega)$ whose union is dense (and also dense in $B_{p,q}^{\vect s}(\Nc,\Omega)$ for the weak topology $\sigma^{\vect s}_{p,q}$).

\begin{proof}
	We may assume that $H$ contains $e_{\Omega'}$.
	Take $\varphi\in C^\infty_c(\R_+^*)$, and  define $\psi\coloneqq \Kc(\varphi)$. Take $\eta\in \Sc_\Omega(\Nc)$, and observe that Proposition~\ref{prop:1} and~\cite[Proposition 3.7]{Martini} imply that
	\[
	\dd \pi_\lambda(\eta*\psi)= (\Fc_\Nc \eta)(\lambda) P_{\lambda,0} \varphi(\dd \pi_\lambda(\Lc))=(\Fc_\Nc \eta)(\lambda) \varphi(N(\lambda)) P_{\lambda,0} 
	\]
	for every $\lambda\in \Omega'$. Analogously, one proves that $\dd \pi_\lambda(\eta*\psi)=0$ for every $\lambda\in F'\setminus (W\cup \Omega')$, so that $\eta*\psi\in \Sc_\Omega(\Nc)$.  
	
	Observe that $\delta \coloneqq \inf\limits_{0\neq j\in \Z} d_{\Omega'}(e_{\Omega'}, 2^j e_{\Omega'})/2>0$. Then, we may find a family $(t_k)_{k\in K}$ which is maximal for the property that $d_{\Omega'}(e_{\Omega'}\cdot t_k, e_{\Omega'}\cdot t_{k'})\geqslant 2\delta$ for every $k,k'\in K$, $k\neq k'$, such that $\Z\subseteq K$, and such that $\lambda\cdot t_j=2^j \lambda$ for every $j\in \Z$ and for every $\lambda\in \Omega'$. In particular, if we define $\lambda_k\coloneqq e_{\Omega'}\cdot t_k$, then $(\lambda_k)$ is a $(\delta,2)$-lattice on $\Omega'$. 
	Let $H'$ be a compact neighbourhood of $H$ in $\Omega'$, and let us prove that there is $R>1$ such that $H'\subseteq \bigcup_{j\in \Z} B(\lambda_j, R\delta)$. Indeed, the sets $B_R\coloneqq \bigcup_{j\in \Z} B(\lambda_j, R\delta)$,  $R>0$, form an increasing open covering of $\Omega'$ such that $2^j B_R=B_R$ for every $j\in \Z$. Hence, there is $R>0$ such that $[1,2] H'\subseteq B_R$, so that $\R_+^* H'\subseteq\bigcup_{j\in \Z} 2^j[1,2] H'\subseteq B_R$.
	
	Now, take $\eta\in C^\infty_c(\R_+^*)$ such that
	\[
	\sum_{j\in \Z} \eta(2^{-2 j}\,\cdot\,)=1
	\]
	on $\R_+^*$, and fix $\tau\in C^\infty_c(\Omega')$ so that $\chi_{B(e_{\Omega'},R\delta)}\leqslant \tau\leqslant \chi_{B(e_{\Omega'},2 R\delta)}$. Define
	\[
	\varphi_j\coloneqq \tau(\eta\circ N)
	\]
	for every $j\in\Z$, so that
	\[
	 \chi_{\R_+^* H'}\leqslant \sum_{j\in\Z} \varphi_j(\,\cdot\, t_j^{-1})\leqslant\chi_{\Omega'}.
	\]
	Then, we may find a bounded family $(\varphi_k)_{k\in K\setminus \Z}$ of positive elements of $C^\infty_c(\Omega')$ such that
	\[
	\sum_{k\in K} \varphi_k(\,\cdot\, t_k^{-1})=1
	\]
	on $\Omega'$. In particular, $\varphi_k(\,\cdot\, t_k^{-1})=0$ on  $\R_+^* H'$ for every $k\in K\setminus \Z$, and
	\[
	u*\Fc_\Nc^{-1}(\varphi_k(\,\cdot\, t_k^{-1}))=\begin{cases}
		u*\Kc(\eta_k(2^{-2 k}\,\cdot\,)) &\text{if $k\in \Z$}\\
		0 & \text{if $k\not \in \Z$}
	\end{cases}
	\]
	for every $u\in V$ and for every $k\in K$.
	Since $\Delta_{\Omega'}^{\vect s}(\lambda_k)=2^{k \sum_j s_j}$ for every $k\in \Z$, the assertion follows.
\end{proof}

Choosing  $H=\Set{1}$ when $r=1$, we get the following corollary.

\begin{cor}
	Assume that $r=1$ and take $s\in \R$ and $p,q\in ]0,\infty]$. Then, $B_{p,q}^{s}(\Nc,\Omega)$ embeds canonically as a closed subspace of $B_{p,q}^s(\Nc,\Lc)$.
\end{cor}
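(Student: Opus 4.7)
The plan is to derive the corollary directly from Proposition~\ref{prop:7} by choosing $H=\{e_{\Omega'}\}$. For this to produce the whole space $B^s_{p,q}(\Nc,\Omega)$, it suffices to show that $\R_+^*H=\Omega'$, i.e.\ that $\Omega'$ is exhausted by the scaling orbit of its base point.

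The key ingredient is that, when $r=1$, the cone $\Omega$ (and hence $\Omega'$) is one-dimensional: this is classical from Vinberg's T-algebra classification of homogeneous cones, since rank~$1$ corresponds to a single primitive idempotent with no off-diagonal blocks, forcing the underlying vector space to be $\R$. One can also see it more directly from the structure of $T_+$: the scaling subgroup $\R_+^*\subseteq T_+$ already surjects onto the one-dimensional quotient $T_+/[T_+,T_+]$, and a dimension count against the simply transitive action on $\Omega'$ forces $\dim \Omega' = 1$. In any case, $\Omega'=\R_+^*\,e_{\Omega'}$.

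With this observation in hand, take $H=\{e_{\Omega'}\}$ so that $\R_+^*H=\Omega'$. Then $\Omega'\setminus\R_+^*H=\emptyset$, whence $C^\infty_c(\Omega'\setminus\R_+^*H)=\{0\}$, and the defining condition for the subspace $V\subseteq B^{\vect s}_{p,q}(\Nc,\Omega)$ in Proposition~\ref{prop:7} is vacuously satisfied. Consequently $V=B^{\vect s}_{p,q}(\Nc,\Omega)$ (and similarly for the dotted version), and noting that when $r=1$ the single component $s$ of $\vect s$ satisfies $\sum_j s_j=s$, Proposition~\ref{prop:7} provides the desired canonical closed embedding into $B^s_{p,q}(\Nc,\Lc)$.

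No substantial obstacle is expected, as the entire content of the statement has been pushed into Proposition~\ref{prop:7}; the only thing to verify is the one-dimensionality of $\Omega$ under the rank-$1$ hypothesis, for which we may simply invoke Vinberg's classification.
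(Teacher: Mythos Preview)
Your proposal is correct and takes essentially the same approach as the paper: the paper simply says ``Choosing $H=\{1\}$ when $r=1$, we get the following corollary,'' which is exactly your choice $H=\{e_{\Omega'}\}$ once one identifies $\Omega'$ with $\R_+^*$. Your added justification that $r=1$ forces $\dim\Omega=1$ via Vinberg's classification is the right way to make this explicit; note however that your alternative ``dimension count'' argument (surjectivity of the scaling subgroup onto $T_+/[T_+,T_+]$ plus simple transitivity) is incomplete as stated, since a $1$-dimensional abelianization alone does not force a solvable group to be $1$-dimensional---stick with the Vinberg/T-algebra justification.
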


 The situation when $r>1$ is more complicated.

\begin{deff}
	Take $\vect s\in \R^r$ and $p,q\in ]0,\infty]$. We say that property $(D')_{p,q}^{\vect s,0}$  holds if there are a $(\delta, R)$-lattice $(\lambda_k)_{k\in K}$ on $\Omega'$, for some $\delta>0$ and some $R>1$, a bounded family $(\varphi_k)_{k\in K}$ of positive elements of $C^\infty_c(\Omega')$ such that
	\[
	\sum_{k\in K} \varphi_k(\,\cdot\, t_k^{-1})\geqslant 1
	\]
	on $\Omega'$, where $t_k\in T_+$ and $\lambda_k=e_{\Omega'}\cdot t_k$ for every $k\in K$, and two constants $C>0$ and $c>1$ such that
	\[
	\norm*{\sum_{\substack{k\in K_c}} u_k*\psi_k}_{L^p(\Nc)}\leqslant C  \norm*{ \Delta_{\Omega'}^{\vect s}(\lambda_k) \norm{u_k*\psi_k}_{L^p(\Nc)} }_{\ell^q(K_c)}
	\]
	for every $(u_k)\in \Sc_{\Omega,L}(\Nc)^{(K_c)}$, where $\psi_k\coloneqq \Fc_\Nc^{-1}(\varphi_k(\,\cdot\, t_k^{-1}))$ for every $k\in K$, while 
	\[
	K_c\coloneqq \Set{k\in K\colon \varphi_k(\,\cdot\, t_k^{-1})( \chi_{[1/c , c]}\circ N)\neq 0}.
	\]
\end{deff}

Notice that the same argument used to prove~\cite[Lemma 5.6]{CalziPeloso} shows that property $(D')^{\vect 0}_{p,q}$ holds for every $p\in ]0,\infty]$ and for every $q\in ]0, \min(p,p')]$. 

In addition, if property $(D')^{\vect s}_{p,q}$ holds, then property $(D')^{\vect {s_2}}_{p,q_2}$ holds for every $\vect{s_2}\leqslant \vect s$ and for every $q_2\in ]0,q]$ (cf.~\cite[Lemma 2.34]{CalziPeloso}).

\begin{lem}\label{lem:4}
	Take $\vect s\in \R^r$ and $p,q\in ]0,\infty]$, and assume that property $(D')_{p,q}^{\vect s}$ holds. Take a $(\delta, R)$-lattice $(\lambda_k)_{k\in K}$ on $\Omega'$, for some $\delta>0$ and some $R>1$, a bounded family $(\varphi_k)_{k\in K}$ of positive elements of $C^\infty_c(\Omega')$ such that
	\[
	\sum_{k\in K} \varphi_k(\,\cdot\, t_k^{-1})\geqslant 1
	\]
	on $\Omega'$, where $t_k\in T_+$ and $\lambda_k=e_{\Omega'}\cdot t_k$ for every $k\in K$, and a constant $c>1$. Then, there is a constant $C>0$ such that
	\[
	\norm*{\sum_{\substack{k\in K_c}} u_k*\psi_k}_{L^p(\Nc)}\leqslant C  \norm*{ \Delta_{\Omega'}^{\vect s}(\lambda_k) \norm{u_k*\psi_k}_{L^p(\Nc)} }_{\ell^q(K_c)}
	\]
	for every $(u_k)\in \Sc'_{\Omega,L}(\Nc)^{(K_c)}$, where $\psi_k\coloneqq\Fc_\Nc^{-1}(\varphi_k(\,\cdot\, t_k^{-1}))$ for every $k\in K$, while 
	\[
	K_c\coloneqq \Set{k\in K\colon \varphi_k(\,\cdot\, t_k^{-1})( \chi_{[1/c , c]}\circ N)\neq 0}.
	\]
\end{lem}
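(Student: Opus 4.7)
The strategy is to reduce the general case to the specific data $(\lambda_{k^0}^0, \varphi_{k^0}^0, c_0)$ provided by the hypothesis, in three stages: first change the lattice and cutoff family at fixed $c_0$, then change the constant $c$, and finally extend from Schwartz to distributional sequences $(u_k)$. All three stages will rely on the bounded-overlap properties of lattices recalled in Section~\ref{sec:2}, the $T_+$-equivariance of $\Delta_{\Omega'}^{\vect s}$, and the convolution estimates of Section~\ref{sec:3}.

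For the first stage, I would construct a partition of unity relating the two cutoff families. Setting $\eta_k := \Fc_\Nc^{-1}\bigl( \varphi_k(\,\cdot\, t_k^{-1}) / \sum_{k^1} \varphi_{k^1}^0(\,\cdot\, (t_{k^1}^0)^{-1}) \bigr) \in \Sc_\Omega(\Nc)$, one has $\psi_k = \sum_{k^0 \in I(k)} \eta_k * \psi_{k^0}^0$, where $I(k) \subset K^0$ is the uniformly finite (Section~\ref{sec:2}) set of $k^0$ with $\mathrm{supp}\,\varphi_{k^0}^0(\,\cdot\, (t_{k^0}^0)^{-1}) \cap \mathrm{supp}\,\varphi_k(\,\cdot\, t_k^{-1}) \neq \emptyset$, and the $L^1$-norms of the $\eta_k$ are uniformly bounded after $T_+$-rescaling (using \cite[Proposition 4.2]{CalziPeloso}). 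Regrouping yields $\sum_k u_k * \psi_k = \sum_{k^0} \tilde u_{k^0}^0 * \psi_{k^0}^0$ with $\tilde u_{k^0}^0 = \sum_{k \,:\, k^0 \in I(k)} u_k * \eta_k$, a finite sum. Applying the hypothesis to $(\tilde u_{k^0}^0)$ bounds the left-hand side in terms of $\|\tilde u_{k^0}^0 * \psi_{k^0}^0\|_{L^p}$; the latter is then estimated by terms of the form $\|u_k * \psi_k * g_{k,k^0}\|_{L^p}$ via Corollary~\ref{cor:13}, where $g_{k,k^0} \in \Sc_\Omega(\Nc)$ is chosen so that $\psi_k * g_{k,k^0} = \eta_k * \psi_{k^0}^0$ (possible because $\Fc_\Nc \psi_k$ does not vanish on the support of $\Fc_\Nc(\eta_k * \psi_{k^0}^0)$) and has uniform $L^1$-bounds after $T_+$-rescaling. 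Reindexing the resulting $\ell^q$-sum, using bounded overlap and \cite[Corollary 2.49]{CalziPeloso} to compare $\Delta_{\Omega'}^{\vect s}(\lambda_{k^0}^0)$ with $\Delta_{\Omega'}^{\vect s}(\lambda_k)$ at nearby lattice points, then yields the estimate for $(\lambda_k), (\varphi_k)$.

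For the second stage (change of $c$), the dilation subgroup $\{2^j I : j \in \Z\} \subset T_+$ acts on $\Omega'$ with $N(2^j \lambda) = 2^{2j} N(\lambda)$, so a finite (depending on $\log(c/c_0)$) family of dilates of the annulus $N^{-1}([1/c_0, c_0])$ covers $N^{-1}([1/c, c])$; applying the first-stage estimate to each rescaled piece and combining the resulting $\ell^q$-estimates via the quasi-triangle inequality, absorbing the bounded $\Delta_{\Omega'}^{\vect s}$-factors arising from the dilations into the constant, gives the estimate at constant $c$ for Schwartz data. For the third stage, given $(u_k) \in \Sc'_{\Omega,L}(\Nc)^{(K_c)}$ with each $u_k * \psi_k \in L^p(\Nc)$ (else the right-hand side is $+\infty$ and there is nothing to prove), I would set $u_k^{(n)} := (\chi_n (u_k * \psi_k')) * \psi_k'' \in \Sc_{\Omega,L}(\Nc)$ for $\chi_n \in C^\infty_c(\Nc)$ an exhausting sequence and $\psi_k', \psi_k'' \in \Sc_\Omega(\Nc)$ chosen with $\psi_k' * \psi_k'' * \psi_k = \psi_k$, so that $u_k^{(n)} * \psi_k \to u_k * \psi_k$ in $L^p$ (or uniformly on compacta inside $C_0(\Nc)$ when $p = \infty$, using Corollary~\ref{cor:10}), and then invoke the Schwartz estimate and pass to the limit. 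The main obstacle is precisely this third stage: the $p = \infty$ endpoint, where $L^p$-convergence must be replaced by weak-star convergence in $L^\infty$, and one must appeal to lower semicontinuity of the right-hand $\ell^{q}$--$L^\infty$ norm on $\Sc'_{\Omega,L}(\Nc)$ together with Corollary~\ref{cor:10} to secure the needed $C_0$-containment uniformly in $k$.
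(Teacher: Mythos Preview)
Your Stages 1 and 2 together amount to the paper's Step I, and the ingredients you list (bounded overlap, \cite[Corollary 2.49]{CalziPeloso}, Corollary~\ref{cor:13} with rescaling) are the right ones. Note however that your separation is slightly artificial: Stage 1 cannot stand alone ``at fixed $c_0$'', because once you change the cutoff family the supports of the new $\psi_k$ for $k\in K_c$ need not lie inside $N^{-1}([1/c_0,c_0])$, so you already need dilations to cover them. The paper therefore dilates the hypothesis first (obtaining the estimate for every $\psi'_{k',j}$, $j\in\Z$) and only then regroups.

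The genuine gap is in Stage 3, and it is not the $p=\infty$ endpoint you flag. Your choice $u_k^{(n)}=(\chi_n(u_k*\psi_k'))*\psi_k''$ with $\psi_k'*\psi_k''*\psi_k=\psi_k$ forces $\Fc_\Nc\psi_k'$ to be nonvanishing on $\Supp{\Fc_\Nc\psi_k}$, hence $\psi_k'$ has \emph{larger} Fourier support than $\psi_k$. But then $u_k*\psi_k'\in L^p(\Nc)$ does not follow from $u_k*\psi_k\in L^p(\Nc)$ for an arbitrary $u_k\in\Sc'_{\Omega,L}(\Nc)$, and without it you cannot bound $\norm{u_k^{(n)}*\psi_k}_{L^p}$ uniformly in $n$, nor obtain the claimed $L^p$-convergence. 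The obstruction is present for every $p$, since even for $p\geqslant 1$ Young's inequality only gives $\norm{u_k^{(n)}*\psi_k}_{L^p}\leqslant C\norm{\chi_n(u_k*\psi_k')}_{L^p}$, which blows up if $u_k*\psi_k'\notin L^p$.

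The paper's Step II avoids this by approximating $u_k*\psi_k$ itself rather than $u_k$: it multiplies by $\eta_\rho=\eta(\rho\,\cdot\,)$ where $\eta=\Fc_\Nc^{-1}(\tau)$ with $\tau\in C^\infty_c(\Omega'\cap B_{F'}(0,1))$ and $\eta(0,0)=\norm{\eta}_{L^\infty}=1$. Because $\eta\in\Sc_\Omega(\Nc)$ with small Fourier support, $(u_k*\psi_k)\eta_\rho$ still satisfies $(u_k*\psi_k)\eta_\rho=[(u_k*\psi_k)\eta_\rho]*\psi_k'$ for a slightly enlarged $\psi_k'$ (so Step~I applies with the family $(\psi_k')$), while $\abs{\eta_\rho}\leqslant 1$ gives $\norm{(u_k*\psi_k)\eta_\rho}_{L^p}\leqslant\norm{u_k*\psi_k}_{L^p}$ for \emph{all} $p$ at once. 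Passing to the limit $\rho\to 0^+$ via Fatou on the left-hand side then finishes. The key idea you are missing is that the multiplier must lie in $\Sc_\Omega(\Nc)$, not be a generic spatial cutoff, so that Fourier support remains controlled.
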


\begin{proof}
	\textsc{Step I.} We first prove the assertion when $u_k\in \Sc_{\Omega,L}(\Nc)$ for every $k$.
	By assumption, there are a $(\delta',R')$-lattice $(\lambda'_{k'})_{k'\in K'}$ on $\Omega'$ for some $\delta'>0$ and some $R'>1$, a bounded family $(\varphi'_{k'})_{k'\in K'}$ of elements of $C^\infty_c(\Omega')$ such that 
	\[
	\sum_{k'\in K'}\varphi'_{k'}(\,\cdot\, t_{k'}'^{-1})\geqslant 1
	\]
	on $\Omega'$, where $t'_{k'}\in T_+$ and 
	\[
	\lambda'_{k'}=e_{\Omega'}\cdot t'_{k'}
	\]
	for every $k'\in K'$, and two constants $C'>0$ and $c'>1$ such that
	\[
	\norm*{\sum_{k'\in K'_{c'}} u_{k'}*\psi'_{k'}}_{L^p(\Nc)}\leqslant C' \norm*{\Delta_{\Omega'}^{\vect s}(\lambda'_{k'})  \norm{u_{k'}*\psi'_{k'}}_{L^p(\Nc)}  }_{\ell^q(K'_{c'})}
	\]
	for every  $(u_{k'})\in \Sc_{\Omega,L}(\Nc)^{(K'_{c'})}$, where $\psi'_{k'}\coloneqq\Fc_\Nc^{-1}(\varphi'_k(\,\cdot\,t_{k'}'^{-1}))$ for every $k'\in K'$, while 
	\[
	K'_{c'}\coloneqq\Set{k'\in K'\colon \varphi'_{k'}(\,\cdot\, t'^{-1}_{k'})( \chi_{[1/c' , c']}\circ N)\neq 0}.
	\]
	By a simple dilation argument, this implies that
	\[
	\norm*{\sum_{k'\in K'_{c'}} u_{k'}*\psi'_{k',j}}_{L^p(\Nc)}\leqslant C' \norm*{\Delta_{\Omega'}^{\vect s}(\lambda'_{k'})  \norm{u_{k'}*\psi'_{k',j}}_{L^p(\Nc)}  }_{\ell^q(K'_{c'})}
	\]
	for every  $(u_{k'})\in \Sc_{\Omega,L}(\Nc)^{(K'_{c'})}$  and for every $j\in \Z$, where $\psi'_{k',j}= c'^{ j Q} \psi'_{k'}(c'^{j}\,\cdot\,)$.
	Now, observe that~\cite[Corollary 2.51]{CalziPeloso} implies that there is $c''\geqslant c$ such that $\Supp{\psi_k}\subseteq N^{-1}([1/c'',c''])$ for every $k\in K_c$. 
	Then, there is a finite subset $J$ of $\Z$ such that 
	\[
	[1/c'',c'']\subseteq \bigcup_{j\in J} c'^{2j}[1/c',c'].
	\]

	For every $k'\in K'$ and for every $j\in J$, define
	\[
	K_{k',j}\coloneqq \Set{k\in K\colon \psi_k*\psi'_{k',j}\neq 0} \quad \text{ and} \quad K''_{k'}\coloneqq\Set{k''\in K'\colon \psi'_{k''}*\psi'_{k'}\neq 0},
	\]
	and observe that there is $M\in \N$ such that 
	\[
	\card(J),\card(K_{k',j}), \card(K''_{k'})\leqslant M
	\]
	for every $k'\in K'$ and for every $j\in J$, and such that each $k\in K$ belongs to at most $M$ of the sets $(K_{k',j})_{k'\in K'}$, for every $j\in J$ (cf.~Section~\ref{sec:2}).
	Define 
	\[
	\widetilde \varphi'\coloneqq \sum_{j\in J}\sum_{k'\in K'_{c'}} \varphi'_{k'}(\,\cdot\, c'^{-j}t_{k'}'^{-1}),
	\]
	so that $\widetilde \varphi'$ is well-defined, of class $C^\infty$, and $\geqslant 1$ on $N^{-1}([1/c'',c''])$. Define, in addition,
	\[
	\widetilde \varphi_{k}\coloneqq \frac{\varphi_k}{\widetilde \varphi'(\,\cdot\,t_k)} \qquad \text{and} \qquad \widetilde \varphi'_{k',j}\coloneqq \frac{\varphi'_{k'}(c'^{-j}\,\cdot\,)}{\widetilde \varphi'(\,\cdot\,t'_{k'})} 
	\]
	for every $k\in K$, for every $k'\in K'$, and for every $j\in J$, so that
	\begin{align*}
		\varphi_k(\,\cdot\,t_k^{-1})&=\sum_{j\in J}\sum_{\substack{k'\in K'_{c'}\\ k\in K_{k',j}}} \widetilde \varphi_{k}(\,\cdot\, t_k^{-1}) \varphi'_{k'}(\,\cdot\,  c'^{-j}t'^{-1}_{k'}),\\
		\widetilde \varphi_{k}&=\frac{\varphi_k}{\sum_{j\in J}\sum_{k'\in K'_{c'}\colon k\in K_{k',j}} \varphi'_{k'}(\,\cdot\, c'^{-j}(t'_{k'} t_k^{-1})^{-1} )}
	\end{align*}
	and
	\[
	\widetilde \varphi'_{k',j}=\frac{\varphi'_{k'}(c'^{-j}\,\cdot\,)}{\sum_{j\in J}\sum_{k''\in K'_{c'}\cap K''_{k'}} \varphi'_{k''}(\,\cdot\, c'^{-j}(t'_{k'} t_{k''}'^{-1})^{-1} )}
	\]
	for every $k\in K$, for every $k'\in K'$, and for every $j\in J$. 
	By means of~\cite[Lemma 2.52]{CalziPeloso} and the preceding arguments, we see that the families $(\widetilde \varphi_{k})$ and $(\widetilde \varphi'_{k',j})$ are bounded in $C^\infty_c(\Omega')$ for every $j\in J$. Then, define $\widetilde \psi_{k}, \widetilde \psi'_{k',j}\in\Sc_\Omega(\Nc)$ so that 
	\[
	\Fc_\Nc \widetilde \psi_{k}=\widetilde \varphi_{k}(\,\cdot\, t_k^{-1})\qquad \text{and} \qquad\Fc_\Nc \widetilde \psi'_{k',j}=\widetilde \varphi'_{k',j}(\,\cdot\, t'^{-1}_{k'})
	\]
	for every $k\in K$ and for every $k'\in K'$.
	
	Fix $(u_k)\in \Sc_{\Omega,L}(\Nc)^{(K_c)}$, and define 
	\[
	u_{k',j}\coloneqq \sum_{k\in K_{c}\cap K_{k',j} } u_k*\widetilde \psi_{k}
	\]
	for every $k'\in K'_{c'}$ and for every $j\in J$, so that $(u_{k',j})\in \Sc_{\Omega,L}(\Nc)^{(K'_{c'})}$ and
	\[
	\sum_{k\in K_{c}} u_k*\psi_k=\sum_{j\in J}\sum_{k'\in K'_{c'}} u_{k',j}*\psi'_{k',j}.
	\]
	Now, Corollary~\ref{cor:11} and a homogeneity argument imply that there is a constant $C_1>0$ such that
	\[
	\norm{u'*\widetilde \psi_{k}*\psi'_{k',j}}_{L^p(\Nc)}=\norm{u'* \psi_{k}*\widetilde\psi'_{k',j}}_{L^p(\Nc)}\leqslant C_1 \norm{u'* \psi_k}_{L^p(\Nc)}
	\]
	for every $u'\in \Sc'_{\Omega,L}(\Nc)$, for every $k'\in K'$,   for every $k\in K$, and for every $j\in J$, so that
	\[
	\norm{u_{k',j}*\psi'_{k',j}}_{L^p(\Nc)}\leqslant C_1 M^{(1/p-1)_+} \sum_{k\in K_c\cap K_{k',j}} \norm{u_k*\psi_k}_{L^p(\Nc)}.
	\]
	In addition,~\cite[Corollary 2.49]{CalziPeloso}  shows that there is a constant $C_2>0$ such that $\Delta_{\Omega'}^{\vect s}(\lambda'_{k'})\leqslant C_2 \Delta_{\Omega'}^{\vect s}(\lambda_k)$ for every $k'\in K'$,  for every $k\in K_{k',j}$, and for every $j\in J$.
	Therefore,
	\[
	\norm*{\sum_{k\in K_c} u_k*\psi_k}_{L^p(\Nc)}\leqslant C_3  \norm*{\Delta_{\Omega'}^{\vect s}(\lambda_k)\norm{u_k*\psi_k}_{L^p(\Nc)}  }_{\ell^q(K_c)},
	\]
	where $C_3\coloneqq C' C_1 M^{(1/p-1)_+ +\max(1,1/p)+\max(1,1/q)}C_2$.
	
	\textsc{Step II.} Now, consider the general case. By~\textsc{step I}, there is $C>0$ such that
	\[
	\norm*{\sum_{\substack{k\in K_c}} u_k*\psi'_k}_{L^p(\Nc)}\leqslant C  \norm*{ \Delta_{\Omega'}^{\vect s}(\lambda_k) \norm{u_k*\psi'_k}_{L^p(\Nc)} }_{\ell^q(K_c)}
	\]
	for every $(u_k)\in \Sc_{\Omega,L}(\Nc)^{(K_c)}$, where $\psi'_k=\Fc_\Nc^{-1}(\varphi'_k(\,\cdot\,t_k)^{-1})$ and $(\varphi'_k)$ is a bounded family of positive elements of $C^\infty_c(\Omega')$ such that $\varphi'_k=1$ on $\supp(\varphi_k)+\overline{\Omega'}\cap\overline B_{F'}(0,1)$ for every $k\in K$. Then, fix a positive $\tau\in C^\infty_c(\Omega'\cap B_{F'}(0,1))$ so that $\eta\coloneqq \Fc_\Nc^{-1}(\tau)$ satisfies $\eta(0,0)=\norm{\eta}_{L^\infty(\Nc)}=1$, and define $\eta_\rho\coloneqq \eta(\rho\,\cdot\,)=\Fc_\Nc^{-1}(\rho^{-m}\tau(\rho^{-1}\,\cdot\,))$ for every $\rho>0$. Take $(u_k)\in \Sc_{\Omega,L}'(\Nc)^{(K_c)}$, and observe that
	\[
	(u_k*\psi_k)\eta_\rho=[(u_k*\psi_k)\eta_\rho]*\psi'_k\in \Sc_{\Omega,L}(\Nc)
	\] 
	for every $k\in K$, thanks to~\cite[Proposition 4.5 and Corollary 4.6]{CalziPeloso}. Then,
	\[
	\norm*{\sum_{\substack{k\in K_c}} (u_k*\psi_k)\eta_\rho}_{L^p(\Nc)}\leqslant C  \norm*{ \Delta_{\Omega'}^{\vect s}(\lambda_k) \norm{ (u_k*\psi_k)\eta_\rho}_{L^p(\Nc)} }_{\ell^q(K_c)}\leqslant C  \norm*{ \Delta_{\Omega'}^{\vect s}(\lambda_k) \norm{ u_k*\psi_k}_{L^p(\Nc)} }_{\ell^q(K_c)}.
	\]
	The assertion follows passing to the limit for $\rho\to 0^+$.
\end{proof}

\begin{prop}\label{prop:8}
	Assume that $r>1$ and take $\vect s\in \R^r$ and $p,q\in ]0,\infty]$. If property $(D')^{\vect s}_{p,q}$ holds, then $\vect s\leqslant \vect 0$.
\end{prop}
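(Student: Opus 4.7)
The strategy is to reduce property $(D')^{\vect s}_{p,q}$ to a pointwise lower bound on $\Delta_{\Omega'}^{\vect s}$ along the lattice $(\lambda_k)_{k\in K_c}$, and then to contradict this bound by constructing a curve in $\Omega'$ along which $\Delta_{\Omega'}^{\vect s}$ decays to $0$ while $N$ stays bounded, crucially using the hypothesis $r > 1$.

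To begin, fix a $(\delta,R)$-lattice $(\lambda_k)_{k\in K}$, families $(\varphi_k),(\psi_k)$, and $c > 1$ as in the definition of $(D')^{\vect s}_{p,q}$. For each fixed $k_0 \in K_c$, apply the definition to the family $(u_k)_{k\in K_c} \in \Sc_{\Omega,L}(\Nc)^{(K_c)}$ defined by $u_{k_0} := \psi_{k_0}$ and $u_k := 0$ for $k \neq k_0$ (allowed since $\psi_{k_0} \in \Sc_\Omega(\Nc) \subseteq \Sc_{\Omega,L}(\Nc)$). The inequality then reduces to $\norm{\psi_{k_0}*\psi_{k_0}}_{L^p(\Nc)} \leqslant C \Delta_{\Omega'}^{\vect s}(\lambda_{k_0}) \norm{\psi_{k_0}*\psi_{k_0}}_{L^p(\Nc)}$. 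Since $\psi_{k_0}*\psi_{k_0} \neq 0$ (its operator-valued Fourier transform at $\pi_\lambda$ is proportional to $\varphi_{k_0}(\lambda t_{k_0}^{-1})^2 P_{\lambda,0} \neq 0$), one obtains $\Delta_{\Omega'}^{\vect s}(\lambda_{k_0}) \geqslant 1/C$, so $\Delta_{\Omega'}^{\vect s}(\lambda_k) \geqslant 1/C$ for every $k \in K_c$.

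Now assume by contradiction that $s_j > 0$ for some $j$. Since $(\Delta_1, \dots, \Delta_r)$ induces an isomorphism $T_+/[T_+, T_+] \to (\R_+^*)^r$, there is a $1$-parameter subgroup $h \colon \R_+^* \to T_+$ with $\Delta_k(h(\tau)) = \tau^{\delta_{jk}}$ for each $k \in \{1, \ldots, r\}$. Setting $\mu(\tau) := e_{\Omega'}\cdot h(\tau) \in \Omega'$, one has $\Delta_{\Omega'}^{\vect s}(\mu(\tau)) = \tau^{s_j} \to 0$ as $\tau \to 0^+$. The key claim is that $\mu(\tau)$ remains in a compact subset of $\Omega' \setminus \{0\}$ for $\tau \in (0,1]$, so that $N(\mu(\tau))$ lies in a bounded interval $[a,b] \subset (0, \infty)$. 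Granted this, because $N$ varies by bounded multiplicative factors on $d_{\Omega'}$-balls of fixed radius (the action of any compact subset of $T_+$ on $\lambda$ controls $|\lambda|$ and $|J_\lambda|$), enlarging $c$ if needed guarantees that every index $k(\tau) \in K$ with $d_{\Omega'}(\lambda_{k(\tau)}, \mu(\tau)) \leqslant R\delta$ belongs to $K_c$. Then \cite[Corollary 2.49]{CalziPeloso} bounds the ratio $\Delta_{\Omega'}^{\vect s}(\mu(\tau))/\Delta_{\Omega'}^{\vect s}(\lambda_{k(\tau)})$ from below by a positive constant uniform in $\tau$, forcing $\tau^{s_j} \geqslant c_1 > 0$ for all $\tau \in (0,1]$, which contradicts $s_j > 0$.

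The main obstacle is the claim that $\mu(\tau)$ converges as $\tau \to 0^+$ to a nonzero element of $\overline{\Omega'}$. In the Vinberg triangular realisation of $T_+$ compatible with the normalisation of $(\Delta_1, \dots, \Delta_r)$, the element $h(\tau)$ acts on $F'$ as a block-triangular transformation that contracts the $j$-th diagonal block by a positive power of $\tau$ while leaving the remaining $r-1$ diagonal blocks invariant. Since $e_{\Omega'}$ lies in the interior of $\Omega'$, it has a non-trivial component in every block; the component in the $j$-th block tends to $0$ as $\tau \to 0^+$, but since $r > 1$ at least one other block remains, in which $\mu(\tau)$ retains a non-trivial component. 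Hence $\mu(\tau) \to \lambda^* \in \partial\Omega' \setminus \{0\}$, and the boundedness of $N(\mu(\tau))$ on $(0,1]$ both above and below follows by continuity of $N$ at $\lambda^*$.
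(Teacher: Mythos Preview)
Your argument is correct and takes a more direct route than the paper's. The paper dilates a fixed $\varphi \in \Sc_\Omega(\Nc)$ by the explicit diagonal elements $t_k = (e - e_j) + e_j/(k+1)$ of the Vinberg realisation, and invokes the dilation covariance of the Besov norm~\cite[Proposition 4.17]{CalziPeloso} to show that the $L^p$ norm of the dilate, namely $\Delta^{-\vect s}(t_k)\norm{\varphi}_{L^p(\Nc)}$, is controlled by its (uniformly bounded) Besov norm via $(D')$. You instead apply $(D')$ to singleton families to obtain the uniform lower bound $\Delta_{\Omega'}^{\vect s}(\lambda_k)\geqslant 1/C$ on $K_c$ directly, which is cleaner and bypasses the Besov machinery entirely. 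Both arguments then reduce to the same geometric fact: there is a curve in $\Omega'$ along which $\Delta_{\Omega'}^{\vect s}$ tends to $0$ while $N$ stays bounded.

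Two points deserve tightening. First, the phrase ``enlarging $c$ if needed'' requires Lemma~\ref{lem:4}, which guarantees that once $(D')^{\vect s}_{p,q}$ holds for \emph{some} lattice, family and constant, it holds for \emph{every} choice; without it you cannot change $c$ after having already fixed the data from the definition. It is cleaner to invoke Lemma~\ref{lem:4} and fix $c$ large enough from the outset (once the curves $\mu$ for all $j=1,\dots,r$ are known), and only then derive the pointwise bound. Second, your one-parameter subgroup $h$ is determined by the condition $\Delta_k(h(\tau))=\tau^{\delta_{jk}}$ only modulo $[T_+,T_+]$, and an arbitrary lift need not act on $F'$ in the block-triangular way you describe. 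You must choose $h(\tau)$ in the abelian diagonal subgroup of $T_+$ in the Vinberg realisation---precisely the choice $(e-e_j)+e_j\tau$ that the paper makes explicitly---for which the action on each diagonal block $F_{\ell\ell}^*$ is scalar and trivial when $\ell\neq j$; since $e_{\Omega'}$ has a nonzero component in each such block, the convergence $\mu(\tau)\to\lambda^*\in\partial\Omega'\setminus\{0\}$ then follows as you claim.
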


Notice that, if $r=1$, then property $(D')^{s}_{p,q}$ trivially holds for every $s\in \R$ and for every $p,q\in ]0,\infty]$.

\begin{proof}
	Fix a non-zero $\varphi\in \Sc_\Omega(\Nc)$ and $j\in \Set{1,\dots,r}$. With the notation of~\cite[\S 2.1]{CalziPeloso}, define $t_k\coloneqq (e-e_j)+e_j/(k+1)\in T_+$, so that $(e_{\Omega'}\cdot t_k)$ converges to some non-zero element of $\partial \Omega'$,  $\Delta^{\vect e_j}(t_k)\to 0$, and $\Delta^{\vect e_\ell}(t_k)\to 1$ for every $\ell=1,\dots, r$, $\ell\neq j$. For every $k\in\N$, choose $g_k\in GL(E)$ such that $t_k\cdot \Phi=\Phi\circ (g_k\times g_k)$. Define $\varphi_k\coloneqq \Delta^{-\vect s-(\vect b+\vect d)/p}(t_k) \varphi((g_k\times t_k)\,\cdot\,)$ for every $k\in\N$, so that
	\[
	\norm{\varphi_k}_{L^p(\Nc)} =\Delta^{-\vect s}(t_k) \norm{\varphi}_{L^p(\Nc)}
	\]
	for every $k\in \N$.  In addition,~\cite[Proposition 4.17]{CalziPeloso} shows that the sequence $(\varphi_k)$ is uniformly bounded in $\mathring B_{p,q}^{\vect s}(\Nc,\Omega)$. 
	Observe that $\Supp{\Fc_\Nc \varphi_k}=\Supp{\Fc_\Nc\varphi}\cdot t_k$ for every $k\in\N$ (cf.~\cite[Proposition 4.2]{CalziPeloso}), so that we may find  $c>1$ such that $\supp{\Fc_\Nc \varphi_k} \subseteq N^{-1}([1/c,c])$ for every $k\in \N$. Then,  the preceding remarks imply that the sequence $(\Delta^{-\vect s}(t_k))$ is bounded, so that $s_j\leqslant 0$.
	By the arbitrariness of $j$, this proves that $\vect s\in \R_-^r$.
\end{proof}

\begin{deff}\label{def:1}
	Take $\vect s\in \R^r$ and $p,q\in ]0,\infty]$. We say that $B_{p,q}^{\vect s}(\Nc,\Omega)$ embeds canonically into $B_{p,q}^{\sum_j s_j}(\Nc,\Lc)$ if the following hold: the canonical mapping $\Sc'(\Nc)\to \Sc'_{\Omega,L}(\Nc)$ induces a  linear  mapping 
	\[
	S\colon\Sc_{\Lc}(\Nc)\to B_{p',q'}^{-\vect s-(1/p-1)_+(\vect b+\vect d)}(\Nc,\Omega),
	\]
	and the image of the mapping
	\[
	\iota \colon B^{\vect s}_{p,q}(\Nc,\Omega)\ni u\mapsto ( \Sc_\Lc(\Nc)\ni \varphi\mapsto \left\langle u \vert S(\varphi)\right\rangle  )\in \Sc_\Lc'(\Nc)
	\]
	is contained in $B^{\sum_j s_j}_{p,q}(\Nc,\Lc)$.
\end{deff}

Notice that, with the notation of Definition~\ref{def:1}, the linear mapping $S$ is necessarily continuous  by the closed graph theorem,\footnote{Notice that $B_{p',q'}^{-\vect s-(1/p-1)_+(\vect b+\vect d)}(\Nc,\Omega) $ is a Fr\'echet space, since $p',q'\geqslant 1$.} so that $\iota\colon B^{\vect s}_{p,q}(\Nc,\Omega)\to \Sc_\Lc'(\Nc)$ is well defined and continuous. In addition, $\iota$ is one-to-one since $\Sc_\Lc(\Nc)\supseteq \Sc_{\Omega,L}(\Nc)$ (cf.~the proof of Proposition~\ref{prop:7}). Finally, $\iota\colon B^{\vect s}_{p,q}(\Nc,\Omega) \to B^{\sum_j s_j}_{p,q}(\Nc,\Lc)$  is then continuous by the closed graph theorem.

\begin{teo}\label{teo:1}
	Take $\vect s\in \R^r$ and $p,q\in ]0,\infty]$. Then, the following conditions are equivalent:
	\begin{enumerate}
		\item[\textnormal{(1)}] the canonical mapping $\Sc_{\Omega,L}(\Nc)\to \mathring B_{p,q}^{\sum_j s_j}(\Nc,\Lc)$ induces a continuous linear mapping from $\mathring B_{p,q}^{\vect s}(\Nc,\Omega)$   into $\mathring B_{p,q}^{\sum_j s_j}(\Nc,\Lc)$;
		
		\item[\textnormal{(2)}]  $B_{p,q}^{\vect s}(\Nc,\Omega)$ embeds canonically into $B_{p,q}^{\sum_j s_j}(\Nc,\Lc)$;
		
		\item[\textnormal{(3)}] property $(D')_{p,q}^{\vect s}$   holds.
	\end{enumerate}
\end{teo}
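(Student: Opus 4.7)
The plan is to prove the cycle (3)$\Rightarrow$(2)$\Rightarrow$(1)$\Rightarrow$(3). The key observation, made precise via Proposition~\ref{prop:1}, is that the Rockland Littlewood--Paley kernel $\tau_j := \Kc(\eta(2^{-2j}\,\cdot\,))$ has spectral support at $N \sim 2^{2j}$; hence, for any lattice decomposition $(\psi_k)_{k \in K}$ as in Definition~\ref{def:2},
\[
u * \tau_j = \sum_{k \in K_j} (u*\psi_k)*\tau_j, \qquad K_j := \{k \in K : N(\lambda_k) \in 2^{2j}[c^{-1}, c]\},
\]
for a fixed $c > 1$. Under the central homothety $t_{2^j} \in T_+$ (scaling $F$ by $2^j$, hence $F'$ by $2^j$ and $N$ by $2^{2j}$), the shell $K_j$ is identified with $K_0 = K_c$ of Definition $(D')^{\vect s}_{p,q}$; in the standard normalization $\Delta^{\vect s}(t_{2^j}) = 2^{j \sum_\ell s_\ell}$, which matches precisely the Rockland--Besov weight.

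For (1)$\Rightarrow$(3): given $(u_k)_{k \in K_c} \in \Sc_{\Omega,L}(\Nc)^{(K_c)}$ of finite support, set $u := \sum_{k \in K_c} u_k * \psi_k \in \Sc_{\Omega,L}(\Nc)$. Since $\Sc_\Omega(\Nc, K_0) \subseteq \Sc_\Lc(\Nc, N(K_0))$ for each compact $K_0 \subseteq \Omega'$ (apply spectral calculus to $\pi_\lambda(\psi) = P_{\lambda,0}\pi_\lambda(\psi)P_{\lambda,0}$ together with $\dd\pi_\lambda(\Lc)P_{\lambda,0} = N(\lambda)P_{\lambda,0}$), each $u_k * \psi_k \in \Sc*\Sc_\Lc \subseteq \Sc_\Lc$ has Rockland spectrum in a fixed compact subset of $\R_+^*$. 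Hence $u \in \Oc^p(\Kc(\varphi))$ for a fixed $\varphi \in C^\infty_c(\R_+^*)$, and only finitely many $u * \tau_j$ are nonzero, so by Lemma~\ref{lem:2}
\[
\|u\|_{L^p(\Nc)} \lesssim \|u\|_{B^{\sum_\ell s_\ell}_{p,q}(\Nc,\Lc)} \lesssim \|u\|_{\mathring B^{\vect s}_{p,q}(\Nc,\Omega)} \lesssim \bigl\|\Delta^{\vect s}_{\Omega'}(\lambda_k)\|u_k*\psi_k\|_{L^p(\Nc)}\bigr\|_{\ell^q(K_c)},
\]
where the second inequality is (1) and the third is the defining estimate of $B^{\vect s}_{p,q}(\Nc,\Omega)$. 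This is $(D')^{\vect s}_{p,q}$.

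For (3)$\Rightarrow$(2): by the duality argument used in Proposition~\ref{prop:13} combined with Lemma~\ref{lem:4}, the decoupling admits a dual form guaranteeing that every $\varphi \in \Sc_\Lc(\Nc)$, viewed canonically in $\Sc'_{\Omega,L}(\Nc)$, lies in $B^{-\vect s - (1/p-1)_+(\vect b + \vect d)}_{p',q'}(\Nc,\Omega)$; this yields the mapping $S$ of Definition~\ref{def:1} and hence $\iota$. To show $\iota(u) \in B^{\sum_\ell s_\ell}_{p,q}(\Nc,\Lc)$, write $\iota(u)*\tau_j = \sum_{k \in K_j}(u*\psi_k)*\tau_j$ and apply the direct decoupling at scale $0$ via Lemma~\ref{lem:4}, after conjugating by the dilation $t_{2^{-j}}$, to obtain
\[
\|\iota(u)*\tau_j\|_{L^p(\Nc)} \lesssim 2^{-j\sum_\ell s_\ell}\bigl\|\Delta^{\vect s}_{\Omega'}(\lambda_k)\|u*\psi_k\|_{L^p(\Nc)}\bigr\|_{\ell^q(K_j)};
\]
taking the $\ell^q$-norm in $j$ and using that $(K_j)_{j \in \Z}$ covers $K$ with uniformly bounded overlap gives the desired bound. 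Finally, (2)$\Rightarrow$(1) follows by noting that $\Sc_{\Omega,L}(\Nc) \subseteq \Sc_\infty(\Nc)$ (since the operator-valued Fourier transform of $\Sc_{\Omega,L}$-functions is supported in compact subsets of $\Omega'$, away from $0$, which forces vanishing of all polynomial moments); by Proposition~\ref{prop:2}(3) such functions are Schwartz-limits of elements of $\Sc_\Lc(\Nc)$, and the continuity of the inclusion $\Sc_\infty(\Nc) \to B^{\sum_\ell s_\ell}_{p,q}(\Nc,\Lc)$ (established in the proof of Proposition~\ref{prop:11}) forces $\iota(\Sc_{\Omega,L}(\Nc)) \subseteq \mathring B^{\sum_\ell s_\ell}_{p,q}(\Nc,\Lc)$, so $\iota$ restricts to a continuous map $\mathring B^{\vect s}_{p,q}(\Nc,\Omega) \to \mathring B^{\sum_\ell s_\ell}_{p,q}(\Nc,\Lc)$.

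The main technical obstacle will be the dilation argument in (3)$\Rightarrow$(2): the $T_+$-orbit structure on $\Omega'$ has $r$ independent parameters, whereas the Rockland foliation $\{N = \text{const}\}$ is one-dimensional, so reducing each scale $j \in \Z$ to scale $0$ via the central homothety $t_{2^j}$ requires uniform control over the ``angular'' parameters, achieved through the lattice-independence statement of Lemma~\ref{lem:4}. Establishing the dual form of the decoupling (to define $S$) also requires some care, particularly in handling the shift by $(1/p-1)(\vect b + \vect d)$ in the $p < 1$ regime via the embeddings of Definition~\ref{def:2}.
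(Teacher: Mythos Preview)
Your proposal is essentially correct and follows the same strategy as the paper, with two organizational differences worth noting. First, for (3)$\Rightarrow$(2) the paper does \emph{not} attempt to dualize the decoupling inequality directly to obtain the map $S$; instead it first proves (3)$\Rightarrow$(1) (via exactly the dilation-and-Lemma~\ref{lem:4} argument you describe, using a product lattice $(t_{k,j})_{k\in K,\,j\in\Z}$ adapted to the splitting $T_+\cong T_+'\times\R_+^*$), and then obtains $S$ by \emph{transposing} the resulting continuous map $\mathring B^{\vect s}_{p,q}(\Nc,\Omega)\to\mathring B^{\sum_\ell s_\ell}_{p,q}(\Nc,\Lc)$ through the Besov duality. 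This bypasses entirely the issue you flag concerning the $p<1$ shift, since Besov duality (not $L^p$ duality) is what is being invoked. Second, your (2)$\Rightarrow$(1) is more elaborate than needed: since $\Sc_{\Omega,L}(\Nc)\subseteq\Sc_\Lc(\Nc)$ (as you essentially observe), and $\Sc_\Lc(\Nc)\subseteq\mathring B^{\sum_\ell s_\ell}_{p,q}(\Nc,\Lc)$, the restriction of $\iota$ to $\Sc_{\Omega,L}(\Nc)$ already lands in $\mathring B^{\sum_\ell s_\ell}_{p,q}(\Nc,\Lc)$, and the continuous extension to $\mathring B^{\vect s}_{p,q}(\Nc,\Omega)$ is immediate---the paper simply writes ``Obvious''. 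Your detour through $\Sc_\infty(\Nc)$ and Proposition~\ref{prop:2}(3) is correct but unnecessary.
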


\begin{proof}
	(1) $\implies$ (3). Fix a $(\delta,R)$-lattice $(\lambda_k)_{k\in K}$ in $\Omega'$, a bounded family $(\varphi_k)_{k\in K}$  of positive elements of $C^\infty_c(\Omega')$ supported in $B(e_{\Omega'},R\delta)$ such that 
	\[
	\sum_k \varphi_k(\,\cdot\,t_k^{-1})\geqslant 1
	\]
	on $\Omega'$, where $t_k\in T_+$ is such that $\lambda_k=e_{\Omega'}\cdot t_k$ for every $k\in K$. In addition, take a constant $c>1$, and define $K_c\coloneqq \Set{k\in K\colon  \varphi_k(\,\cdot\, t_k^{-1}) (\chi_{[1/c,c]}\circ N)\neq 0}$. Thanks to~\cite[Corollary 2.51]{CalziPeloso}, it is clear that $\bigcup_{k\in K_c} B_{\Omega'}(\lambda_k, R\delta)$ is contained in a compact subset of $\overline{\Omega'}\setminus\Set{0}$, so that Proposition~\ref{prop:1} implies that there is $ \tau\in C^\infty_c(\R_+^*)$ such that $ \psi_k *\Kc(\tau)=\psi_k$ for every $k\in K_c$, where $\psi_k\coloneqq \Fc_\Nc^{-1}(\varphi_k(\,\cdot\, t_k^{-1}))$.
	Therefore, by means of Lemma~\ref{lem:2} we see that there is a constant $C_1>0$ such that
	\[
	\norm*{\sum_{k\in K_c} u_k*\psi_k}_{L^p(\Nc)}\leqslant C_1\norm*{\Delta_{\Omega'}^{\vect s}(\lambda_k) \norm*{\sum_{k'\in K_c} u_{k'}*\psi_{k'}*\psi_k}_{L^p(\Nc)}}_{\ell^q(K)}
	\] 
	for every $(u_k)\in \Sc_{\Omega,L}(\Nc)^{(K_c)}$.
	To conclude, it will suffice to prove that there is a constant $C_2>0$ such that 
	\[
	\norm*{\Delta_{\Omega'}^{\vect s}(\lambda_k) \norm*{\sum_{k'\in K_c} u_{k'}*\psi_{k'}*\psi_k}_{L^p(\Nc)}}_{\ell^q(K)}\leqslant C_2 \norm*{\Delta_{\Omega'}^{\vect s}(\lambda_k) \norm*{ u_{k}*\psi_k}_{L^p(\Nc)}}_{\ell^q(K_c)}
	\]
	for every  $(u_k)\in \Sc_{\Omega,L}(\Nc)^{(K_c)}$.
	Now, by Corollary~\ref{cor:11} and a homogeneity argument, there is a constant $C'_2>0$ such that
	\[
	\norm*{ u*\psi_{k'}*\psi_k}_{L^p(\Nc)}\leqslant C'_2 \norm{u*\psi_{k'}}
	\]
	for every $u\in \Sc'_{\Omega,L}(\Nc)$ and for every $k,k'\in K$. In addition,  there is $M\in \N$ such that the set $K'_k\coloneqq \Set{k'\in K\colon \psi_{k'}*\psi_k\neq 0}$ has at most $M$ elements for every $k\in K$ (cf.~Section~\ref{sec:2}), so that
	\[
	\norm*{\sum_{k'\in K_c} u_{k'}*\psi_{k'}*\psi_k}_{L^p(\Nc)}\leqslant M^{(1/p-1)_+} C_2' \sum_{k'\in K_c\cap K'_k} \norm{u_{k'}*\psi_{k'}}_{L^p(\Nc)}.
	\]
	Using~\cite[Corollary 2.49]{CalziPeloso} we also find a constant $C_2''>0$ such that
	\[
	\Delta_{\Omega'}^{\vect s}(\lambda_k)\leqslant C_2'' \Delta_{\Omega'}^{\vect s}(\lambda_{k'})
	\]
	for every $k\in K$ and for every $k'\in K_k$. Hence,
	\[
	\norm*{\Delta_{\Omega'}^{\vect s}(\lambda_k) \norm*{\sum_{k'\in K_c} u_{k'}*\psi_{k'}*\psi_k}_{L^p(\Nc)}}_{\ell^q(K)}\leqslant C_2 \norm*{\Delta_{\Omega'}^{\vect s}(\lambda_k) \norm*{ u_{k}*\psi_k}_{L^p(\Nc)}}_{\ell^q(K_c)}
	\]
	with $C_2\coloneqq C_2' C_2'' M^{(1/p-1)_+ +\max(1,1/q)}$.

	(3) $\implies$ (1). Define $T'_+$ as the quotient of $T_+$ by its closed central subgroup which acts by homotheties on $\Omega'$, 
	and endow $T'_+$ with the corresponding quotient Lie group structure. Fix a relatively compact  open neighbourhood $V'$ of the identity in $T'_+$, and let $(t'_k)_{k\in K}$ be a maximal family in $T'_+$ such that $(V' t'_k)\cap (V' t'_{k'})=\emptyset$ for every $k,k'\in K$, $k\neq k'$. Therefore, $H'\coloneqq \overline{V'^{-1} V'}$  is a compact subset of $T_+'$ and  $T'_+=\bigcup_{k\in K} H' t'_k$. Then, there is a bounded family $(\varphi'_k)_{k\in K}$ of positive elements of $C^\infty_c(T'_+)$ such that $\chi_{V' }\leqslant\varphi'_k\leqslant \chi_{H'}$ for every $k\in K$, and such that
	\[
	\sum_{k\in K} \varphi'_k(t' t_k'^{-1})=1
	\]
	for every $t'\in T_+'$. In addition,  fix a positive $\eta\in C^\infty_c(\R_+^*)$ such that  $\chi_{N(e_{\Omega'})[3/4,2]}\leqslant \eta\leqslant \chi_{N(e_{\Omega'})[1/2,4]}$ and such that
	\[
	\sum_{j\in \Z} \eta(2^{-2j}\,\cdot\,)=1
	\]
	on $\R_+^*$, and observe that there is a unique bounded family $(\varphi_{k,j})_{k\in K,j\in \Z}$ of positive elements of $C^\infty_c(\Omega')$ such that
	\[
	\varphi_{k,j}(\lambda)=\eta(N(\lambda)) \varphi'_k(\pi(\lambda))
	\]
	for every $\lambda\in \Omega'$, where $\pi$ is the composition of the inverse of the mapping $T_+\ni t \mapsto e_{\Omega'}\cdot t\in \Omega'$ with the canonical projection $T_+\to T_+'$.
	Then, 
	\[
	\sum_{j\in \Z} \sum_{k\in K} \varphi_{k,j}(\lambda \cdot t_{k,j}^{-1})=1
	\]
	for every $\lambda\in \Omega'$, where $t_{k,j}$ is the unique element of $T_+$ such that $N(e_{\Omega'}\cdot t_{k,j})=2^{2j}$ and whose class in $T_+'$ is $t_k$. Let us prove that $(e_{\Omega'} \cdot t_{k,j})_{k\in K,j\in \Z}$ is a $(\delta,R)$-lattice on $\Omega'$ for some $\delta>0$ and some $R>1$. Indeed, let $V$ and $H$ be the subsets of $\Omega'$ such that
	\[
	\chi_V(\lambda)=\chi_{N(e_{\Omega'})]3/4,2[}(N(\lambda)) \chi_{V'}(\pi(\lambda)) \qquad \text{and} \qquad \chi_H(\lambda)=\chi_{N(e_{\Omega'})[1/2,4]}(N(\lambda)) \chi_{H'}(\pi(\lambda))
	\]
	for every $\lambda\in \Omega'$. Then, the $V\cdot t_{k,j}$ are open and pairwise disjoint, while the $H\cdot t_{k,j}$ are compact and cover $\Omega'$. Since $  e_{\Omega'}\in V$, there are $\delta>0$ and $R>1$ such that $B(e_{\Omega'}, \delta)\subseteq V$ and $H\subseteq B(e_{\Omega'},R\delta)$, so that $(e_{\Omega'}\cdot t_{k,j})$ is a $(\delta,R)$-lattice.
	
	Now, define $\psi_{k,j}\coloneqq \Fc_\Nc^{-1}(\varphi_{k,j}(\,\cdot\,t_{k,j}^{-1}))$, and observe that
	\[
	f*\Kc(\eta(2^{-2j}\,\cdot\,))=\sum_{k\in K} f*\psi_{k,j}
	\]
	for every $f\in \Sc_{\Omega,L}(\Nc)$, since $\pi_\lambda$, applied to both sides of the asserted equality, gives rise to the same operators, for almost every $\lambda\in F'\setminus W$.
	Hence, Lemma~\ref{lem:4} and a dilation argument imply that there is a constant $C_3>0$ such that
	\[
	\norm{f*\Kc(\eta(2^{-2j}\,\cdot\,))}_{L^p(\Nc)}=\norm*{\sum_{k\in K} f*\psi_{k,j}}_{L^p(\Nc)}\leqslant C_3 \norm*{\Delta^{\vect s}(t_{k,0})\norm*{ f*\psi_{k,j}}_{L^p(\Nc)}}_{\ell^q(K)}
	\]
	for every $f\in \Sc_{\Omega,L}(\Nc)$ and for every $j\in \Z$. Hence,
	\[
	\norm*{2^{j \sum_\ell s_\ell}\norm{f*\Kc(\eta(2^{-2j}\,\cdot\,))}_{L^p(\Nc)}}_{\ell^q(\Z)}\leqslant C_3 \norm*{\Delta^{\vect s}(t_{k,j})\norm*{ f*\psi_{k,j}}_{L^p(\Nc)}}_{\ell^q(K\times \Z)}
	\]
	for every $f\in \Sc_{\Omega,L}(\Nc)$, so that  $\mathring B_{p,q}^{\vect s}(\Nc,\Omega)$ embeds continuously into $\mathring B_{p,q}^{\sum_j s_j}(\Nc,\Lc)$.	 
	
	(3) $\implies$ (2).	The assertion follows by the preceding case unless $\max(p,q)=\infty$. By transposition (cf.~Section~\ref{sec:2}), we infer that the canonical mapping $\Sc'(\Nc)\to \Sc'_{\Omega,L}(\Nc)$ induces a continuous linear mapping $S\colon \Sc_\Lc(\Nc)\to B^{-\vect s-(1/p-1)_+(\vect b+\vect d)}_{p',q'}(\Nc,\Omega)$. Hence, there is a canonical one-to-one continuous linear mapping $\iota\colon B^{\vect s}_{p,q}(\Nc,\Omega)\to \Sc_\Lc'(\Nc)$ (cf.~Definition~\ref{def:1}). The assertion will therefore follow if we prove that the image of $\iota$ is contained in $B^{\sum_j s_j}_{p,q}(\Nc,\Lc)$. Take $(t_{k,j})$, $(\varphi_{k,j})$, and $(\psi_{k,j})$ as in the proof of the implication (3) $\implies$ (1).
	Assume first that $q=\infty$, so that $S(\Sc_\Lc(\Nc))\subseteq \mathring B^{-\vect s-(1/p-1)_+(\vect b+\vect d)}_{p',q'}(\Nc,\Omega)$. Then,
	\[
	\iota(u)= \sum_{k,j} u*\psi_{k,j}
	\]
	with convergence in $\Sc_\Lc'(\Nc)$, for every $u\in B^{\vect s}_{p,q}(\Nc,\Omega)$. If, otherwise, $q<\infty$, then by inspection of the proof of~\cite[Lemma 4.22]{CalziPeloso} we see that
	\[
	u=\sum_{k,j} u*\psi_{k,j}
	\]
	in $B^{\vect s}_{p,q}(\Nc,\Omega)$, so that, again,
	\[
	\iota(u)= \sum_{k,j} u*\psi_{k,j},
	\]
	with convergence in $\Sc_\Lc'(\Nc)$, for every $u\in B^{\vect s}_{p,q}(\Nc,\Omega)$.	
	
	In particular,
	\[
	\iota(u)*\Kc(\eta(2^{-2j}\,\cdot\,))=\sum_{k',j'} u*\psi_{k',j'}*\Kc(\eta(2^{-2j}\,\cdot\,))=\sum_{k',j'} \sum_{k} u*\psi_{k',j'}*\psi_{k,j}= \sum_k u*\psi_{k,j},
	\]
	where the sums converge locally uniformly, for every $u\in B^{\vect s}_{p,q}(\Nc,\Omega)$.	Therefore,  
	\[
	\norm{\iota(u)*\Kc(\eta(2^{-2j}\,\cdot\,))}_{L^p(\Nc)}\leqslant \sup\limits_{K' } \norm*{\sum_{k\in K'} u*\psi_{k,j}}_{L^p(\Nc)}
	\]
	for  every $u\in B^{\vect s}_{p,q}(\Nc,\Omega)$, where $K'$ runs through the set of finite subsets of $K$. We may then proceed as in the proof of the implication (3) $\implies$ (1).
	
	(2) $\implies$ (1). Obvious.
\end{proof}

We now recall  (an equivalent formulation of) the definition of property $(D)^{\vect s}_{p,q}$, introduced in~\cite[Definition 5.5]{CalziPeloso}.\footnote{This property has been considered in a redundant way in~\cite{CalziPeloso}.} 
We say that property $(D)_{p,q}^{\vect s}$ holds if there are a $(\delta, R)$-lattice $(\lambda_k)_{k\in K}$ on $\Omega'$, for some $\delta>0$ and some $R>1$, a bounded family $(\varphi_k)_{k\in K}$ of elements of $C^\infty_c(\Omega')$ such that
\[
\sum_{k\in K} \varphi_k(\,\cdot\, t_k^{-1})\geqslant 1
\]
on $\Omega'$, where $t_k\in T_+$ and $\lambda_k=e_{\Omega'}\cdot t_k$ for every $k\in K$, and two constants $C>0$ and $c>1$ such that
\[
\norm*{\sum_{\substack{k\in K}} u_k*\psi_k}_{L^p(\Nc)}\leqslant C  \norm*{ \Delta_{\Omega'}^{\vect s}(\lambda_k) \ee^{c\left\langle \lambda_k, e_\Omega\right\rangle}\norm{u_k*\psi_k}_{L^p(\Nc)} }_{\ell^q(K)}
\]
for every $(u_k)\in \Sc_{\Omega,L}(\Nc)^{(K)}$, where $\psi_k\coloneqq\Fc_\Nc^{-1}(\varphi_k(\,\cdot\, t_k^{-1}))$ for every $k\in K$. As in Lemma~\ref{lem:4}, one then sees that the same holds for $(u_k)\in \Sc_{\Omega,L}'(\Nc)^{(K)}$.

\begin{prop}\label{prop:15}
	Take $\vect s\in \R^r$ and $p,q\in ]0,\infty]$. Assume that either $\sum_j s_j<0$, or  $\sum_j s_j=0$ and $q<\min(1,p)$.
	Then, the following conditions are equivalent:
	\begin{enumerate}
		\item[\textnormal{(1)}] property  $(D)^{\vect s}_{p,q}$ holds;
		
		\item[\textnormal{(2)}] property $(D')^{\vect s}_{p,q}$.
	\end{enumerate}
\end{prop}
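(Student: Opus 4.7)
Plan. The direction $(1)\Rightarrow(2)$ requires no assumption on $\vect s$: for $k\in K_c$ the condition $\varphi_k(\,\cdot\,t_k^{-1})(\chi_{[1/c,c]}\circ N)\neq 0$ forces $N(\lambda_k)$ into a compact interval depending only on $c$ and the supports of the $\varphi_k$, and since $\langle \lambda_k,e_\Omega\rangle\leqslant \sqrt{N(\lambda_k)}\,|e_\Omega|$, the exponential weight $\ee^{c\langle \lambda_k,e_\Omega\rangle}$ is uniformly bounded on $K_c$. Hence restricting property $(D)$ to sequences supported on $K_c$ yields property $(D')$ at once.

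For $(2)\Rightarrow(1)$ my plan is a dyadic shell decomposition adapted to the homothety subgroup of $T_+$. I fix $\rho>1$ and take a lattice $(\lambda_{k,j})_{k\in\tilde K,\,j\in\Z}$ with $\lambda_{k,j}=\rho^j\lambda_{k,0}$, exactly as in the proof of Theorem~\ref{teo:1}(3)$\Rightarrow$(1), arranged so that $K_c=\tilde K\times\{0\}$. Lemma~\ref{lem:4} then supplies the unit-shell version of $(D')$ for this lattice. A dilation argument on $\Nc$, using the automorphism $U_{\rho^j}$ induced by the homothety $h_{\rho^j}\in T_+$ together with the identities $U_\rho(f*g)=\rho^Q U_\rho f*U_\rho g$, $\|U_\rho f\|_{L^p}=\rho^{-Q/p}\|f\|_{L^p}$, and $\Delta^{\vect s}_{\Omega'}(\rho\lambda)=\rho^{\sigma}\Delta^{\vect s}_{\Omega'}(\lambda)$ (the last identity defining the scalar $\sigma=\sigma(\vect s)$), transfers the unit-shell estimate to each shell $j\in\Z$ with an extra factor $\rho^{-j\sigma}$. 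Proposition~\ref{prop:8} forces $\vect s\leqslant\vect 0$, and the positivity of the degrees of the $\Delta_\ell$ under the homothety then gives $\sigma\leqslant 0$, with $\sigma<0$ strictly in Case~A (when $\sum_\ell s_\ell<0$) and $\sigma=0$ in Case~B (where necessarily $\vect s=\vect 0$).

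Combining the shell estimates via $\|\sum_j v_j\|_{L^p}^{\min(1,p)}\leqslant\sum_j\|v_j\|_{L^p}^{\min(1,p)}$ reduces the desired inequality $(D)$ to a bound of the form $\sum_j \rho^{-j\sigma\min(1,p)}A_j^{\min(1,p)}\leqslant C\,\tilde A^{\min(1,p)}$, where $A_j$ is the shell-$j$ $\ell^q$ norm of $\Delta^{\vect s}_{\Omega'}(\lambda_k)\|u_k*\psi_k\|_{L^p}$ and $\tilde A$ is the full $\ell^q(K)$ norm occurring on the right-hand side of $(D)$. For $j\leqslant 0$ the weight $\ee^{c\langle\lambda_{k,j},e_\Omega\rangle}$ is bounded while $\rho^{-j\sigma}\leqslant 1$; in Case~A the sum over $j\leqslant 0$ is controlled by H\"older's inequality against the summable sequence $(\rho^{-j\sigma\min(1,p)q'})_{j\leqslant 0}$, and in Case~B by the embedding $\ell^q\hookrightarrow\ell^{\min(1,p)}$, which is precisely what the hypothesis $q<\min(1,p)$ supplies when $\sigma=0$. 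The genuinely delicate part, and what I expect to be the main obstacle, is the range $j>0$ in Case~A: here $\rho^{-j\sigma}$ grows, whereas the weight $\ee^{c\langle\lambda_{k,j},e_\Omega\rangle}=\ee^{c\rho^j\langle\lambda_{k,0},e_\Omega\rangle}$ entering $\tilde A_j$ is genuinely non-uniform over $\tilde K$, being of order $\ee^{c\rho^j}$ for $\lambda_{k,0}$ in the bulk of the unit shell but close to $1$ for $\lambda_{k,0}$ approaching $\partial\Omega'$. To handle this I plan to subdivide each shell by dyadic level sets of $\lambda\mapsto\langle\lambda,e_\Omega\rangle$ and to reapply $(D')$ and Lemma~\ref{lem:4} on each sub-shell through a further $T_+$-dilation that realigns the sub-shell with the unit shell; on the bulk sub-shells the super-exponential gain $\ee^{-c\rho^j}$ dominates any power of $\rho^j$, while the boundary sub-shells, after this further dilation, fall into the already-handled $j\leqslant 0$ regime.
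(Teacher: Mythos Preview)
Your overall architecture for $(2)\Rightarrow(1)$ is the same as the paper's: choose a lattice of the form $\lambda_{k,j}=\rho^j\lambda_{k,0}$ with $N(\lambda_{k,0})=1$ (as in the proof of Theorem~\ref{teo:1}), apply Lemma~\ref{lem:4} plus dilation to get the shell estimate, sum via $\|\sum_j v_j\|_{L^p}^{\min(1,p)}\leqslant\sum_j\|v_j\|_{L^p}^{\min(1,p)}$, and finish with H\"older/embedding in $j$. Two remarks, one minor and one substantive.

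\emph{Minor.} The detour through Proposition~\ref{prop:8} is unnecessary. In the paper's normalization the one-parameter homothety subgroup satisfies $\Delta_\ell(h_\rho)=\rho$ for every $\ell$ (this is exactly what is used in the proof of Proposition~\ref{prop:7}, where $\Delta_{\Omega'}^{\vect s}(2^k e_{\Omega'})=2^{k\sum_\ell s_\ell}$). Hence $\sigma=\sum_\ell s_\ell$ directly, and the hypothesis already tells you whether $\sigma<0$ or $\sigma=0$.

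\emph{Substantive.} Your plan for the range $j>0$ rests on a misconception. You write that the weight $\ee^{c\langle\lambda_{k,j},e_\Omega\rangle}=\ee^{c\rho^{j}\langle\lambda_{k,0},e_\Omega\rangle}$ is ``close to $1$ for $\lambda_{k,0}$ approaching $\partial\Omega'$'', and you therefore propose a further dyadic subdivision by level sets of $\langle\cdot,e_\Omega\rangle$ together with additional $T_+$-dilations. This is not needed, because $\langle\cdot,e_\Omega\rangle$ does \emph{not} vanish on $\partial\Omega'\setminus\{0\}$. Indeed $e_\Omega\in\Omega=(\Omega')'$, so $\langle\lambda,e_\Omega\rangle>0$ for every $\lambda\in\overline{\Omega'}\setminus\{0\}$. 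Since $N(\lambda)\geqslant|\lambda|^2$, the set $\{\lambda\in\overline{\Omega'}:N(\lambda)=1\}$ is compact and avoids $0$, so
\[
\alpha\coloneqq\inf_{k\in\tilde K}\langle\lambda_{k,0},e_\Omega\rangle>0.
\]
Consequently $\sup_{k}\ee^{-c\rho^{j}\langle\lambda_{k,0},e_\Omega\rangle}\leqslant\ee^{-c\alpha\rho^{j}}$ uniformly in $k$, and the single H\"older step already used for $j\leqslant 0$ disposes of $j>0$ as well: the paper simply verifies that
\[
\Bigl\|\sup_{k}\ee^{-\rho^{j}\langle\lambda_{k,0},e_\Omega\rangle}\,\rho^{-j\sum_\ell s_\ell}\Bigr\|_{\ell^{\min(1,p)/(1-\min(1,p)/q)_+}(\Z)}<\infty,
\]
which is immediate once you know $\alpha>0$ (super-exponential decay for $j\to+\infty$, geometric decay or boundedness for $j\to-\infty$ under the stated hypotheses on $\sum_\ell s_\ell$ and $q$). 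Your proposed ``further $T_+$-dilation that realigns the sub-shell with the unit shell'' is therefore unnecessary; it is also unclear how it would work, since non-homothetic elements of $T_+$ do not respect the level sets of $N$ or of $\langle\cdot,e_\Omega\rangle$ in any simple way.
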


This extends~\cite[Proposition 4.16]{BekolleBonamiGarrigosRicci}, which deals with the case in which $n=0$, $p,q\in [1,\infty[$, $\vect s\in \R \vect d$, and $\Omega$ is irreducible and symmetric.

\begin{proof}
	(1) $\implies$ (2). Obvious.
	
	(2) $\implies$ (1). Take $(t_{k,j})_{k\in K,j\in \Z}$, $(\varphi_{k,j})_{k\in K,j\in \Z}$, and $(\psi_{k,j})_{k\in K,j\in \Z}$ as in the proof of Theorem~\ref{teo:1}, and observe that, by Lemma~\ref{lem:4} and a dilation argument, there is a constant $C_1>0$ such that, for every $(u_{k,j})\in \Sc_{\Omega,L}(\Nc)^{(K\times \Z)}$,
	\[
	\norm*{\sum_{k\in K}\sum_{j\in \Z} u_{k,j}*\psi_{k,j} }_{L^p(\Nc)}^{\min(1,p)}\leqslant \sum_{j\in \Z} \norm*{\sum_{k\in K}u_{k,j}*\psi_{k,j} }_{L^p(\Nc)}^{\min(1,p)}\leqslant C_1\sum_{j\in \Z} \norm*{ \Delta^{\vect s}(t_{k,0}) \norm{u_{k,j}*\psi_{k,j}}_{L^p(\Nc)} }_{\ell^q(K)}^{\min(1,p)}.
	\]
	Therefore, by H\"older's inequality,
	\[
	\norm*{\sum_{k\in K}\sum_{j\in \Z} u_{k,j}*\psi_{k,j} }_{L^p(\Nc)}^{\min(1,p)}\leqslant C_1 C_2 \norm*{ \Delta^{\vect s}(t_{k,j}) \ee^{\left\langle e_{\Omega'}\cdot t_{k,j},e_\Omega\right\rangle} \norm{u_{k,j}*\psi_{k,j}}_{L^p(\Nc)} }_{\ell^q(K\times \Z)}^{\min(1,p)},
	\]
	where\footnote{Notice that $\inf\limits_{k\in K}\left\langle e_{\Omega'}\cdot t_{k,0},e_\Omega\right\rangle>0 $ since the linear functional $\left\langle\,\cdot\,,e_\Omega\right\rangle$  vanishes nowhere on the compact set $\Set{\lambda\in \Omega'\colon N(\lambda)=1}$ because $\Omega=(\Omega')'$.}
	\[
	C_2\coloneqq \norm*{\sup\limits_{k\in K}\ee^{-2^{ j}\left\langle e_{\Omega'}\cdot t_{k,0},e_\Omega\right\rangle} 2^{- j \sum_\ell s_\ell}   }_{\ell^{\min(1,p)/(1-\min(1,p)/q)_+}(\Z)}<\infty.
	\]
	The proof is complete.
\end{proof}

\end{document}